\newcommand\footnoteref[1]{\protected@xdef\@thefnmark{\ref{#1}}\@footnotemark}
\newcommand{\A}{\mathcal{A}}
\newcommand{\B}{\mathcal{B}}
\newcommand{\C}{\mathcal{C}}
\newcommand{\M}{\mathcal{M}}
\newcommand{\F}{\mathcal{F}}
\newcommand{\V}{\mathcal{V}}
\newcommand{\U}{\mathcal{U}}
\newcommand{\W}{\mathcal{W}}
\renewcommand{\P}{\mathcal{P}}
\renewcommand{\L}{\mathcal{L}}
\newcommand{\uh}{\upharpoonright}
\def\qt#1{``#1''}%
\newtheoremstyle{custom}
  {10pt}
  {10pt}
  {\normalfont}
  {}
  {\bfseries}
  {}
  { }
  {}
\theoremstyle{custom}
\newtheorem{theorem}{Theorem}[section]
\newtheorem{lemma}[theorem]{Lemma}
\newtheorem{corollary}[theorem]{Corollary}
\newtheorem{proposition}[theorem]{Proposition}
\theoremstyle{definition}
\newtheorem{definition}[theorem]{Definition}
\newtheorem{example}[theorem]{Example}
\theoremstyle{remark}
\newtheorem{remark}[theorem]{Remark}
\numberwithin{equation}{section}
\newtheoremstyle{noparens}%
  {}{}%
{}{}%
{\bfseries}{.}%
{ }%
{\thmname{#1}\thmnumber{ #2}\thmnote{ #3}}
\theoremstyle{noparens}
\newtheorem*{theorem*}{Theorem}
\title{Partition genericity and pigeonhole basis theorems}
\author{
  Benoit Monin \and Ludovic Patey
}
\begin{document}

\begin{abstract}
There exist two notions of typicality in computability theory, namely, genericity and randomness. In this article, we introduce a new notion of genericity, called \emph{partition genericity}, which is at the intersection of these two notions of typicality, and show that many basis theorems apply to partition genericity. More precisely, we prove that every co-hyperimmune set and every Kurtz random is partition generic, and that every partition generic set admits a weak infinite subsets. In particular, we answer a question of Kjos-Hanssen and Liu by showing that every Kurtz random admits an infinite subset which does not compute any set of positive Hausdorff dimension.
Partition genericty is a partition regular notion, so these results imply many existing pigeonhole basis theorems.
\end{abstract}

\maketitle

\section{Introduction}\label[section]{sect:introduction}

The infinite pigeonhole principle can be considered as the most basic statement from Ramsey's theory. The infinite pigeonhole principle for 2 colors can be formulated as \qt{for every set $A \subseteq \omega$, there is an infinite set $H \subseteq A$ or $H \subseteq \overline{A}$.} From a combinatorial viewpoint, the infinite pigeonhole principle is trivial. On the other hand, the computational analysis of this principle is very subtle and received the attention of the computability community for decades, motivated by the reverse mathematics of Ramsey's theorem for pairs. 

\subsection{Pigeonhole basis theorems}


The computability-theoretic analysis of a mathematical problem consists in understanding, given an instance, how computably complicated are its solutions. From this perspective, a lower bound is a statement of the form \qt{There exists an instance such that every solution is computationally strong}, while an upper bound is of the form \qt{For every instance, there is a computationally weak solution.} Here, the notions of strength and weaknesses range over many computability-theoretic properties.

A \emph{pigeonhole basis theorem} is an upper bound for the pigeonhole principle, that is, a statement of the form: \qt{For every set $A \subseteq \omega$, there is an infinite set $H \subseteq A$ or $H \subseteq \overline{A}$ which is computationally weak}. Several pigeonhole basis theorems have been proven: 
\begin{enumerate}
	\item If $B$ is a non-computable set, then for every set $A \subseteq \omega$, there is an infinite set $H \subseteq A$ or $H \subseteq \overline{A}$ such that $B \not \leq_T H$ (Dzhafarov and Jockusch~\cite{Dzhafarov2009Ramseys}).
	\item If $B$ is a non-$\Sigma^0_1$ set, then for every set $A \subseteq \omega$, there is an infinite set $H \subseteq A$ or $H \subseteq \overline{A}$ such that $B$ is not $\Sigma^0_1(H)$ (Wang~\cite{Wang2016definability}).
	\item If $f$ is hyperimmune, then for every set $A \subseteq \omega$, there is an infinite set $H \subseteq A$ or $H \subseteq \overline{A}$ such that $f$ is $H$-hyperimmune (Patey~\cite{patey2015iterative}).
	\item For every set $A \subseteq \omega$, there is an infinite set $H \subseteq A$ or $H \subseteq \overline{A}$ of non-PA degree (Liu~\cite{Liu2012RT22}).
	\item For every set $A \subseteq \omega$, there is an infinite set $H \subseteq A$ or $H \subseteq \overline{A}$ of non-random degree, and which does not compute any set of positive Hausdorff dimension (Liu~\cite{liu2015cone}).
\end{enumerate}

Asking for an infinite subset of $A$ or of $\overline{A}$ is important, as there exist sets $A$ such that every infinite subset is arbitrarily strong. For example, if $A$ is the set of all initial segments of a set $B$, then every infinite subset of $A$ computes $B$. In some cases however, one can fix the side of the subset. This happens in particular when the set $A$ is sufficiently typical, where typicality means either randomness or genericity.


\subsection{Randomness subset basis theorems}

Randomness is a notion of typicality which was originally defined using measure theory. More recently, Algorithmic Randomness gave a formal meaning to the notion of \emph{random sequence} using effective measure theory and Kolmogorov complexity. This yielded a hierarchy randomness notions, among which we should mention (in increasing order) Kurtz randomness, Schnorr randomness, Martin-L\"of randomness, Schnorr 2-randomness, and 2-randomness.

A \emph{Randomness subset basis theorem} is a statement of the form: \qt{For every sufficiently random sequence $A \subseteq \omega$, there is an infinite subset set $H \subseteq A$ which is computationally weak}. Here, by \qt{sufficiently random}, we mean that the class of all such sets~$A$ has positive measure. One can then quantify the amount of randomness needed for such statement, and obtain a theorem of the form \qt{For every $\boxdot$ random sequence~$A \subseteq \omega$, there is an infinite subset set $H \subseteq A$ which is computationally weak}, where  \qt{$\boxdot$ random} should be replaced by the right notion of randomness, such as Martin-L\"of randomness for example. Subsets of random sequences were mainly studied as mass problems. For example, the Muchnik degree of the class of infinite subsets of Martin-L\"of random sequences (seen as set of numbers), is the Muchnik degree of DNC functions (Kjos-Hanssen~\cite{kjoshanssen2009infinite}, Greenberg and Miller~\cite{greenberg2009lowness}). However, a few randomness subset basis theorems appeared in the literature:

\begin{enumerate}
	\item Every 2-random (or even Schnorr 2-random) has an infinite subset which does not compute a 1-random (Kjos-Hanssen~\cite{kjoshanssen2011strong}).
	\item Every 1-random has an infinite subset which does not compute a 1-random, or even which does not compute any set with positive Hausdorff dimension (Kjos-Hanssen and Liu~\cite{kjoshanssen2020extracting}).
\end{enumerate}

Kjos-Hanssen and Liu~\cite{kjoshanssen2020extracting} asked whether these results could be improved to weaker notions of randomness, such as Schnorr randomness.
In this article, we give a strong positive answer by showing that these results can be improved to Kurtz randomness.


\subsection{Genericity subset basis theorems}

Genericity is a notion of typicality which can be defined in terms of co-meager sets. The default notion of genericity considered is Cohen genericity. Later, Jockusch studied effectivizations of Cohen genericity, yielding again a hierarchy genericity notions, among which we mention in increasing order bi-hyperimmunity, weak 1-genericity and 1-genericity.
A \emph{genericity subset basis theorem} is a statement of the form: \qt{For every sufficiently Cohen generic set $A \subseteq \omega$, there is an infinite subset set $H \subseteq A$ which is computationally weak}. Genericity subset basis theorems were not specifically studied per se. One can however mention one such result:

\begin{enumerate}
	\item If $B$ is a non-computable set, then every co-hyperimmune set has an infinite subset which does not compute~$B$ (Hirschfeldt et al.~\cite{Hirschfeldt2008strength}).
\end{enumerate}

\subsection{Partition genericity}

In this article, we define a new notion of genericity, called \emph{partition genericity}, and prove many statements of the form \qt{Every partition generic set $A$ has an infinite computationally weak subset.} We call these statements \emph{partition genericity subset basis theorems}. Contrary to Martin-L\"of randomness and Cohen genericity, this notion of partition genericity enjoys a property that one would expect of a subset basis theorem, that is, partition genericity is closed under supersets.


\begin{theorem}\label[theorem]{thm:pg-basis-theorem}\ 
\begin{enumerate}
	\item If $B$ is a non-computable set, and $A$ is partition generic, then there is an infinite set $H \subseteq A$ such that $B \not \leq_T H$.
	\item If $B$ is a non-$\Sigma^0_1$ set, and $A$ is partition generic relative to~$B$, then there is an infinite set $H \subseteq A$ such that $B$ is not $\Sigma^0_1(H)$.
	\item If $f$ is hyperimmune, and $A$ is partition generic relative to~$f$, then there is an infinite set $H \subseteq A$ such that $f$ is $H$-hyperimmune.
	\item If~$A$ is partition generic, then there is an infinite set $H \subseteq A$ of non-PA degree.
	\item If~$A$ is partition generic, then there is an infinite set $H \subseteq A$ of non-random degree, and furthermore, which does not compute any set of positive Hausdorff dimension.
\end{enumerate}	
\end{theorem}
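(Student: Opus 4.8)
The plan is to notice that items~(1)–(5) are the partition-genericity analogues of the five pigeonhole basis theorems listed in the introduction, and to prove each one by localising the corresponding argument to subsets of $A$. Concretely, I would use a single forcing template — Mathias forcing restricted to subsets of $A$ — whose conditions are pairs $(\sigma,\mathcal X)$ with $\sigma$ a finite subset of $A$, with $\mathcal X\subseteq A$ a reservoir that is still \emph{large} with respect to the partition-regular notion underlying partition genericity, and with $\max\sigma<\min\mathcal X$; a filter meeting countably many dense sets then produces an infinite $H\subseteq A$. For each item one fixes the appropriate countable list of requirements, proves that the conditions deciding a given requirement form a dense set, and concludes that the generic $H$ satisfies them all. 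The hypothesis that $A$ is partition generic enters at exactly one point: it guarantees that whenever a density argument has to thin the reservoir, the thinned reservoir is still large, so the construction never gets stuck; and the relativisations in items~(1)–(3) are precisely what allow the relevant preservation argument to refer to the object being preserved.

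For items~(1) and~(2), with requirements $\Phi^H_e\neq B$ and $W^H_e\neq B$, the density arguments are adaptations of the preservation proofs of Dzhafarov–Jockusch~\cite{Dzhafarov2009Ramseys} and Wang~\cite{Wang2016definability}: from a condition $(\sigma,\mathcal X)$, either some finite $\tau\subseteq\mathcal X$ already forces a disagreement at some value, in which case we extend to $(\sigma\cup\tau,\{x\in\mathcal X:x>\max\tau\})$, or no such $\tau$ exists and the behaviour of $\Phi_e$ (respectively $W_e$) along the finite subsets of $\mathcal X$ yields a computation of $B$ (respectively a $\Sigma^0_1$ description of $B$), contradicting the hypothesis on $B$. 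Here the only thinning used is removing an initial segment, so largeness is preserved for free. For item~(3), with the requirement that $\Phi^H_e$, if total, not dominate $f$, the argument is that of Patey~\cite{patey2015iterative}: one considers $n\mapsto\sup\{\Phi^{\sigma\cup\tau}_e(n):\tau\subseteq\mathcal X\text{ finite}\}$, and either it is partial, letting us force $\Phi^H_e$ partial, or it is total and computable from the partition-generic oracle, whence the hyperimmunity of $f$ relative to that oracle supplies a value below which $\Phi^H_e$ can be forced; again the reservoir manipulations are benign.

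Items~(4) and~(5) carry the real weight and run on Liu's~\cite{Liu2012RT22,liu2015cone} largeness / bushy-tree machinery. For item~(4) one enumerates the nonempty $\Pi^0_1$ classes with no computable member and requires that $\Phi^H_e$ not be a member of the $e$-th one; meeting all of these forces $H$ to have non-PA degree. For item~(5) one requires, for all $e$, all $k$ and all thresholds, that if $\Phi^H_e$ is total then $K(\Phi^H_e\uh n)<n/k$ for some large $n$; meeting all of these forces every set computed by $H$ to have effective Hausdorff dimension $0$, and in particular a non-random degree. In both cases the density of the deciding conditions comes from Liu's combinatorial dichotomy applied inside $\mathcal X$: either $\mathcal X$ can be thinned to a reservoir $\mathcal X'$ on which no extension of $\sigma$ can steer $\Phi_e$ into the class (respectively keep the Kolmogorov complexity of its output high), or the alternative lets us satisfy the requirement outright. \textbf{I expect the main obstacle to be exactly here}: one must check that the reservoir $\mathcal X'$ returned by Liu's lemma is still large for the partition-regular notion, that is, that partition genericity is robust under this thinning, and calibrating the definition of partition genericity so that this holds uniformly — for the PA-avoidance requirements and the dimension requirements at once — is the technical heart of the theorem.

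Finally, the class of partition generic sets is partition regular and contains $\omega$, so for every set $A$ one of $A$ and $\overline A$ is partition generic; applying items~(1)–(5) to it then recovers the corresponding pigeonhole basis theorem from the introduction.
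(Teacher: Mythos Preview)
Your dichotomy for items~(1)--(3) has a genuine gap. You claim that when no finite $\tau\subseteq\mathcal X$ forces a disagreement, ``the behaviour of $\Phi_e$ along the finite subsets of $\mathcal X$ yields a computation of $B$'' (respectively a $\Sigma^0_1$ definition, a dominating function). But your reservoir $\mathcal X$ is a subset of~$A$, and $A$ is in general non-computable---bi-hyperimmune sets, for instance, are partition generic. Searching over finite $\tau\subseteq\mathcal X$ is therefore not a computable process, so no contradiction follows. Nor can you force $\Phi_e^G$ partial without genuinely thinning the reservoir beyond an initial segment, so the claim that for (1)--(3) ``the only thinning used is removing an initial segment'' is exactly where the argument breaks.

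The paper's remedy is that conditions are triples $(\sigma,X,\U)$ with $\U$ a non-trivial $\Sigma^0_1$ partition-large class in which $X$ is partition generic. The density argument asks whether a certain $\Sigma^0_1$ class built from $\U$ and the requirement is still large. If so, partition genericity places $X$ inside it and one extends. If not, one obtains finitely many $k$-covers $Z_0\cup\dots\cup Z_{k-1}=\omega$ (covers of~$\omega$, \emph{not} subsets of~$X$) each piece of which is ``bad''; for item~(1) one first applies the cone-avoidance basis theorem to the $\Pi^0_1$ class of such covers, and in every case one then invokes the key combinatorial lemma (\Cref{lemma:abslarge}) to find some $Z_i$ such that $Z_i\cap X$ is partition generic in a smaller $\Sigma^0_1$ large class $\V\subseteq\U$. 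This Liu-style thinning is required for \emph{all five} items, not only~(4) and~(5). For items~(2) and~(3) matters are worse: the paper exhibits (\Cref{prop:pg-non-basis-preservation-hyp}, \Cref{prop:pg-non-basis-preservation-ce}) partition generic sets for which the conclusions fail, so unrelativised partition genericity is outright insufficient, and the paper introduces auxiliary two-coordinate largeness notions ($\mathfrak H$-largeness, $\mathfrak E(B)$-largeness) that absorb the relativisation hypothesis---your one-line invocation of relativisation does not capture this mechanism. Finally, partition genericity itself is not partition regular; what is partition regular (\Cref{cor:partition-generic-regular}) is the class of sets partition generic in \emph{some} non-trivial $\Sigma^0_1$ large class, which is why the $\U$-component of the conditions is not optional.
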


In particular, every co-hyperimmune set and every Kurtz random is partition generic. Moreover, we show that partition genericity is almost a partition regular notion (see \Cref{cor:partition-generic-regular}). It follows that all these partition genericity subset basis theorems imply all the pigeonhole basis theorems mentioned above.

\subsection{Organization of this paper}

In \Cref{sect:pr}, we introduce the central notion of partition regularity, and study it both from a combinatorial and a computability-theoretic viewpoint. We then define the notion of partition genericity. Then, in \Cref{sect:applications}, we prove a first range of applications which hold for unrelativized partition genericity. The two next sections, \Cref{sect:pres-hyp} and \Cref{sect:pres-sigma} are devoted to the preservation of hyperimmunity and non-$\Sigma^0_1$ definitions, respectively. These subset basis theorems require the definition of alternative notions of genericity. Last, we study lowness for various notions related to partition genericity in \Cref{sect:questions}.

\subsection{Notation}

We use lower case letters $a, b, c$ for integers, upper case letters for sets of integers, and rounded letters $\A, \B$ for classes.

A \emph{$k$-cover} of a set $X$ is a $k$-tuple of sets $X_0, \dots, X_{k-1}$
such that $X_0 \cup \dots \cup X_{k-1} \supseteq X$. We do not require the sets $X_i$ to be pairwise disjoint.
Given a set~$X \subseteq \omega$ and some~$n \in \omega$, we let~$[X]^n$ denote the set of all subsets of~$X$ of size~$n$. Accordingly, we write $[X]^\omega$ for the class of all infinite subsets of~$X$.
We write~$2^{=n}$ for the set of all binary strings of length~$n$, and $2^{<\omega} = \bigcup_n 2^{=n}$. We write $|\sigma|$ for the length of the string~$\sigma$.

A \emph{Mathias condition} is a pair $(\sigma, X)$, where $\sigma \in 2^{<\omega}$ is a finite binary string, $X$ is an infinite set of integers, and $\min X > |\sigma|$.

\section{Partition regularity}\label[section]{sect:pr}

In this section, we conduct a general study of partition regularity from a computability-theoretic viewpoint. We introduce several related concepts, including the central notion of partition genericity, which will be justified by the constructions of \Cref{sect:applications}.

\subsection{Partition regularity}

The central notion we are going to consider in this article is the one of partition regularity. 
This concept comes from Ramsey theory and can be considered as a generalization of the infinite pigeonhole principle.

\begin{definition}
A \emph{partition regular} class is a collection of sets $\L \subseteq 2^\omega$ such that:
\begin{enumerate}
\item $\L$ is not empty
\item If $X \in \L$ and $X \subseteq Y$, then $Y \in \L$
\item For every~$k$, if $X \in \L$ and $Y_0 \cup \dots \cup Y_k \supseteq X$, then there is $i \leq k$ such that $Y_i \in \L$
\end{enumerate}
\end{definition}

Ramsey's theory is sometimes characterized as the study of which classes are partition regular. 
There are many well-known examples of partition regular classes in combinatorics:

\begin{example}The following classes are partition regular.
\begin{enumerate}
	\item $\{ X \subseteq \omega : X \mbox{ is infinite} \}$ by the infinite pigeonhole principle ;
	\item $\{X \subseteq \omega :\ n \in X\}$ for a fixed~$n$ ;
	\item $\{X \subseteq \omega : \limsup_{n \rightarrow \infty} \frac{| \{1,2,\ldots,n\} \cap X|}{n} > 0 \}$ ;
	\item $\{X \subseteq \omega :\ \sum_{n \in X} \frac{1}{n} = \infty\}$.
\end{enumerate}
\end{example}

In the computability-theoretic realm, many pigeonhole basis theorems can be rephrased as statements about partition regularity.

\begin{example}The following classes are partition regular.
\begin{enumerate}
	\item $\{ X \subseteq \omega : \exists Y \in [X]^\omega\ Y \not \geq_T C \}$ for any $C \not \leq_T \emptyset$ (see Dzhafarov and Jockusch~\cite{Dzhafarov2009Ramseys})
	\item $\{ X \subseteq \omega : \exists Y \in [X]^\omega\ Y \mbox{ is not of PA degree} \}$ (see Liu~\cite{Liu2012RT22})
	\item $\{ X \subseteq \omega : \exists Y \in [X]^\omega\ Y^{(n)} \not \geq_T C \}$ for any non-$\Delta^0_{n+1}$ set $C$ (see Monin and Patey~\cite{monin2021weakness}) ;
	\item $\{ X \subseteq \omega : \exists Y \in [X]^\omega\ \omega_1^Y = \omega_1^{ck} \}$ (see Monin and Patey~\cite{monin2021weakness}).
\end{enumerate}
\end{example}

Dorais~\cite{dorais2012variant} was the first to use partition regular classes in the context of reverse mathematics. More precisely, he worked with a variant of Mathias forcing whose reservoirs avoid a $\Sigma^0_2$ free ideal over $2^\omega$. A class is a free ideal iff it is the complement of a partition regular class.

\subsection{Non-trivial classes}

A partition regular class can be though of as a notion of largeness. Indeed, if we interpret $X \in \L$ as \qt{$X$ is large}, then the axioms of partition regularity say that if a set is large, then any superset of it is large, and if we split a large set into two (or finitely many) parts, then at least one of the parts is large. There exist however a family of partition regular classes that fails this intuition. We call them principal classes.

\begin{definition}
A partition regular class $\L \subseteq 2^\omega$ is \emph{principal} if $\L = \{X \in 2^{\omega}\ :\ n \in X\}$ for some $n$. A partition regular class $\L$ is \emph{non-trivial} if it contains only infinite sets, otherwise it is \emph{trivial}.
\end{definition}

The following proposition shows that once one excludes the principal partition regular classes, then the remaining partition regular classes satisfy at least one enjoyable property of largeness, namely, having only infinite elements.

\begin{proposition}\label[proposition]{prop:non-trivial-principal}
A partition regular class $\L$ is non-trivial iff it contains no principal partition regular subclass.
\end{proposition}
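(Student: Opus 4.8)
The plan is to prove the two directions separately, the ``only if'' direction being immediate from the definitions and the ``if'' direction being the content of the statement. For the easy direction: if $\L$ is trivial, then it contains a finite set $F$, and by upward closure it contains every superset of $F$; in particular, picking any $n \in F$ (note $F \neq \emptyset$, since $\emptyset$ together with a $1$-cover consisting of $\emptyset$ itself would violate axiom (3), or more simply because $\emptyset \subseteq Y$ for all $Y$ forces $\L = 2^\omega$ which is not principal), the principal class $\{X : n \in X\}$ is contained in $\L$. So a non-trivial class contains no principal subclass. Wait --- I should be careful: if $F = \emptyset \in \L$ then $\L = 2^\omega$, which certainly contains the principal subclass $\{X : 0 \in X\}$, so that case is fine too. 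Thus ``non-trivial $\Rightarrow$ no principal subclass'' in all cases.

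\textbf{The main direction.} Now suppose $\L$ is trivial; I must produce a principal partition regular subclass. Since $\L$ is trivial it contains a finite set, hence (by upward closure) it contains a finite set $F$ of some minimal cardinality $m$ among all finite members of $\L$; if $m = 0$ we are done as above, so assume $m \geq 1$ and write $F = \{n_0, \dots, n_{m-1}\}$. The key claim is that $\{X : n_i \in X\} \subseteq \L$ for some $i < m$, equivalently that the principal class generated by some element of $F$ is a subclass of $\L$. To see this, consider the $m$-cover of $F$ given by $Y_i = F \cap \{n_i\} \cup (\omega \setminus F)$ for $i < m$ --- actually cleaner: take $Y_i = (\omega \setminus F) \cup \{n_i\}$. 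These satisfy $Y_0 \cup \dots \cup Y_{m-1} = \omega \supseteq F$, so by axiom (3) some $Y_i \in \L$. But if $n_i$ were not ``essential'' we could shrink; the point is to show $Y_i$ being in $\L$ forces $\{X : n_i \in X\} \subseteq \L$. Hmm, $Y_i \setminus \{n_i\} = \omega \setminus F$ is infinite, and I want to rule out that $\L$ contains $\omega \setminus F$ (a set missing $n_i$). The right move is instead: by minimality of $m$, no finite set of size $< m$ lies in $\L$; now given any $Z$ with $n_i \in Z$, I want $Z \in \L$. Use that $F \in \L$ and cover $F$ by $\{n_i\}$ and $F \setminus \{n_i\}$: since $F \setminus \{n_i\}$ has size $m - 1 < m$ it is \emph{not} in $\L$ (here using $m$ minimal among \emph{all} finite sets, and $F \setminus \{n_i\}$ is finite), and $\{n_i\}$ has size $1$. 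If $m > 1$ then $\{n_i\}$ also has size $< m$, contradiction --- so actually $m = 1$, i.e. $\L$ contains a singleton $\{n\}$, and then by upward closure $\{X : n \in X\} \subseteq \L$, giving the desired principal subclass. So the real content is: \emph{a trivial partition regular class contains a singleton.} I would prove this directly by induction on the size of a finite member $F \in \L$: if $|F| \geq 2$, split $F$ into two nonempty pieces $F_0, F_1$; one of them is in $\L$ by axiom (3); recurse. This terminates at a singleton (or at $\emptyset$, handled separately, but $\emptyset \in \L$ gives $\L = 2^\omega \supseteq \{X : 0 \in X\}$).

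\textbf{Assembling the proof.} So the write-up runs: (i) if $\L$ is non-trivial and $\M \subseteq \L$ is principal, say $\M = \{X : n \in X\}$, then $\{n\} \in \M \subseteq \L$ --- but $\{n\}$ is finite, contradicting non-triviality; hence a non-trivial class has no principal subclass. (ii) Conversely, if $\L$ is trivial, pick a finite $F \in \L$ of least size; by repeatedly applying axiom (3) to a bipartition into nonempty halves (and using upward closure), one obtains either a singleton $\{n\} \in \L$ or $\emptyset \in \L$; in either case $\{X : n \in X\} \subseteq \L$ (taking any $n$ when $\emptyset \in \L$) is a principal partition regular subclass. The main obstacle --- really the only subtlety --- is bookkeeping the $\emptyset$ case and making sure the bipartition-and-recurse argument is stated cleanly (one should phrase it as: among all finite sets in $\L$, choose one of minimal size, then argue its size is $\leq 1$, since a set of size $\geq 2$ bipartitions into two strictly smaller nonempty finite sets one of which is in $\L$, contradicting minimality). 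Everything else is a direct unwinding of the three axioms.
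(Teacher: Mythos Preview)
Your proof is correct. The one substantive difference from the paper is in the ``if'' direction: where you pick a finite $F \in \L$ of minimal size and argue by bipartition that $|F| \leq 1$, the paper simply covers an arbitrary finite $X = \{n_1,\dots,n_k\} \in \L$ by the singletons $\{n_1\},\dots,\{n_k\}$ and applies axiom~(3) once to get some $\{n_i\} \in \L$. Both arguments are short and elementary; the paper's avoids the minimality bookkeeping and the separate $\emptyset$ case, while yours makes explicit why the $\emptyset \in \L$ situation is harmless (forcing $\L = 2^\omega$). In the final write-up you could streamline by adopting the singleton-cover trick directly.
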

\begin{proof}
It is clear that if $\L$ is non-trivial, it does not contain a principal partition regular subclass. Suppose now $\L$ is trivial, that is, $\L$ contains a finite set $X = \{n_1, \dots, n_k\}$. Then in particular we have $\{n_1\} \cup \dots \cup \{n_k\} \supseteq X$. It follows that we must have $\{n_i\} \in \L$ for some $i \leq k$. Then any set $X$ containing $n_i$ is in $\L$, that is, we have $\{X \in 2^{\omega}\ :\ n_i \in X\} \subseteq \L$.
\end{proof}

Non-trivial partition regular classes admit a simple characterization, which will be later used to generalize the concept of non-triviality to arbitrary classes.

\begin{proposition}\label[proposition]{prop:non-trivial-u2}
A partition regular class $\L$ is non-trivial iff $\L$ is included in the $\Sigma^0_1$ class $\U_2$ of sets containing at least two distinct elements.
\end{proposition}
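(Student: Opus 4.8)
The plan is to prove the two implications separately, both of which are short, the content being a reuse of the singleton-splitting idea from the proof of \Cref{prop:non-trivial-principal}.

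For the left-to-right direction, suppose $\L$ is non-trivial. Then by definition every $X \in \L$ is infinite, so in particular $X$ contains at least two distinct elements, i.e.\ $X \in \U_2$. Hence $\L \subseteq \U_2$. I would also record here the easy observation that $\U_2$ is a $\Sigma^0_1$ class: the statement \qt{$X$ contains at least two distinct elements} is witnessed by a finite initial segment of $X$, so $\U_2 = \bigcup_{m < n} \{X : m \in X \wedge n \in X\}$ is a (countable) union of clopen sets.

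For the right-to-left direction I would argue by contraposition. Suppose $\L$ is trivial, so it contains a finite set $X$. If $|X| \leq 1$, then $X \notin \U_2$ immediately, so $\L \not\subseteq \U_2$. Otherwise write $X = \{n_1, \dots, n_k\}$ with $k \geq 2$; then $\{n_1\} \cup \dots \cup \{n_k\} \supseteq X$ is a $k$-cover of $X$, so by partition regularity there is some $i$ with $\{n_i\} \in \L$. Since the singleton $\{n_i\}$ does not contain two distinct elements, $\{n_i\} \notin \U_2$, and again $\L \not\subseteq \U_2$. Either way, $\L \subseteq \U_2$ fails, which completes the contrapositive.

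There is essentially no obstacle in this argument; it is a direct bookkeeping exercise built on \Cref{prop:non-trivial-principal}. The only point meriting a word of care is the trivial-but-degenerate case $|X| \le 1$, which is why I split the contrapositive argument into the two cases above rather than immediately invoking the cover into singletons. The value of the proposition lies not in its difficulty but in the reformulation it provides: non-triviality of a partition regular class is exactly inclusion in a fixed $\Sigma^0_1$ class, which is the form that will generalize to arbitrary (not necessarily partition regular) classes later on.
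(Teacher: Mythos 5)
Your proof is correct and follows essentially the same route as the paper: the forward direction is immediate from non-triviality, and the backward direction is the contrapositive via the singleton-cover argument, which the paper obtains by citing \Cref{prop:non-trivial-principal} directly rather than re-running it. Your explicit handling of the degenerate case $|X| \le 1$ is a harmless extra precaution, not a substantive difference.
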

\begin{proof}
If $\L$ is non-trivial then every member of $\L$ is infinite and clearly $\L \subseteq \U_2$. If $\L$ is trivial then by \Cref{prop:non-trivial-principal}, it contains $\{n\}$ for some $n \in \omega$ and thus we do not have $\L \subseteq \U_2$.
\end{proof}

The following proposition says that any non-trivial partition regular class must contain many elements, in a measure-theoretic sense. All the partition regular classes we are going to consider in the applications are Borelian, hence measurable.

\begin{proposition} \label{prop:measure:pr}
Let $\L$ be a non-trivial partition regular class. Then $\L$ is closed by finite change of its elements. Furthermore if $\L$ is measurable it has measure $1$.
\end{proposition}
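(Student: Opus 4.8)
The plan is to prove the two assertions separately: the first is purely combinatorial, and the second layers a short measure-theoretic argument on top of it.

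\emph{Closure under finite change.} Since $\L$ is upward closed, it is enough to show that $\L$ is closed under removing a single element and then iterate. So I would fix $X \in \L$ and $n \in \omega$ and observe that $\{X \setminus \{n\},\, \{n\}\}$ is a $2$-cover of $X$. Partition regularity then gives $X \setminus \{n\} \in \L$ or $\{n\} \in \L$; the second option is excluded because $\{n\}$ is finite while $\L$, being non-trivial, contains only infinite sets. Hence $X \setminus \{n\} \in \L$. Iterating removes any finite set from a member of $\L$, and upward closure lets one add any finite set back, so $\L$ is closed under arbitrary finite changes.

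\emph{Measure.} Assume $\L$ is measurable and let $\mu$ be the uniform measure on $2^\omega$. First, since $\L$ is non-empty, upward closed, and contains only infinite sets, $\omega \in \L$. Second, having just shown that $\L$ is invariant under finite changes of coordinates, $\L$ lies in the tail $\sigma$-algebra, so Kolmogorov's zero-one law gives $\mu(\L) \in \{0,1\}$. It then remains to rule out $\mu(\L) = 0$. The key observation is that if $C \subseteq \omega$ is chosen uniformly at random, then $C$ and its complement $\omega \setminus C$ are each $\mu$-distributed, while $\{C,\, \omega \setminus C\}$ is always a $2$-cover of $\omega \in \L$; hence, by partition regularity, $C \in \L$ or $\omega \setminus C \in \L$ for every outcome. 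A union bound therefore yields $1 = \Pr[\,C \in \L \text{ or } \omega\setminus C \in \L\,] \le \mu(\L) + \mu(\L)$, so $\mu(\L) \ge 1/2$, and combined with the zero-one law this forces $\mu(\L) = 1$.

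The argument is short, and I do not anticipate a genuine obstacle; the only steps deserving a little care are the passage to the zero-one law (i.e.\ recording that a measurable class closed under finite change is a tail event) and the probabilistic reformulation of partition regularity — a random set and its complement cover $\omega$, yet both have law $\mu$ — that is used to discard the null case.
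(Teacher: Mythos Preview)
Your proof is correct. The closure-under-finite-change argument is essentially the same as the paper's (you remove one element at a time, the paper removes finitely many at once, but both rely on the fact that a non-trivial partition regular class contains no finite set).

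For the measure part, however, you take a genuinely different and more elementary route. The paper invokes algorithmic randomness: assuming $\mu(\L)=0$, it finds an oracle $O$ so that $\L$ is contained in a $\Pi^0_2(O)$ null set, whence no element of $\L$ can be $O$-Martin-L\"of random; but any $O$-random $Z$ and its complement $\overline{Z}$ cover $\omega$, forcing one of them into $\L$, a contradiction. Your argument bypasses randomness entirely by using that the bit-complement map on $2^\omega$ is measure-preserving, so the event $\{C:\omega\setminus C\in\L\}$ has the same measure as $\L$, and the union bound gives $\mu(\L)\ge 1/2$ directly. This is shorter, requires no computability-theoretic machinery, and works in any setting where one has a measure invariant under the coordinate-wise involution $c\mapsto 1-c$. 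The paper's approach, on the other hand, fits the ambient theme of the article (effective measure and randomness), and its template---pick a typical point and use that both it and its complement are typical---recurs later when Kurtz randoms are shown to be bi-partition generic.
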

\begin{proof}
Let $X \in \L$. By definition, any $Y \supseteq X$ also belongs to $\L$. Thus $\L$ is closed by finite addition of elements. Consider now any $Y \subseteq X$ such that $|X - Y|$ is finite. In particular, $X = Y \cup \{n_0, \dots, n_k\}$ for some elements $n_0, \dots, n_k$. As $\L$ contains only infinite elements, we must have $Y \in \L$. Thus $\L$ is closed by finite suppression. We easily conclude that $\L$ is closed by finite changes.

If $\L$ is measurable, by Kolmogorov 0-1 law, $\L$ is either of measure~$0$ or of measure~$1$. Suppose for contradiction that $\L$ is of measure $0$. As $\L$ is measurable, if must be included in some Borel set $\A$ of measure $0$. Let $O$ be an oracle such that $\A$ is included in a $\Pi^0_2(O)$ set effectively of measure $0$. Then no element of $\L$ is $O$-Martin-L\"of random. Let $Z$ be any $O$-Martin-L\"of random set. We also have that $\overline{Z}$ is  $O$-Martin-L\"of random. Also $\omega \subseteq Z \cup \overline{Z}$. As $\omega \in \L$ we must have $Z \in \L$ or $\overline{Z} \in \L$, which is a contradiction. Thus $\L$ is not of measure $0$ and therefore it is of measure $1$.
\end{proof}

\subsection{Closure properties}

We now study some good closure properties enjoyed by the collection of all partition regular classes. A superclass of partition regular class is not partition regular in general, even when the superclass is closed under superset. For example, let $A$ be a bi-infinite set, and let $\L_A = \{ X \in 2^\omega : |A \cap X| = \infty \}$. Then $\L_A$ is a partition regular class, but $\L = \L_A \cup \{ X \in 2^\omega : X \supseteq \overline{A} \}$  is not. Indeed, let $x_0 = \min \overline{A}$ and $B = \overline{A} \setminus \{x_0\}$. Then $\{x_0\} \cup B = \overline{A} \in \L$, but neither $\{x_0\}$, nor $B$ belong to $\L$.
On the other hand, an arbitrary union of partition regular classes is partition regular.

\begin{proposition} \label{prop:unionlarge}
Suppose $\{\L_i\}_{i \in I}$ is an arbitrary non-empty collection of partition regular classes. Then $\bigcup_{i \in I} \L_i$ is a partition regular class.
\end{proposition}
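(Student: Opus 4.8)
We need to verify the three axioms of partition regularity for $\L := \bigcup_{i \in I} \L_i$, given that each $\L_i$ satisfies them and $I \neq \emptyset$. The first two axioms are immediate: $\L$ is nonempty because each $\L_i$ is nonempty and $I$ is nonempty; and if $X \in \L$ with $X \subseteq Y$, then $X \in \L_i$ for some $i$, whence $Y \in \L_i \subseteq \L$ by upward closure of $\L_i$. So the only content is the pigeonhole axiom, and even that turns out to be essentially trivial.

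For the third axiom, fix $k$, suppose $X \in \L$, and suppose $Y_0 \cup \dots \cup Y_k \supseteq X$. Since $X \in \L$, pick $i \in I$ with $X \in \L_i$. Now apply the third axiom of partition regularity \emph{for the single class $\L_i$}: from $X \in \L_i$ and $Y_0 \cup \dots \cup Y_k \supseteq X$ we get some $j \leq k$ with $Y_j \in \L_i$, and therefore $Y_j \in \L$. This completes the verification.

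The plan is therefore just to write out these three bullet checks in order, the union axiom being the only one worth a sentence of justification. I do not anticipate any obstacle: the key point is simply that the pigeonhole axiom is a statement quantified over a single large set $X$, and membership of $X$ in the union pins down a single index $i$ whose pigeonhole property we can invoke. (Contrast this with the failure of closure under supersets-of-unions exhibited in the paragraph preceding the proposition: there the offending set $\overline{A}$ lies in the bigger class for a reason — being a superset of $\overline{A}$ — that does not survive splitting, whereas here the reason "$X \in \L_i$ for some fixed $i$" does survive.)
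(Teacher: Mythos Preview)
Your proposal is correct and follows exactly the same approach as the paper's proof: verify nonemptiness from $I\neq\emptyset$, and for both upward closure and the pigeonhole axiom pick a single index $i$ with $X\in\L_i$ and invoke the corresponding property of $\L_i$. The paper's argument is essentially your three bullet checks without the surrounding commentary.
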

\begin{proof}
It is clear that $\bigcup_{i \in I} \L_i$ is not empty. Let $X \in \bigcup_{i \in I} \L_i$. Let $Y \supseteq X$. There is some $i \in I$ such that $X \in \L_i$. As $\L_i$ is partition regular, $Y \in \L_i \subseteq \bigcup_{i \in I} \L_i$.

Let $X \in \bigcup_{i \in I} \L_i$. Let $Y_0 \cup \dots \cup Y_k \supseteq X$. There is some $i \in I$ such that $X \in \L_i$. As $\L_i$ is partition regular, $Y_j \in \L_i \subseteq \bigcup_{i \in I} \L_i$ for some $j \leq k$.
\end{proof}

In particular for every class $\A$ containing a partition regular class, there is a largest partition regular class included in $\A$.

\begin{definition}
Given a class $\A \subseteq 2^\omega$, let $\L(\A)$ denote the largest partition regular subclass of $\A$. If $\A$ does not contain a partition regular class, let $\L(\A)$ be the empty set.
\end{definition}

The largest partition regular class included in $\A$ admits a simple explicit definition that we shall use to analyse the definitional complexity of the partition regular classes we consider.

\begin{proposition}\label{prop:complarge}
Let $\A \subseteq 2^\omega$ be any class. Then
$$\L(\A) = \{X \in 2^\omega\ :\ \forall k\ \forall X_0\cup\dots\cup X_k \supseteq X\ \exists i \leq k\ X_i \in \A\}$$
\end{proposition}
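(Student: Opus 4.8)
The claim is an equality of two classes, so the natural strategy is to show each is contained in the other. Write $\mathcal{M}$ for the right-hand side, i.e.\ $\mathcal{M} = \{X \in 2^\omega : \forall k\ \forall X_0 \cup \dots \cup X_k \supseteq X\ \exists i \leq k\ X_i \in \A\}$. The plan splits into three natural steps: (i) verify that $\mathcal{M}$ is itself a partition regular class whenever it is nonempty; (ii) verify $\mathcal{M} \subseteq \A$; (iii) verify that $\mathcal{M}$ contains every partition regular subclass of $\A$, so that $\mathcal{M} = \L(\A)$ by maximality (using \Cref{prop:unionlarge} to know a largest such class exists, or simply observing directly that $\mathcal{M}$ is the union of all partition regular subclasses once (i)--(iii) are in place). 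I will also need to treat the degenerate case where $\A$ contains no partition regular subclass, showing that then $\mathcal{M} = \emptyset$, consistent with the definition of $\L(\A)$.

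For step (ii), given $X \in \mathcal{M}$, apply the defining condition with $k = 0$ and the trivial cover $X_0 = X \supseteq X$; this forces $X_0 = X \in \A$, so $\mathcal{M} \subseteq \A$. For step (iii), suppose $\L' \subseteq \A$ is partition regular and $X \in \L'$. For any $k$ and any cover $X_0 \cup \dots \cup X_k \supseteq X$, partition regularity of $\L'$ gives some $i \leq k$ with $X_i \in \L' \subseteq \A$; hence $X \in \mathcal{M}$. So every partition regular subclass of $\A$ lies in $\mathcal{M}$. Combined with (i)--(ii), if $\mathcal{M} \neq \emptyset$ then $\mathcal{M}$ is the largest partition regular subclass of $\A$; and if $\mathcal{M} = \emptyset$, then by (iii) there is no nonempty partition regular subclass either, matching the convention $\L(\A) = \emptyset$.

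For step (i), the interesting checks are closure under superset and the partition property. Upward closure: if $X \in \mathcal{M}$ and $X \subseteq Y$, then any cover of $Y$ is a cover of $X$, so the condition defining $\mathcal{M}$ transfers verbatim from $X$ to $Y$. The partition property is the main point and the place where a small amount of care is needed: suppose $X \in \mathcal{M}$ and $Y_0 \cup \dots \cup Y_k \supseteq X$; I must produce $j \leq k$ with $Y_j \in \mathcal{M}$. Assume toward a contradiction that no $Y_j$ is in $\mathcal{M}$. Then for each $j \leq k$ there is some finite cover $Y_j \subseteq Z^j_0 \cup \dots \cup Z^j_{m_j}$ witnessing $Y_j \notin \mathcal{M}$, i.e.\ with $Z^j_\ell \notin \A$ for all $\ell \leq m_j$. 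Now the finite family $\{Z^j_\ell : j \leq k,\ \ell \leq m_j\}$ is a cover of $X$ (since $X \subseteq \bigcup_j Y_j \subseteq \bigcup_{j,\ell} Z^j_\ell$) all of whose members are outside $\A$, contradicting $X \in \mathcal{M}$. This re-indexing of a "cover of covers" into a single finite cover is the one genuinely non-bookkeeping step; everything else is routine unwinding of the definitions. Finally I note $\mathcal{M} \neq \emptyset \iff \A$ contains a nonempty partition regular class, which ties the two cases of the definition of $\L(\A)$ together and completes the proof.
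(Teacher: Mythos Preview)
Your proof is correct and follows essentially the same approach as the paper's: show the right-hand side is contained in $\A$ via the trivial $1$-cover, show it contains every partition regular subclass of $\A$, and show it is itself partition regular when nonempty by the same ``cover of covers'' refinement argument. The only difference is cosmetic---you name the right-hand side $\mathcal{M}$ and organize the degenerate case slightly more explicitly, but the mathematical content is identical.
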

\begin{proof}
Note that by definition, $\L(\A) \subseteq \A$, as if $X \notin \A$ then itself as a $1$-cover is not in $\A$, so $X \notin \L(\A)$.
Let us show that $\L(\A)$ contains every partition regular class included in $\A$. Suppose $\L \subseteq \A$ is partition regular. Then given $X \in \L$, for every $k$ and every $X_0\cup\dots\cup X_k \supseteq X$ we have $X_i \in \L \subseteq \A$ for some $i \leq k$. It follows that $X \in \L(\A)$ and thus that $\L \subseteq \L(\A)$.

Assume $\L(\A) \neq \emptyset$. Let us show that $\L(\A)$ is partition regular.
Suppose $X \in \L(\A)$. Let $Y \supseteq X$. Then for every $k$, every $k$-cover of $Y$  is also a $k$-cover of $X$. As $X \in \L(\A)$, one element of the $k$-cover belongs to $\A$. Thus for every $k$ and every $k$-cover of $Y$, one element of the $k$-cover belongs to $\A$. It follows that $Y \in \L(\A)$.
Let $X \in \L(\A)$ and let $Y_0 \cup \dots \cup Y_k \supseteq X$ for some $k$. Let us show there is some $i \leq k$ such that $Y_i \in \L(\A)$. Suppose for contradiction that this is not the case. In particular for every $i \leq k$ there are sets $Y^i_{0}, \dots Y^i_{k_i} \supseteq Y_i$ such that $\forall j \leq k_i$, we have $Y^i_{j} \notin \A$. In particular the sets $\{Y^i_{j}\}_{i \leq k, j \leq k_i}$ are a finite cover of $X$ such that for every $i \leq k$ and every $j \leq k_i$ we have $Y^i_{j} \notin \A$. This contradicts that $X \in \L(\A)$. Thus there must exists $i \leq k$ such that $Y_i \in \L(\A)$.
So if $\L(\A)$ is non-empty, it is partition regular.
\end{proof}

Last, partition regular classes enjoy a very useful property: the intersection of an infinite decreasing sequence of partition regular classes is again partition regular. This property will be used to propage properties of $\Pi^0_2$ partition regular classes to arbitrary intersections of $\Sigma^0_1$ partition large classes.

\begin{proposition}\label{prop:interlarge}
Suppose $\{\L_n\}_{n \in \omega}$ is a collection of partition regular classes with $\L_{n+1} \subseteq \L_n$. Then $\bigcap_{n \in \omega} \L_n$ is partition regular.
\end{proposition}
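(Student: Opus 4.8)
The plan is to prove directly that $\L_\infty := \bigcap_{n \in \omega} \L_n$ satisfies the three axioms of a partition regular class. The only axiom that requires real work is non-emptiness; closure under supersets is immediate (if $X \in \L_\infty$ then $X \in \L_n$ for all $n$, so $Y \supseteq X$ lies in each $\L_n$), and the partition axiom follows by a compactness/pigeonhole argument once non-emptiness is established, as I explain below.

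First I would observe that for each fixed $n$, since $\L_n$ is partition regular it contains $\omega$ (indeed any member $X$ of $\L_n$ has $\omega \supseteq X$, so $\omega \in \L_n$); hence $\omega \in \L_\infty$, which settles non-emptiness at once. This is the key simplification: the decreasing sequence cannot degenerate to the empty intersection because every partition regular class contains the top element $\omega$.

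Next, for the partition axiom, suppose $X \in \L_\infty$ and $Y_0 \cup \dots \cup Y_k \supseteq X$. For each $n$, since $X \in \L_n$ and $\L_n$ is partition regular, there is some index $i_n \le k$ with $Y_{i_n} \in \L_n$. By the infinite pigeonhole principle, some fixed index $i \le k$ occurs as $i_n$ for infinitely many $n$. Now I use the nesting: for any $m$, pick $n \ge m$ with $i_n = i$; then $Y_i \in \L_n \subseteq \L_m$. Hence $Y_i \in \L_m$ for every $m$, i.e.\ $Y_i \in \L_\infty$, as required. (Closure under supersets was already checked, so $\L_\infty$ is partition regular.)

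I do not expect any serious obstacle here: the proof is a short formal verification, and the decreasing hypothesis $\L_{n+1} \subseteq \L_n$ is used in exactly one place — converting "$Y_i$ belongs to cofinally many $\L_n$" into "$Y_i$ belongs to all $\L_n$." The mild subtlety worth flagging is that this argument genuinely needs the sequence to be decreasing (or at least directed): for an arbitrary countable family of partition regular classes the intersection can fail to be partition regular, so the place to be careful is not to accidentally prove more than is true. One could alternatively deduce the result from \Cref{prop:complarge} by writing $\bigcap_n \L_n \supseteq \L\bigl(\bigcap_n \L_n\bigr)$ and checking the reverse inclusion, but the direct pigeonhole argument above is cleaner and self-contained.
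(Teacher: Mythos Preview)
Your proof is correct and follows essentially the same approach as the paper: both verify the three axioms directly, with non-emptiness via $\omega \in \L_n$ for all $n$, and the partition axiom handled by exploiting the nesting $\L_{n+1} \subseteq \L_n$. The only cosmetic difference is that the paper argues the partition axiom by contradiction (pick for each $i$ an $n_i$ with $Y_i \notin \L_{n_i}$ and take $n$ larger than all of them), whereas you use pigeonhole to find a single $i$ that works cofinally and hence everywhere---these are dual formulations of the same idea.
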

\begin{proof}
For every $n$,  $\omega \in \L_n$ because $\L_n$ is partition regular. It follows that $\omega \in \bigcap_{n \in \omega} \L_n$. In particular $\bigcap_{n \in \omega} \L_n$ is not empty.

Suppose $X \in \bigcap_{n \in \omega} \L_n$. Let $Y \supseteq X$. For every $n$, since $X \in \L_n$ then $Y \in \L_n$ as $\L_n$ is partition regular. Thus $Y \in \bigcap_{n \in \omega} \L_n$.

Suppose $X \in \bigcap_{n \in \omega} \L_n$. Let $Y_0 \cup \dots \cup Y_k \supseteq X$. Suppose for contradiction that for every $i \leq k$ the set $Y_i$ is not in $\bigcap_{n \in \omega} \L_n$. For every $i \leq k$, let $n_i$ be such that $Y_i \notin \L_{n_i}$. Let $n$ be larger than these numbers. For every $i \leq k$, since $\L_{n_i} \supseteq \L_n$, the set $Y_i$ is not in $\L_n$. As $X \in \L_n$, it follows that $\L_n$ is not partition regular, which contradicts our hypothesis. Thus for every $X \in \bigcap_{n \in \omega} \L_n$ and for every $Y_0 \cup \dots \cup Y_k \supseteq X$, there is some $i \leq k$ such that $Y_i \in \bigcap_{n \in \omega} \L_n$.
\end{proof}

\subsection{$\Pi^0_2$ Partition regular classes}

The most basic non-trivial partition regular class, the class of all infinite sets, is $\Pi^0_2$. In this section, we study a few specific $\Pi^0_2$ partition regular classes and show that there is no non-trivial $\Sigma^0_2$ partition regular class.

\begin{proposition}
Let $\U$ be an upward-closed $\Sigma^0_1$ class. Then $\L(\U)$ is $\Pi^0_2$.
\end{proposition}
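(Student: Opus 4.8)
The plan is to use the explicit description of $\L(\U)$ given in \Cref{prop:complarge}, namely
$$\L(\U) = \{X \in 2^\omega\ :\ \forall k\ \forall X_0 \cup \dots \cup X_k \supseteq X\ \exists i \leq k\ X_i \in \U\},$$
and to show that the quantifier alternation visible here can be collapsed to a single $\Pi^0_2$ form once we exploit that $\U$ is both $\Sigma^0_1$ and upward closed. The naive reading of the displayed formula is $\Pi^1_1$ (a universal second-order quantifier over covers $X_0, \dots, X_k$), so the first key step is to replace the quantification over arbitrary covers by a quantification over a countable, effectively presented family of ``canonical'' covers.

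The main idea is that, since $\U$ is upward closed, testing whether some piece of a cover lies in $\U$ only depends on how the cover partitions $X$ from below: given a cover $X_0 \cup \dots \cup X_k \supseteq X$, we may replace each $X_j$ by $X_j \cap X$ without losing the property that some piece is in $\U$ is harder, not easier — wait, we need the other direction, so instead observe that if we are looking for the $\L(\U)$ condition to fail, a witnessing cover of $X$ can be taken with each $X_j \subseteq X$ (shrinking the pieces only removes them from the upward-closed $\U$, making failure easier to witness). Thus $X \in \L(\U)$ iff for every $k$ and every partition of $X$ into $k+1$ pieces, some piece is in $\U$; and since $\U$ is $\Sigma^0_1$, ``some piece is in $\U$'' is witnessed by a finite amount of information about that piece, i.e.\ by a finite subset of $X$. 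Concretely, $X \notin \L(\U)$ iff there is some $k$ and a coloring $c : X \to \{0, \dots, k\}$ such that for each $i \leq k$, the set $c^{-1}(i)$ has no finite subset forcing membership in $\U$. The second key step is then to note that a coloring of $X$ is determined by a coloring of $\omega$ (restricted to $X$), and that ``$c^{-1}(i) \cap X$ has a finite subset in $\s{enum}(\U)$'' is a $\Sigma^0_1$ condition in $X$ and $c$; after a compactness/König's-lemma argument to bound $k$ (or by simply leaving $k$ existentially quantified over $\omega$, which keeps us arithmetical), the negation $X \notin \L(\U)$ becomes $\Sigma^0_2$, hence $\L(\U)$ is $\Pi^0_2$.

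In more detail, the steps I would carry out are: (1) rewrite $\L(\U)$ via \Cref{prop:complarge}; (2) using upward-closure of $\U$, argue that in the defining condition one may restrict attention to covers $X_0, \dots, X_k$ with $X_j \subseteq X$, and moreover (by adding $X \setminus X_j$'s complement appropriately) to \emph{partitions} of $X$; (3) using that $\U$ is $\Sigma^0_1$, say $\U = \{Y : \exists n\ R(Y \uh n)\}$ for a computable predicate $R$ closed upward, replace ``$X_i \in \U$'' by ``$\exists n\ R(X_i \uh n)$'', which depends only on finitely much of $X_i$; (4) quantify partitions of $X$ by total functions $c : \omega \to \{0,\dots,k\}$ and observe the whole failure condition ``$\exists k\ \exists c\ \forall i \leq k\ \forall n\ \neg R((c^{-1}(i) \cap X)\uh n)$'' — here $\exists c$ is a number-quantifier-style search only if $c$ is finitely describable, which it is not, so instead (4$'$) use that the relevant data is $c \uh N$ for each $N$ and apply compactness: a bad partition of $X$ exists iff for every $N$ there is a partition of $X \cap \{0,\dots,N\}$ with no piece yet in $\U$, which is $\Pi^0_2$; one must check this compactness argument is legitimate, i.e.\ that finite bad partitions extend to an infinite one, which holds by König's lemma since the ``no piece in $\U$'' constraint, being $\Pi^0_1$, is closed. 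Concluding, $X \notin \L(\U)$ is $\Sigma^0_2$ and so $\L(\U)$ is $\Pi^0_2$.

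The step I expect to be the main obstacle is (4$'$): making precise the compactness argument that lets one replace the ``$\exists$ infinite bad partition'' by ``$\forall N\ \exists$ bad partition of an initial segment'', and in particular verifying that one does not need to bound the number of colors $k$ — or, if one does bound $k$, doing so correctly. A clean way around it is to note that for the $\L(\U)$ condition, the case of general $k$ reduces to $k=1$ by iterating the $2$-cover property, so it suffices to analyze $2$-covers; then $X \notin \L(\U)$ iff there is a set $Y$ with $Y \subseteq X$, $\overline{Y} \cap X \subseteq X$ partitioning $X$, and neither $Y$ nor $X \setminus Y$ in $\U$ — and this ``$\exists Y$'' over $2^\omega$ with a $\Pi^0_1$ matrix is exactly a $\Sigma^1_1$-looking statement that collapses to $\Sigma^0_2$ by the compactness of the binary tree of finite approximations to such a $Y$, via König's lemma applied to the $\Pi^0_1$ (closed) constraint. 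This is the delicate point and deserves the bulk of the write-up.
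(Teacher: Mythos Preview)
Your main approach via compactness is correct and is precisely what the paper's one-line proof invokes: the paper simply cites the formula from \Cref{prop:complarge} and declares it ``clearly $\Pi^0_2$,'' relying on the standard fact that a universal second-order quantifier over a compact space in front of a $\Sigma^0_1$ matrix collapses to a $\Sigma^0_1$ condition. Concretely, for fixed $k$ the class of bad covers $\{(Z_0,\dots,Z_k):\bigcup_i Z_i\supseteq X\text{ and }\forall i\ Z_i\notin\U\}$ is $\Pi^0_1(X)$, so its \emph{non-emptiness} is already $\Pi^0_1(X)$ (not $\Pi^0_2$ as you wrote in step~(4$'$)), and then the outer $\exists k$ gives $\Sigma^0_2$ for the complement. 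Your steps (2)--(3) using upward-closure are harmless simplifications but not actually needed; the compactness argument works directly on arbitrary covers.

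Your proposed ``clean way around it'' by reducing to $2$-covers is wrong, however. The $2$-cover condition with target $\U$ does \emph{not} imply the general $k$-cover condition, because $\U$ itself need not be partition regular --- the iteration you have in mind would require that whenever $Y_1\cup Y_2\in\U$ one of $Y_1,Y_2$ lies in $\U$, which is false in general. For a concrete counterexample take $\U=\U_2=\{Y:|Y|\geq 2\}$ and let $X$ be any three-element set: every $2$-cover of $X$ has a piece containing at least two elements (pigeonhole), hence in $\U_2$, yet the cover of $X$ by singletons witnesses $X\notin\L(\U_2)$. So drop that paragraph; the direct compactness argument already handles all $k$ uniformly with no need to bound the number of colors.
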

\begin{proof}
By \Cref{prop:complarge}, the largest partition regular subclass of $\U$ is defined by
$$\L(\U) = \{X \in 2^\omega\ :\ \forall k\ \forall X_0\cup\dots\cup X_k \supseteq X\ \exists i \leq k\ X_i \in \U\}$$
which is clearly $\Pi^0_2$.
\end{proof}

The previous proposition will be very useful for our computational analysis of partition regularity, as shows the following corollary.

\begin{corollary} \label{cor:complarge}
Let $\U$ be a $\Sigma^0_1$ class. The sentence ``$\U$ contains a partition regular class'' is $\Pi^0_2$.
\end{corollary}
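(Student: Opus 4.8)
The plan is to reduce the sentence to ``$\omega \in \L(\U)$'' and then show the latter is $\Pi^0_2$ by a compactness argument.

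First, $\U$ contains a partition regular class if and only if $\L(\U) \neq \emptyset$, since $\L(\U)$ is by definition the largest partition regular subclass of $\U$. Next, $\L(\U) \neq \emptyset$ if and only if $\omega \in \L(\U)$: one implication is trivial, and for the other, if $\L(\U)$ is non-empty then it is partition regular by \Cref{prop:complarge}, so it either contains an infinite set, and hence also its superset $\omega$, or it is trivial, in which case by \Cref{prop:non-trivial-principal} it contains a principal class $\{X : n \in X\}$, which again contains $\omega$. So it remains to prove that ``$\omega \in \L(\U)$'' is $\Pi^0_2$.

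By \Cref{prop:complarge}, $\omega \notin \L(\U)$ if and only if for some $k$ there is a cover $Y_0 \cup \dots \cup Y_k \supseteq \omega$ with $Y_i \notin \U$ for every $i \leq k$. The obstacle here is the second-order quantifier over covers, and the key step is to eliminate it by compactness. Fix a c.e. presentation of $\U$ as a union of basic clopen sets indexed by a c.e. set $W$ of binary strings, and for each $k$ consider the tree $T_k$ of tuples $(\tau_0, \dots, \tau_k)$ of binary strings of a common length $s$ such that: (i) every position below $s$ receives value $1$ in at least one of the $\tau_i$; and (ii) no string in $W$ is a prefix of any $\tau_i$. Then $T_k$ is a finitely branching tree, and its infinite paths are precisely the $(k+1)$-covers of $\omega$ none of whose parts belongs to $\U$; so by K\"onig's lemma such a cover exists if and only if $T_k$ has a node of every length $s$. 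Now ``$T_k$ has a node of length $s$'' is $\Pi^0_1$, being a finite disjunction over $(2^{=s})^{k+1}$ of the decidable condition (i) conjoined with the $\Pi^0_1$ condition (ii); hence ``$T_k$ has a node of every length'' is $\Pi^0_1$, and ``$\omega \notin \L(\U)$'', i.e. ``$\exists k$ ($T_k$ has a node of every length)'', is $\Sigma^0_2$. Therefore ``$\omega \in \L(\U)$'' is $\Pi^0_2$, as claimed.

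The main difficulty is exactly the compactness step, turning the quantification over arbitrary finite covers into a statement about a finitely branching tree of finite objects; this is the content concealed in the ``clearly $\Pi^0_2$'' of the preceding proposition, the only new point being that one cannot first replace $\U$ by its upward closure, since doing so may change whether a partition regular class is present, so the argument has to be carried out on $\U$ itself.
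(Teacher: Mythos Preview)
Your proof is correct and follows the same route as the paper: reduce to the statement ``$\omega \in \L(\U)$'' via \Cref{prop:complarge}, then observe this is $\Pi^0_2$; the paper's proof is a one-liner that leaves the compactness argument implicit, whereas you have spelled it out carefully. Your first paragraph is slightly more roundabout than needed, since any non-empty partition regular class is upward-closed by definition and therefore contains $\omega$ immediately, without the trivial/non-trivial case split.
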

\begin{proof}
By \cref{prop:complarge}, the class $\U$ contains a partition regular class iff $\omega \in \L(\U)$, which is a $\Pi^0_2$ sentence.
\end{proof}

The partition regular class of all infinite sets can be generalized to a whole family of non-trivial partition regular classes in the following way.

\begin{definition}
For any infinite set $X$ we define $\L_X$ as the $\Pi^0_2(X)$ partition regular class of the sets that intersect $X$ infinitely often.
\end{definition}

In particular, $\L_\omega$ is the class of all infinite sets. It is not the only possible kind of $\Pi^0_2$ partition regular class. There are examples of $\Pi^0_2$ partition regular classes $\L$ such that $\L_X \nsubseteq \L$ for any $X \in [\omega]^{\omega}$. Consider for instance the class $\{X \subseteq \omega :\ \sum_{n \in X} \frac{1}{n} = \infty\}$.

We finish this section by proving that there the only $\mathbf{\Sigma^0_2}$ partition regular classes are the trivial ones.

\begin{proposition}
There are no non-trivial $\mathbf{\Sigma^0_2}$ partition regular classes.
\end{proposition}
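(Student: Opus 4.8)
The plan is to derive a contradiction from the existence of a non-trivial $\mathbf{\Sigma^0_2}$ partition regular class $\L$ by exhibiting a finite cover of $\omega$ none of whose pieces lies in $\L$. Since $\L$ is $\mathbf{\Sigma^0_2}$, fix an oracle $O$ so that $\L$ is $\Sigma^0_2(O)$; then membership ``$X \in \L$'' has the form $\exists n\,\varphi(n,X)$ with $\varphi$ being $\Pi^0_1(O)$, i.e.\ $\varphi(n,X)$ holds iff $X$ lies in a certain $O$-effectively closed set $\mathcal{C}_n$, and $\L = \bigcup_n \mathcal{C}_n$. Because $\L$ is non-trivial, by \Cref{prop:measure:pr} it has measure $1$; in particular $\omega \in \L$, so $\omega \in \mathcal{C}_{n}$ for some $n$, and more importantly, some $\mathcal{C}_n$ has positive measure. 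The key point I would exploit is that each $\mathcal{C}_n$, being a closed set contained in $\L$, contains only infinite sets (non-triviality), yet is closed — and a closed set of positive measure must, by Lebesgue density, be ``fat'' somewhere, which will clash with the fact that one can split a fat set into two pieces of half the density.

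More concretely, here is the mechanism I expect to work. Suppose $\mathcal{C} \subseteq \L$ is $\Pi^0_1(O)$ of positive measure $\mu(\mathcal{C}) = \delta > 0$. Take a basic clopen cylinder $[\tau]$ inside which the relative density of $\mathcal{C}$ exceeds $1 - 2^{-m}$ for $m$ large (Lebesgue density theorem). Now partition $\omega$ into two infinite sets $E$ (evens) and $D$ (odds); more useful is to split according to a set $A$ and its complement $\overline{A}$ chosen so that membership in $\mathcal C$ is very unlikely to be preserved under passing to $X \cap A$ or $X \cap \overline A$. Since $\L$ is partition regular and $\omega = A \cup \overline A$, one of $A, \overline A \in \L$. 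Iterating this bisection argument along a sequence of finer partitions, and using partition regularity at each stage to stay inside $\L$, one builds a descending sequence of cylinders forcing a \emph{finite} set (or a set of density $0$) into $\bigcap_n \mathcal{C}_n$; but $\L$ contains only infinite sets, contradiction. The role of $\mathbf{\Sigma^0_2}$ (rather than arbitrary) is precisely to make $\L$ a countable increasing union of closed sets, so that a single $\mathcal{C}_n$ captures enough of $\L$ to run the density argument with uniform constants.

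An alternative and cleaner route, which I would actually try first: use that a $\Sigma^0_2(O)$ class $\L$ of measure $1$ has an $O'$-computable ``approximation'' and, crucially, that the complement $2^\omega \setminus \L$ is $\Pi^0_2(O)$ of measure $0$, hence is contained in an $O$-effective null $G_\delta$. Then by the argument already used in the proof of \Cref{prop:measure:pr}, if $Z$ is $O$-Martin-L\"of random then $Z \in \L$ and $\overline Z \in \L$. The extra leverage from $\Sigma^0_2$ is that we may now apply partition regularity to a $Z$ that is random \emph{relative to $O'$} or even to $\emptyset^{(2)}$: splitting $\omega$ into finitely many pieces along a sufficiently generic/random partition and using that $\L \in \Sigma^0_2$ means each piece's membership is decided ``at a finite stage,'' one forces the chosen large piece to be a \emph{thin} set — e.g.\ of zero density, or ultimately finite — contradicting that $\L \subseteq \U_2$ and contains only infinite sets (\Cref{prop:non-trivial-u2}).

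The main obstacle I anticipate is making the bisection terminate: naively, each application of partition regularity to $\omega = Y_0 \cup Y_1$ hands back an \emph{infinite} $Y_i \in \L$, and one must arrange the successive splits so that the surviving set genuinely shrinks in a way detectable by the $\Sigma^0_2$ structure — i.e.\ so that the witness $n$ with $X \in \mathcal C_n$ cannot keep up. The right bookkeeping is presumably: at stage $s$ we have a finite set $F_s$ and an infinite reservoir $R_s$ with $F_s \cup R_s \in \L$ via a witness $\le s$; split $R_s$ into two halves, use partition regularity to keep one half, and add finitely many elements to $F_s$ only when forced — then argue that the witness index is non-decreasing but the reservoir's density halves each stage, so in the limit $\bigcup_s F_s$ is the only survivor and it is not infinite, contradicting non-triviality. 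Getting the quantifier on the $\Sigma^0_2$ witness to interact correctly with the infinitely many splits is the delicate point; everything else is the routine measure/density bookkeeping one expects.
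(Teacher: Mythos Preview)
Your proposal outlines three strategies but does not complete any of them, and you explicitly flag the gap yourself (``making the bisection terminate'', ``getting the quantifier on the $\Sigma^0_2$ witness to interact correctly''). That gap is real. In the density approach, partition regularity only hands you a piece lying in $\L$, not in the particular closed $\C_n$ on which you ran the density argument, so the uniform constants evaporate at the next split. In the randomness approach, observing that $Z,\overline Z\in\L$ for $O$-random $Z$ is just \Cref{prop:measure:pr} again and yields no contradiction; the gesture toward randomness relative to $O'$ is never cashed out. In the bookkeeping approach you need, at each stage, to split the current reservoir into two pieces \emph{neither} of which lies in $\C_s$, so that whichever one partition regularity keeps is outside $\C_s$; you give no reason why such a split exists.

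The missing idea is short and bypasses all of this. Work with the complement: $2^\omega\setminus\L$ is $\Pi^0_2(Z)$ for some oracle $Z$, hence an intersection of $\Sigma^0_1(Z)$ classes $\U_n$, and since $\L$ is non-trivial each $\U_n$ contains every finite set. The key lemma (essentially Mileti) is that a $\Sigma^0_1(Z)$ class containing all finite sets automatically contains every $Z$-hyperimmune set: if $X\notin\U_n$, then from the fact that $\sigma 0^\infty\in\U_n$ for every $\sigma$ one $Z$-computably finds, for each length $m$, a bound $f(m)$ with $[\sigma 0^{f(m)}]\subseteq\U_n$ for all $\sigma$ of length $m$, and this $f$ dominates the principal function of $X$. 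Hence every $Z$-hyperimmune set lies in $\bigcap_n\U_n=2^\omega\setminus\L$. Now take any $X$ with both $X$ and $\overline X$ $Z$-hyperimmune; then neither is in $\L$, contradicting partition regularity applied to $\omega=X\cup\overline X$. No iteration, no density, no tracking of witnesses is needed.
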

\begin{proof}
Fix a set~$Z$.
If a $\Sigma^{0,Z}_1$ class contains all the finite sets, then by Mileti~\cite{mileti2004partition}, it also contains all the $Z$-hyperimmune sets : Let $\U$ be such a $\Sigma^{0,Z}_1$ class. Let $X \not\in \U$.  For any $n$ and every string $\sigma$ of length $n$ we have $\sigma 0^{\infty} \in \U$. Let then $Z$-compute $f(n)$ such that for any $\sigma$ of length $n$ we have $[\sigma 0^{f(n)}] \subseteq \U$.
 It must be that $X\uh_{n+f(n)}$ contains at least a $1$ at a position greater than $n$. By repeating this we see that we can $Z$-computably bound $p_X$.

Suppose now for contradiction that there exists a non-trivial $\mathbf{\Sigma^0_2}$ partition regular class~$\C$. Let~$Z$ be such that~$\C$ is $\Sigma^{0,Z}_2$. In particular, $2^\omega \setminus \C$ contains all the hyperimmune sets. Consider now any set $X$ such that both $X$ and $\overline{X}$ are $Z$-hyperimmune. By partition regularity, at least one of them belongs to~$\C$, which is a contradiction.
\end{proof}

\subsection{Partition largeness}

As mentioned earlier, a partition regular class represents a notion of largeness. However, a class containing a partition regular class is not necessarily itself partition regular. These classes admit a nice characterization.

\begin{definition}
A \emph{partition large class} is a non-empty collection of sets $\A \subseteq 2^\omega$ such that
\begin{itemize}
	\item[(a)] If $X \in \A$ and $Y \supseteq X$, then $Y \in \A$
	\item[(b)] For every~$k$, if $Y_0 \cup \dots \cup Y_k \supseteq \omega$, there is some $j \leq k$ such that $Y_j \in \A$.
\end{itemize}
\end{definition}

\begin{proposition}\label[proposition]{prop:large-contains-pr}
A class $\A \subseteq 2^\omega$ is partition large iff it is upward-closed and contains a partition regular subclass.
\end{proposition}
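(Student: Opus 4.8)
The statement is a clean "iff", and I would prove each direction separately, using the characterisation of $\L(\A)$ from \Cref{prop:complarge} as the main engine.

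\textbf{($\Rightarrow$) Suppose $\A$ is partition large.} Upward closure is immediate from clause (a). The content is to produce a partition regular subclass; the natural candidate is $\L(\A)$, the largest partition regular subclass of $\A$ (\Cref{prop:complarge}), and it suffices to show $\L(\A) \neq \emptyset$. By \Cref{cor:complarge} (or directly by \Cref{prop:complarge}), $\L(\A) \neq \emptyset$ iff $\omega \in \L(\A)$, i.e.\ iff for every $k$ and every $k+1$-cover $X_0 \cup \dots \cup X_k \supseteq \omega$ there is $i \leq k$ with $X_i \in \A$. But that is exactly clause (b) of partition largeness. Hence $\omega \in \L(\A)$, so $\L(\A)$ is a nonempty partition regular subclass of $\A$.

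\textbf{($\Leftarrow$) Suppose $\A$ is upward-closed and contains a partition regular subclass $\L$.} First, $\A$ is nonempty since $\L$ is nonempty (by axiom (1) of partition regularity). Clause (a) is the assumed upward closure. For clause (b), note that $\L$ partition regular gives $\omega \in \L$ (take any $X \in \L$; then $\omega \supseteq X$, so $\omega \in \L$ by upward closure of $\L$). Now if $Y_0 \cup \dots \cup Y_k \supseteq \omega$, then applying axiom (3) of partition regularity to $\omega \in \L$ yields some $j \leq k$ with $Y_j \in \L \subseteq \A$. Thus $\A$ is partition large.

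\textbf{Main obstacle.} There is essentially no hard step here: the proposition is really just unwinding definitions once one has \Cref{prop:complarge} in hand. The only mild subtlety is making sure the "$\omega \in \L$" reduction is used correctly in both directions — that partition regularity of a class forces $\omega$ to be a member (via upward closure from any nonempty element), and conversely that membership of $\omega$ is all that \Cref{prop:complarge} needs to certify $\L(\A)$ nonempty hence partition regular. I would present the argument in the two short paragraphs above with no further embellishment.
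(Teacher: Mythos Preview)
Your proof is correct and follows essentially the same approach as the paper: both directions use \Cref{prop:complarge} to identify $\L(\A)$ as the desired partition regular subclass, verifying $\omega \in \L(\A)$ via clause~(b), and conversely both use $\omega \in \L$ together with axiom~(3) of partition regularity to derive clause~(b). The only cosmetic difference is that you present the directions in the opposite order and spell out why $\omega \in \L$ a bit more explicitly; note also that your appeal to \Cref{cor:complarge} is slightly off (that corollary is about the definitional complexity for $\Sigma^0_1$ classes), but you already give the correct alternative via \Cref{prop:complarge}.
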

\begin{proof}
Suppose $\A$ is upward-closed and contains a partition regular subclass $\L \subseteq \A$.
(a) is trivially satisfied by hypothesis. By partition regularity of $\L$, $\omega \in \L$ and for every~$k$ and every $Y_0 \cup \dots \cup Y_k \supseteq \omega$, there is some $j \leq k$ such that $Y_j \in \L \subseteq \A$. So $\A$ is partition large.

Suppose now $\A$ is partition large. By (a), it is upward-closed. We claim that $\L(\A)$ is partition regular. By (b) and \Cref{prop:complarge}, $\omega \in \L(\A)$, so $\L(\A) \neq \emptyset$. By definition of $\L(\A)$, it is partition regular, hence $\A$ contains a partition regular subclass.
\end{proof}

\begin{lemma} \label{lem:partition-regular-x}
Let $\L \subseteq 2^\omega$ be a non-trivial partition regular class and $X \in \L$. Then $\L \cap \L_X$ is partition large.
\end{lemma}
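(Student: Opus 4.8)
The goal is to show that $\L \cap \L_X$ is partition large, which by \Cref{prop:large-contains-pr} amounts to showing that $\L \cap \L_X$ is upward-closed and contains a partition regular subclass. Upward-closure is immediate: both $\L$ and $\L_X$ are upward-closed (the latter because any superset of a set meeting $X$ infinitely often also meets $X$ infinitely often), so their intersection is too. The real content is the partition condition (b): for every $k$ and every cover $Y_0 \cup \dots \cup Y_k \supseteq \omega$, some $Y_j$ belongs to $\L \cap \L_X$.

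First I would restrict the cover to $X$: since $Y_0 \cup \dots \cup Y_k \supseteq \omega \supseteq X$, the sets $Y_i \cap X$ form a $(k{+}1)$-cover of $X$. Because $X \in \L$ and $\L$ is partition regular, there is some $j \leq k$ with $Y_j \cap X \in \L$. Now $Y_j \cap X \subseteq X$, so $Y_j \cap X$ meets $X$ infinitely often — indeed, since $\L$ is non-trivial, $Y_j \cap X$ is infinite, and it is a subset of $X$, hence $Y_j \cap X \in \L_X$. Also $Y_j \supseteq Y_j \cap X \in \L$, so by upward-closure of $\L$ we get $Y_j \in \L$; and $Y_j \supseteq Y_j \cap X \in \L_X$, so $Y_j \in \L_X$. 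Therefore $Y_j \in \L \cap \L_X$, establishing (b).

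Finally, I should check $\L \cap \L_X$ is non-empty, which is needed for it to be partition large: $X$ itself lies in $\L$ by hypothesis and in $\L_X$ trivially (it intersects itself infinitely often, as it is infinite by non-triviality of $\L$), so $X \in \L \cap \L_X$. Combining non-emptiness, upward-closure, and (b) gives that $\L \cap \L_X$ is partition large.

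There is no real obstacle here — the argument is a short diagram chase through the definitions. The only point requiring a moment's care is making sure that, after intersecting the cover with $X$, the witnessing piece $Y_j \cap X$ genuinely lands in $\L_X$ and not merely in $\L$; this is where non-triviality of $\L$ is used, to guarantee $Y_j \cap X$ is infinite so that being a subset of $X$ forces infinitely many common elements with $X$.
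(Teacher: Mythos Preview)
Your proof is correct and follows essentially the same approach as the paper: intersect the cover with $X$, use partition regularity of $\L$ to find $Y_j \cap X \in \L$, and invoke non-triviality to conclude $Y_j \cap X$ is infinite and hence in $\L_X$. The paper's proof is terser (it stops at $Y_j \cap X \in \L \cap \L_X$ rather than passing to $Y_j$, and omits the routine checks of upward-closure and non-emptiness), but the substance is identical.
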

\begin{proof}
Let $Y_0 \cup \dots \cup Y_k \supseteq \omega$. In particular we have $Y_0 \cap X \cup \dots \cup Y_k \cap X \supseteq X$. As $X \in \L$ we must have $Y_j \cap X \in \L$ for some $j \leq k$. In particular, since~$\L$ is non-trivial, $Y_j \cap X$ is infinite, so $Y_j \cap X \in \L_X$. Therefore, there is some $j \leq k$ such that $Y_j \cap X \in \L \cap \L_X$.
\end{proof}

The following proposition yields another definition of $\L(\A)$ which will be very useful.

\begin{proposition}\label[proposition]{lem:lx}
Let $\A \subseteq \U_2$ be a partition large class. Then
$$
\L(\A) = \{ X : \A \cap \L_X \mbox{ is partition large}\}
$$
\end{proposition}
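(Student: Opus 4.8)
The statement is $\L(\A) = \{X : \A \cap \L_X \text{ is partition large}\}$ for $\A \subseteq \U_2$ partition large. The plan is to prove the two inclusions separately, using the explicit description of $\L(\A)$ from \Cref{prop:complarge} and the characterization of partition largeness from \Cref{prop:large-contains-pr}.

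For the inclusion $\supseteq$: suppose $\A \cap \L_X$ is partition large. I need to show $X \in \L(\A)$, i.e. (by \Cref{prop:complarge}) that for every $k$ and every $k$-cover $X_0 \cup \dots \cup X_k \supseteq X$, some $X_i \in \A$. Given such a cover, the sets $X_i \cap X$ still cover $X$, but to invoke the largeness of $\A \cap \L_X$ I want a cover of $\omega$. The fix is to replace $X_0$ by $X_0 \cup \overline{X}$; then $(X_0 \cup \overline{X}) \cup X_1 \cup \dots \cup X_k \supseteq \omega$. By part (b) of partition largeness of $\A \cap \L_X$, one of these is in $\A \cap \L_X$. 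If it is some $X_i$ with $i \geq 1$, then $X_i \in \A$ and we are done. If it is $X_0 \cup \overline{X}$, then $X_0 \cup \overline{X} \in \L_X$, meaning $(X_0 \cup \overline{X}) \cap X = X_0 \cap X$ is infinite — in particular nonempty — and also $X_0 \cup \overline{X} \in \A$; since $\A$ is upward-closed I cannot immediately conclude $X_0 \in \A$ from a superset being in $\A$. I should instead argue more carefully: since $\A \cap \L_X$ is upward closed within the relevant sense, and I really want to capture $X_0$ itself. The cleaner route is: apply part (b) to the cover $(X_0 \cap X) \cup (X_1 \cap X) \cup \dots \cup (X_k \cap X) \cup \overline{X} \supseteq \omega$ of $\omega$ by $k+2$ pieces; since $\overline{X} \notin \L_X$ (it is disjoint from $X$, assuming $X$ infinite, which holds as $X \in \L(\A)$ forces... wait, I am proving $X \in \L(\A)$). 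I should first note $X$ is infinite: if $\A \cap \L_X$ is partition large it is nonempty, so $\L_X \neq \emptyset$, forcing $X$ infinite. Then $\overline{X} \cap X = \emptyset$ is finite so $\overline{X} \notin \L_X$, hence $\overline{X} \notin \A \cap \L_X$; therefore some $X_i \cap X \in \A \cap \L_X \subseteq \A$, and by upward closure of $\A$, $X_i \in \A$. This gives $X \in \L(\A)$.

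For the inclusion $\subseteq$: suppose $X \in \L(\A)$. Since $\L(\A)$ is partition regular (it is nonempty as $\omega \in \L(\A)$ by part (b) of largeness of $\A$) and $X \in \L(\A)$, and since $\A \subseteq \U_2$ forces $\L(\A) \subseteq \U_2$, so $\L(\A)$ is a non-trivial partition regular class and $X \in \L(\A)$. Now \Cref{lem:partition-regular-x} applied to the non-trivial partition regular class $\L(\A)$ and $X \in \L(\A)$ gives that $\L(\A) \cap \L_X$ is partition large; since $\L(\A) \subseteq \A$, the larger class $\A \cap \L_X \supseteq \L(\A) \cap \L_X$ is also partition large (a superclass of a partition large class that is still upward closed and hence still contains a partition regular subclass — here $\A \cap \L_X$ is upward closed as an intersection of upward closed classes, and contains the partition regular class $\L(\A) \cap \L_X$, so by \Cref{prop:large-contains-pr} it is partition large). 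This finishes $\subseteq$.

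The main obstacle is the bookkeeping in the $\supseteq$ direction: one must be careful that partition largeness is phrased with covers of $\omega$ (not covers of $X$), and that upward closure of $\A$ lets one pass from $X_i \cap X \in \A$ to $X_i \in \A$, while the complement piece $\overline{X}$ is safely excluded because it misses $\L_X$ once $X$ is known infinite. Establishing that $X$ is infinite at the start of that direction (from nonemptiness of the partition large class $\A \cap \L_X$ and nontriviality of $\L_X$) is the small observation that makes the argument go through cleanly.
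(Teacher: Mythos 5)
Your proof is correct. The inclusion $\L(\A) \subseteq \{X : \A \cap \L_X \text{ partition large}\}$ is handled exactly as in the paper: non-triviality of $\L(\A)$ via \Cref{prop:non-trivial-u2}, then \Cref{lem:partition-regular-x}, then upward closure to pass from $\L(\A) \cap \L_X$ to $\A \cap \L_X$. Where you genuinely diverge is the reverse inclusion. The paper proves it indirectly: it shows that $\L = \{X : \A \cap \L_X \text{ partition large}\}$ is itself a partition regular subclass of $\A$ (the key step being that $X \in \L(\A \cap \L_X)$, followed by a second application of \Cref{lem:partition-regular-x} to push covers of $X$ into $\L$) and then invokes maximality of $\L(\A)$. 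You instead verify the explicit cover condition of \Cref{prop:complarge} directly: given a $k$-cover $X_0, \dots, X_k$ of $X$, you pass to the cover $(X_0 \cap X), \dots, (X_k \cap X), \overline{X}$ of $\omega$, note that $\overline{X} \notin \L_X$ once $X$ is known infinite (which you correctly extract from non-emptiness of the partition large class $\A \cap \L_X$), and conclude some $X_i \cap X \in \A$, hence $X_i \in \A$ by upward closure. Your route is shorter and more elementary for that half, at the cost of not exhibiting the partition regularity of the right-hand class as a standalone fact (which the paper gets for free from its argument). One cosmetic remark: \Cref{lem:partition-regular-x} yields that $\L(\A) \cap \L_X$ is partition \emph{large}, not partition regular as you write at one point; this does not affect your argument, since a partition large class contains a partition regular subclass and that is all you use.
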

\begin{proof}
Let $\L = \{ X : \A \cap \L_X \mbox{ is partition large} \}$.
Since $\A$ is partition large, then by \Cref{prop:large-contains-pr}, $\L(\A)$ is the largest partition regular subclass of $\A$. Moreover, by \Cref{prop:non-trivial-u2}, $\L(\A)$ is non-trivial since~$\L(\A) \subseteq \U_2$. Thus, by \Cref{lem:partition-regular-x}, for every $X \in \L(\A)$, $\L(\A) \cap \L_X$ is partition large. In particular, $\A \cap \L_X$ is partition large, hence $X \in \L$. It follows that $\L(\A) \subseteq \L$.

Let us show that $\L$ is partition regular. First, $\A \cap \L_\omega = \A$ is partition large, so $\omega \in \L$.
Let $X \in \L$ and $Y_0 \cup \dots \cup Y_k \supseteq X$. In particular, $\A \cap \L_X$ is partition large, so by \Cref{prop:large-contains-pr}, $\L(\A \cap \L_X)$ is partition regular. Moreover, $X \in \L(\A \cap \L_X)$, so there is some $i \leq k$ such that $Y_i \in \L(\A \cap \L_X)$. By \Cref{lem:partition-regular-x}, $\L(\A \cap \L_X) \cap \L_{Y_i}$ is partition large. Since $\L(\A \cap \L_X) \cap \L_{Y_i} \subseteq \A \cap \L_{Y_i}$, then $\A \cap \L_{Y_i}$ is partition large, so $Y_i \in \L$.

Last, let us show that $\L \subseteq \A$. Indeed, if $\A \cap \L_X$ is partition large, then since $X \cup \overline{X} = \omega$, either $X$, or $\overline{X}$ belongs to $\A \cap \L_X$. However, $\overline{X} \not \in \L_X$, so $X \in \A \cap \L_X$. In particular, $X \in \A$.
It follows that $\L$ is a partition regular subclass of $\A$, so by maximality of $\L(\A)$, $\L \subseteq \L(\A)$.
\end{proof}

Recall that \Cref{prop:non-trivial-u2} characterizes non-trivial partition regular classes as those which are included in the $\Sigma^0_1$ class $\U_2$ of sets containing at least two distinct elements. Since there is no non-empty $\Sigma^0_1$ class containing only infinite sets, we take this characterization as the natural generalization of non-triviality to arbitrary classes.

\begin{definition}
A class $\A \subseteq 2^\omega$ is \emph{non-trivial} if it is included in the $\Sigma^0_1$ class $\U_2$ of sets containing at least two distinct elements.
\end{definition}

\subsection{Partition genericity}\label[section]{subsect:pg}

Given a $\Pi^0_2$ partition regular class $\L$ and a set $X$, then either $X \in \L$, or $\overline{X} \in \L$. In general, whether the first or the second case holds depends on the choice of $\L$. For some sets however, the same case always holds. This yields the notion of partition genericity.


\begin{definition}
Let $\A \subseteq 2^\omega$ be a class. We say that $X$ is \emph{partition generic in $\A$} if $X$ belongs to every non-trivial $\Pi^0_2$ partition regular subclass of $\A$. If $X$ is partition generic in $2^\omega$ we simply say that $X$ is \emph{partition generic}.
\end{definition}

%

The first and most trivial example of partition generic set is $\omega$.
First, note that partition genericity is closed under finite changes.

\begin{proposition}\label[proposition]{prop:pg-invariant-finite}
If $X$ is partition generic in $\A$	and $Y =^{*} X$, then $Y$ is partition generic in~$\A$.
\end{proposition}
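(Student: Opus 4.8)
The plan is to reduce the statement directly to the closure-under-finite-changes property of non-trivial partition regular classes established in \Cref{prop:measure:pr}. Unwinding the definition, $X$ being partition generic in $\A$ means precisely that $X$ belongs to every non-trivial $\Pi^0_2$ partition regular subclass of $\A$; so to show $Y$ is partition generic in $\A$ it suffices to fix an arbitrary non-trivial $\Pi^0_2$ partition regular class $\L \subseteq \A$ and verify that $Y \in \L$.

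First I would invoke partition genericity of $X$ to conclude $X \in \L$. Then, since $\L$ is non-trivial and partition regular, \Cref{prop:measure:pr} tells us that $\L$ is closed under finite changes of its elements. As $Y =^{*} X$, the set $Y$ differs from $X \in \L$ by only finitely many elements, hence $Y \in \L$ as well. Since $\L$ was an arbitrary non-trivial $\Pi^0_2$ partition regular subclass of $\A$, this shows $Y$ belongs to all of them, i.e. $Y$ is partition generic in $\A$.

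There is essentially no obstacle here: the content of the proposition is already packaged in \Cref{prop:measure:pr}, and the only thing to be careful about is that the relevant classes $\L$ are required to be non-trivial (which is what makes \Cref{prop:measure:pr} applicable — a trivial partition regular class such as $\{X : n \in X\}$ is of course not closed under finite changes). Since partition genericity quantifies only over non-trivial $\Pi^0_2$ partition regular subclasses, this hypothesis is automatically available, and the argument goes through verbatim.
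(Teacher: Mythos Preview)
Your proof is correct and matches the paper's argument essentially line for line: fix an arbitrary non-trivial $\Pi^0_2$ partition regular $\L \subseteq \A$, use partition genericity of $X$ to get $X \in \L$, then invoke \Cref{prop:measure:pr} (closure under finite changes for non-trivial partition regular classes) to conclude $Y \in \L$. Your remark about why non-triviality is needed is also on point.
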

\begin{proof}
Let $\L \subseteq \A$ be any non-trivial $\Pi^0_2$ partition regular subclass of $\A$.
Since $X$ is partition generic in $\A$, then $X \in \L$. Since $Y =^{*} X$, by \Cref{prop:measure:pr}, $Y \in \L$.
Therefore $Y$ is partition generic in~$\A$.
\end{proof}

It follows that every co-finite set is partition generic. Actually, this characterizes the computable partition generic sets. Indeed, if $A$ is a co-infinite computable set, then $\L_{\overline{A}}$ is a non-trivial $\Pi^0_2$ partition regular class which does not contain~$A$.

\begin{definition}
Let $\A \subseteq 2^\omega$ be a class. We say that $X$ is \emph{bi-partition generic in $\A$} if $X$ and $\overline{X}$ are both partition generic in $\A$. If $X$ is bi-partition generic in $2^\omega$ we simply say that $X$ is \emph{bi-partition generic}.
\end{definition}

The existence of bi-partition generic sets follows from \Cref{prop:measure:pr}. Note that, contrary to partition genericity, no computable set is bi-partition generic.

\begin{proposition}\label[proposition]{prop:bi-partition-generic-biimmune}
Every bi-partition generic set is bi-immune.
\end{proposition}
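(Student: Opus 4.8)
The plan is to reduce bi-immunity to a single asymmetric statement and then, for that statement, exhibit an explicit witnessing non-trivial $\Pi^0_2$ partition regular class, namely $\L_W$ for a suitable computable set $W$.

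First I would isolate the claim: \emph{if $Y$ is partition generic, then $\overline{Y}$ contains no infinite c.e. set}. I would also record that every partition generic set is infinite --- indeed a finite set $F$ is co-infinite and computable, so by the remark following \Cref{prop:pg-invariant-finite} it is not partition generic (equivalently, $F$ is disjoint from the infinite computable set $\{m : m > \max F\}$, which the claim forbids). Granting the claim, $\overline{Y}$ is therefore genuinely immune. Now if $X$ is bi-partition generic, both $X$ and $\overline{X}$ are partition generic, so applying the claim to $Y = X$ shows $\overline{X}$ is immune and applying it to $Y = \overline{X}$ shows $X$ is immune; hence $X$ is bi-immune.

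For the claim, suppose $W \subseteq \overline{Y}$ is an infinite c.e. set. Since the range of a strictly increasing enumeration of $W$ is an infinite computable subset of $W$, we may assume $W$ is an infinite computable subset of $\overline{Y}$; in particular $Y \cap W = \emptyset$. Consider $\L_W$, the $\Pi^0_2$ partition regular class of sets meeting $W$ infinitely often (this is the class $\L_X$ of the paper, instantiated at the infinite set $X = W$). It is non-trivial, as each of its members is infinite, and it is $\Pi^0_2$ because $W$ is computable. Since $Y \cap W = \emptyset$, we have $Y \notin \L_W$, contradicting that $Y$ --- being partition generic --- belongs to every non-trivial $\Pi^0_2$ partition regular subclass of $2^\omega$.

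This argument is short and I do not foresee a genuine obstacle; the only points needing a word of care are the reduction to a computable $W$ (so that $\L_W$ is plainly $\Pi^0_2$ rather than $\Pi^0_2(W)$) and the observation that partition generic sets are infinite, so that \emph{immune} is literally meaningful for both $X$ and its complement.
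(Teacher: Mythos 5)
Your proposal is correct and follows essentially the same route as the paper: both arguments use the class $\L_W$ for an infinite computable subset $W$ of one side to contradict partition genericity of the other side. The paper phrases it contrapositively (an infinite subset $X \subseteq A$ gives a non-trivial partition regular class $\L_X$ omitting $\overline{A}$, which therefore cannot be $\Pi^0_2$, so $X$ is not computable), but the substance is identical.
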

\begin{proof}
Let~$A$ be bi-partition generic. Let~$X$ be an infinite subset of~$A$.
Then $\L_X$ is a partition regular class such that $\overline{A} \not \in \L_X$.
Since~$\overline{A}$ is partition generic, $\L_X$ is not $\Pi^0_2$, so $X$ is not computable.
Similarly, $\overline{A}$ has no infinite computable subset.
\end{proof}

In particular, every bi-partition generic set is bi-infinite. We now prove that every typical set is bi-partition generic, that is, every sufficiently random or generic set is bi-partition generic.

\begin{proposition}\label[proposition]{prop:kurtz-bipg}
Every Kurtz random is bi-partition generic.
\end{proposition}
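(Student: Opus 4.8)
The plan is to first establish the auxiliary claim that \emph{every Kurtz random set is partition generic}, and then to bootstrap it to bi-partition genericity using the fact that Kurtz randomness is invariant under complementation. The latter is immediate: the bitwise complementation map $X \mapsto \overline{X}$ is a measure-preserving computable homeomorphism of $2^\omega$, so it maps $\Pi^0_1$ classes of measure $0$ to $\Pi^0_1$ classes of measure $0$; hence $X$ is Kurtz random if and only if $\overline{X}$ is. Granting the auxiliary claim, if $X$ is Kurtz random then both $X$ and $\overline{X}$ are Kurtz random, hence both are partition generic, which is exactly what it means for $X$ to be bi-partition generic.

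To prove the auxiliary claim, fix a Kurtz random $X$ and a non-trivial $\Pi^0_2$ partition regular class $\L$. Being $\Pi^0_2$, the class $\L$ is Borel, hence measurable, so by \Cref{prop:measure:pr} it has measure $1$. Now write $\L$ as an intersection $\L = \bigcap_n \U_n$ of a uniformly $\Sigma^0_1$ sequence of classes $\U_n$, which is precisely what being $\Pi^0_2$ amounts to. For each $n$ we have $\L \subseteq \U_n$ and $\mu(\L) = 1$, so $\mu(\U_n) = 1$; equivalently, $2^\omega \setminus \U_n$ is a $\Pi^0_1$ class of measure $0$. Since $X$ is Kurtz random it avoids every such class, i.e.\ $X \in \U_n$ for every $n$, and therefore $X \in \bigcap_n \U_n = \L$. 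As $\L$ was an arbitrary non-trivial $\Pi^0_2$ partition regular class, $X$ is partition generic.

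The content of the argument is entirely concentrated in \Cref{prop:measure:pr}: it is what converts the purely combinatorial hypothesis ``$\L$ is a non-trivial $\Pi^0_2$ partition regular class'' into the measure-theoretic statement $\mu(\L)=1$, after which a $\Pi^0_2$ class of measure $1$ is automatically a countable intersection of $\Sigma^0_1$ classes of measure $1$, each of which any Kurtz random must meet. I do not expect a genuine obstacle here; the two points to be careful about are to invoke the characterization of Kurtz randomness as avoidance of $\Pi^0_1$ classes of measure $0$ (rather than any effective test formulation) and to record the routine complementation-invariance used in the first step.
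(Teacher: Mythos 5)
Your proof is correct and follows essentially the same route as the paper's: invoke \Cref{prop:measure:pr} to get that every non-trivial $\Pi^0_2$ partition regular class has measure $1$, observe that a Kurtz random must then lie in each $\Sigma^0_1$ level of the intersection, and handle $\overline{X}$ via closure of Kurtz randomness under complementation. You simply spell out the details (the decomposition $\L=\bigcap_n\U_n$ and the complementation step) that the paper's two-line proof leaves implicit.
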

\begin{proof}
By~\Cref{prop:measure:pr}, every non-trivial measurable partition regular class is of measure~$1$. It follows that any Kurtz-random belongs to every $\Pi^0_2$ partition regular class and thus that any Kurtz-random is bi-partition generic.
\end{proof}


In particular, one can be bi-hyperimmune and bi-partition generic.
The following proposition shows that is actually always the case,
in the sense that every bi-hyperimmune set is bi-partition generic.

\begin{proposition}\label[proposition]{prop:cohyp-pg}
Every co-hyperimmune set is partition generic.
\end{proposition}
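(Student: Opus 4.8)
The plan is to show that if $\overline A$ is hyperimmune, then $A$ lies in every non-trivial $\Pi^0_2$ partition regular class, which is exactly the definition of partition genericity. So I would fix an arbitrary non-trivial $\Pi^0_2$ partition regular class $\L$ and write it as an intersection $\L = \bigcap_{n} \U_n$ with each $\U_n$ a $\Sigma^0_1$ class; in particular $\L \subseteq \U_n$ for every $n$. Since $\L$ is non-trivial and partition regular it contains $\omega$, and by \Cref{prop:measure:pr} it is closed under finite changes of its elements; hence $\L$, and therefore each $\U_n$, contains every cofinite set.

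The heart of the argument is a complementation trick that reduces the claim to Mileti's theorem. For each $n$, consider the class $\U_n' := \{X : \overline X \in \U_n\}$. This is again $\Sigma^0_1$: if $\U_n = \bigcup_{\tau \in S}[\tau]$ for a c.e.\ set of strings $S$, then $\U_n' = \bigcup_{\tau \in S}[\overline\tau]$, where $\overline\tau$ is the bitwise complement of $\tau$, and $\tau \mapsto \overline\tau$ is computable. Moreover $\U_n'$ contains every finite set, since the complement of a finite set is cofinite and hence belongs to $\U_n$. By Mileti's theorem~\cite{mileti2004partition}, as invoked earlier in this section (a $\Sigma^0_1$ class containing all finite sets contains all hyperimmune sets), the class $\U_n'$ contains every hyperimmune set; in particular $\overline A \in \U_n'$, i.e.\ $A \in \U_n$. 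As $n$ was arbitrary, $A \in \bigcap_n \U_n = \L$. Since $\L$ was an arbitrary non-trivial $\Pi^0_2$ partition regular class, $A$ is partition generic.

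The only point requiring care is the passage from $\Pi^0_2$ to $\Sigma^0_1$: Mileti's theorem is a statement about $\Sigma^0_1$ classes, so one genuinely must decompose $\L$ and verify that the relevant hypothesis (here, containing all cofinite sets rather than all finite sets) is inherited by each $\U_n$ — which is precisely what \Cref{prop:measure:pr} supplies. Everything else is the observation that the implication ``$\Sigma^0_1$ class containing all cofinite sets $\Rightarrow$ it contains all co-hyperimmune sets'' is simply the image of Mileti's theorem under the complementation map $X \mapsto \overline X$; one could equivalently re-run Mileti's proof verbatim with the roles of $0$ and $1$ exchanged, obtaining a computable array $[n_t,n_{t+1})$ each interval of which meets $\overline A$. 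I do not expect any real obstacle beyond this bookkeeping.
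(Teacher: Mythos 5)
Your proof is correct. It establishes the same core fact as the paper's argument --- that a $\Sigma^0_1$ class containing all cofinite sets must contain every co-hyperimmune set --- but packages it differently. The paper argues by contradiction: from $A \notin \L$ it extracts a single $\Sigma^0_1$ partition large class $\U \supseteq \L$ with $A \notin \U$, and uses partition largeness directly to produce, for each $t$, a finite set $\rho \in \U$ with $\min \rho > t$; each such $\rho$ must meet $\overline{A}$, yielding a computable array tracing $\overline{A}$ and contradicting its hyperimmunity. You instead argue directly, decomposing $\L = \bigcap_n \U_n$, using closure under finite changes (\Cref{prop:measure:pr}) to see that each $\U_n$ contains all cofinite sets, and then invoking Mileti's lemma --- which the paper states and proves only later, in the non-existence result for $\Sigma^0_2$ partition regular classes --- applied to the complemented classes $\U_n'$. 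The two computations are the same up to the bitwise complement you describe, but your route has a small advantage: by working with cofinite sets inside $\L$ itself rather than with a partition large $\Sigma^0_1$ hull, you never need to worry about whether the hull is upward-closed or whether the finite part of the cover $\{0,\dots,t\} \cup \{t+1, t+2, \dots\}$ could land in $\U$, a point the paper's phrasing glosses over. The cost is an external appeal to Mileti's lemma where the paper's argument is self-contained.
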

\begin{proof}
Let $A$ be a co-hyperimmune set.
Suppose for the contradiction that $A \not \in \L$
for some non-trivial $\Pi^0_2$ partition regular class $\L \subseteq 2^\omega$. In particular, there is a partition large $\Sigma^0_1$ class $\U \supseteq \L$ such that $A \not \in \U$.
Since $\U$ is partition large, for every $t \in \omega$, there is some $\rho \in 2^{<\omega}$ with $\min \rho > t$ such that $\rho \in \U$.
In particular, for every such $\rho$, we have $\rho \cap \overline{A} \neq \emptyset$. Moreover, such a string~$\rho$ can be found computably uniformly in $t$. We can therefore compute an array tracing $\overline{A}$, contradicting hyperimmunity of $\overline{A}$.
\end{proof}

\begin{corollary}
Every bi-hyperimmune set is bi-partition generic.
\end{corollary}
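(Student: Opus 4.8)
The plan is to derive this directly from \Cref{prop:cohyp-pg} by exploiting the symmetry between the two notions of ``bi-''. Recall that a set $A$ is bi-hyperimmune when both $A$ and $\overline{A}$ are hyperimmune, and bi-partition generic when both $A$ and $\overline{A}$ are partition generic; the key observation is that ``$A$ is co-hyperimmune'' simply means ``$\overline{A}$ is hyperimmune'', so the class of co-hyperimmune sets is stable under the relevant manipulations.

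First I would unfold the hypothesis: let $A$ be bi-hyperimmune, so that $A$ is hyperimmune and $\overline{A}$ is hyperimmune. Since $\overline{A}$ is hyperimmune, $A$ is co-hyperimmune, hence by \Cref{prop:cohyp-pg} the set $A$ is partition generic. Next, since $A = \overline{\overline{A}}$ is hyperimmune, the set $\overline{A}$ is co-hyperimmune, so applying \Cref{prop:cohyp-pg} again (now to $\overline{A}$ in place of $A$) gives that $\overline{A}$ is partition generic as well. Having shown that both $A$ and $\overline{A}$ are partition generic, the definition of bi-partition genericity yields that $A$ is bi-partition generic, which completes the argument.

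There is essentially no obstacle here: the statement is a formal corollary obtained by applying the preceding proposition to $A$ and to $\overline{A}$ and then reassembling the two conclusions via the definition of ``bi-partition generic''. The only point requiring (minimal) care is to keep the roles of $A$ and $\overline{A}$ straight when translating between ``hyperimmune'' and ``co-hyperimmune'', but this is purely bookkeeping. If one wished, one could also phrase the whole thing in a single sentence, noting that bi-hyperimmunity of $A$ makes both $A$ and $\overline{A}$ co-hyperimmune, so \Cref{prop:cohyp-pg} applies to each.
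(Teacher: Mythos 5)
Your argument is correct and is exactly the paper's proof (which just says ``Immediate by \Cref{prop:cohyp-pg}''): you apply \Cref{prop:cohyp-pg} to $A$ and to $\overline{A}$, each of which is co-hyperimmune by bi-hyperimmunity of $A$, and combine the conclusions. The bookkeeping between ``hyperimmune'' and ``co-hyperimmune'' is handled correctly.
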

\begin{proof}
Immediate by~\Cref{prop:cohyp-pg}.
\end{proof}

As mentioned earlier, every sufficiently random set is bi-partition generic. Moreover, every sufficiently random set is effectively bi-immune. It is natural to wonder whether every effectively co-immune set is partition generic. The following proposition answers negatively.

\begin{proposition}\label[proposition]{prop:co-immune-not-pg}
There is an effectively co-immune set which is not partition generic.
\end{proposition}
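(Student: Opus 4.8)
The goal is to build an effectively co-immune set $A$ (i.e.\ $\overline{A}$ contains no infinite c.e.\ set, witnessed effectively) which fails to be partition generic, meaning there is a non-trivial $\Pi^0_2$ partition regular class $\L$ with $A \notin \L$. By the discussion following \Cref{prop:pg-invariant-finite}, no computable co-infinite set works, so $A$ must be non-computable — but effective co-immunity is a weak, "thin complement" condition, whereas partition genericity is a "fatness" condition, so the two should be compatible with failure. The cleanest strategy is: (1) pick in advance a convenient non-trivial $\Pi^0_2$ partition regular class $\L$ that is "easy to stay out of"; (2) then diagonalize to build $A$ simultaneously (a) out of $\L$ and (b) effectively co-immune.

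**Choosing $\L$.** I would take $\L = \L_{E}$ for a suitable infinite computable set $E$ — say $E$ the set of even numbers — so that $\L_E$ is the $\Pi^0_2$ partition regular class of sets meeting $E$ infinitely often, which is non-trivial. Then $A \notin \L_E$ simply means $A \cap E$ is finite, i.e.\ $A$ contains only finitely many even numbers. The point is that a set which is co-immune need not be "thick" on any particular computable set: we are free to let $A$ live (essentially) inside the odd numbers. So the construction will put almost all of $A$ inside $E^{c}$ (the odds), guaranteeing $A \notin \L_E$, while the odds still leave us plenty of room to defeat enumerations of subsets of $\overline{A}$.

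**Building $A$ effectively co-immune inside the odds.** To make $A$ effectively co-immune we need a computable function $g$ such that for every $e$, if $W_e$ is infinite then $W_e \cap A \neq \emptyset$, and moreover a witness appears below $g(e)$ — equivalently $\overline{A}$ is "effectively immune": there is a computable $h$ with $|W_e| \le h(e)$ whenever $W_e \subseteq \overline{A}$. I would do this by a straightforward finite-extension / movable-marker construction restricted to odd numbers: reserve infinitely many odd numbers as "A-slots" and infinitely many as "$\overline A$-slots". For requirement $R_e$ (kill $W_e$ as a subset of $\overline A$), search for the first odd A-slot $n$ not yet committed such that $e$ enumerates $n$ into $W_e$ within a bounded number of steps; put that $n$ into $A$. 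Because the A-slots among the odds are infinite and computable, this can be arranged so that the bound is computable in $e$, delivering the required witness function and hence effective co-immunity of $A$. All even numbers (except possibly finitely many used for bookkeeping, though in fact we can use none) go into $\overline{A}$, so $A \cap E$ is finite and $A \notin \L_E = \L$; since $\L$ is a non-trivial $\Pi^0_2$ partition regular class, $A$ is not partition generic.

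**Main obstacle.** The one delicate point is making the bound $h(e)$ genuinely computable while keeping the two sets of odd slots (for $A$ and for $\overline A$) both infinite and not interfering — i.e.\ ensuring that when $R_e$ acts it always finds an uncommitted A-slot quickly, and that the "budget" used per requirement is computably bounded. This is handled by a priority-free bookkeeping: interleave the requirements so that $R_e$ is only allowed to act on the $e$-th block of odd A-slots, and within that block take the least element enumerated into $W_e$ by stage $s$ for increasing $s$; the bound $h(e)$ is then read off from the (computable) position of that block. A secondary, routine check is that $A$ remains co-infinite (immediate, as all evens and infinitely many odds stay in $\overline A$) and non-computable, which is automatic from effective co-immunity together with co-infiniteness. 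I do not expect any of this to be hard; the content of the proposition is really just the observation that "thin complement" and "not fat on $E$" are orthogonal.
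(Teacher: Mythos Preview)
Your approach has a fatal flaw at step (1)--(2): taking $\L=\L_E$ with $E$ computable (the evens) is incompatible with co-immunity. If $A\cap E$ is finite, then $E\setminus(A\cap E)$ is an infinite \emph{computable} (hence c.e.) subset of $\overline{A}$, so $\overline{A}$ is not immune and $A$ is not co-immune, let alone effectively co-immune. More generally, for any infinite c.e.\ set $X$, a co-immune set $A$ must satisfy $|A\cap X|=\infty$ (otherwise $X\setminus(A\cap X)$ is an infinite c.e.\ subset of $\overline{A}$), so $A\in\L_X$ for every such $X$. Thus no class of the form $\L_X$ with $X$ c.e.\ can witness failure of partition genericity for a co-immune set; your ``thin complement'' and ``not fat on $E$'' are \emph{not} orthogonal --- the first outright forbids the second.

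This is precisely why the paper cannot use an $\L_X$-type class and instead constructs a density-based partition regular class
\[
\L=\{X:\ \forall k\ \exists n\ |X\uh_{n^2}|\geq nk\},
\]
proves it is partition regular, and then builds $A$ with $|A\uh_{n^2}|<n$ for every $n$ (so $A\notin\L$) while keeping $A$ effectively co-immune via a Post-style simple-set trick: let $x_e$ be the $e^2$-th element of $W_e$ if it exists, and set $A=\{x_e:e\in\omega\}$; then $|W_e|\leq e^2$ whenever $W_e\subseteq\overline{A}$, and $A\uh_{n^2}\subseteq\{x_e:e<n\}$. The real content of the proposition, contrary to your last sentence, is exactly finding a non-trivial $\Pi^0_2$ partition regular class that a sparse-but-co-immune set can avoid; classes $\L_X$ are ruled out, so something genuinely different (density/growth rate) is required.
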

\begin{proof}
Consider the following $\Pi^0_2$ class $\L = \{X\ :\ \forall k\ \exists n\ |X \uh_{n^2}| \geq nk \}$. Let us prove it is partition regular : Let $X \in \L$ and $Y_1 \cup \dots Y_m \supseteq X$. For $k \in \omega$ there is some~$n$ such that $|X \uh_{n^2}| \geq n m k$. Then, there is some $e_k \leq m$ such that $|Y_{e_k} \uh_{n^2}| \geq n k$. Let $e \leq m$ be such that $e = e_k$ for infinitely many $k$. We then have for infinitely many $k$ that there exists $n$ such that $|Y_{e} \uh_{n^2}| \geq n k$. This is therefore true in particular for every $k$. Thus $Y_e \in \L$, so $\L$ is partition regular.

It suffices to construct an effectively co-immune set $A$ such that for every $n$, $|A \uh_{n^2}| < n$ for every $n$. Then $A \not \in \L$, hence $A$ is not partition generic.
To construct such a set, for every $e$, let $x_e$ be the $e^2$th element of $W_e$ (in the $<_\omega$ order), if it exists. Assume by convention that $W_0 = \emptyset$, so $e_0$ does not exist. Let $A = \{ x_e : e \in \omega \}$. Then $A$ is an infinite set, which is effectively co-immune, as witnessed by the fonction $n \mapsto n^2$. Last, for every $n$, $A \uh_{n^2} \subseteq \{ x_e : e < n \}$, so $|A \uh_{n^2}| < n$.
\end{proof}

We have seen so far three classes of partition generic sets: co-finite sets, co-hyperimmune sets and Kurtz randoms. Let us construct a bi-partition generic set which belongs to none of these categories.

\begin{proposition}\label[proposition]{prop:bipartition-generic-2n}
There is a bi-partition generic set~$A$ such that for every~$n$, $A(2n) \neq A(2n+1)$.
\end{proposition}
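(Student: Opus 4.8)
The plan is to build $A$ by a forcing construction, meeting two families of requirements: the "genericity" requirements forcing $A$ into every non-trivial $\Pi^0_2$ partition regular class, the symmetric ones forcing $\overline{A}$ into every such class, and the "structural" requirements $A(2n) \neq A(2n+1)$ for all $n$. The structural constraint is the only real novelty, since bi-partition genericity alone is already provided by, say, \Cref{prop:kurtz-bipg}; the point is that a Kurtz random will almost surely violate $A(2n)\neq A(2n+1)$, so we need a bespoke construction. First I would set up the notion of forcing: conditions are pairs $(\sigma, \ell)$ where $\sigma \in 2^{<\omega}$ has even length and already satisfies $\sigma(2n)\neq\sigma(2n+1)$ for all $2n+1 < |\sigma|$, and $\ell$ records which requirements have been satisfied; extension keeps the alternation constraint on every new block of two bits. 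Equivalently, one can view $A$ as coded by an element $f \in 2^\omega$ via $A(2n) = f(n)$, $A(2n+1) = 1 - f(n)$, so that $A$ ranges over a single homeomorphic copy $\P \subseteq 2^\omega$ of Cantor space, and the task becomes: make the coded set $f$ sufficiently generic so that both $A$ and $\overline{A}$ land in every non-trivial $\Pi^0_2$ partition regular class.

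The heart of the matter is the density lemma: given a condition $\sigma$ (with the alternation property) and a non-trivial $\Pi^0_2$ partition regular class $\L = \bigcap_m \U_m$ with each $\U_m$ an upward-closed $\Sigma^0_1$ partition large class, I must find an extension $\tau \supseteq \sigma$, still alternating, with $\tau \in \U_m$ — and symmetrically for $\overline{\cdot}$. The key structural fact I would use is that the set $E$ of even positions and the set $O$ of odd positions partition $\omega$, and on the subset $A$ we are building, the positions actually used by $A$ beyond $|\sigma|$ form a set that still hits infinitely much of $\omega$ in a controlled way. More precisely: since $\U_m$ is partition large and upward-closed, and since the "alternation pattern" forces $A$ to contain, for each block $\{2n, 2n+1\}$ past $|\sigma|$, exactly one of the two elements — so $A$ restricted past $|\sigma|$ is an infinite set meeting every block — I claim $A \in \L$ is forceable. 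The mechanism: by \Cref{lem:lx} applied to $\U_m$, and by partition regularity, there is a large class $\U_m \cap \L_X$ for suitable $X$; choosing $X$ to be "one representative from each block" realizable by an alternating string shows some alternating finite string lies in $\U_m$. Concretely, consider the $2$-cover of $\omega$ by $E$ and $O$ (evens and odds): since $\U_m$ is partition large, $E \in \U_m$ or $O \in \U_m$; whichever holds, say $E$, is itself realizable as a limit of alternating strings (the string with $0$ in every even slot, $1$ in every odd slot, extending $\sigma$), and by the $\Sigma^0_1$-ness of $\U_m$ a finite such string already witnesses membership; the symmetric bit-flip handles $\overline{A}$ and the other parity. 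One must be slightly careful that the fixed prefix $\sigma$ does not obstruct this, but since non-trivial partition regular classes are closed under finite changes (\Cref{prop:measure:pr}), the prefix is harmless.

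The remaining ingredient is bookkeeping: enumerate all non-trivial $\Pi^0_2$ partition regular classes is not literally possible (there are continuum many), but we only need a countable dense set of requirements. Here I would invoke that it suffices to be generic for the countably many pairs $(e, m)$ where $\U_e$ ranges over upward-closed $\Sigma^0_1$ classes (these have a computable enumeration), with the requirement "if $\U_e$ is partition large then $A \in \U_e$" and its flip for $\overline{A}$; every non-trivial $\Pi^0_2$ partition regular class is an intersection of such $\U_e$'s (by the proposition preceding \Cref{cor:complarge}, $\L(\U)$ is $\Pi^0_2$, and conversely every such $\L$ is $\bigcap \U_e$ over large $\Sigma^0_1$ supersets). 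So a construction meeting these countably many dense requirements, never violating the alternation constraint, produces the desired $A$. The main obstacle I anticipate is precisely the density lemma — verifying that the alternation constraint is compatible with diving into an arbitrary large $\Sigma^0_1$ class $\U_e$; the argument via the $E/O$ $2$-cover above is the crux, and it works because partition largeness is exactly the property that lets us two-color $\omega$ and keep a large piece, while the alternating strings are cofinal in (a copy of) one of the two pieces.
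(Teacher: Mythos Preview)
Your approach is the paper's: force with alternating finite strings and prove density into each non-trivial $\Sigma^0_1$ partition large class via the even/odd split. The bookkeeping reduction to countably many $\Sigma^0_1$ partition large classes is also right.

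There is one step that does not go through as you wrote it. You obtain $E \in \U_m$ (or $O \in \U_m$) from partition \emph{largeness} of $\U_m$, and then invoke \Cref{prop:measure:pr} to say the prefix $\sigma$ is harmless. But \Cref{prop:measure:pr} is about non-trivial partition \emph{regular} classes; a $\Sigma^0_1$ partition large class need not be closed under finite changes even for infinite members. For instance $\U = \{Z : |Z \cap E| \geq 2\}$ (with $E$ the evens) is non-trivial, upward-closed, $\Sigma^0_1$ and partition large, yet $\{0,2\} \cup O \in \U$ while $(\{0,2\} \cup O) \setminus \{0\} \notin \U$. So from $E \in \U_m$ alone you cannot conclude that the alternating all-evens extension of $\sigma$ lies in $\U_m$.

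Two easy repairs. One is to run the $2$-cover argument inside $\L(\U_m)$ rather than $\U_m$: since $\U_m$ is non-trivial partition large, $\L(\U_m)$ is a non-trivial partition regular class, so $E \in \L(\U_m)$ or $O \in \L(\U_m)$, and now \Cref{prop:measure:pr} legitimately passes to the finite modification, landing you back in $\U_m$. The other --- what the paper actually does --- is to absorb the prefix into the cover: set $X_0 = \{2n : 2n > |\sigma|\}$, $X_1 = \{2n+1 : 2n+1 > |\sigma|\}$, and cover $\omega$ by the singletons $\{0\},\dots,\{|\sigma|\}$ together with $X_0, X_1$. Non-triviality of $\U_m$ rules out the singletons, so $X_0 \in \U_m$ or $X_1 \in \U_m$; then $\tau = \sigma \cup (X_i \uh_{2k})$ for suitable $k$ is an alternating extension of $\sigma$ with $[\tau] \subseteq \U_m$ by upward closure. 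With either fix your plan goes through unchanged.
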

\begin{proof}
Consider the notion of forcing whose conditions are strings $\sigma$ of even length, such that for every~$n < |\sigma|/2$, $\sigma(2n) \neq \sigma(2n+1)$. The conditions are partially ordered by the suffix relation. Let us show that every sufficiently generic set~$G$ is bi-partition generic.

Let~$\sigma$ be a condition, and let $\U$ be a non-trivial $\Sigma^0_1$ partition large class. Let $X_0 = \{2n : 2n > |\sigma| \}$ and $X_1 = \{2n+1 : 2n+1 > |\sigma|$. Since $\{0\} \cup \dots \cup \{|\sigma|\} \cup X_0 \cup X_1 \supseteq \omega$, then either $X_0 \in \U$, or $X_1 \in \U$. Say the former case holds as the other case is symmetric. Since $\U$ is $\Sigma^0_1$, then there is some $k$ such that $[X_0 \uh_{2k}] \subseteq \U$. Let $\tau = \sigma \cup X_0 \uh_{2k}$. Note that $\tau$ is a valid condition. Moreover, since $\U$ is closed under superset, then $[\tau] \subseteq \U$. Therefore, every sufficiently generic set~$G$ for this notion of forcing belongs to every non-trivial $\Sigma^0_1$ partition large class. By symmetric, so does the complement of~$G$, so $G$ is bi-partition generic.
\end{proof}

\begin{corollary}
There is a bi-partition generic set which is neither co-hyperimmune, nor Kurtz random.
\end{corollary}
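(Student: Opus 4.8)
The plan is to exhibit a single set $A$ which is simultaneously bi-partition generic, not co-hyperimmune, and not Kurtz random, by invoking \Cref{prop:bipartition-generic-2n}. The crucial observation is that the set $A$ produced there already satisfies $A(2n) \neq A(2n+1)$ for every $n$, and that this alternation constraint is enough to rule out both Kurtz randomness and co-hyperimmunity, while the proposition itself guarantees bi-partition genericity. So the work is entirely in checking that the extra combinatorial property is incompatible with the other two notions of typicality.

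First I would rule out Kurtz randomness. A Kurtz random set cannot lie in any effectively closed (i.e. $\Pi^0_1$) class of measure zero. But the class $\mathcal{C} = \{ X \in 2^\omega : \forall n\ X(2n) \neq X(2n+1) \}$ is a $\Pi^0_1$ class, and it has measure zero, since at each pair of coordinates $(2n, 2n+1)$ the constraint excludes half the strings, and $(1/2)^n \to 0$. Hence $A \in \mathcal{C}$ forces $A$ not to be Kurtz random; in fact it shows $A$ is not even weakly $1$-random. Second I would rule out co-hyperimmunity. Since $A(2n) \neq A(2n+1)$, for every $n$ exactly one of $2n, 2n+1$ belongs to $\overline{A}$; in particular $\overline{A}$ contains an element in each block $\{2n, 2n+1\}$. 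The partial computable array $n \mapsto \{2n, 2n+1\}$ (a total computable array of finite sets of size $2$, hence a legitimate Mathias-type enumeration of finite sets) meets $\overline{A}$ in every interval, so the principal function of $\overline{A}$ is dominated by the computable function $n \mapsto 2n+1$. Therefore $\overline{A}$ is not hyperimmune, i.e. $A$ is not co-hyperimmune. By the symmetry $A(2n)\neq A(2n+1)$, the same argument applied to $A$ itself shows $A$ is not hyperimmune either, so neither $A$ nor $\overline{A}$ is hyperimmune.

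Combining: \Cref{prop:bipartition-generic-2n} gives a bi-partition generic $A$ with $A(2n)\neq A(2n+1)$ for all $n$; the two paragraphs above show such an $A$ is neither co-hyperimmune nor Kurtz random, which is exactly the statement of the corollary. I do not expect any serious obstacle here: the only point requiring a moment's care is confirming that the alternation class $\mathcal{C}$ is genuinely $\Pi^0_1$ of measure zero (immediate) and that the block array is a valid witness against hyperimmunity (also immediate, since it is a computable sequence of pairwise disjoint finite sets whose union is $\omega$). The corollary is thus essentially a one-line consequence of the preceding proposition once these observations are recorded.
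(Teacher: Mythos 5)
Your proof is correct and is essentially identical to the paper's: both take the set $A$ of \Cref{prop:bipartition-generic-2n}, use the c.e.\ array $F_n = \{2n,2n+1\}$ to witness that neither $A$ nor $\overline{A}$ is hyperimmune, and note that $A$ lies in the measure-zero $\Pi^0_1$ class $\{X : \forall n\ X(2n) \neq X(2n+1)\}$, hence is not Kurtz random.
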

\begin{proof}
Consider the set~$A$ of \Cref{prop:bipartition-generic-2n}.
It is clearly neither hyperimmune nor co-hyperimmune since~$F_0, F_1, \dots$ defined by~$F_n = \{2n, 2n+1\}$ is a c.e. array tracing both~$A$ and $\overline{A}$.  
Furthermore, $A$ is not Kurtz random, since~$A \in \{ X : \forall n X(2n) \neq X(2n+1) \}$ which is a $\Pi^0_1$ class of measure~0.
\end{proof}


The following lemma is a sort of pigeonhole principle, from which we will derive partition regularity of the class of sets which are partition generic in some non-trivial $\Sigma^0_1$ partition large class.

\begin{lemma} \label{lemma:abslarge2}
Let $\A \subseteq 2^\omega$ be a class and $X$ be a set which is partition generic in $\A$. For every $Y_0 \cup Y_1 \supseteq X$, if $Y_0 \notin \L(\A)$ then $Y_1$ is partition generic in $\A$.
\end{lemma}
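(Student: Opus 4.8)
The statement to prove is: if $X$ is partition generic in $\A$, and $Y_0 \cup Y_1 \supseteq X$ with $Y_0 \notin \L(\A)$, then $Y_1$ is partition generic in $\A$. My plan is to argue by contraposition on $Y_1$: suppose $Y_1$ is \emph{not} partition generic in $\A$, i.e.\ there is a non-trivial $\Pi^0_2$ partition regular subclass $\L \subseteq \A$ with $Y_1 \notin \L$. I want to build, from $\L$, another non-trivial $\Pi^0_2$ partition regular subclass $\L' \subseteq \A$ avoiding $X$, which contradicts partition genericity of $X$; and to do so I will have to use the hypothesis $Y_0 \notin \L(\A)$ in an essential way.

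The first step is to unpack $Y_0 \notin \L(\A)$ using \Cref{prop:complarge}: there is some $k$ and a $(k{+}1)$-cover $Z_0 \cup \dots \cup Z_k \supseteq Y_0$ with $Z_i \notin \A$ for every $i \leq k$. The second step is to observe that then $Z_0 \cup \dots \cup Z_k \cup Y_1 \supseteq Y_0 \cup Y_1 \supseteq X$ is a finite cover of $X$. Now I form the class $\L' = \L(\A \cap \L_X) \cap \dots$ — more precisely, I want the largest partition regular subclass of $\A$ refined to live "inside $X$". A cleaner route: by \Cref{lem:lx} (applied with $\A$ replaced by $\L(\A)$, or directly), consider $\L_0 = \L(\A) \cap \L_X$; if $X \in \L(\A)$ this is partition large by \Cref{lem:partition-regular-x}, and $\L(\L_0)$ is a non-trivial $\Pi^0_2$ partition regular subclass of $\A$ containing $X$. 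Then since $Z_0,\dots,Z_k,Y_1$ cover $X$ and $X \in \L(\L_0)$, partition regularity forces one of them into $\L(\L_0) \subseteq \A$; but each $Z_i \notin \A$, so the witness must be $Y_1$, giving $Y_1 \in \L(\L_0) \subseteq \L \cup (\text{stuff})$ — and I must arrange this to contradict $Y_1 \notin \L$. The honest way to get the contradiction is to take $\L' = \L(\A)$ itself at the outer level when $X \in \L(\A)$: then $X$ being partition generic forces $X \in$ every non-trivial $\Pi^0_2$ partition regular subclass, in particular $X \in \L(\A)$ provided $\L(\A)$ is non-trivial $\Pi^0_2$ — which it need not be in general. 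So the real argument is: if $X$ is partition generic in $\A$ and $X \in \L(\A)$ is not automatic, I instead run the covering argument with the class $\L$ witnessing failure of $Y_1$: since $\L$ is partition regular and $X \subseteq Y_1 \cup \bigcup_i Z_i$, if $X \in \L$ then some member of the cover lies in $\L \subseteq \A$; it cannot be any $Z_i$ (as $Z_i \notin \A$), so $Y_1 \in \L$, contradiction. Hence $X \notin \L$, contradicting that $X$ is partition generic in $\A$ (as $\L$ is a non-trivial $\Pi^0_2$ partition regular subclass of $\A$).

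So the clean proof is just: assume $Y_1$ not partition generic in $\A$, get $\L \subseteq \A$ non-trivial $\Pi^0_2$ partition regular with $Y_1 \notin \L$; get the finite cover $Z_0 \cup \dots \cup Z_k \supseteq Y_0$ with all $Z_i \notin \A$ from $Y_0 \notin \L(\A)$ via \Cref{prop:complarge}; note $\{Z_i\}_{i\le k} \cup \{Y_1\}$ covers $X$; since $X$ is partition generic in $\A$ and $\L$ is a non-trivial $\Pi^0_2$ partition regular subclass of $\A$, we have $X \in \L$, so by partition regularity some element of the cover is in $\L \subseteq \A$; since no $Z_i \in \A$, it must be $Y_1 \in \L$ — contradiction.

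\begin{proof}
Suppose for contradiction that $Y_1$ is not partition generic in $\A$. Then there is a non-trivial $\Pi^0_2$ partition regular subclass $\L \subseteq \A$ with $Y_1 \notin \L$.

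Since $Y_0 \notin \L(\A)$, by \Cref{prop:complarge} there is some $k$ and sets $Z_0, \dots, Z_k$ with $Z_0 \cup \dots \cup Z_k \supseteq Y_0$ such that $Z_i \notin \A$ for every $i \leq k$. As $Y_0 \cup Y_1 \supseteq X$, the sets $Z_0, \dots, Z_k, Y_1$ form a finite cover of $X$.

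Now $X$ is partition generic in $\A$, and $\L$ is a non-trivial $\Pi^0_2$ partition regular subclass of $\A$, so $X \in \L$. By partition regularity of $\L$ applied to the cover $Z_0 \cup \dots \cup Z_k \cup Y_1 \supseteq X$, one of these sets belongs to $\L$. Since $\L \subseteq \A$ and $Z_i \notin \A$ for every $i \leq k$, this element must be $Y_1$, so $Y_1 \in \L$. This contradicts $Y_1 \notin \L$.

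Therefore $Y_1$ is partition generic in $\A$.
\end{proof}
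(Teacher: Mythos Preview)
Your proof is correct. The paper's argument follows the same contradiction strategy but takes a shorter path: having chosen a non-trivial $\Pi^0_2$ partition regular $\V \subseteq \A$ with $Y_1 \notin \V$, the paper simply observes that $\V \subseteq \L(\A)$ (since $\L(\A)$ is the \emph{largest} partition regular subclass of $\A$), so the hypothesis $Y_0 \notin \L(\A)$ immediately gives $Y_0 \notin \V$; then partition regularity of $\V$ applied to the $2$-cover $Y_0 \cup Y_1 \supseteq X$ yields $X \notin \V$, contradicting partition genericity of $X$. Your route instead unpacks $Y_0 \notin \L(\A)$ via \Cref{prop:complarge} into an explicit bad cover $Z_0,\dots,Z_k$ of $Y_0$ and then applies partition regularity to the larger cover $Z_0,\dots,Z_k,Y_1$ of $X$. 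This is perfectly valid, but the detour through the explicit cover is unnecessary once one notices the inclusion $\V \subseteq \L(\A)$.
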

\begin{proof}
Suppose for contradiction that there is a non-trivial $\Pi^0_2$ partition regular class $\V \subseteq \A$ such that $Y_1 \notin \V$. In particular, $\V \subseteq \L(A)$, so $Y_0 \notin \V$. By partition regularity of $\V$, since $Y_0 \cup Y_1 \supseteq X$, then $X \notin \V$, which contradicts partition genericity of $X$ in $\A$.
\end{proof}

\begin{proposition} \label[proposition]{lemma:abslarge}
Let $\A$ be a partition large class. Suppose $X$ is partition generic in $\A$. Let $Y_0 \cup \dots \cup Y_k \supseteq X$. Then there is a $\Sigma^0_1$ class $\U$ such that $\U \cap \A$ is partition large, together with some $i \leq k$ such that $Y_i$ is partition generic in $\U \cap \A$.
\end{proposition}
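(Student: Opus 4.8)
The plan is to prove the statement by induction on $k$ (the cover $Y_0\cup\dots\cup Y_k\supseteq X$ having $k+1$ parts). \textbf{Base case $k=0$.} Here $Y_0\supseteq X$. Partition regular classes are upward-closed, so every non-trivial $\Pi^0_2$ partition regular subclass of $\A$ that contains $X$ also contains $Y_0$; hence $Y_0$ is partition generic in $\A$. Take $\U=2^\omega$: then $\U\cap\A=\A$ is partition large and $Y_0$ is partition generic in $\U\cap\A$.

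\textbf{Inductive step.} Assume the statement for $(k+1)$-part covers, and let $Y_0\cup\dots\cup Y_{k+1}\supseteq X$; set $Z=Y_0\cup\dots\cup Y_k$, so $Z\cup Y_{k+1}\supseteq X$. If $Y_{k+1}$ is itself partition generic in $\A$, take $\U=2^\omega$ and $i=k+1$ and we are done. Otherwise fix a non-trivial $\Pi^0_2$ partition regular class $\V\subseteq\A$ with $Y_{k+1}\notin\V$. The crux is to over-approximate $\V$ by an upward-closed $\Sigma^0_1$ class $\mathcal{O}$ with $\V\subseteq\mathcal{O}$ and $Y_{k+1}\notin\mathcal{O}$. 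To get this, write $\V=\bigcap_n U_n$ with the $U_n$ effectively open, and for each $\sigma\in 2^{<\omega}$ let $[\sigma]^\uparrow=\{Y:Y\supseteq\{i<|\sigma|:\sigma(i)=1\}\}$ (a clopen, upward-closed class), and set $\mathcal{O}_n=\bigcup\{[\sigma]^\uparrow:[\sigma]^\uparrow\subseteq U_0\cap\dots\cap U_n\}$. Each $\mathcal{O}_n$ is upward-closed and $\Sigma^0_1$, and $\bigcap_n\mathcal{O}_n\subseteq\bigcap_n U_n=\V$; conversely, if $X'\in\V$ then since $\V$ is upward-closed $\{Y:Y\supseteq X'\}\subseteq\V\subseteq U_0\cap\dots\cap U_n$, and $\{Y:Y\supseteq X'\}=\bigcap_\ell[X'\!\uh_\ell]^\uparrow$ is a decreasing intersection of compact classes contained in an open set, so by compactness $[X'\!\uh_\ell]^\uparrow\subseteq U_0\cap\dots\cap U_n$ for some $\ell$, i.e. $X'\in\mathcal{O}_n$. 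Thus $\V=\bigcap_n\mathcal{O}_n$, and since $Y_{k+1}\notin\V$ we may fix $m$ with $Y_{k+1}\notin\mathcal{O}_m$; put $\mathcal{O}=\mathcal{O}_m$.

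Since $\V\subseteq\mathcal{O}$, the class $\mathcal{O}\cap\A$ is upward-closed and contains the partition regular class $\V$, hence is partition large by \Cref{prop:large-contains-pr}. As $\L(\mathcal{O}\cap\A)\subseteq\mathcal{O}\cap\A\subseteq\mathcal{O}$ and $Y_{k+1}\notin\mathcal{O}$, we get $Y_{k+1}\notin\L(\mathcal{O}\cap\A)$. Also $X$ is partition generic in $\mathcal{O}\cap\A$, because every non-trivial $\Pi^0_2$ partition regular subclass of $\mathcal{O}\cap\A$ is also such a subclass of $\A$ and therefore contains $X$. Applying \Cref{lemma:abslarge2} to the class $\mathcal{O}\cap\A$, the set $X$, and the $2$-cover $Y_{k+1}\cup Z\supseteq X$ (using $Y_{k+1}\notin\L(\mathcal{O}\cap\A)$) shows that $Z=Y_0\cup\dots\cup Y_k$ is partition generic in $\mathcal{O}\cap\A$. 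Now apply the induction hypothesis to the partition large class $\mathcal{O}\cap\A$, the partition generic set $Z$, and the $(k+1)$-part cover $Y_0\cup\dots\cup Y_k\supseteq Z$: it produces a $\Sigma^0_1$ class $\U'$ such that $\U'\cap\mathcal{O}\cap\A$ is partition large and $Y_i$ is partition generic in $\U'\cap\mathcal{O}\cap\A$ for some $i\le k$. Finally set $\U=\U'\cap\mathcal{O}$: this is $\Sigma^0_1$, $\U\cap\A=\U'\cap\mathcal{O}\cap\A$ is partition large, and $Y_i$ is partition generic in $\U\cap\A$, which completes the induction.

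I expect the only genuine obstacle to be the over-approximation step, namely producing an upward-closed $\Sigma^0_1$ class $\mathcal{O}\supseteq\V$ with $Y_{k+1}\notin\mathcal{O}$; this rests on the compactness argument above (and implicitly on the fact, used earlier, that $\L$ of a $\Sigma^0_1$ upward-closed class behaves well). Everything else is bookkeeping: gluing \Cref{lemma:abslarge2} and \Cref{prop:large-contains-pr} through the induction, plus the elementary observation that partition genericity in $\A$ is automatically inherited by every partition large subclass of $\A$.
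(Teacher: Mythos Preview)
Your proof is correct and follows exactly the paper's inductive strategy: if $Y_k$ fails to be partition generic in $\A$, extract a $\Sigma^0_1$ class from a witnessing non-trivial $\Pi^0_2$ partition regular subclass, check that its intersection with $\A$ is still partition large, apply \Cref{lemma:abslarge2}, and recurse. The one place where you do more work than the paper is the compactness argument producing an \emph{upward-closed} $\Sigma^0_1$ over-approximation $\mathcal{O}$ with $\V\subseteq\mathcal{O}$ and $Y_{k+1}\notin\mathcal{O}$; the paper simply picks a $\U_e$ from the presentation $\V=\bigcap_{e\in C}\U_e$ and tacitly treats $\U_e\cap\A$ as partition large, so your version makes explicit a detail (upward-closure of the chosen $\Sigma^0_1$ layer) that the paper leaves to the reader.
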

\begin{proof}
We prove the statement by the lemma by induction on~$k$.
For $k = 0$, $Y_0 \supseteq X$ so $Y_0$ is partition generic in~$\A$ by upward-closure of partition genericity. Take $\U = 2^\omega$ and we are done.

Suppose now that the property holds for $k-1$. Suppose $Y_k$ is not partition generic in $\A$.
Thus there is a non-trivial $\Pi^0_2$ partition regular class $\bigcap_{e \in C} \U_e \subseteq \A$ such that $Y_k \not \in \bigcap_{e \in C} \U_e$. In particular, there is some $e \in \C$ such that $Y_k \notin \U_e$. Note that $\bigcap_{e \in C} \U_e \subseteq \A \cap \U_e$, so $\A \cap \U_e$ is partition large. By \Cref{lemma:abslarge2}, $Y_0 \cup \dots \cup Y_{k-1}$ is partition generic in $\U_e \cap \A$. By induction hypothesis on $\U_e \cap \A$ and $Y_0 \cup \dots \cup Y_{k-1}$, there is a $\Sigma^0_1$ class $\V$ such that $\V \cap \U_e \cap \A$ is partition large, together with some $i < k$ such that $Y_i$ is partition generic in $\V \cap \U_e \cap \A$. The property therefore holds with the $\Sigma^0_1$ class $\V \cap \U_e$.
\end{proof}

\begin{corollary}\label[corollary]{cor:partition-generic-regular}
The following class is partition regular : 
$$
\P = \{ X : X \mbox{ is partition generic in some non-trivial } \Sigma^0_1 \mbox{ partition large class} \}
$$
\end{corollary}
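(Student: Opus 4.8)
The plan is to verify the three axioms of a partition regular class for $\P$ directly, using the machinery built up in the preceding propositions. Nonemptiness is immediate: $\omega$ is partition generic in $2^\omega$, and $2^\omega$ is a non-trivial $\Sigma^0_1$ partition large class (it is upward-closed, it contains the partition regular class of all infinite sets, and it is trivially contained in $\U_2$ — wait, one must be slightly careful here, since $2^\omega$ contains finite sets and even $\emptyset$, so it is \emph{not} non-trivial in the sense of the last definition). So instead I would take $\U = \U_2$ itself as the witnessing class for $\omega$: $\U_2$ is $\Sigma^0_1$, upward-closed, contains the class of all infinite sets (a partition regular subclass), hence is partition large by \Cref{prop:large-contains-pr}, and is non-trivial by definition; and $\omega \in \U_2$ is vacuously partition generic in $\U_2$ since the only sets we need to land in are members of non-trivial $\Pi^0_2$ partition regular subclasses of $\U_2$, all of which contain $\omega$ by partition regularity. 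Thus $\omega \in \P$.

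Second, upward-closure: suppose $X \in \P$, witnessed by a non-trivial $\Sigma^0_1$ partition large class $\U$ with $X$ partition generic in $\U$, and let $Y \supseteq X$. Partition genericity is upward-closed in the trivial sense — any non-trivial $\Pi^0_2$ partition regular class is upward-closed, so if $X$ lies in all such subclasses of $\U$ then so does $Y$ — hence $Y$ is partition generic in the same $\U$, giving $Y \in \P$. (This is exactly the base case $k=0$ of \Cref{lemma:abslarge}.)

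Third, and this is the substantive axiom, the pigeonhole property: suppose $X \in \P$ is partition generic in the non-trivial $\Sigma^0_1$ partition large class $\U$, and $Y_0 \cup \dots \cup Y_k \supseteq X$. Apply \Cref{lemma:abslarge} with $\A = \U$: this yields a $\Sigma^0_1$ class $\V$ such that $\V \cap \U$ is partition large, together with some $i \leq k$ such that $Y_i$ is partition generic in $\V \cap \U$. The only gap to close is that the witnessing class in the definition of $\P$ must be \emph{non-trivial} (contained in $\U_2$), whereas \Cref{lemma:abslarge} produces $\V \cap \U$ which need not be. This is fixed by replacing $\V \cap \U$ with $\V \cap \U \cap \U_2$: intersecting with the $\Sigma^0_1$ class $\U_2$ keeps it $\Sigma^0_1$ and, since every non-trivial $\Pi^0_2$ partition regular subclass of $\V \cap \U$ is already contained in $\U_2$ by \Cref{prop:non-trivial-u2}, it does not remove any such subclass; in particular $\V \cap \U \cap \U_2$ still contains a partition regular subclass (namely $\L(\V \cap \U)$, which is non-trivial hence $\subseteq \U_2$), so it is partition large, and it is now non-trivial. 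And $Y_i$ remains partition generic in it, since the collection of non-trivial $\Pi^0_2$ partition regular subclasses is unchanged. Hence $Y_i \in \P$.

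The main obstacle is really just this bookkeeping about non-triviality: \Cref{lemma:abslarge} is stated for partition large classes without the non-triviality restriction, while $\P$ is defined using non-trivial witnessing classes, so one must check that passing to $\U_2$-intersections is harmless. Everything else is a direct unwinding of definitions together with \Cref{lemma:abslarge} and \Cref{lemma:abslarge2}.
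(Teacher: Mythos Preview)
Your proof is correct and follows essentially the same route as the paper: nonemptiness via $\omega \in \U_2$, then \Cref{lemma:abslarge} for the pigeonhole axiom. The one difference is that your worry about non-triviality of $\V \cap \U$ is unfounded: since $\U$ is already non-trivial (i.e., $\U \subseteq \U_2$) and $\V \cap \U \subseteq \U$, the class $\V \cap \U$ is automatically non-trivial, so the extra intersection with $\U_2$ changes nothing and can be dropped --- this is exactly the one-line observation the paper makes.
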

\begin{proof}
First, $\omega$ is partition generic in $\U_2 = \{ X : |X| \geq 2 \}$.
Suppose $X \in \P$ and let $Y_0 \cup \dots \cup Y_k \supseteq X$. Let $\U \subseteq 2^\omega$ be a non-trivial $\Sigma^0_1$ partition large class in which $X$ is partition generic. By \Cref{lemma:abslarge}, there is a $\Sigma^0_1$ class $\V$ such that $\U \cap \V$ is partition large, together with some $i \leq k$ such that $Y_i$ is partition generic in $\U \cap \V$. Note that $\U \cap \V \subseteq \U$, hence is non-trivial. It follows that $Y_i \in \P$.
\end{proof}

Partition genericity within a large class is not an interesting notion of typicality whenever one does not put effectiveness restrictions on the large classes. Indeed, we shall see that every infinite set is partition generic relative to itself within a large class.
Given an infinite set~$X$, let $\U_{X,2} = \{ Y : |Y \cap X| \geq 2\}$.

\begin{lemma}\label[lemma]{lem:ua2-non-trivial}
For every set~$X$, $\L(\U_{X,2}) = \L_X$.
\end{lemma}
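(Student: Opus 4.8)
The plan is to prove the two inclusions separately, using \Cref{lem:lx} to rephrase $\L(\U_{X,2})$ in terms of partition largeness. Note first that $\U_{X,2} = \{Y : |Y \cap X| \geq 2\}$ is upward-closed and is partition large: any finite cover $Y_0 \cup \dots \cup Y_k \supseteq \omega$ must have some $Y_j$ meeting $X$ in at least two points, since $X$ is infinite and is covered by finitely many pieces. Moreover $\U_{X,2} \subseteq \U_2$. Hence \Cref{lem:lx} applies and gives
$$
\L(\U_{X,2}) = \{ Z : \U_{X,2} \cap \L_Z \mbox{ is partition large} \}.
$$

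For the inclusion $\L_X \subseteq \L(\U_{X,2})$: let $Z \in \L_X$, so $Z$ meets $X$ infinitely often. I must show $\U_{X,2} \cap \L_Z$ is partition large, i.e. it is upward-closed (clear, as both factors are) and every finite cover of $\omega$ has a piece in it. Given $Y_0 \cup \dots \cup Y_k \supseteq \omega$, intersect with the infinite set $Z \cap X$: some $Y_j$ meets $Z \cap X$ infinitely often, hence $Y_j \in \L_Z$ and $|Y_j \cap X| = \infty \geq 2$, so $Y_j \in \U_{X,2} \cap \L_Z$. Thus $Z \in \L(\U_{X,2})$.

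For the reverse inclusion $\L(\U_{X,2}) \subseteq \L_X$: I argue contrapositively. Suppose $Z \notin \L_X$, i.e. $Z \cap X$ is finite. I claim $\U_{X,2} \cap \L_Z$ is not partition large, so $Z \notin \L(\U_{X,2})$. The idea is that any member $Y$ of $\U_{X,2} \cap \L_Z$ must be infinite (being in $\L_Z$, as $Z$ is infinite — here I use that $Z \in \L_X^{\,\mathrm{c}}$ still has $Z$ infinite if $\L_X$ is about $X$, but if $Z$ itself is finite then $\L_Z$ already fails to be partition large trivially; so assume $Z$ infinite) and must meet the finite set $Z \cap X$ in at least $2$ points — wait, that is not forced. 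Instead, the cleaner route: since $Z \cap X$ is finite, pick the cofinite set $W = \omega \setminus (Z \cap X)$; then $\{x\}$ for each $x \in Z \cap X$ together with $W$ form a finite cover of $\omega$. Each singleton $\{x\}$ is not in $\U_{X,2} \cap \L_Z$ (it is finite), and $W \cap Z \cap X = \emptyset$, so $|W \cap X|$ could still be infinite, but $W$ is a valid candidate to check — actually $W$ may well lie in $\U_{X,2} \cap \L_Z$. So this particular cover does not witness failure.

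The main obstacle is precisely this reverse inclusion; the honest argument is as follows. Assume $Z \cap X$ finite; I show $\U_{X,2} \cap \L_Z$ is not partition large by exhibiting a finite cover of $\omega$ none of whose pieces lies in it. Take $Y_0 = Z$ and $Y_1 = \overline{Z}$, so $Y_0 \cup Y_1 = \omega$. Now $Y_1 = \overline{Z} \notin \L_Z$ since $\overline{Z} \cap Z = \emptyset$ is finite. And $Y_0 = Z$: we have $|Z \cap X| < \infty$, and in fact $|Z\cap X|$ could be $\geq 2$, so $Z$ might be in $\U_{X,2}$ — but the point is $Z \in \L_Z$ always, so this cover fails too. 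The resolution: one must combine both obstructions. Since $Z \cap X$ is finite, enumerate it as $\{x_1,\dots,x_m\}$. Consider the $(m+2)$-cover consisting of $\overline{Z}$, the singletons $\{x_1\},\dots,\{x_m\}$, and $Z \setminus X$. Then $\overline{Z} \notin \L_Z$; each $\{x_i\}$ is finite hence not in $\L_Z$; and $Z \setminus X$ satisfies $(Z\setminus X) \cap X = \emptyset$, so $|(Z\setminus X)\cap X| = 0 < 2$, hence $Z \setminus X \notin \U_{X,2}$. Thus no piece of this finite cover of $\omega$ (the union is $\overline{Z} \cup \{x_1,\dots,x_m\} \cup (Z\setminus X) = \overline Z \cup Z = \omega$) lies in $\U_{X,2}\cap\L_Z$, so $\U_{X,2}\cap\L_Z$ is not partition large and $Z \notin \L(\U_{X,2})$. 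Combining the two inclusions gives $\L(\U_{X,2}) = \L_X$, as desired. I expect the write-up to need only care in getting this finite cover right; everything else is routine from \Cref{lem:lx} and the definitions.
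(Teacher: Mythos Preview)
Your argument is correct, but it takes a genuinely different route from the paper's own proof. The paper never invokes \Cref{lem:lx}; instead it argues both inclusions straight from the definition of $\L(\cdot)$. For $\L_X \subseteq \L(\U_{X,2})$ the paper simply observes that $\L_X$ is itself a partition regular class contained in $\U_{X,2}$ (since $|Y\cap X|=\infty$ implies $|Y\cap X|\ge 2$), and then appeals to the maximality of $\L(\U_{X,2})$. For $\L(\U_{X,2}) \subseteq \L_X$ the paper takes $Y \in \L(\U_{X,2})$ with $|Y\cap X|<\infty$ and covers $Y$ by the singletons of $Y\cap X$ together with $Y\setminus X$; none of these pieces lies in $\U_{X,2}$, contradicting partition regularity of $\L(\U_{X,2})$.

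Your detour through \Cref{lem:lx} works, but it costs you: the forward inclusion becomes a direct verification that $\U_{X,2}\cap\L_Z$ is partition large (rather than a one-line maximality appeal), and the reverse inclusion requires the slightly larger cover $\overline{Z},\{x_1\},\dots,\{x_m\},Z\setminus X$ of all of $\omega$ rather than just a cover of~$Y$. The paper's approach is shorter and more self-contained; yours illustrates how \Cref{lem:lx} can be used in practice. Note also that both your proof and the paper's tacitly assume $X$ is infinite (needed for $\U_{X,2}$ to be partition large, and for $\L_X$ to be nonempty); for finite $X$ both sides are empty and the statement is trivial. Finally, in a clean write-up you should drop the false starts in the reverse inclusion and go directly to the $(m+2)$-cover that works.
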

\begin{proof}
First, let us show that $\L_X \subseteq \L(\U_{X,2})$.
Suppose that~$Y \in \L_X$. By definition, $|Y \cap X| = \infty$, so~$Y \in \U_{X,2}$.
So~$\L_X$ is a partition regular subclass of~$\U_{X,2}$. Since $\L(\U_{X,2})$ is the largest partition regular subclass of~$\U_{X,2}$, then $\L_X \subseteq \L(\U_{X,2})$.

Then, let us show that $\L(\U_{X,2}) \subseteq \L_X$.
Let~$Y \in \L(\U_{X,2})$. We claim that~$|Y \cap X| = \infty$. Indeed, otherwise, consider the $|Y \cap X|$-cover of $Y \cap X$ made of singletons. By partition regularity of $\L(\U_{X,2})$, one of the parts belongs to~$\U_{X,2}$, but $\U_{X,2}$ contains no singleton, contradiction. Therefore~$|Y \cap X| = \infty$, so $Y \in \L_X$.
\end{proof}

\begin{lemma}\label[lemma]{lem:ua2-large}
For every infinite set~$X$, $\U_{X,2}$ is a non-trivial $\Sigma^0_1(X)$ partition large class.
\end{lemma}
\begin{proof}
$\U_{X,2}$ is clearly non-trivial, upward-closed and $\Sigma^0_1(X)$. 
By \Cref{lem:ua2-non-trivial}, $\L(\U_{X,2}) = \L_X$. Since~$X$ is infinite, $X \in \L_X$, so $\L(\U_{X,2}) \neq \emptyset$. By \Cref{prop:large-contains-pr}, $\U_{X,2}$ is large. 
\end{proof}

The following lemma shows, as promised, that every infinite set is partition generic relative to itself within a large class.

\begin{lemma}\label[lemma]{lem:ua2-pg}
For every set~$X$, $X$ belongs to any large subclass of $\U_{X,2}$.
\end{lemma}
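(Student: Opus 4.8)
The plan is to read off the conclusion directly from the defining property (b) of partition largeness, applied to the single obvious partition of $\omega$. Let $\A \subseteq \U_{X,2}$ be an arbitrary partition large subclass. Since $\A$ is partition large it is in particular non-empty, and since $X \cup \overline{X} = \omega$, the pair $X, \overline{X}$ is a $2$-cover of $\omega$; hence property (b) with $k = 1$ yields that $X \in \A$ or $\overline{X} \in \A$.

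The only remaining point — and it is what makes the lemma work — is that the second alternative is impossible. Indeed $\overline{X} \cap X = \emptyset$, so $|\overline{X} \cap X| = 0 < 2$, which means $\overline{X} \notin \U_{X,2}$; since $\A \subseteq \U_{X,2}$ this forces $\overline{X} \notin \A$. Therefore $X \in \A$, as desired. (When $|X| < 2$ the class $\U_{X,2}$ is empty, so it has no partition large subclass and the statement holds vacuously; the argument above covers this case automatically, since a partition large $\A$ must be non-empty.)

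I expect there to be essentially no obstacle here: the content is simply the observation that $\overline{X}$ is excluded from $\U_{X,2}$ by design, so in any partition of $\omega$ into $X$ and $\overline{X}$ a partition large subclass of $\U_{X,2}$ is forced onto the side $X$. This is exactly the phenomenon advertised in the paragraph preceding the lemma, namely that, absent effectiveness restrictions on the ambient large class, partition genericity relative to $X$ becomes trivial: $X$ itself already lies in every large subclass of $\U_{X,2}$.
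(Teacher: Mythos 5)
Your proof is correct and is essentially the paper's own argument: apply partition largeness to the cover $X \cup \overline{X} = \omega$ and observe that $\overline{X} \notin \U_{X,2}$ since $\overline{X} \cap X = \emptyset$ (the paper phrases this as a proof by contradiction, but the content is identical). The remark about the vacuous case $|X| < 2$ is a harmless extra.
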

\begin{proof}
Let~$\V$ be a large subclass of~$\U_{X,2}$.
Suppose for the contradiction that~$X \not \in \V$. By largeness of $\V$, $\overline{X} \in \V \subseteq \U_{X,2}$. Then $|\overline{X} \cap X| \geq 2$, contradiction.
\end{proof}

By Patey~\cite{patey2017iterative}, if $f$ is a hyperimmune function, then for every set $A$, there is an infinite subset $H \subseteq A$ or $H \subseteq \overline{A}$ such that $f$ is $H$-hyperimmune. Is it also the case if one replaces hyperimmunity by partition genericity? We answer negatively by constructing a specific bi-partition generic set which is neither Kurtz random, nor bi-hyperimmune.

\begin{lemma}\label[lemma]{lem:subset-complement-not-pg}
For every set~$A$ and every infinite set~$H \subseteq \overline{A}$, then~$A$ is not $H$-partition generic.
\end{lemma}
\begin{proof}
By \Cref{lem:ua2-large}, $\U_{H,2}$ is a non-trivial $\Sigma^0_1(A)$ partition large class. In particular, $\L(\U_{H,2})$ is a non-trivial $\Pi^0_2(H)$ partition generic class.
However, $A \not \in \U_{H,2} \supseteq \L(\U_{H,2})$, so~$A$ is not $H$-partition generic.
\end{proof}

\begin{proposition}
There is a bi-partition generic set~$A$ such that for every infinite set~$H \subseteq A$ and~$H \subseteq \overline{A}$, neither~$A$ nor~$\overline{A}$ is $H$-partition generic.
\end{proposition}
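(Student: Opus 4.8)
The plan is to show that the bi-partition generic set $A$ already produced in \Cref{prop:bipartition-generic-2n} does the job; the point is that the pairing condition $A(2n) \neq A(2n+1)$ is rigid enough to force every infinite subset of $A$ (resp.\ of $\overline{A}$) to uniformly compute an infinite subset of $\overline{A}$ (resp.\ of $A$), and this extra subset is exactly what is needed to produce the required $\Pi^0_2$ partition regular witnesses.

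First I would isolate the elementary combinatorial observation. Let $p$ be the computable involution of $\omega$ exchanging $2n$ and $2n+1$. The condition $A(2n) \neq A(2n+1)$ says precisely that for every $m$, exactly one of $m$ and $p(m)$ belongs to $A$; equivalently, $m \in A \iff p(m) \in \overline{A}$. Consequently, if $H \subseteq A$ is infinite then $Y := p[H]$ is an infinite subset of $\overline{A}$ with $Y \leq_T H$ (since $k \in Y \iff p(k) \in H$), and symmetrically an infinite $H \subseteq \overline{A}$ yields an infinite subset $p[H] \subseteq A$ computable from $H$.

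Now fix an infinite $H$ with $H \subseteq A$; the case $H \subseteq \overline{A}$ is entirely symmetric. To see that $\overline{A}$ is not $H$-partition generic, observe that $\L_H$ is a non-trivial $\Pi^0_2(H)$ partition regular class and $\overline{A} \cap H = \emptyset$, so $\overline{A} \notin \L_H$ (this is also a direct instance of \Cref{lem:subset-complement-not-pg} applied to the set $\overline{A}$ and the infinite set $H \subseteq A = \overline{\overline{A}}$). To see that $A$ is not $H$-partition generic, take $Y = p[H]$: it is infinite, $Y \subseteq \overline{A}$, and $Y \leq_T H$, so $\L_Y$ is a non-trivial partition regular class which is $\Pi^0_2(Y)$, hence $\Pi^0_2(H)$, and $A \cap Y = \emptyset$ gives $A \notin \L_Y$. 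Thus neither $A$ nor $\overline{A}$ is $H$-partition generic.

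Since \Cref{prop:bipartition-generic-2n} already supplies the bi-partition generic witness, essentially all that remains to check are the two one-line computability facts above, so I do not expect any real obstacle. The only things to keep in mind are the oracle monotonicity $\Pi^0_2(Y) \subseteq \Pi^0_2(H)$ when $Y \leq_T H$, and the (perhaps counterintuitive) point that no nontrivial subset extraction is needed — the naive ``partner'' map already produces an infinite subset on the opposite side precisely because the pairing condition is so rigid.
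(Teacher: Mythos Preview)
Your proof is correct and follows essentially the same approach as the paper: both take the set $A$ from \Cref{prop:bipartition-generic-2n}, observe that the partner map $p$ (swapping $2n$ and $2n+1$) sends any infinite $H \subseteq A$ to an $H$-computable infinite $Y = p[H] \subseteq \overline{A}$, and use $\L_H$ and $\L_Y$ as the witnessing non-trivial $\Pi^0_2(H)$ partition regular classes excluding $\overline{A}$ and $A$ respectively. Your write-up is in fact slightly more explicit than the paper about the step $Y \leq_T H \Rightarrow \Pi^0_2(Y) \subseteq \Pi^0_2(H)$, which the paper leaves implicit.
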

\begin{proof}
Let $A$ be the bi-partition generic set of~\Cref{prop:bipartition-generic-2n}.
Let~$H \subseteq A$. By \Cref{lem:subset-complement-not-pg}, $\overline{A}$ is not $H$-partition generic. Let~$P = \{ 2n+1 : 2n \in H \} \cup \{2n : 2n+1 \in H \}$. Then~$P \subseteq \overline{A}$. By \Cref{lem:subset-complement-not-pg}, $\overline{A}$ is not $P$-partition generic, hence not~$H$-partition generic.
The case $H \subseteq \overline{A}$ is symmetric.
\end{proof}

If we consider partition genericity in a non-trivial partition large open class, then the answer is positive, but in unsatisfactory manner.

\begin{proposition}
Let $B$ be an infinite set. For every set $A$, there is an infinite subset $H \subseteq A$ or $H \subseteq \overline{A}$ such that $B$ is $H$-partition generic in a non-trivial $\Sigma^0_1(H)$ partition large class.
\end{proposition}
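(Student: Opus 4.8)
The plan is to produce, for any infinite $B$ and any $A$, an infinite set $H$ inside $A$ or inside $\overline{A}$ together with a non-trivial $\Sigma^0_1(H)$ partition large class $\U$ witnessing that $B$ is partition generic in $\U$. The key observation is that $\U_{B,2} = \{ Y : |Y \cap B| \geq 2\}$ is, by \Cref{lem:ua2-large}, a non-trivial $\Sigma^0_1(B)$ partition large class, and by \Cref{lem:ua2-pg}, $B$ belongs to every large subclass of $\U_{B,2}$. So if I could arrange $\U$ to be a subclass of $\U_{B,2}$ that is still partition large and still $\Sigma^0_1(H)$, then $B$ would automatically be partition generic in $\U$. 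But the honest way to get $\Sigma^0_1(H)$-ness while only knowing $B$ is to observe that $B$ itself is an infinite subset of $\omega = A \cup \overline{A}$, so by the infinite pigeonhole principle at least one of $B \cap A$, $B \cap \overline{A}$ is infinite; call it $B'$, and set $H = B'$. Then $H$ is an infinite subset of $A$ or of $\overline{A}$, and $H \subseteq B$.

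First I would take $H = B \cap A$ if it is infinite, else $H = B \cap \overline{A}$; in either case $H$ is infinite, $H \subseteq B$, and $H$ is contained in $A$ or in $\overline{A}$. Next, set $\U = \U_{H,2} = \{ Y : |Y \cap H| \geq 2 \}$. By \Cref{lem:ua2-large}, $\U$ is a non-trivial $\Sigma^0_1(H)$ partition large class. Then I would check that $B$ is partition generic in $\U$: since $H \subseteq B$, any $Y$ with $|Y \cap H| \geq 2$ also has $|Y \cap B| \geq 2$, but more to the point I want $B \in \V$ for every non-trivial $\Pi^0_2$ partition regular subclass $\V \subseteq \U$. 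Since $H \subseteq B$ and $H$ is infinite, $B \in \L_H = \L(\U_{H,2})$ by \Cref{lem:ua2-non-trivial}; and every non-trivial $\Pi^0_2$ partition regular $\V \subseteq \U$ satisfies $\V \subseteq \L(\U) = \L(\U_{H,2}) = \L_H \ni B$, because $\L(\U)$ is the largest partition regular subclass of $\U$. Hence $B \in \V$ for all such $\V$, i.e. $B$ is partition generic in $\U$.

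The only mild subtlety — and the step I expect to be the "main obstacle," though it is really just bookkeeping — is making sure the witnessing class is $\Sigma^0_1$ in $H$ rather than in $B$: this is exactly why I pass to $H = B \cap A$ or $B \cap \overline{A}$ and use $\U_{H,2}$ instead of $\U_{B,2}$, exploiting that $H \subseteq B$ so that membership of $B$ in the relevant $\L_H$ is immediate. One should also note the phrase "in unsatisfactory manner" in the surrounding text refers to the fact that we only get partition genericity of $B$ inside the open class $\U_{H,2}$, which by \Cref{lem:subset-complement-not-pg}-type reasoning is essentially forced and does not upgrade to full $H$-partition genericity of $B$; but that observation is commentary, not part of the proof obligation. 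I would close with the short verification that $\U_{H,2}$ is indeed $\Sigma^0_1(H)$, upward-closed, and non-trivial, all of which is \Cref{lem:ua2-large}, so no further work is needed.
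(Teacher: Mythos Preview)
Your construction is exactly the paper's: take $H = A \cap B$ if infinite, else $H = \overline{A} \cap B$, and witness with $\U_{H,2}$. But your verification that $B$ is partition generic in $\U_{H,2}$ contains an invalid inference. You write that every non-trivial partition regular $\V \subseteq \U_{H,2}$ satisfies $\V \subseteq \L(\U_{H,2}) = \L_H \ni B$, and then conclude $B \in \V$. The containment $\V \subseteq \L_H$ together with $B \in \L_H$ does not yield $B \in \V$; the inclusion points the wrong way. Knowing that $B$ lies in the \emph{largest} partition regular subclass of $\U_{H,2}$ says nothing about whether $B$ lies in an arbitrary smaller one.

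The fix is the argument you already sketched in your first paragraph, but applied to $H$ rather than to $B$: any non-trivial partition regular $\V \subseteq \U_{H,2}$ is in particular partition large, so by \Cref{lem:ua2-pg} we get $H \in \V$; since $B \supseteq H$ and $\V$ is upward-closed, $B \in \V$. This is exactly how the paper proceeds (it first observes that $H$ is $H$-partition generic in $\U_{H,2}$ via \Cref{lem:ua2-pg}, then uses $B \supseteq H$).
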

\begin{proof}
Let $H = A \cap B$ if it is infinite, otherwise $H = \overline{A} \cap B$.
By \Cref{lem:ua2-large}, $\U_{H,2}$ is a non-trivial  $\Sigma^0_1(H)$ partition large class.
We claim that $B$ is $H$-partition generic in~$\U_{H,2}$.
By \Cref{lem:ua2-pg}, $H$ is $H$-partition generic in~$\U_{H,2}$.
Since~$B \supseteq H$, then $A$ is also $H$-partition generic in~$\U_{H,2}$.
\end{proof}


\section{Applications}\label[section]{sect:applications}

We now justify the study of partition genericity by proving several partition genericity subset basis theorems. 
All these basis theorems are proven with the same notion of forcing, that we call \emph{partition generic Mathias forcing}. A \emph{condition} is a tuple $(\sigma, X, \U)$, where $(\sigma, X)$ is a Mathias condition, that is, $\sigma$ is a finite string and $X$ is an infinite set such that $\min X > |\sigma|$. Moreover, $\U$ is a non-trivial large $\Sigma^0_1$ class within which $X$ is partition generic. A condition $(\tau, Y, \V)$ \emph{extends} another condition $(\sigma, X, \U)$ if $(\tau, Y)$ Mathias extends $(\sigma, X)$, that is, $\sigma \prec \tau$, $Y \subseteq X$ and $\tau \setminus \sigma \subseteq X$. Moreover, we require that $\V \subseteq \U$.

Note that we do not impose any effectivity restriction to the reservoir $X$. In particular, if $A$ is a set which is partition generic in a non-trivial $\Sigma^0_1$ large class $\U$, then we will consider sufficiently generic filters containing the condition $(\emptyset, A, \U)$. Indeed, any such filter $\F$ induces a subset $G_\F = \bigcup_{(\sigma, X, \V) \in \F} \sigma$ of~$A$. 

The first property we prove is that $G_\F$ is an infinite set. 

\begin{lemma}\label[lemma]{lem:pg-forcing-infinite}
Let $\F$ be a sufficiently generic filter. Then $G_\F$ is infinite.
\end{lemma}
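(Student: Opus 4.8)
The statement to prove is that for a sufficiently generic filter $\F$ (for partition generic Mathias forcing), the set $G_\F$ is infinite. The natural approach is the standard density argument: I want to show that for every $n$, the set of conditions $(\sigma, X, \U)$ with $|\sigma| > n$ is dense. Given such a density fact for all $n$, any sufficiently generic filter meets all these dense sets, hence $G_\F = \bigcup_{(\sigma, X, \V) \in \F} \sigma$ has arbitrarily long initial segments and is therefore infinite.

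**The density step.** So fix a condition $(\sigma, X, \U)$ and a target length $n$; I want to extend it to a condition whose string component has length $> n$. The key point is that $X$ is infinite (it is partition generic in a non-trivial $\Sigma^0_1$ class, and non-trivial partition large classes contain only infinite sets via \Cref{prop:non-trivial-u2} applied to $\L(\U)$, or more directly $X \in \L(\U)$ which is non-trivial). Since $X$ is infinite, I can pick some element $a \in X$ with $a > n$ (in fact it suffices to extend $\sigma$ by one bit using the least element of $X$, and iterate). Concretely: let $a = \min X$, form $\tau = \sigma 0^{a - |\sigma| - 1} 1$ so that $\tau$ puts a $1$ exactly at position $a$ and $0$s elsewhere past $|\sigma|$ — this respects $\tau \setminus \sigma \subseteq X$ since the only new element is $a \in X$. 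Set $Y = X \setminus \{a\}$ (or $X \cap (a, \infty)$), which is still infinite, and by \Cref{prop:measure:pr} (closure under finite changes of non-trivial partition regular classes, hence $Y$ is still partition generic in $\U$ — note $Y =^* X$) the triple $(\tau, Y, \U)$ is a valid condition extending $(\sigma, X, \U)$. Iterating this finitely many times produces a condition with string length exceeding $n$, establishing density. Alternatively, one may directly invoke \Cref{prop:pg-invariant-finite}: $Y =^* X$ and $X$ partition generic in $\U$ imply $Y$ partition generic in $\U$.

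**The main obstacle.** The only subtlety worth flagging is checking that the reservoir after extraction remains partition generic in $\U$ — one must not accidentally shrink $X$ to something non-generic. But since we only remove finitely many elements (indeed just the one element $a$ we promoted into $\tau$, or those below it), $Y =^* X$, and partition genericity in a fixed class is preserved under finite changes by \Cref{prop:pg-invariant-finite}. Everything else is routine bookkeeping about Mathias conditions ($\min Y > |\tau|$ holds by taking $Y = X \cap (|\tau|, \infty)$, which is a finite, hence genericity-preserving, modification). I do not expect any real difficulty here.

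\begin{proof}
Fix $n \in \omega$. We claim the set $D_n$ of conditions $(\sigma, X, \U)$ with $|\sigma| > n$ is dense. Let $(\sigma, X, \U)$ be any condition. Since $X$ is partition generic in the non-trivial $\Sigma^0_1$ partition large class $\U$, we have $X \in \L(\U)$, which by \Cref{prop:non-trivial-u2} is non-trivial, so $X$ is infinite. Pick $a \in X$ with $a > n$ and $a > |\sigma|$. Let $\tau \in 2^{<\omega}$ extend $\sigma$ by setting $\tau(a) = 1$ and $\tau(i) = 0$ for $|\sigma| \leq i < |\tau| = a+1$, $i \neq a$. Then $\tau \setminus \sigma = \{a\} \subseteq X$, and $|\tau| > n$. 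Let $Y = X \cap (a, \infty)$. Then $Y =^{*} X$, so by \Cref{prop:pg-invariant-finite}, $Y$ is partition generic in $\U$; moreover $Y$ is infinite and $\min Y > a = |\tau| - 1 \geq |\tau|$... more precisely $\min Y > a \geq |\tau|$, so $(\tau, Y, \U)$ is a condition. It Mathias-extends $(\sigma, X)$ since $\sigma \prec \tau$, $Y \subseteq X$, $\tau \setminus \sigma \subseteq X$, and $\U \subseteq \U$. Hence $(\tau, Y, \U) \in D_n$ extends $(\sigma, X, \U)$, proving density.

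Now let $\F$ be a sufficiently generic filter, meeting every $D_n$. For each $n$, there is $(\sigma, X, \U) \in \F$ with $|\sigma| > n$, so $|G_\F| > n$. As this holds for all $n$, $G_\F$ is infinite.
\end{proof}
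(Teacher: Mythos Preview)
Your approach is essentially identical to the paper's, but there is a genuine slip in how you define the density set and draw the final conclusion. You set $D_n = \{(\sigma,X,\U) : |\sigma| > n\}$ and then infer ``$|\sigma| > n$, so $|G_\F| > n$''. This is a non-sequitur: $|\sigma|$ is the \emph{length} of the string, while $|G_\F|$ is the \emph{cardinality} of the set $\{x : \exists (\sigma,X,\U)\in\F,\ \sigma(x)=1\}$. A filter can meet all of your $D_n$ and still produce $G_\F = \emptyset$ (take every $\sigma$ to be a string of zeros). The paper instead uses the density set $\{(\tau,Y,\U) : \tau(x)=1 \text{ for some } x>n\}$, and your construction in fact proves density of \emph{that} set, since you put $\tau(a)=1$ with $a>n$. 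So the repair is simply to restate $D_n$ correctly and adjust the last sentence accordingly.

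There is also a minor bookkeeping slip: since $|\tau|=a+1$, your inequality ``$\min Y > a \geq |\tau|$'' is false ($a = |\tau|-1 < |\tau|$), and $Y = X\cap(a,\infty)$ only guarantees $\min Y \geq a+1 = |\tau|$, not the strict inequality $\min Y > |\tau|$ required of a Mathias condition. Take $Y = X \setminus \{0,\dots,|\tau|\}$ instead, as the paper does.
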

\begin{proof}
Let us show that for any $n$, the set of conditions $(\tau, Y, \U)$ such that $\tau(x) = 1$ for some $x > n$ is dense.
Let $(\sigma, X, \U)$ be a condition. Since $X$ is partition generic in $\U$ and $\U$ is non-trivial, then $X \in \L(\U)$, hence $X$ is infinite. Let $x \in X$ be greater than $n$, let $\tau$ be string $\sigma \cup \{x\}$ and let $Y = X \setminus \{0, \dots, |\rho|\}$. By \Cref{prop:pg-invariant-finite}, $Y$ is partition generic in $\U$. Therefore, $(\tau, Y, \U)$ is a valid extension of $(\sigma, X, \U)$.
\end{proof}

We now turn to the various partition genericity subset basis theorems. 

\subsection{Cone avoidance}

The first one is the \emph{cone avoidance partition genericity subset basis theorem}. Recall that Dzhafarov and Jockusch~\cite{Dzhafarov2009Ramseys} proved that if $B$ is a non-computable set, then for every set $A \subseteq \omega$, there is an infinite set $H \subseteq A$ or $H \subseteq \overline{A}$ such that $B \not \leq_T H$. We prove the corresponding partition genericity theorem.

\begin{lemma}\label[lemma]{lem:pg-forcing-cone-avoidance}
Let $B$ be a non-computable set and let $\Phi_e$ be a Turing functional.
Then for every condition $(\sigma, X, \U)$, there is an extension $(\tau, Y, \U)$
forcing $\Phi_e^G \neq B$.	
\end{lemma}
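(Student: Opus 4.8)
The plan is to carry out the standard diagonalization argument for cone avoidance, adapted to the partition generic Mathias forcing. We want to find an extension $(\tau, Y, \U)$ of the given condition $(\sigma, X, \U)$ that forces $\Phi_e^G \neq B$. The two ways of forcing $\Phi_e^G \neq B$ are: (i) forcing $\Phi_e^G(x)\!\downarrow\, \neq B(x)$ for some $x$, by pushing the element $x$ (or its absence) into the already-decided part $\tau$; or (ii) forcing $\Phi_e^G(x)\!\uparrow$ for some $x$, which can be arranged by finding a reservoir inside which no further extension ever converges at $x$. The key is to show that if option (ii) fails everywhere, then $B$ is computable, contradicting the hypothesis.

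First I would set up the forcing question. Say $(\tau', Z, \V) \vdash \Phi_e^G(x)\!\downarrow = i$ if $\Phi_e^{\tau'}(x)\!\downarrow = i$ using only bits of the oracle below $|\tau'|$. For the given condition $(\sigma, X, \U)$ and each $x$, consider the $\Sigma^0_1$ class
$$
\V_x = \{ W : \exists \rho \subseteq W\ (\Phi_e^{\sigma \cup \rho}(x)\!\downarrow \neq B(x)),\ \min\rho > |\sigma| \}
$$
together with $\U$. If for some $x$ the class $\U \cap \V_x$ is partition large, then since $X$ is partition generic in $\U$ and $X \cup \overline{X} \supseteq$ (a cofinite set containing) the relevant elements, by an application of \Cref{lemma:abslarge} (with the trivial $1$-cover refined appropriately, or directly by partition genericity since $\U\cap\V_x$ restricted to $X$ is large) we get that $X$ itself, hence some finite $\rho \subseteq X$ with $\min\rho>|\sigma|$, lies in $\V_x$; then $\tau = \sigma\cup\rho$, $Y = X \setminus \{0,\dots,|\tau|\}$, and $\U$ give the desired extension forcing $\Phi_e^G(x)\neq B(x)$, using \Cref{prop:pg-invariant-finite} to see $Y$ is still partition generic in $\U$.

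Otherwise, for every $x$, the class $\U \cap \V_x$ is \emph{not} partition large. I then claim this lets us compute $B$. Since $\U\cap\V_x$ fails to be partition large, by \Cref{prop:large-contains-pr} it contains no partition regular subclass, equivalently (by \Cref{cor:complarge}) $\omega \notin \L(\U\cap\V_x)$, i.e.\ there is a finite cover $W_0 \cup \dots \cup W_m \supseteq \omega$ with no $W_j \in \U\cap\V_x$; since $\U$ alone \emph{is} partition large, some $W_j \in \U$, and then $W_j \notin \V_x$, meaning: for every $\rho \subseteq W_j$ with $\min\rho>|\sigma|$, $\Phi_e^{\sigma\cup\rho}(x)$ either diverges or converges to $B(x)$. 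In particular, searching computably for any $\rho$ making $\Phi_e^{\sigma\cup\rho}(x)$ converge (restricting to $\rho\subseteq W_j$) will, if it succeeds, reveal $B(x)$. The subtlety is that a priori such $\rho$ need not exist; here one uses that $X$ is partition generic in $\U$, so $X \in \L(\U)$, and $W_j \in \U$ is consistent with $X$ in the sense needed to guarantee (again via a largeness/pigeonhole application, splitting $X$ according to the cover) that convergence can actually be forced on a sub-reservoir — otherwise we would be in case (ii) and could instead take the extension $(\sigma, X \cap W_j, \U)$ forcing divergence, which contradicts that we are trying to force inequality. Thus the procedure "$x \mapsto B(x)$" becomes computable, contradicting non-computability of $B$.

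\textbf{Main obstacle.} The delicate point is the bookkeeping in the "otherwise" case: one must phrase the forcing question so that failure of largeness of $\U\cap\V_x$ uniformly produces a \emph{computable} finite cover, pick out the part that stays in $\U$, and then argue that on that part $\Phi_e$ genuinely computes $B(x)$ rather than merely diverging — which is exactly where partition genericity of $X$ in $\U$ (via \Cref{lemma:abslarge} and \Cref{prop:large-contains-pr}) is needed to rule out the "always diverges" escape. Once that dichotomy is set up cleanly, the construction of the extension and the verification that $Y$ remains partition generic in $\U$ are routine given \Cref{prop:pg-invariant-finite} and \Cref{lem:pg-forcing-infinite}.
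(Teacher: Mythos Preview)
Your proposal has a genuine gap in Case~2. The core problem is that the class $\V_x$ is defined using the bit $B(x)$, so although for each fixed $x$ it is a $\Sigma^0_1$ class, the dichotomy ``$\U\cap\V_x$ large / not large'' cannot be decided without already knowing $B(x)$. When you then try to compute $B$ in Case~2, you need, uniformly in $x$, a finite cover $W_0\cup\dots\cup W_m\supseteq\omega$ witnessing that $\U\cap\V_x$ is not large, together with an index $j$ such that $W_j\in\U$ and $W_j\notin\V_x$. Neither the cover nor the index $j$ is obtained computably: the failure of largeness is merely a $\Sigma^0_2$ fact, and checking $W_j\notin\V_x$ requires $B(x)$, which is circular. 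Your attempt to rescue this via ``otherwise force divergence and take $(\sigma,X\cap W_j,\U)$'' is also off: forcing divergence already \emph{is} a way of forcing $\Phi_e^G\neq B$, so there is no contradiction to extract; and $X\cap W_j$ need not be partition generic in $\U$, so that triple need not be a condition.

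The paper avoids all of this by never mentioning $B$ in the forcing question. It asks instead whether the class of reservoirs containing a \emph{split pair} (two finite $\rho_0,\rho_1$ with $\Phi_e^{\sigma\cup\rho_0}(x)\!\downarrow\neq\Phi_e^{\sigma\cup\rho_1}(x)\!\downarrow$ for some $x$) is large inside $\U$. If so, partition genericity of $X$ in $\U$ gives a split pair in $X$, and one half disagrees with $B$. If not, there is a $k$ and a non-empty $\Pi^0_1$ class of $k$-covers each of whose parts is either outside $\U$ or contains no split pair; now the \emph{cone avoidance basis theorem} produces such a cover $Z_0,\dots,Z_{k-1}$ with $B\not\leq_T Z_0\oplus\dots\oplus Z_{k-1}$. \Cref{lemma:abslarge} selects a part $Z_i$ with $Z_i\cap X$ partition generic in some $\V\cap\U$, and since $Z_i$ has no split pair, any total $\Phi_e^G$ on that reservoir would be $Z_i$-computable, contradicting $B\not\leq_T Z_i$. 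The two ingredients you are missing are the $B$-free ``split pair'' formulation and the appeal to the cone avoidance basis theorem for $\Pi^0_1$ classes, which together replace your non-uniform per-$x$ covers by a single cover that provably does not compute $B$.
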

\begin{proof}
A \emph{split pair} is a pair of strings $\rho_0, \rho_1$ such that there is an input~$x \in \omega$ for which $\Phi_e^{\sigma \cup \rho_0}(x)\downarrow \neq \Phi_e^{\sigma \cup \rho_1}(x)\downarrow$. We have two cases.

Case 1: the following class is large:
$$
\A = \{ Y \in \U : Y \emph{ contains a split pair } \}
$$
Since $\A$ is a non-trivial $\Pi^0_2$ large subclass of~$\U$ and $X$ is partition generic in~$\U$, then $X \in \A$. Let $\rho_0, \rho_1 \subseteq X$ be a split pair, with witness~$x$.
In particular, there is some $i < 2$ such that $\Phi_e^{\sigma \cup \rho_i}(x)\neq B(x)$.
Let $Y = X \setminus \{0, \dots, |\rho_i|\}$. By \Cref{prop:pg-invariant-finite}, $Y$ is partition generic in~$\U$.
Then $(\sigma \cup \rho_i, Y, \U)$ is an extension of $(\sigma, X, \U)$ forcing $\Phi_e^G(x) \neq B(x)$.

Case 2: there is some $k \in \omega$ such that the following class is non-empty:
$$
\C = \{ Z_0 \oplus \dots \oplus Z_{k-1} : Z_0 \cup \dots \cup Z_{k-1} = \omega \wedge
 \forall i < k\ Z_i\, \not \in \U \mbox{ or } Z_i \mbox{ contains no split pair } \}
$$
Note that $\C$ is a $\Pi^0_1$ class. By the cone avoidance basis theorem~\cite{Jockusch197201}, there is an element $Z_0 \oplus \dots \oplus Z_{k-1} \in \C$ such that $B \not \leq_T Z_0 \oplus \dots \oplus Z_{k-1}$. By \Cref{lemma:abslarge}, there is a $\Sigma^0_1$ class $\V$ such that $\V \cap \U$ is large and some $i < k$ such that $Z_i \cap X$ is partition generic in $\V  \cap \U$. Note that in particular, $Z_i \cap X \in \V  \cap \U$, so $Z_i \in \U$, hence~$Z_i$ contains no split pair.

The condition $(\sigma, Z_i \cap X, \V \cap \U)$ is a valid extension of $(\sigma, X, \U)$. 

We claim that $(\sigma, Z_i \cap X, \V \cap \U)$ forces $\Phi^G_e \neq B$.
Suppose for the contradiction that $\Phi^{G_\F}_e = B$ for some generic filter~$\F$.
Let $f : \omega \to 2$ be the partial $Z_i$-computable function which on input~$x$,
searches for some $\rho \subseteq Z_i$ such that $\Phi^{\sigma \cup \rho}_e(x)\downarrow$. If such a~$\rho$ is found, then $f(x) = \Phi^{\sigma \cup \rho}_e(x)\downarrow$.
Since $\Phi^{G_\F}_e = B$, then $f$ is total. Since $Z_i$ contains no split pair, then $f = B$, which contradicts the assumption that $B \not \leq_T Z_i$.
\end{proof}

\begin{theorem}\label[theorem]{thm:pg-basis-cone-avoidance}
Let $A$ be a set which is partition generic in a non-trivial $\Sigma^0_1$ large class $\U \subseteq 2^\omega$. Then for every non-computable set $B$, there is an infinite set $G \subseteq A$ such that $B \not \leq_T G$.
\end{theorem}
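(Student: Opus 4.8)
The plan is to use the partition generic Mathias forcing described just above, instantiated with the condition $(\emptyset, A, \U)$. Since $A$ is partition generic in the non-trivial $\Sigma^0_1$ large class $\U$, the pair $(\emptyset, A, \U)$ is a valid condition, and any sufficiently generic filter $\F$ containing it produces a set $G_\F = \bigcup_{(\sigma,X,\V)\in\F}\sigma \subseteq A$. By \Cref{lem:pg-forcing-infinite}, $G_\F$ is infinite provided $\F$ is sufficiently generic. So it remains only to ensure that $G_\F$ does not compute $B$, i.e.\ that $\Phi_e^{G_\F} \neq B$ for every Turing functional index $e$.

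The key step is to observe that the requirements $\mathcal R_e : \Phi_e^G \neq B$ are each handled by a dense set of conditions. This is exactly the content of \Cref{lem:pg-forcing-cone-avoidance}: for every condition $(\sigma, X, \U)$ and every $e$, there is an extension forcing $\Phi_e^G \neq B$. Hence for a sufficiently generic filter $\F$ (meeting the dense set associated to each $e$ as well as the density requirements from \Cref{lem:pg-forcing-infinite}), the resulting $G_\F$ is an infinite subset of $A$ with $\Phi_e^{G_\F} \neq B$ for all $e$, which gives $B \not\leq_T G_\F$. Setting $G = G_\F$ completes the proof.

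The only subtlety worth spelling out is that \Cref{lem:pg-forcing-cone-avoidance} is stated as producing an extension with the \emph{same} third component $\U$, whereas in Case~2 of its proof the third component actually shrinks to $\V \cap \U$; this is harmless since the forcing partial order allows shrinking the $\Sigma^0_1$ class, and non-triviality is preserved because $\V \cap \U \subseteq \U$. One should also remark that the statement only requires $B \not\leq_T G$, not the side-selecting form $H \subseteq A$ or $H \subseteq \overline A$; here we get the stronger conclusion that the subset lies inside $A$ itself, which is possible precisely because $A$ is assumed partition generic. The main obstacle, namely carrying out the forcing step against a non-computable $B$ using the cone avoidance basis theorem on the $\Pi^0_1$ class $\C$ together with \Cref{lemma:abslarge}, has already been discharged in \Cref{lem:pg-forcing-cone-avoidance}, so the proof of the theorem itself is a routine genericity argument.
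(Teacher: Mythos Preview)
Your proof is correct and follows exactly the paper's approach: take a sufficiently generic filter containing $(\emptyset, A, \U)$, apply \Cref{lem:pg-forcing-infinite} for infinitude and \Cref{lem:pg-forcing-cone-avoidance} for each requirement $\Phi_e^G \neq B$. Your side observation that the extension in Case~2 of \Cref{lem:pg-forcing-cone-avoidance} actually has third component $\V\cap\U$ rather than $\U$ is an accurate reading of a minor inconsistency in the lemma's statement, but as you note this is harmless for the forcing.
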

\begin{proof}
Let $\F$ be a sufficiently generic filter containing $(\emptyset, A, \U)$.
By \Cref{lem:pg-forcing-infinite}, $G_\F$ is infinite. By construction, $G_\F \subseteq A$.
Last, by \Cref{lem:pg-forcing-cone-avoidance}, $B \not \leq_T G_\F$.
\end{proof}

\begin{corollary}[Dzhafarov and Jockusch~\cite{Dzhafarov2009Ramseys}]
Let $B$ be a non-computable set. Then for every set $A \subseteq \omega$, there is an infinite set $H \subseteq A$ or $H \subseteq \overline{A}$ such that $B \not \leq_T H$.
\end{corollary}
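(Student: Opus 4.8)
The plan is to derive the corollary of Dzhafarov--Jockusch from \Cref{thm:pg-basis-cone-avoidance} together with the fact (\Cref{cor:partition-generic-regular}) that the class
$$
\P = \{ X : X \mbox{ is partition generic in some non-trivial } \Sigma^0_1 \mbox{ partition large class} \}
$$
is partition regular. First I would fix a non-computable set $B$ and an arbitrary set $A \subseteq \omega$. Since $\P$ is partition regular, in particular $\omega \in \P$, and since $A \cup \overline{A} = \omega \supseteq \omega$, the third clause of partition regularity gives that $A \in \P$ or $\overline{A} \in \P$. Say $A \in \P$ (the other case being symmetric, working with $\overline{A}$ in place of $A$). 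By definition of $\P$, there is a non-trivial $\Sigma^0_1$ partition large class $\U$ in which $A$ is partition generic.

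Now I would apply \Cref{thm:pg-basis-cone-avoidance} directly to $A$, $\U$, and $B$: since $A$ is partition generic in the non-trivial $\Sigma^0_1$ large class $\U$ and $B$ is non-computable, there is an infinite set $H \subseteq A$ with $B \not\leq_T H$. In the symmetric case we obtain instead an infinite $H \subseteq \overline{A}$ with $B \not\leq_T H$. Either way we have produced an infinite set $H \subseteq A$ or $H \subseteq \overline{A}$ with $B \not\leq_T H$, which is the desired conclusion.

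The only step that requires a moment's care is the very first one: invoking partition regularity of $\P$ to split $\omega$ into $A$ and $\overline{A}$ and conclude one of them lies in $\P$. This is immediate from \Cref{cor:partition-generic-regular} once one notes $\omega \in \P$ (which is part of the proof of that corollary, witnessed by $\U_2$), so there is really no obstacle here — the corollary is a formal consequence of the two cited results, and the informal remark in the introduction that ``all these partition genericity subset basis theorems imply all the pigeonhole basis theorems'' is made precise in exactly this way.
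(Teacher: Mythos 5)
Your proof is correct and follows exactly the paper's argument: invoke \Cref{cor:partition-generic-regular} to conclude that $A$ or $\overline{A}$ is partition generic in a non-trivial $\Sigma^0_1$ partition large class, then apply \Cref{thm:pg-basis-cone-avoidance}. The extra detail you supply about splitting the cover $A \cup \overline{A} \supseteq \omega$ via partition regularity of $\P$ is precisely what the paper's one-line proof leaves implicit.
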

\begin{proof}
By \Cref{cor:partition-generic-regular}, either $A$ or $\overline{A}$ is partition generic in a non-trivial $\Sigma^0_1$ large class. Apply \Cref{thm:pg-basis-cone-avoidance}.
\end{proof}

\subsection{PA avoidance}

We now turn to a PA avoidance partition genericity subset basis theorem.
Liu~\cite{Liu2012RT22} proved that for every set~$A \subseteq \omega$, there is an infinite set~$H \subseteq A$ or $H \subseteq \overline{A}$ of non-PA degree. We prove the corresponding partition genericity theorem.

\begin{lemma}\label[lemma]{lem:pg-forcing-non-pa}
For every condition $(\sigma, X, \U)$ and every Turing functional $\Phi_e$,
there is an extension $(\tau, Y, \V)$ forcing $\Phi^G_e$ not to be a DNC${}_2$ function.	
\end{lemma}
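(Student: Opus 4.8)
The plan is to mimic the structure of the cone‑avoidance lemma (\Cref{lem:pg-forcing-cone-avoidance}), replacing "$\Phi_e^G = B$" by "$\Phi_e^G$ is a DNC${}_2$ function" and using the fact that no function of PA degree computes a DNC${}_2$ function — equivalently, that there is a $\Pi^0_1$ class of DNC${}_2$ functions, so that "being of non‑PA degree" is exactly the same as "not computing any member of that class". Here we want the *weaker* conclusion that $\Phi_e^G$ itself fails to be DNC${}_2$; since a non‑PA degree is one that computes no DNC${}_2$ function, iterating this lemma over all $e$ gives that $G_\F$ has non‑PA degree in the final theorem. First I would fix an effective enumeration and work with a universal partial computable function, so that a total $\{0,1\}$‑valued $f$ is DNC${}_2$ iff $f(x)\neq U(x)$ for all $x$ with $U(x)\!\downarrow$.

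The forcing step splits into two cases exactly as before. Say a pair of strings $\rho_0,\rho_1$ is a \emph{split pair} if there is $x$ with $\Phi_e^{\sigma\cup\rho_0}(x)\!\downarrow\ \neq\ \Phi_e^{\sigma\cup\rho_1}(x)\!\downarrow$. In Case~1, where $\A=\{Y\in\U : Y \text{ contains a split pair}\}$ is large, it is $\Pi^0_2$ and non‑trivial, so partition genericity of $X$ in $\U$ gives $X\in\A$; pick a split pair $\rho_0,\rho_1\subseteq X$ with witness $x$. Since $\Phi_e^{\sigma\cup\rho_0}(x)$ and $\Phi_e^{\sigma\cup\rho_1}(x)$ are distinct elements of $\{0,1\}$, one of them — say the $i$‑th — is *not* equal to the DNC‑target $U(x)$ (if $U(x)\!\downarrow$; if $U(x)\!\uparrow$ we use $i=0$, since then we force $\Phi_e^G(x)\!\downarrow$ but then the "diagonal" requirement is vacuous, so we should instead choose $i$ so that $\Phi_e^{\sigma\cup\rho_i}(x) = U(x)$ when $U(x)\!\downarrow$ — i.e. we force $\Phi_e^G$ to agree with $U$ at $x$, hence fail to be DNC${}_2$). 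Either way, set $Y=X\setminus\{0,\dots,|\rho_i|\}$, which by \Cref{prop:pg-invariant-finite} is still partition generic in $\U$, and $(\sigma\cup\rho_i,Y,\U)$ forces $\Phi_e^G$ not to be DNC${}_2$.

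In Case~2, where for some $k$ the $\Pi^0_1$ class
$$
\C=\{Z_0\oplus\dots\oplus Z_{k-1} : Z_0\cup\dots\cup Z_{k-1}=\omega \ \wedge\ \forall i<k\ (Z_i\notin\U \text{ or } Z_i \text{ contains no split pair})\}
$$
is non‑empty, I would invoke the PA‑avoidance basis theorem (equivalently, the fact that $\C$ has a member of non‑PA degree — which follows from the avoidance of members of any fixed $\Pi^0_1$ class, here the class of DNC${}_2$ functions) to get $Z_0\oplus\dots\oplus Z_{k-1}\in\C$ of non‑PA degree. By \Cref{lemma:abslarge} there is a $\Sigma^0_1$ class $\V$ with $\V\cap\U$ large and some $i<k$ with $Z_i\cap X$ partition generic in $\V\cap\U$; then $Z_i\cap X\in\V\cap\U\subseteq\U$, so $Z_i\in\U$, whence $Z_i$ contains no split pair. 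The condition $(\sigma, Z_i\cap X, \V\cap\U)$ is the desired extension: if some generic $\F$ had $\Phi_e^{G_\F}$ total DNC${}_2$, then the partial $Z_i$‑computable function $f$ that on input $x$ searches for $\rho\subseteq Z_i$ with $\Phi_e^{\sigma\cup\rho}(x)\!\downarrow$ and outputs that value would be total (by genericity and $G_\F\subseteq Z_i\cap X$), and because $Z_i$ has no split pair $f$ would equal $\Phi_e^{G_\F}$, so $Z_i$ would compute a DNC${}_2$ function, contradicting non‑PA degree of $Z_i$.

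The main obstacle is bookkeeping in Case~1: one must be careful that "forcing $\Phi_e^G$ not to be DNC${}_2$" is achieved either by forcing a *convergent* value of $\Phi_e^G(x)$ that coincides with $U(x)$, or — when this is impossible within the split pair — by noting that a split pair with witness $x$ already exhibits a value $\neq U(x)$ only when $U(x)\!\downarrow$, so one has to track which of the two options the split pair actually delivers; replacing "DNC${}_2$" by its characterization via the universal function up front makes this routine. A secondary point is to confirm that $\A$ in Case~1 is genuinely $\Pi^0_2$ (the existence of a split pair inside $Y$ is $\Sigma^0_1$ in $Y$, and "$Y\in\U$" is $\Sigma^0_1$, so $\A$ is $\Sigma^0_1$, hence certainly Borel — but to apply partition genericity we only need it to be a $\Pi^0_2$ *partition regular* class, and largeness plus \Cref{prop:complarge} gives that $\L(\A)$ is the relevant $\Pi^0_2$ partition regular class, which suffices).
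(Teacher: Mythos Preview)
There is a genuine gap in Case~1. If the split pair $\rho_0,\rho_1\subseteq X$ has witness $x$ with $\Phi_x(x)\!\uparrow$, then forcing $\Phi_e^G(x)$ to take either value does nothing to prevent $\Phi_e^G$ from being DNC${}_2$: the diagonal condition at $x$ is vacuous precisely because $\Phi_x(x)\!\uparrow$, so the extension $(\sigma\cup\rho_i,Y,\U)$ does \emph{not} force failure of DNC${}_2$. Your parenthetical notices something is off but resolves it backwards --- ``the diagonal requirement is vacuous'' means DNC${}_2$ survives at $x$, not that it fails. The fix is to restrict from the outset to split pairs whose witness $x$ satisfies $\Phi_x(x)\!\downarrow$. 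With that change Case~1 goes through, and in Case~2 the chosen $Z_i$ has no such split pair, so the $Z_i$-computable search function $f$ agrees with $\Phi_e^{G_\F}$ at every $x$ with $\Phi_x(x)\!\downarrow$; after truncating out-of-range values this is enough for $f$ to be DNC${}_2$, and the non-PA basis theorem gives the contradiction you intended.

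For comparison, the paper does not use split pairs or the non-PA basis theorem at all. It defines, for each $k$, the $\Sigma^0_1$ set $S_k\subseteq\omega\times 2$ of pairs $(x,v)$ such that every $k$-cover has a part in $\U$ containing some $\rho$ with $\Phi_e^{\sigma\cup\rho}(x)\!\downarrow=v$. If for every $k$ some $(x,\Phi_x(x))\in S_k$, then the class $\{Y\in\U:\exists x\,\exists\rho\subseteq Y\ \Phi_e^{\sigma\cup\rho}(x)=\Phi_x(x)\}$ is large and one forces agreement with the diagonal directly. Otherwise some $k$ has $(x,\Phi_x(x))\notin S_k$ for every $x$; then waiting for any $(x,v)\in S_k$ would compute a DNC${}_2$ function, so there is some $x$ with $(x,0),(x,1)\notin S_k$, and intersecting the two witnessing $k$-covers yields a $k^2$-cover on one part of which $\Phi_e^{\sigma\cup\rho}(x)\!\uparrow$ for all $\rho$ --- the extension forces divergence at $x$. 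Your route via the non-PA basis theorem is a legitimate and more modular alternative once Case~1 is repaired; the paper's argument is more self-contained and avoids importing an external basis theorem.
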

\begin{proof}
For every $k$, let $S_k$ be the set of pairs $(x, v) \in \omega \times 2$ such that for every $k$-cover $Z_0 \sqcup \dots \sqcup Z_{k-1} \supseteq \omega$, there is some $j < k$ such that $Z_j \in \U$, and some $\rho \subseteq Z_j$ such that $\Phi_e^{\sigma \cup \rho}(x)\downarrow = v$.
Note that the set $S_k$ is $\Sigma^0_1$ uniformly in $k$. We have two cases:

Case 1: for every $k \in \omega$, there is some $x \in \omega$ such that $(x, \Phi_x(x)) \in S_k$. Then the following class is large:
$$
\{ Y \in \U : \exists x\ \exists \rho \subseteq Y\  \Phi^{\sigma \cup \rho}_e(x)\downarrow = \Phi_x(x) \}
$$
Since $X$ is partition generic, then it belongs to this class, so there is some $x$ and some $\rho \subseteq X$ such that $\Phi^{\sigma \cup \rho}_e(x)\downarrow = \Phi_x(x)$. Let $Y = X \setminus \{0, \dots, |\rho|\}$. Note that $Y$ is again partition generic in $\U$, so $(\sigma \cup \rho, Y, \U)$ is an extension forcing $\Phi^G_e(x)\downarrow = \Phi_x(x)$.

Case 2: there is some $k \in \omega$ such that for every $x \in \omega$, if $\Phi_x(x)\downarrow$, then $(x, \Phi_x(x)) \not \in S_k$. Then there must be some $x$ such that $(x, 0), (x, 1) \not \in S_k$, otherwise we would compute a DNC${}_2$ function by waiting, for each $x$, for some $v < 2$ such that $(x, v)$ is enumerated in $S_k$.
   Let $x$ be such that $(x, 0), (x, 1) \not \in S_k$. By definition of $S_k$, there are two $k$-covers of $\omega$, $Z^0_0 \cup \dots \cup Z^0_{k-1} = \omega$ and  $Z^1_0 \cup \dots \cup Z^1_{k-1} = \omega$ such that for every $i < 2$ and every $j < k$ such that $Z^i_j \in \U$, and every $\rho \subseteq Z^i_j$ such that $\Phi_e^{\sigma \cup \rho}(x)\downarrow$, then $\Phi_e^{\sigma \cup \rho}(x) \neq i$. 
   
 Then $\langle Z^0_s \cap Z^1_t : s, t < k \rangle$ is a $k^2$-cover of $\omega$ such that for every $s, t < k$, either $Z^0_s \cap Z^1_t \not \in \U$, or for every $\rho \subseteq Z^i_j$, $\Phi_e^{\sigma \cup \rho}(x)\uparrow$. By \Cref{lemma:abslarge}, there is a non-trivial large $\Sigma^0_1$ class $\V \subseteq \U$ and some $s, t < k$ such that $Z^0_s \cap Z^1_t \cap X$ is partition regular in $\V$. The condition $(\sigma, Z^0_s \cap Z^1_t \cap X, \V)$ is an extension forcing $\Phi^G_e(x)\uparrow$.
 \end{proof}

\begin{theorem}\label[theorem]{thm:theorem-partition-generic-liu}
Let $A$ be a set which is partition generic in a non-trivial large $\Sigma^0_1$ class $\U \subseteq 2^\omega$. Then there is an infinite set $G \subseteq A$ of non-PA degree.
\end{theorem}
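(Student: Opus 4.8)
The plan is to mirror the proof of \Cref{thm:pg-basis-cone-avoidance}, now feeding \Cref{lem:pg-forcing-non-pa} into the genericity machinery in place of the cone-avoidance lemma, and invoking the standard characterization of PA degrees: a set is of PA degree if and only if it computes a DNC$_2$ function, i.e.\ a $\{0,1\}$-valued function $f$ with $f(x)\neq\Phi_x(x)$ whenever $\Phi_x(x)\downarrow$. Consequently, $G$ is of non-PA degree precisely when, for every Turing functional $\Phi_e$, the function $\Phi_e^G$ fails to be a DNC$_2$ function — either it is partial, or it takes a value outside $\{0,1\}$, or it agrees with $\Phi_x(x)$ for some $x$.

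First I would fix a sufficiently generic filter $\F$ for partition generic Mathias forcing containing the condition $(\emptyset, A, \U)$; this is a valid condition since $A$ is partition generic in the non-trivial $\Sigma^0_1$ large class $\U$. Let $G_\F = \bigcup_{(\sigma,X,\V)\in\F}\sigma$ be the induced set. By \Cref{lem:pg-forcing-infinite}, $G_\F$ is infinite, and $G_\F \subseteq A$ by construction of the forcing. Then, for each $e\in\omega$, let $D_e$ be the set of conditions forcing $\Phi_e^G$ not to be a DNC$_2$ function; by \Cref{lem:pg-forcing-non-pa}, every condition has an extension in $D_e$, so each $D_e$ is dense. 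Since there are only countably many such sets, a sufficiently generic filter meets each $D_e$, whence for every $e$ the function $\Phi_e^{G_\F}$ is not a DNC$_2$ function. By the characterization above, $G_\F$ computes no DNC$_2$ function, so $G_\F$ is of non-PA degree; taking $G = G_\F$ finishes the proof.

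The substantive combinatorial content is already packaged inside \Cref{lem:pg-forcing-non-pa} (the two-case split according to whether a certain uniformly $\Sigma^0_1$ family of sets $S_k$ eventually ``captures'' all $\Phi_x(x)$, together with the use of \Cref{lemma:abslarge} to pass to a smaller $\Sigma^0_1$ large class). So the only points needing care at the level of the theorem are the genericity bookkeeping — checking that forcing ``$\Phi_e^G$ is not DNC$_2$'' for all $e$ genuinely secures non-PA degree, which is exactly the DNC$_2$ characterization of PA degrees — and confirming the relevant sets of conditions are dense, which is immediate from \Cref{lem:pg-forcing-non-pa}. I do not anticipate any further obstacle.
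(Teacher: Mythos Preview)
Your proposal is correct and follows essentially the same approach as the paper: take a sufficiently generic filter containing $(\emptyset, A, \U)$, invoke \Cref{lem:pg-forcing-infinite} for infinity, note $G_\F\subseteq A$ by construction, and apply \Cref{lem:pg-forcing-non-pa} for each~$e$ to conclude non-PA degree via the DNC$_2$ characterization. The paper's proof is terser but identical in structure.
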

\begin{proof}
Let $\F$ be a sufficiently generic filter containing $(\emptyset, A, \U)$.
By \Cref{lem:pg-forcing-infinite}, $G_\F$ is infinite. By construction, $G_\F \subseteq A$.
Last, by \Cref{lem:pg-forcing-non-pa}, $G_\F$ is of non-PA degree.
\end{proof}

\begin{corollary}[Liu~\cite{Liu2012RT22}]
For every set $A \subseteq \omega$, there is an infinite subset $H \subseteq A$ or $H \subseteq \overline{A}$ of non-PA degree.
\end{corollary}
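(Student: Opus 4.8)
The plan is to derive this corollary of Liu's theorem directly from the partition genericity version, \Cref{thm:theorem-partition-generic-liu}, exactly in the style of the cone avoidance corollary proved a few lines above. So the argument has essentially two moves.

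First I would invoke \Cref{cor:partition-generic-regular}, which says that the class $\P$ of sets that are partition generic in some non-trivial $\Sigma^0_1$ partition large class is itself partition regular. Since $A \cup \overline{A} = \omega$ and $\omega \in \P$ (indeed $\omega$ is partition generic in $\U_2$), the closure property of partition regular classes under finite covers applied to the $2$-cover $A \cup \overline{A} \supseteq \omega$ gives that $A \in \P$ or $\overline{A} \in \P$. Say $A \in \P$, the other case being symmetric; fix a non-trivial $\Sigma^0_1$ partition large class $\U$ in which $A$ is partition generic. (One should note $\U$ is automatically large in the relevant sense and non-trivial, i.e.\ $\U \subseteq \U_2$, as required by the hypothesis of the theorem.)

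Second I would apply \Cref{thm:theorem-partition-generic-liu} to $A$ and $\U$: it yields an infinite set $G \subseteq A$ of non-PA degree. If instead $\overline{A} \in \P$, the same theorem produces an infinite $G \subseteq \overline{A}$ of non-PA degree. In either case we have obtained an infinite $H \subseteq A$ or $H \subseteq \overline{A}$ of non-PA degree, which is precisely the statement.

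There is no real obstacle here: the entire content has been isolated in \Cref{thm:theorem-partition-generic-liu} and in the partition regularity of $\P$ (\Cref{cor:partition-generic-regular}). The only point requiring a moment's care is checking that the class $\U$ furnished by membership in $\P$ meets the exact hypotheses of the theorem — non-trivial, $\Sigma^0_1$, and partition large — but this is immediate from the definition of $\P$. Thus the proof is a two-line deduction, entirely parallel to the cone avoidance corollary.
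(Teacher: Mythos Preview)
Your proposal is correct and matches the paper's proof essentially verbatim: the paper also invokes \Cref{cor:partition-generic-regular} to get that $A$ or $\overline{A}$ is partition generic in some non-trivial $\Sigma^0_1$ large class, and then applies \Cref{thm:theorem-partition-generic-liu}. Your additional remarks (that $\omega\in\P$, that the case split is symmetric, and that the class $\U$ obtained from $\P$ meets the hypotheses of the theorem) are all sound and simply make explicit what the paper leaves implicit.
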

\begin{proof}
By \Cref{cor:partition-generic-regular}, either $A$ or $\overline{A}$ is partition generic in a non-trivial $\Sigma^0_1$ large class. Apply \Cref{thm:theorem-partition-generic-liu}.
\end{proof}

\begin{remark}
In all the arguments above, one could have worked with a generalized notion of condition
$(\sigma, X, \L)$ where $\L$ is a non-trivial $\Pi^0_2$ partition regular class.
Then, for every sufficiently generic filter $\F$ and every condition $(\sigma, X, \L)$,
$G_\F \in \L$. This implies in particular that all these partition generic basis theorems also hold if one replaces \qt{there exists an infinite subset $H \subseteq A$} by \qt{there exists a subset $H \subseteq A$ such that $H \in \L$} for any non-trivial $\Pi^0_2$ partition regular class~$\L$.
\end{remark}

\subsection{Constant-bound trace avoidance}

Soon after proving his PA avoidance pigeonhole basis theorem, Liu proved a constant bound trace avoidance pigeonhole basis theorem, with several consequences, such as the existence, for every set~$A \subseteq \omega$, of an infinite subset~$H \subseteq A$ or $H \subseteq \overline{A}$ which does not compute any Martin-L\"of random, or even no set of positive Hausdorff dimension.

\begin{definition}
A \emph{$k$-trace} is a sequence of finite sets of strings $F_0, F_1, \dots$ such that for every $n$, $|F_n| = k$ and every string $\sigma \in F_n$ is of length~$n$. A \emph{constant-bound trace} is a $k$-trace for some $k \in \omega$.
A $k$-trace of a class $\C \subseteq 2^\omega$ is a $k$-trace $F_0, F_1, \dots$ such that $[F_n] \cap \C \neq \emptyset$ for every~$n$, where $[F_n] = \bigcup_{\sigma \in F_n} [\sigma]$.
\end{definition}

Liu~\cite{liu2015cone} proved that if $\C \subseteq 2^\omega$ is a non-empty $\Pi^0_1$ class with no computable 1-trace, then for every set~$A \subseteq \omega$, there is an infinite subset~$H \subseteq A$ or $H \subseteq \overline{A}$ which does not compute any 1-trace of~$\C$. We prove the corresponding partition genericity subset basis theorem.

\begin{lemma}\label[lemma]{lem:pg-forcing-non-trace}
Let $\C \subseteq 2^\omega$ be a non-empty $\Pi^0_1$ class with no computable 1-trace. For every condition $(\sigma, X, \U)$ and every Turing functional $\Phi_e$,
there is an extension $(\tau, Y, \V)$ forcing $\Phi^G_e$ not to be a 1-trace of~$\C$.
\end{lemma}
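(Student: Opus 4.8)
The plan is to mirror the two‑case structure of \Cref{lem:pg-forcing-cone-avoidance} and \Cref{lem:pg-forcing-non-pa}: either a ``largeness'' case lets us directly diagonalize against $\Phi^G_e$ on a single input, or a ``smallness'' case lets us shrink the reservoir so that $\Phi^G_e$ is forced to misbehave on some input. Since $\C$ is non-empty and $\Pi^0_1$, for each $n$ the set of strings $\tau\in 2^{=n}$ with $[\tau]\cap\C\neq\emptyset$ is finite, non-empty, and $\Pi^0_1$ uniformly in $n$; call $m\in\omega$ a \emph{valid $n$-value} if $m$ codes the singleton $\{\tau\}$ for one such $\tau$. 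There are finitely many valid $n$-values, and ``$m$ is not a valid $n$-value'' is $\Sigma^0_1$ uniformly in $m,n$. Observe that $\Phi^G_e$ is a $1$-trace of $\C$ precisely when, for every $n$, $\Phi^G_e(n)\downarrow$ and equals a valid $n$-value. For each $k$ and $n$, let $T_k(n)$ be the set of $m$ such that for every $k$-cover $Z_0\cup\dots\cup Z_{k-1}\supseteq\omega$ there is $j<k$ with $Z_j\in\U$ and some $\rho\subseteq Z_j$ with $\Phi_e^{\sigma\cup\rho}(n)\downarrow=m$; as in \Cref{lem:pg-forcing-non-pa}, $T_k(n)$ is $\Sigma^0_1$ uniformly in $k$ and $n$. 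We distinguish two cases according to whether, for every $k$, there are $n$ and a non-valid $m\in T_k(n)$.

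In the first case, the class
$$\A=\{\,Y\in\U\ :\ \exists n\ \exists \rho\subseteq Y\ (\Phi_e^{\sigma\cup\rho}(n)\downarrow\text{ and }\Phi_e^{\sigma\cup\rho}(n)\text{ is not a valid }n\text{-value})\,\}$$
is $\Sigma^0_1$, upward-closed, and large: given $k$, take $n$ and $m\in T_k(n)$ as in the case hypothesis, so every $k$-cover of $\omega$ has a piece in $\A$. Hence $\L(\A)$ is a non-trivial $\Pi^0_2$ partition regular subclass of $\U$, so partition genericity of $X$ in $\U$ yields $X\in\L(\A)\subseteq\A$. Fixing the resulting $n$ and $\rho\subseteq X$, and putting $Y=X\setminus\{0,\dots,|\rho|\}$, which is still partition generic in $\U$ by \Cref{prop:pg-invariant-finite}, the extension $(\sigma\cup\rho,Y,\U)$ forces $\Phi^G_e(n)$ to converge to a non-valid $n$-value, hence forces $\Phi^G_e$ not to be a $1$-trace of $\C$.

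In the second case, fix $k$ such that for every $n$, every $m\in T_k(n)$ is a valid $n$-value. First we claim there is some $n$ with $T_k(n)=\emptyset$: otherwise the function sending $n$ to the first element enumerated into $T_k(n)$ is total and computable, each of its values is a valid $n$-value, and so the corresponding sequence of singletons is a computable $1$-trace of $\C$, contradicting the hypothesis on $\C$. Fix such an $n$ and let $m_1,\dots,m_r$, with $r\geq 1$, list the valid $n$-values. For each $i\leq r$, since $m_i\notin T_k(n)$, fix a $k$-cover $Z^i_0\cup\dots\cup Z^i_{k-1}\supseteq\omega$ such that for every $j<k$, either $Z^i_j\notin\U$ or no $\rho\subseteq Z^i_j$ satisfies $\Phi_e^{\sigma\cup\rho}(n)\downarrow=m_i$. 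The intersections $W=Z^1_{s_1}\cap\dots\cap Z^r_{s_r}$, ranging over $s_1,\dots,s_r<k$, form a $k^r$-cover of $\omega$. If such a $W$ lies in $\U$, then by upward-closure each $Z^i_{s_i}$ lies in $\U$, so no $\rho\subseteq Z^i_{s_i}$ — hence no $\rho\subseteq W$ — satisfies $\Phi_e^{\sigma\cup\rho}(n)\downarrow=m_i$; consequently $\Phi_e^{\sigma\cup\rho}(n)\notin\{m_1,\dots,m_r\}$ for every $\rho\subseteq W$. The sets $W\cap X$ cover $X$, so by \Cref{lemma:abslarge} there are a $\Sigma^0_1$ class $\V$ with $\V\cap\U$ partition large and one piece $W$ with $W\cap X$ partition generic in $\V\cap\U$; in particular $W\cap X\in\U$, so $W\in\U$, whence $\Phi_e^{\sigma\cup\rho}(n)\notin\{m_1,\dots,m_r\}$ for every $\rho\subseteq W\cap X$. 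The condition $(\sigma,W\cap X,\V\cap\U)$ extends $(\sigma,X,\U)$ and forces $\Phi^G_e(n)$ to be either divergent or a non-valid $n$-value; either way $\Phi^G_e$ is forced not to be a $1$-trace of $\C$.

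The delicate point, and the place where this departs from \Cref{lem:pg-forcing-non-pa}, is the second case: a trace may take arbitrarily large values on a fixed input, so one cannot refine the reservoir against all possible outputs as in the $\mathrm{DNC}_2$ argument. The way around this is twofold: the set of \emph{valid} $n$-values is finite, so for a single input $n$ it suffices to take the common refinement of only finitely many $k$-covers; and the right input $n$ can be found because, were $T_k(n)$ non-empty for all $n$, one could read off a computable $1$-trace of $\C$, which is forbidden by assumption. The remaining steps are the same bookkeeping already carried out in the preceding lemmas.
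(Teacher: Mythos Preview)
Your proof is correct and follows essentially the same two-case structure as the paper's. The only notable difference is cosmetic: in Case~2 the paper refines over all $2^n$ strings of length~$n$ to obtain a $k^{2^n}$-cover and concludes divergence, whereas you refine only over the finitely many valid $n$-values to obtain a $k^r$-cover and conclude that $\Phi^G_e(n)$ is either divergent or not a valid $n$-value---your formulation is slightly more careful about the possibility that the functional outputs something of the wrong shape, but the underlying argument is the same.
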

\begin{proof}
For every $k$, let $S_k$ be the set of strings $\mu \in 2^{<\omega}$ such that for every $k$-cover $Z_0 \sqcup \dots \sqcup Z_{k-1} \supseteq \omega$, there is some $j < k$ such that $Z_j \in \U$, and some $\rho \subseteq Z_j$ such that $\Phi_e^{\sigma \cup \rho}(|\mu|)\downarrow = \mu$.
Note that the set $S_k$ is $\Sigma^0_1$ uniformly in $k$. We have two cases: 

Case 1: for every $k \in \omega$, there is some $\mu \in S_k$ such that $[\mu] \cap \C = \emptyset$. Then the following class is large:
$$
\{ Y \in \U : \exists \mu\ \exists \rho \subseteq Y\  \Phi^{\sigma \cup \rho}_e(|\mu|)\downarrow = \mu \wedge [\mu] \cap \C = \emptyset \}
$$
is large. Since $X$ is partition generic, then it belongs to this class, so there is some $\mu \in 2^{<\omega}$ and some $\rho \subseteq X$ such that $\Phi^{\sigma \cup \rho}_e(|\mu|)\downarrow = \mu$ and $[\mu] \cap \C = \emptyset$. Let $Y = X \setminus \{0, \dots, |\rho|\}$. Note that $Y$ is again partition generic in $\U$, so $(\sigma \cup \rho, Y, \U)$ is an extension forcing $\Phi^G_e(|\mu|)\downarrow = \mu$.

Case 2: there is some $k \in \omega$ such that for every $\mu \in S_k$, $[\mu] \cap \C \neq \emptyset$. Then there must be some $n$ such that $S_k$ contains no string of length~$n$. Indeed, otherwise, one would compute a 1-trace of~$\C$, contradicting our hypothesis. 
    For every string~$\mu$ of length~$n$, let $Z^\mu_0 \cup \dots \cup Z^\mu_{k-1} = \omega$ be a $k$-cover such that for every~$j < k$ such that $Z^\mu_j \in \U$, and every $\rho \subseteq Z^\mu_j$ such that $\Phi_e^{\sigma \cup \rho}(n)\downarrow$, then $\Phi_e^{\sigma \cup \rho}(n) \neq \mu$. 
   
 Let $\langle P_j : j < k^{2^n} \rangle$ be the $k^{2^n}$-cover of~$\omega$ refining all the $k$-covers above. Then for every $j < k^{2^n}$ and every $\rho \subseteq P_j$, $\Phi_e^{\sigma \cup \rho}(n)\uparrow$. By \Cref{lemma:abslarge}, there is a non-trivial large $\Sigma^0_1$ class $\V \subseteq \U$ and some $j < k^{2^n}$ such that $P_j \cap X$ is partition regular in $\V$. The condition $(\sigma, P_j \cap X, \V)$ is an extension forcing $\Phi^G_e(n)\uparrow$.
 \end{proof}

\begin{theorem}\label[theorem]{thm:theorem-partition-generic-trace}
Let $A$ be a set which is partition generic in a non-trivial large $\Sigma^0_1$ class $\U \subseteq 2^\omega$. For every countable collection of non-empty $\Pi^0_1$ classes $\C_0, \C_1, \dots$ with no computable 1-trace. Then there is an infinite set $G \subseteq A$ such that none of the classes $\C_n$ admits a $G$-computable 1-trace.
\end{theorem}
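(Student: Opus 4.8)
The plan is to follow the template of \Cref{thm:pg-basis-cone-avoidance} and \Cref{thm:theorem-partition-generic-liu}: build $G$ as the generic object of partition generic Mathias forcing and meet one density requirement per pair $(n,e)$, where the combinatorial work for each requirement has already been isolated in \Cref{lem:pg-forcing-non-trace}. Concretely, I would take a sufficiently generic filter $\F$ containing the condition $(\emptyset, A, \U)$ (valid by the hypotheses on $A$ and $\U$) and set $G = G_\F$. By \Cref{lem:pg-forcing-infinite}, $G$ is infinite, and by construction of $G_\F$ we have $G \subseteq A$. So everything reduces to showing that, for all $n,e \in \omega$, the functional $\Phi^G_e$ fails to be a $1$-trace of $\C_n$; this is exactly what it means for no $\C_n$ to admit a $G$-computable $1$-trace.

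Fix $n,e \in \omega$. Since $\C_n$ is a non-empty $\Pi^0_1$ class with no computable $1$-trace, \Cref{lem:pg-forcing-non-trace} applied to $\C = \C_n$ and the functional $\Phi_e$ shows that below every condition there is an extension forcing $\Phi^G_e$ not to be a $1$-trace of $\C_n$. Moreover this property persists along any further extension, since the witness the lemma produces commits only a finite, already-fixed portion of $G$: in its first case the equality $\Phi^{\sigma\cup\rho}_e(|\mu|)\downarrow = \mu$ with $[\mu]\cap\C_n = \emptyset$ is settled once $G$ extends $\sigma\cup\rho$, and in its second case $\Phi^G_e(n)\uparrow$ holds because $G \setminus \sigma$ is contained in a reservoir on which no finite subset makes the computation converge, a property inherited by every sub-reservoir. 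Hence the set $D_{n,e}$ of conditions forcing $\Phi^G_e$ not to be a $1$-trace of $\C_n$ is dense and closed under extension, so a sufficiently generic $\F$ meets it, whence $\Phi^G_e$ is not a $1$-trace of $\C_n$. As there are only countably many pairs $(n,e)$, a single sufficiently generic $\F$ meets all of the sets $D_{n,e}$, and then $G = G_\F$ is an infinite subset of $A$ such that none of the $\C_n$ admits a $G$-computable $1$-trace.

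I do not anticipate a genuine obstacle here: the whole difficulty has been pushed into \Cref{lem:pg-forcing-non-trace}, whose second case already exploits both the assumption that $\C_n$ has no computable $1$-trace and the refinement supplied by \Cref{lemma:abslarge} (passing from a $k^{2^n}$-cover to a single part on which the reservoir remains partition generic in a smaller non-trivial $\Sigma^0_1$ large class). The only point worth spelling out in the write-up is the density-plus-persistence verification above, which guarantees that meeting the $D_{n,e}$'s along the generic filter actually pins down the behaviour of $\Phi^G_e$; everything else is the routine unwinding of the forcing relation, carried out exactly as in the two preceding theorems.
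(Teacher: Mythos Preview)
Your proposal is correct and follows exactly the paper's approach: take a sufficiently generic filter containing $(\emptyset, A, \U)$, invoke \Cref{lem:pg-forcing-infinite} for infiniteness, and apply \Cref{lem:pg-forcing-non-trace} to each pair $(n,e)$ to rule out $\Phi^G_e$ being a $1$-trace of~$\C_n$. The additional persistence remarks you include are correct but more detailed than what the paper spells out.
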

\begin{proof}
Let $\F$ be a sufficiently generic filter containing $(\emptyset, A, \U)$.
By \Cref{lem:pg-forcing-infinite}, $G_\F$ is infinite. By construction, $G_\F \subseteq A$.
Last, by \Cref{lem:pg-forcing-non-trace}, none of the classes $\C_n$ admits a $G_\F$-computable 1-trace.
\end{proof}

\begin{corollary}[Liu~\cite{liu2015cone}]\label[corollary]{cor:liu-effective-closed-sets}
Let $\C_0, \C_1, \dots$ be a countable collection of non-empty $\Pi^0_1$ classes with no computable 1-trace.
For every set $A \subseteq \omega$, there is an infinite subset $H \subseteq A$ or $H \subseteq \overline{A}$ such that none of the classes $\C_n$ admits an $H$-computable 1-trace.
\end{corollary}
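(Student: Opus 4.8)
The statement to be proven is \Cref{cor:liu-effective-closed-sets}, deriving Liu's pigeonhole basis theorem from the partition genericity version \Cref{thm:theorem-partition-generic-trace}. The plan is to mirror exactly the structure of the two previously established corollaries (the cone avoidance corollary following \Cref{thm:pg-basis-cone-avoidance}, and the PA avoidance corollary following \Cref{thm:theorem-partition-generic-liu}): invoke \Cref{cor:partition-generic-regular} to locate the typicality on one side of the partition, then apply the relativized basis theorem on that side.

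Concretely, I would argue as follows. Given an arbitrary set $A \subseteq \omega$, observe that $\{0\} \cup \dots \cup \{|{\sigma}|\}$ considerations are not needed here; rather, since $A \cup \overline{A} = \omega$ and $\omega \in \P$, where $\P$ is the partition regular class of \Cref{cor:partition-generic-regular}, partition regularity of $\P$ yields that either $A \in \P$ or $\overline{A} \in \P$. Say $A \in \P$ (the other case being symmetric, producing a subset of $\overline{A}$ instead). By definition of $\P$, there is a non-trivial $\Sigma^0_1$ partition large class $\U \subseteq 2^\omega$ in which $A$ is partition generic. Now apply \Cref{thm:theorem-partition-generic-trace} to this $A$, this $\U$, and the given collection $\C_0, \C_1, \dots$ of non-empty $\Pi^0_1$ classes with no computable $1$-trace: we obtain an infinite set $G \subseteq A$ such that none of the $\C_n$ admits a $G$-computable $1$-trace. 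If instead $\overline{A} \in \P$, the same argument produces an infinite $H \subseteq \overline{A}$ with the same property. In either case $H$ (taking $H = G$ in the first case) is an infinite subset of $A$ or of $\overline{A}$ with no $H$-computable $1$-trace of any $\C_n$, which is exactly the claim.

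I do not anticipate any genuine obstacle: all the work has already been done in \Cref{thm:theorem-partition-generic-trace} and in \Cref{cor:partition-generic-regular}, and this corollary is a pure bookkeeping deduction. The one point worth a sentence of care is confirming that \Cref{cor:partition-generic-regular} applies verbatim — it asserts that $\P$ is partition regular and contains $\omega$, so splitting $\omega = A \cup \overline{A}$ (a $2$-cover) forces one side into $\P$ — and that the hypothesis of \Cref{thm:theorem-partition-generic-trace} (a countable collection of non-empty $\Pi^0_1$ classes with no computable $1$-trace) is literally the hypothesis we are handed. So the proof is three lines long: invoke \Cref{cor:partition-generic-regular}, invoke \Cref{thm:theorem-partition-generic-trace}, and note the symmetry.
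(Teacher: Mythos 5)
Your proposal is correct and follows exactly the paper's own argument: apply \Cref{cor:partition-generic-regular} to the $2$-cover $A \cup \overline{A} = \omega$ to place one side in a non-trivial $\Sigma^0_1$ partition large class, then invoke \Cref{thm:theorem-partition-generic-trace} on that side. No issues.
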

\begin{proof}
By \Cref{cor:partition-generic-regular}, either $A$ or $\overline{A}$ is partition generic in a non-trivial $\Sigma^0_1$ large class. Apply \Cref{thm:theorem-partition-generic-trace}.
\end{proof}

\begin{corollary}\label[corollary]{cor:kurtz-trace-avoidance}
Let $\C_0, \C_1, \dots$ be a countable collection of non-empty $\Pi^0_1$ classes with no computable 1-trace. For every Kurtz random $A \subseteq \omega$, there is an infinite subset $H \subseteq A$ such that none of the classes $\C_n$ admits an $H$-computable 1-trace.
\end{corollary}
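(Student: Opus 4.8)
The plan is to derive this directly from \Cref{thm:theorem-partition-generic-trace} once we check that a Kurtz random satisfies its hypotheses. First I would recall \Cref{prop:kurtz-bipg}: every Kurtz random~$A$ is bi-partition generic, and in particular partition generic (in $2^\omega$). The only gap between this and \Cref{thm:theorem-partition-generic-trace} is that the theorem asks for partition genericity \emph{in a non-trivial $\Sigma^0_1$ partition large class}, not merely partition genericity in $2^\omega$.

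To bridge this gap I would use the most elementary such class, $\U_2 = \{X : |X| \geq 2\}$. Since $\U_2 = \U_{\omega,2}$, \Cref{lem:ua2-large} (applied with $X = \omega$) tells us that $\U_2$ is indeed a non-trivial $\Sigma^0_1$ partition large class, with $\L(\U_2) = \L_\omega$ the class of all infinite sets. Moreover, any non-trivial $\Pi^0_2$ partition regular subclass of $\U_2$ is a fortiori a non-trivial $\Pi^0_2$ partition regular subclass of $2^\omega$, so partition genericity of~$A$ in $2^\omega$ immediately yields partition genericity of~$A$ in $\U_2$.

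Finally I would apply \Cref{thm:theorem-partition-generic-trace} to the set~$A$, the class~$\U_2$, and the given collection $\C_0, \C_1, \dots$, producing an infinite set $G \subseteq A$ such that none of the $\C_n$ admits a $G$-computable 1-trace; this~$G$ serves as the required~$H$. I do not expect a genuine obstacle: the entire content is already in \Cref{prop:kurtz-bipg} (which in turn rests on the measure-theoretic \Cref{prop:measure:pr}) and in \Cref{thm:theorem-partition-generic-trace}. The only point worth spelling out is the observation that the ``witnessing $\Sigma^0_1$ large class'' demanded by the theorem can always be taken to be the trivial one $\U_2$, so that Kurtz randoms — which are not handed to us together with such a class — nevertheless fall under its scope.
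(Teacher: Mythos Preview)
Your proposal is correct and follows the same approach as the paper: invoke \Cref{prop:kurtz-bipg} to get partition genericity of~$A$, then apply \Cref{thm:theorem-partition-generic-trace}. The paper's proof is two lines and leaves implicit the choice of witnessing $\Sigma^0_1$ large class; your explicit use of~$\U_2$ and the observation that partition genericity in~$2^\omega$ passes to~$\U_2$ simply fills in that routine detail.
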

\begin{proof}
By \Cref{prop:kurtz-bipg}, every Kurtz random is bi-partition generic.
Apply \Cref{thm:theorem-partition-generic-trace}.
\end{proof}

\begin{corollary}
For every Kurtz random~$A$, there is an infinite subset~$H \subseteq A$ which does not compute any set of positive Hausdorff dimension.
\end{corollary}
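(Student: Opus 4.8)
The plan is to reduce this corollary to \Cref{cor:kurtz-trace-avoidance} by exhibiting a single $\Pi^0_1$ class with no computable $1$-trace whose $1$-traces encode all sets of positive effective Hausdorff dimension. More precisely, I would invoke the known fact that there is a nonempty $\Pi^0_1$ class $\C \subseteq 2^\omega$ all of whose members have effective Hausdorff dimension $1$ (indeed, one can take the members to be complex sequences, or sequences of sufficiently slow-growing initial-segment complexity), and with the additional property that $\C$ has no computable $1$-trace. The reason $\C$ has no computable $1$-trace is that a computable $1$-trace $F_0, F_1, \dots$ with $|F_n| = 1$ would pick out a single string $\sigma_n$ of length $n$ for each $n$ with $[\sigma_n] \cap \C \neq \emptyset$; since all members of $\C$ have positive lower density of Kolmogorov complexity, the initial segments $\sigma_n$ would have complexity growing linearly, which is impossible for a computable sequence of strings. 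The key point to isolate is that \emph{computing a set of positive Hausdorff dimension entails computing a $1$-trace of $\C$}: if $Z$ computes some $W$ with $\dim(W) > 0$, then $W$ itself, viewed as its sequence of initial segments $W \uh_0, W \uh_1, \dots$, is a $Z$-computable $1$-trace of the $\Pi^0_1$ class of all sets of dimension $\geq \dim(W)/2$ (say), and one must check this latter class is a nonempty $\Pi^0_1$ class with no computable $1$-trace.

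Concretely, I would proceed as follows. First, fix a rational $s$ with $0 < s < 1$ and let $\C_s$ be the $\Pi^0_1$ class of all $X \in 2^\omega$ such that $K(X \uh_n) \geq sn$ for all sufficiently large $n$ (made into a genuine $\Pi^0_1$ class by fixing the threshold on the finitely many exceptions, ranging over finitely many cases, or by using $C$ in place of $K$ and a clean inequality so the class is closed; the standard trick is to take $\C_s = \{X : \forall n\, C(X \uh_n) \geq sn - c\}$ for a suitable constant $c$, which is $\Pi^0_1$). By a theorem of this type (due essentially to the existence of sequences of maximal complexity and the fact that the complex sequences form a $\Pi^0_1$-like class after bounding), $\C_s$ is nonempty. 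Next, I claim $\C_s$ has no computable $1$-trace: a computable $1$-trace selects computable strings $\sigma_n$ with $[\sigma_n]\cap\C_s\neq\emptyset$, so each $\sigma_n$ extends to some $X\in\C_s$, forcing $C(\sigma_n) \geq s|\sigma_n| - c'$ for the relevant indices, contradicting computability of $n \mapsto \sigma_n$. Then, for the main reduction: if $Z$ computes a set $W$ with $\dim_H(W) = \delta > 0$, then for all but finitely many $n$ we have $C(W \uh_n) \geq (\delta/2) n$, so after absorbing a constant, the sequence $(W \uh_n)_n$ is (a finite modification of) a $Z$-computable $1$-trace of $\C_{\delta/2}$. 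Taking the countable collection $\{\C_{1/k} : k \geq 2\}$, applying \Cref{cor:kurtz-trace-avoidance} yields an infinite $H \subseteq A$ such that no $\C_{1/k}$ has an $H$-computable $1$-trace; hence $H$ computes no set of positive Hausdorff dimension.

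I would then write the proof essentially as: \textbf{Proof.} For each integer $k \geq 2$, let $\C_k$ be the nonempty $\Pi^0_1$ class of sequences $X$ with $C(X \uh_n) \geq n/k$ for all $n$ (adjusting by an additive constant so that the class is nonempty and genuinely $\Pi^0_1$); such a class has no computable $1$-trace, since any string in the projection of $\C_k$ at level $n$ has complexity at least $n/k - O(1)$. By \Cref{cor:kurtz-trace-avoidance} applied to this countable collection, there is an infinite $H \subseteq A$ such that no $\C_k$ admits an $H$-computable $1$-trace. Suppose toward a contradiction that $H$ computes a set $W$ with $\dim_H(W) > 0$; fix $k$ with $\dim_H(W) > 2/k$. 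Then $C(W \uh_n) \geq n/k$ for all large $n$, so a finite modification of $(W \uh_n)_{n \in \omega}$ is an $H$-computable $1$-trace of $\C_k$, contradiction. Hence $H$ computes no set of positive Hausdorff dimension. $\square$

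The main obstacle I anticipate is getting the bookkeeping between $\Pi^0_1$-ness, nonemptiness, and the complexity bound exactly right: the naive class $\{X : \forall n\, C(X\uh_n) \geq sn\}$ need not be nonempty for the literal inequality, and $\{X : \exists^\infty n\, C(X\uh_n) \geq sn\}$ is not $\Pi^0_1$, so one must pass through the standard device of bounding below by $sn - c$ and citing (or reproving in one line) that for suitable $c$ this class is nonempty — this is where one leans on the known result, cf.\ Liu~\cite{liu2015cone} where exactly this packaging is used. The other point requiring a sentence of care is the direction ``computes positive dimension $\Rightarrow$ computes a $1$-trace'': one uses that effective Hausdorff dimension of $W$ equals $\liminf_n C(W\uh_n)/n$, so positivity of the dimension gives the required lower bound on cofinitely many initial segments, and a $1$-trace only needs to be correct eventually — or one fixes the finitely many bad values by hand, which only changes finitely many $F_n$.
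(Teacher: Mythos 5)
Your overall strategy is exactly the paper's: characterize positive effective Hausdorff dimension by a linear lower bound on initial-segment complexity, package this into a countable family of nonempty $\Pi^0_1$ classes with no computable $1$-trace, and apply \Cref{cor:kurtz-trace-avoidance}, using the fact that any $W$ of positive dimension computed by $H$ yields the $H$-computable $1$-trace $(W\uh_n)_n$ of one of those classes. The paper takes $\C_{m,c} = \{X : \forall k\ K(X\uh_k) \geq k/m - c\}$ and cites Corollary~1.4 of Kjos-Hanssen--Liu for the absence of computable $1$-traces.

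There is, however, a genuine gap in your concrete reduction step. You fix \emph{one} additive constant $c_k$ per slope $1/k$ (chosen only to make $\C_k$ nonempty), and then claim that if $C(W\uh_n) \geq n/k$ for all large $n$, then ``a finite modification of $(W\uh_n)_n$ is an $H$-computable $1$-trace of $\C_k$.'' This fails: if there is even one $n_0$ with $C(W\uh_{n_0}) < n_0/k - c_k$, then \emph{no} extension of $W\uh_{n_0}$ lies in $\C_k$, hence $[W\uh_n] \cap \C_k = \emptyset$ for \emph{every} $n \geq n_0$ --- cofinitely many entries of the trace are bad, not finitely many, and no finite repair is possible. Since the threshold $N$ past which $C(W\uh_n) \geq n/k$ holds depends on $W$ while $c_k$ is fixed in advance, such an $n_0$ can occur. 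The correct fix --- and it is exactly what the paper does --- is to let the countable family range over \emph{both} the slope and the additive constant, i.e.\ take all classes $\C_{m,c}$ for $m, c \in \omega$ (discarding the empty ones). Then any $W$ of positive dimension genuinely belongs to some $\C_{m,c}$, so $(W\uh_n)_n$ is literally a $1$-trace of that class with no modification needed. Your ``absorbing a constant'' remark points in this direction, but your written proof does not implement it; the ``fix finitely many $F_n$ by hand'' alternative you offer is the step that would fail. With the doubly-indexed family the argument goes through, and your use of plain complexity $C$ in place of $K$ is immaterial.
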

\begin{proof}
For every~$m,c$, let $\C_{m,c} = \{ X : \forall k K( X \uh_k ) \geq k/m-c\}$.
A set $X$ has positive Hausdorff dimension iff $X \in \C_{m,c}$ for some~$m,c \in \omega$.
For every~$m,c \in \omega$, $\C_{m,c}$ has no computable 1-trace (see Corollary~1.4 in \cite{kjoshanssen2020extracting}).
By \Cref{cor:kurtz-trace-avoidance}, there is an infinite subset~$H \subseteq A$
such that none of the classes~$\C_{m,c}$ admits an $H$-computable 1-trace.
In particular, $H$ does not compute any set of positive Hausdorff dimension.
\end{proof}

\subsection{Lowness}

It is clearly not the case that for every set~$A \subseteq \omega$, there is an infinite subset~$H \subseteq A$ or $H \subseteq \overline{A}$ of low degree. One can simply pick $A$ to be any bi-immune set relative to~$\emptyset'$. Then $A$ has no even $\Delta^0_2$ infinite subset in it or its complement. Therefore, there is no low or even $\Delta^0_2$ partition genericity subset basis theorem. More interestingly, Downey, Hirschfeldt, Lempp and Solomon~\cite{downey20010_2} constructed a $\Delta^0_2$ set with no low infinite subset in it or its complement, using a very involved infinite injury priority argument. Thus, there no hope of proving that every $\Delta^0_2$ partition generic sets has a low subset. On the other hand, if $A$ is a $\Delta^0_2$ set such that $\overline{A}$ is not partition generic, then $A$ is partition generic in a non-trivial $\Sigma^0_1$ partition large class in a strong sense, in which case $A$ admits an infinite low subset. 

\begin{theorem}\label[theorem]{thm:delta2-non-pg-low}
Let $A$ be a $\Delta^0_2$ set such that $\overline{A}$ is not partition generic.
Then there is an infinite subset $G \subseteq A$ of low degree.
\end{theorem}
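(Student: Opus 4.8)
The plan is to extract from the failure of partition genericity of $\overline{A}$ a concrete $\Sigma^0_1$ partition large class avoiding $\overline{A}$, and then to run a partition generic Mathias forcing over $\emptyset'$, controlling the jump of the generic in the manner of the low basis theorem. Since $\overline{A}$ is not partition generic, fix a non-trivial $\Pi^0_2$ partition regular class $\L\subseteq 2^\omega$ with $\overline{A}\notin\L$, and write $\L=\bigcap_n\U_n$ with each $\U_n$ a non-trivial $\Sigma^0_1$ partition large class (replacing $\U_n$ by $\U_n\cap\U_2$ if needed, which preserves being $\Sigma^0_1$, upward-closed and partition large). Pick $n$ with $\overline{A}\notin\U_n$ and set $\U:=\U_n$. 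Then $A$ is partition generic in $\U$: for any non-trivial $\Pi^0_2$ partition regular $\V\subseteq\U$ one has $\omega\in\V$, and applying partition regularity to the $2$-cover $(A,\overline{A})$ of $\omega$ forces $A\in\V$ or $\overline{A}\in\V$, the latter being impossible since $\overline{A}\notin\U\supseteq\V$. In particular $A\in\L(\U)$, a non-trivial $\Pi^0_2$ partition regular class, and since $\U$ is $\Sigma^0_1$ with $\overline{A}\notin\U$, every positive requirement of $\U$ (every finite $F$ with $\{Z:F\subseteq Z\}\subseteq\U$) meets $A$, and likewise for every $\Sigma^0_1$ partition large $\U'\subseteq\U$ avoiding $\overline{A}$. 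This is the ``strong'' form of partition genericity the construction exploits.

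I then run partition generic Mathias forcing with initial condition $(\emptyset,A,\U)$, but build the generic filter $\emptyset'$-computably, maintaining the invariant that the reservoir of the current condition is a subset of $A$ computable from $\emptyset'$ uniformly: this holds for $A$ itself, is preserved by removing a finite set, and is preserved by the refinements of \Cref{lemma:abslarge} provided the $k$-covers of $\omega$ fed into that lemma are chosen $\Delta^0_2$ by the low basis theorem. The two families of requirements are the infinity requirements $R^{\infty}_m$ (forcing $|G|>m$), handled exactly as in \Cref{lem:pg-forcing-infinite} with the required element of the reservoir above $|\sigma|$ located using $\emptyset'$, and the jump requirements $R^{J}_e$ (forcing $\Phi^G_e(e)\!\downarrow$ or $\Phi^G_e(e)\!\uparrow$).

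The jump requirements are the delicate part. Given a condition $(\sigma,X,\U')$ and $e$, consider the $\Sigma^0_1$ class $\A=\{Z\in\U':\exists\rho\subseteq Z\ \Phi^{\sigma\cup\rho}_e(e)\!\downarrow\}$. If $X\in\A$, extend to $(\sigma\cup\rho,X\setminus\{0,\dots,|\rho|\},\U')$ and force convergence; if $X\notin\A$, then $(\sigma,X,\U')$ already forces divergence, since every extension leaves the finite subsets of the reservoir among those of $X$. As $X$ is partition generic in $\U'$, the first case occurs whenever $\A$ is partition large, since then $\L(\A)$ is a non-trivial $\Pi^0_2$ partition regular subclass of $\U'$, so $X\in\L(\A)\subseteq\A$. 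The real issue is to decide, $\emptyset'$-computably, which case holds, as ``$X\in\A$'' is a priori only $\Sigma^0_2$; I resolve this as in the low basis theorem, by carrying the reservoir data inside a $\Pi^0_1$ class of $k$-covers of $\omega$ and using \Cref{lemma:abslarge} on $A$ and the induced $k$-cover $(Z_j\cap A)_j$ of $A$ — which produces a refinement $\U''\subseteq\U$ of the partition large class together with a part $Z_i$ such that $Z_i\cap A$ is partition generic in $\U''$, a valid new reservoir inside $A$ — thereby replacing the $\Sigma^0_2$ question by the $\Pi^0_1$, hence $\emptyset'$-decidable, question whether the $\Pi^0_1$ class of reservoir data admitting no convergence witness for $e$ is non-empty. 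When it is non-empty one refines to it and forces divergence; when it is empty one extracts a finite set of convergence witnesses by compactness and forces convergence. Either way the construction records $\emptyset'$-computably how $R^J_e$ was won. Running this $\emptyset'$-computable construction yields a sufficiently generic filter $\F\ni(\emptyset,A,\U)$, and $G=G_\F$ is an infinite subset of $A$; since $\emptyset'$ knows for each $e$ whether $\Phi^G_e(e)$ was forced to converge or diverge, $G'\leq_T\emptyset'$, i.e.\ $G$ is low.

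I expect the main obstacle to be exactly the jump control just sketched: reconciling the $\Pi^0_2$ character of partition largeness — hence of ``can convergence be forced'' — with the demands that the construction be $\emptyset'$-computable and that all reservoirs remain subsets of the fixed $\Delta^0_2$ set $A$, whose own complexity cannot be improved. Getting \Cref{lemma:abslarge} to cooperate with the low basis theorem (so that the refined classes and reservoirs stay $\Delta^0_2$ with $\emptyset'$-computable presentations) is the point requiring care, and the hypothesis $\overline{A}\notin\U$ is precisely what keeps the relevant $\Pi^0_1$ classes of reservoir data non-empty, so that the low basis theorem applies and $A$ genuinely has a subset of the required kind.
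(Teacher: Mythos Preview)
Your proposal has a genuine gap at the jump-control step. You plan to refine conditions via \Cref{lemma:abslarge}: given a low $k$-cover $Z_0,\dots,Z_{k-1}$ of $\omega$, that proposition produces an index $i$ and a refined $\Sigma^0_1$ class $\U''$ such that $Z_i\cap A$ is partition generic in $\U''$. But \Cref{lemma:abslarge} is non-effective: its proof proceeds by searching over all non-trivial $\Pi^0_2$ partition regular subclasses of the current class to find one excluding some $Y_j$, and there is no $\emptyset'$-computable way to carry out this search or to certify that a given $Z_i\cap A$ is partition generic in a given $\U''$. Partition genericity quantifies over all such $\Pi^0_2$ classes, so the invariant you want to maintain (\qt{the reservoir is a $\Delta^0_2$ subset of $A$ partition generic in the current $\Sigma^0_1$ class}) is not one that $\emptyset'$ can verify or propagate. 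A related problem is that your $\Pi^0_1$ class of $k$-covers is indexed by $k$, and deciding whether \emph{some} $k$ works is a $\Sigma^0_2$ question, not a $\Pi^0_1$ one.

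The paper avoids these issues by abandoning partition generic Mathias forcing entirely. It uses ordinary Mathias conditions $(\sigma_s,X_s)$ and maintains a much weaker invariant: $X_s$ is low (not a subset of $A$!) and $X_s\in\bigcap_n\V_n$, the \emph{fixed} $\Pi^0_2$ partition regular class with $\overline{A}\notin\V_m$. Only the stems $\sigma_s$ live inside $A$. For the jump requirement, the paper considers the single $\Pi^{0,X_s}_1$ class $\C$ of sets $B$ with $\overline{B}\notin\V_m$ and $(\sigma_s,X_s\cap B)\Vdash\Phi^G_e(e)\!\uparrow$; emptiness of $\C$ is $\emptyset'$-decidable since $X_s$ is low. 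If $\C=\emptyset$ then $A\notin\C$ gives a convergence witness in $X_s\cap A$; if $\C\neq\emptyset$ the low basis theorem gives a low $B\in\C$, and the $\Pi^0_1$ condition $\overline{B}\notin\V_m$ together with partition regularity of $\bigcap_n\V_n$ forces $X_s\cap B\in\bigcap_n\V_n$, restoring the invariant. The key idea you are missing is that \qt{$\overline{B}\notin\V_m$} is a $\Pi^0_1$ surrogate for \qt{$B$ behaves like $A$}, which lets the low basis theorem do all the work without ever invoking \Cref{lemma:abslarge} or tracking partition genericity.
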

\begin{proof}
Let $\bigcap_n \V_n$ be a $\Pi^0_2$ partition regular class such that $\overline{A} \not \in \bigcap_n \V_n$. In particular, there is some $m \in \omega$ such that $\overline{A} \not \in \V_m$.
Define a $\Delta^0_2$ decreasing sequence of Mathias conditions $(\sigma_0, X_0) \geq (\sigma_1, X_1) \geq \dots$ such that for every $s \in \omega$
\begin{itemize}
	\item[(1)] $\sigma_s \subseteq A$ ; $X_s$ is low ; $X_s \in \bigcap_n \V_n$ ;
	\item[(2)] $\sigma_{s+1} \in \V_e$ if $s = 2e$ ;
	\item[(3)] $(\sigma_{s+1}, X_{s+1}) \Vdash \Phi^G_e(e)\downarrow$ or $(\sigma_{s+1}, X_{s+1}) \Vdash \Phi^G_e(e)\uparrow$ if $s = 2e+1$
\end{itemize}
Start with the condition $(\emptyset, \omega)$.

\emph{Satisfying (2)}:
Given a condition $(\sigma_s, X_s)$ at a stage $s = 2e$, search $\emptyset'$-computably for some finite $\rho \subseteq X \cap A$ such that $\rho \in \V_s$. We claim that such a $\rho$ must exist. Indeed, since $X$ belongs to the partition regular class $\bigcap_n \V_n$, then either $X \cap A$ or $X \cap \overline{A}$ belongs to $\bigcap_n \V_n$. However, $\overline{A} \not \in \V_m$, so by upward-closure of partition regular classes, $X \cap \overline{A} \not \in \bigcap_n \V_n$, hence $X \cap A \in \bigcap_n \V_n$. The condition $(\sigma_s \cup \rho, X_s - \{0, \dots, \max \rho \})$ satisfies (2).

\emph{Satisfying (3)}:
Given an condition $(\sigma_s, X_s)$ at stage $s = 2e+1$, consider the $\Pi^{0,X_s}_1$ class $\C$ of all $B$ such that $\overline{B} \not \in \V_m$ and  $(\sigma_s, X_s \cap B) \Vdash \Phi^G_e(e)\uparrow$.
	Decide $\emptyset'$-computably if $\C$ is empty or not.
	If $\C = \emptyset$, then in particular $A \not \in \C$, so $(\sigma_s, X_s \cap A) \not\Vdash \Phi^G_e(e)\uparrow$. Search $\emptyset'$-computably for some $\rho \subseteq X_s \cap A$ such that $\Phi^{\sigma_s \cup \rho}_e(e)\downarrow$. The condition $(\sigma_s \cup \rho, X_s - \{0, \dots, \max \rho \})$ forces $\Phi^G_e(e)\downarrow$, hence satisfies (3).
	If $\C \neq \emptyset$, then by the low basis theorem, there is some $B \in \C$ of low degree. In particular, $\overline{B} \not \in \V_m$ so $\overline{B} \cap X_s \not \in \V_m$. However, $X_s \in \bigcap_n \V_n$, so by partition regularity of $\bigcap_n \V_n$, $B \cap X_s \in \bigcap_n \V_n$. The condition $(\sigma_s, X_s \cap B)$ forces $\Phi^G_e(e)\uparrow$, hence satisfies (3).
	
This completes the construction. Let $G = \bigcup_s \sigma_s$. In particular, by (1), $G \subseteq A$. By (2) $G \in \bigcap_n \V_n$, hence $G$ is infinite. By (3), $G' \leq_T \emptyset'$. This completes the proof.
\end{proof}

The previous theorem gives us a characterization of the $\Delta^0_2$ sets with no low infinite subset in them or their complements.

\begin{corollary}
A $\Delta^0_2$ set~$A$ has no low infinite subset in it or its complement iff it is bi-partition generic relative to every low set.
\end{corollary}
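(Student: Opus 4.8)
The plan is to prove both directions as essentially routine consequences of the material already developed, chiefly \Cref{thm:delta2-non-pg-low} and the relativized version of \Cref{cor:partition-generic-regular}. Recall that the corollary asserts: a $\Delta^0_2$ set $A$ has no low infinite subset in it or its complement if and only if $A$ is bi-partition generic relative to every low set. For the right-to-left direction, suppose $A$ is bi-partition generic relative to every low set, and suppose for contradiction that there is a low infinite set $H$ with $H \subseteq A$ (the case $H \subseteq \overline{A}$ being symmetric). Then, by the relativization of \Cref{lem:ua2-large} to $H$, the class $\U_{H,2}$ is a non-trivial $\Sigma^0_1(H)$ partition large class, so $\L(\U_{H,2})$ is a non-trivial $\Pi^0_2(H)$ partition regular class; since $H$ is low, this is a non-trivial $\Pi^0_2$ partition regular class relative to a low set. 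But $\overline{A} \cap H = \emptyset$, so $\overline{A} \notin \U_{H,2} \supseteq \L(\U_{H,2})$, contradicting that $\overline{A}$ is partition generic relative to the low set $H$ (this is exactly the content of \Cref{lem:subset-complement-not-pg} relativized). Hence no such $H$ exists on either side.

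For the left-to-right direction, suppose $A$ is a $\Delta^0_2$ set that is \emph{not} bi-partition generic relative to some low set $L$; we must produce a low infinite subset of $A$ or of $\overline{A}$. By definition, either $A$ is not partition generic relative to $L$, or $\overline{A}$ is not partition generic relative to $L$. In the first case, note that $A$ is $\Delta^0_2$, hence $\Delta^0_2$ relative to $L$ as well (since $L$ is low, $L' \equiv_T \emptyset'$, so being $\Delta^0_2$ and being $\Delta^{0,L}_2$ coincide), and $\overline{A}$ — the complement — plays the role of "$A$" in a relativized version of \Cref{thm:delta2-non-pg-low}: more precisely, we apply the $L$-relativization of \Cref{thm:delta2-non-pg-low} with the set whose complement is not partition generic relative to $L$. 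Writing it out: if $A$ is not partition generic relative to $L$, apply the relativized theorem to $\overline{A}$ (which is $\Delta^{0,L}_2$ and whose complement $A$ is not $L$-partition generic) to obtain an infinite subset $G \subseteq \overline{A}$ that is low relative to $L$, hence low since $L$ is low and lowness is transitive in the appropriate sense ($G' \leq_T L' \equiv_T \emptyset'$). If instead $\overline{A}$ is not partition generic relative to $L$, apply the relativized theorem to $A$ directly to get an infinite low subset $G \subseteq A$. Either way we obtain a low infinite subset of $A$ or of $\overline{A}$, completing the contrapositive.

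The one point requiring a little care — and the main (minor) obstacle — is verifying that \Cref{thm:delta2-non-pg-low} relativizes cleanly to a low oracle $L$ while still producing a subset that is low in the \emph{absolute} sense. The proof of \Cref{thm:delta2-non-pg-low} builds a $\Delta^0_2$ decreasing sequence of Mathias conditions using the low basis theorem and $\emptyset'$-computable searches; relativizing to $L$ replaces $\Delta^0_2$ by $\Delta^{0,L}_2$ and $\emptyset'$ by $L'$, and invokes the low basis theorem relative to $L$ (giving reservoirs $B$ with $B' \leq_T L'$). Since $L$ is low, $L' \equiv_T \emptyset'$, so every "$L$-low" object occurring in the construction — the reservoirs $X_s$, the final set $G$ — actually satisfies $(\,\cdot\,)' \leq_T L' \equiv_T \emptyset'$, i.e. is low outright; and the $\Delta^{0,L}_2$ sequence of conditions is genuinely $\Delta^0_2$. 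I would include a short remark to this effect rather than reproving the theorem. Everything else is bookkeeping about relativized versions of the lemmas in \Cref{subsect:pg}, all of which relativize verbatim.
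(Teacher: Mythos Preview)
Your proof is correct and follows essentially the same approach as the paper's. The only cosmetic difference is in the right-to-left direction: the paper invokes \Cref{prop:bi-partition-generic-biimmune} (bi-partition generic implies bi-immune, relativized to each low set) in one line, whereas you unpack that argument via \Cref{lem:subset-complement-not-pg}; and you spell out the relativization of \Cref{thm:delta2-non-pg-low} to a low oracle more carefully than the paper does. Your mention of \Cref{cor:partition-generic-regular} in the preamble is superfluous, as you never actually use it.
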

\begin{proof}
Suppose $A$ is bi-partition generic relative to every low set. By~\Cref{prop:bi-partition-generic-biimmune}, $A$ is bi-immune relative to every low set, hence has no low infinite subset in it or its complement.

Suppose $A$ is not bi-partition generic relative to a low set~$L$. By symmetry, say~$\overline{A}$ is not partition generic relative to~$L$. By~\Cref{thm:delta2-non-pg-low} relativized to~$L$, there is an infinite subset~$H \subseteq A$ such that~$H' \leq_T L' \leq_T \emptyset'$, hence $A$ has an infinite low subset.
\end{proof}

\begin{corollary}
There is a $\Delta^0_2$ set which is bi-partition generic relative to every low set.
\end{corollary}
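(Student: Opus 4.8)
The plan is to deduce this directly from the previous corollary. That corollary establishes the equivalence, for a $\Delta^0_2$ set $A$: "$A$ has no low infinite subset in it or its complement" $\iff$ "$A$ is bi-partition generic relative to every low set." So the entire task reduces to exhibiting a single $\Delta^0_2$ set $A$ such that neither $A$ nor $\overline{A}$ contains an infinite low subset; feeding such an $A$ into the corollary then yields the statement.

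Such a set is precisely what Downey, Hirschfeldt, Lempp and Solomon~\cite{downey20010_2} construct, as recalled at the opening of this subsection: a $\Delta^0_2$ set with no low infinite subset in it or in its complement. Thus the proof amounts to citing that construction and applying the previous corollary. I would write it essentially as: "Let $A$ be the $\Delta^0_2$ set of~\cite{downey20010_2}, which has no low infinite subset in it or its complement. By the previous corollary, $A$ is bi-partition generic relative to every low set."

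The only substantive work here lives inside the cited theorem, and that is where the difficulty would be if one wanted a self-contained argument: one must build $A$ by a $\emptyset'$-approximation meeting, for each pair of Turing functionals, a requirement that defeats any candidate low approximation to an infinite subset of $A$ or of $\overline{A}$ by diagonalizing once the relevant use stabilizes. Since "the candidate subset is infinite" is a $\Pi^0_2$ hypothesis, a requirement may be injured infinitely often before its environment settles, forcing an infinite injury priority argument — this is the main obstacle, and it is exactly the content of~\cite{downey20010_2}, so I would not reproduce it.
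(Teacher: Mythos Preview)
Your proposal is correct and matches the paper's own proof exactly: the paper also simply invokes the previous corollary together with the Downey--Hirschfeldt--Lempp--Solomon construction from~\cite{downey20010_2} of a $\Delta^0_2$ set with no low infinite subset in it or its complement. Your additional remarks on the difficulty of that cited construction are accurate but go beyond what the paper includes.
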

\begin{proof}
Immediate by the previous corollary and the existence of a $\Delta^0_2$ set with no low infinite subset in either it or its complement~\cite{downey20010_2}.
\end{proof}

\section{Preservation of hyperimmunity}\label[section]{sect:pres-hyp}

The purpose of this section is to find the right genericity notion admitting a preservation of hyperimmunity genericity basis theorem, implying the preservation of hyperimmunity pigeonhole basis theorem. More precisely, the following theorem was proven by Patey~\cite{patey2015iterative}:

\begin{theorem}[Patey~\cite{patey2015iterative}]\label[theorem]{thm:preservation-nonce-pigeon-basis}
If $B$ is a hyperimmune set, then for every set $A \subseteq \omega$, there is an infinite set $H \subseteq A$ or $H \subseteq \overline{A}$ such that $B$ is $H$-hyperimmune.
\end{theorem}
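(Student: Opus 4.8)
The statement is the pigeonhole basis theorem for preservation of hyperimmunity, and following the template of the earlier sections, the plan is to derive it from a corresponding \emph{partition genericity} subset basis theorem. Concretely, I would first prove a lemma of the form: if $B$ is hyperimmune, $A$ is partition generic relative to $B$ (in a non-trivial $\Sigma^0_1(B)$ partition large class $\U$), then there is an infinite $G \subseteq A$ such that $B$ is $G$-hyperimmune. Once this is in place, the theorem follows immediately: by \Cref{cor:partition-generic-regular} relativized to $B$, either $A$ or $\overline{A}$ is partition generic in some non-trivial $\Sigma^0_1(B)$ partition large class, and applying the lemma to whichever side works produces the desired infinite subset $H \subseteq A$ or $H \subseteq \overline{A}$ with $B$ $H$-hyperimmune.

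\textbf{The forcing argument.} To prove the lemma, I would use partition generic Mathias forcing relative to $B$, exactly as in \Cref{sect:applications}: a condition is $(\sigma, X, \U)$ with $X$ partition generic in $\U$, a non-trivial $\Sigma^0_1(B)$ partition large class. The infiniteness of $G_\F$ is handled by \Cref{lem:pg-forcing-infinite}. The heart is a density lemma: for every condition $(\sigma, X, \U)$ and every Turing functional $\Phi_e$, there is an extension forcing that $\Phi_e^G$, if it is (the principal function of) an array, is majorized by $B$ on infinitely many blocks — i.e., forcing that $\Phi_e^G$ is not a witness to non-$G$-hyperimmunity of $B$. As in the non-PA and trace-avoidance lemmas, the proof splits into two cases according to whether a certain $\Sigma^0_1(B)$ class is partition large. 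In Case~1 (the ``trace'' class of reservoirs $Y \in \U$ that contain a finite $\rho$ making $\Phi_e^{\sigma\cup\rho}$ output a sufficiently fast-growing finite array piece is large), since $X$ is partition generic it lies in this $\Pi^0_2(B)$ large class, so we can pass to an extension $(\sigma\cup\rho, X', \U)$ that forces $\Phi_e^G$ to output an array-piece growing faster than $B$ allows, hence forces $\Phi_e^G$ not to be a function $B$-hyperimmunity fails against. In Case~2 the relevant class fails to be large for some bound $k$, which yields a $k$-cover $Z_0 \cup \dots \cup Z_{k-1} = \omega$ witnessing this; by \Cref{lemma:abslarge} there is a $\Sigma^0_1(B)$ refinement $\V \subseteq \U$ with $\V$ still large and some $Z_i \cap X$ partition generic in $\V$, and the condition $(\sigma, Z_i \cap X, \V)$ forces that $\Phi_e^G$ grows too slowly (or is not total), from which $B$-hyperimmunity of $B$ relative to $G$ is preserved — and here one must check that from the failure of Case~1 one can $B$-computably extract a trace bounding a would-be array, contradicting hyperimmunity of $B$ unless the forcing blocks $\Phi_e^G$.

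\textbf{Main obstacle.} The delicate point is the exact bookkeeping in the density lemma: what one forces is not simply ``$\Phi_e^G$ is not a function'' but rather ``the array defined from $\Phi_e^G$ does not dominate $B$'', which is a $\Pi^0_2(G\oplus B)$ requirement, and one must organize the two cases so that in Case~2 the $B$-computable extraction of a bounding trace genuinely contradicts hyperimmunity of $B$ — this requires phrasing the $\Sigma^0_1(B)$ classes $S_k$ in terms of \emph{columns} of the putative array (analogous to the inputs $x$ and values $v$ in \Cref{lem:pg-forcing-non-pa}, or the strings $\mu$ in \Cref{lem:pg-forcing-non-trace}) rather than single values. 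A secondary subtlety, already flagged by \Cref{lem:subset-complement-not-pg}, is that partition genericity of $A$ alone (unrelativized) is \emph{not} enough — one genuinely needs $A$ partition generic relative to $B$, which is why the theorem statement and the lemma both carry the relativization; this is why I do not expect a cleaner unrelativized version. I expect the combinatorics of the two-case split, together with verifying that the refined reservoir $Z_i \cap X$ (or $\bigcap$ of finitely many such) remains partition generic in the refined $\Sigma^0_1(B)$ class, to be the bulk of the work, all of it a routine-but-careful adaptation of the PA-avoidance argument.
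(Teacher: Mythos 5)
There is a genuine gap, and it sits exactly at the point the paper's Section~4 is designed to repair. Your key lemma — ``if $B$ is hyperimmune and $A$ is partition generic relative to $B$ \emph{in some non-trivial $\Sigma^0_1(B)$ partition large class $\U$}, then some infinite $G \subseteq A$ has $B$ $G$-hyperimmune'' — is false. Take $A = B$ and $\U = \U_{B,2} = \{Y : |Y \cap B| \geq 2\}$. By \Cref{lem:ua2-large} this is a non-trivial $\Sigma^0_1(B)$ partition large class, and by \Cref{lem:ua2-pg} the set $B$ belongs to every large subclass of $\U_{B,2}$, hence is partition generic (relative to anything) in $\U_{B,2}$. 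Yet no infinite $H \subseteq B$ preserves hyperimmunity of $B$: the singletons of elements of $H$ form an $H$-computable array tracing $B$. This matters because \Cref{cor:partition-generic-regular} (relativized) only hands you \emph{one} of $A$, $\overline{A}$ lying in \emph{some} such class, with no control over which side; if the side it gives you is the bad one, your argument stalls. (Full partition genericity relative to $B$ in all of $2^\omega$ does suffice — that is the paper's \Cref{hyp-and-pg-rel-then-preserve-hyp} — but the pigeonhole covering step cannot deliver full relativized genericity for one of the two sides, only the within-a-class version, which is too weak.)

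The second, related problem is in your Case~2. All your classes are $\Sigma^0_1(B)$, so the ``extraction of a trace bounding a would-be array'' you invoke is only $B$-computable — and a $B$-computable array tracing $B$ always exists, so there is no contradiction with hyperimmunity of $B$ to be had there. Hyperimmunity of $B$ is only violated by a \emph{plain computable} trace. The paper's fix for both problems is to change the largeness notion itself rather than relativize: it works with unrelativized $\Sigma^0_1$ classes $\A \subseteq 2^\omega \times 2^\omega$ whose second coordinate is a ``guess'' at a cofinite subset of $\overline{B}$, requires largeness uniformly over all cofinite second coordinates ($\mathfrak{H}$-largeness), and uses hyperimmunity of $B$ exactly once, in \Cref{prop:omega-b-prtr-generic}, to show that the plain-computable witnessing array must eventually land inside $\overline{B}$, so that $(\omega,\overline{B})$ — and then, via the covering lemma \Cref{lemma:prtr-abslarge}, $(A,\overline{B})$ or $(\overline{A},\overline{B})$ — lies in every such class. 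In the forcing (\Cref{thm:pg-tg-hyperimmune}), Case~1 forces some component of $\Phi_e^G$ entirely into $\overline{B}$, and Case~2 fixes a cofinite $P$ and forces every component of $\Phi_e^G$ to meet the finite set $\overline{P}$, so $\Phi_e^G$ is not an array at all. Your two-case split cannot be made to work without some device of this kind that keeps the witnessing computation unrelativized while still tracking $\overline{B}$.
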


Partition genericity is not the right notion to reprove this pigeonhole basis theorem. Indeed, the following proposition shows that there is no preservation of hyperimmunity partition genericity basis theorem.

\begin{proposition}\label[proposition]{prop:pg-non-basis-preservation-hyp}
There is a hyperimmune set~$B$ and a partition generic set~$A$ such that $B$ is hyperimmune relative to no infinite subset of~$A$.
\end{proposition}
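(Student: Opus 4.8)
We want a single hyperimmune set $B$ and a partition generic set $A$ such that $B$ is not $H$-hyperimmune for any infinite $H \subseteq A$. The natural strategy is to make $A$ itself compute, from each of its infinite subsets, a function dominating the principal function of $B$ — more precisely, to arrange that $A$ is so ``sparse'' that any infinite $H \subseteq A$ already lets us compute a fast-growing function that traces $B$, killing its $H$-hyperimmunity. Concretely, I would fix a fast-growing computable scale, say a function $g$ with $g(0) < g(1) < \cdots$ tending to infinity quickly, and build $A$ so that $|A \cap [0, g(n))| \le n$ for all $n$ (so $A$ is extremely thin), while $A$ is still partition generic. Then from any infinite $H \subseteq A$ one can $H$-computably enumerate, for each $n$, the first element of $H$ past $g(n)$; the sparseness of $A$ forces a bound, and this yields an $H$-computable function that grows only ``polynomially'' in the scale — the point being that we then pick $B$ to be a hyperimmune set whose principal function is dominated by (a computable modification of) this $H$-computable function for every infinite $H \subseteq A$ simultaneously.

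**Making $A$ partition generic.** The thinness condition $|A \cap [0,g(n))| \le n$ is exactly the kind of constraint handled in \Cref{prop:co-immune-not-pg}: there, a set $A$ with $|A \uh_{n^2}| < n$ was built and shown to be effectively co-immune, and the whole point of that proposition was that such a set fails partition genericity. So I cannot make $A$ arbitrarily thin and keep it partition generic. The right move is to use \Cref{prop:cohyp-pg}: every co-hyperimmune set is partition generic. So instead of controlling $|A \cap [0,g(n))|$ for \emph{all} $n$, I make $\overline{A}$ hyperimmune — equivalently, $A$ co-hyperimmune — which automatically gives partition genericity, and I arrange the enumeration of $\overline{A}$ to grow fast enough that $A$ is ``thin on average'' along a sparse sequence of stages. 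Then $B$ is chosen as a hyperimmune set tailored to this construction: a finite-injury (or direct $\emptyset''$-style) construction interleaves (i) making $\overline{A}$ hyperimmune (diagonalizing against all computable arrays), (ii) making $B$ hyperimmune, and (iii) ensuring that for each Turing functional $\Phi_e$ and each ``$e$ is (an index for) an infinite subset of $A$'' requirement, if $\Phi_e^{H}$ is total with $H = \Phi_e^{\text{(oracle)}}$ infinite and $\subseteq A$, then $\Phi_e^H$ (suitably corrected) dominates the principal function $p_B$ of $B$, so $B$ is not $H$-hyperimmune.

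**Carrying out the requirements.** The cleanest packaging: enumerate pairs $(e, i)$ where $\Phi_e$ is a functional and $i$ names the potential infinite subset $H_i \subseteq A$ obtained as the range of $\Phi_e$ applied to (an enumeration of) $A$; but since ``$H$ infinite $\subseteq A$'' ranges over uncountably many $H$, we instead quantify over Turing functionals $\Psi$ and require: \emph{if} $\Psi^{A}$ is total, increasing, and its range is $\subseteq A$, \emph{then} $B$ is not $(\Psi^A)$-hyperimmune. There are only countably many $\Psi$, so this is a countable list of requirements. For each $\Psi$ we build a partial $\Psi^A$-computable function $f_\Psi$ that, whenever $\Psi^A$ is as above, dominates $p_B$: while constructing $B$, at the stages where we add a new element to $B$ (to keep $B$ hyperimmune), we wait for $\Psi^A$ to converge far enough and place the next element of $B$ \emph{below} the current $\Psi^A$-value, so that $p_B(k) \le \Psi^A(k)$ eventually — the finite-injury bookkeeping keeps each $\Psi$-strategy acting only finitely often on each $B$-element. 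The \emph{main obstacle} is the tension between ``$B$ hyperimmune'' (which wants $p_B$ to grow faster than every computable function) and ``$p_B$ dominated by $\Psi^A$ for every good $\Psi$'' — this is consistent only because $A$ is co-hyperimmune and hence not computable, so $\Psi^A$ need not be (even approximately) computable; the construction must exploit non-computability of $A$ to let $p_B$ outrun computable functions while still being caught by the (non-computable) functions $\Psi^A$. Verifying that the finite-injury priority ordering genuinely resolves this, in particular that a $\Psi$-strategy which believes $\Psi^A$ is total is never permanently injured by the hyperimmunity-of-$B$ strategies of lower priority, is the step I expect to require the most care.
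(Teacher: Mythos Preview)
Your proposal has a genuine gap, and you are massively overcomplicating what is in the paper a three-line argument.

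\textbf{The gap.} In your third paragraph you reduce ``for every infinite $H \subseteq A$'' to a countable list of requirements indexed by Turing functionals $\Psi$, asking that if $\Psi^A$ is total, increasing, and has range contained in $A$, then $B$ is not $\Psi^A$-hyperimmune. But this only covers the $A$-computable infinite subsets of $A$. There are continuum many infinite $H \subseteq A$, and a generic such $H$ is not $A$-computable; your requirements say nothing about those. So even if your finite-injury construction succeeds on its own terms, it does not prove the proposition.

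\textbf{The simple proof you missed.} Take any bi-hyperimmune set $A$ and set $B = A$. Since $A$ is co-hyperimmune, \Cref{prop:cohyp-pg} gives that $A$ is partition generic; since $A$ is hyperimmune, so is $B$. Now let $H \subseteq A = B$ be infinite. The principal function $p_H$ is $H$-computable, and since $H \subseteq B$ we have $p_B(n) \le p_H(n)$ for every $n$. Thus $p_H$ dominates $p_B$, so $B$ is not $H$-hyperimmune. This is exactly the paper's proof.

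Note that your first paragraph was close to this idea: you wanted $A$ sparse so that $p_H$ is automatically large for every infinite $H \subseteq A$, and you correctly identified co-hyperimmunity as the route to partition genericity. What you missed is that ``$A$ hyperimmune'' is precisely the sparseness you need, and that taking $B = A$ makes the domination $p_B \le p_H$ automatic and uniform in $H$, with no construction required. The detour through \Cref{prop:co-immune-not-pg} was a red herring: that proposition shows a \emph{specific} sparseness condition fails partition genericity, not that all sparse sets do; bi-hyperimmune sets are sparse in the relevant sense and are partition generic.
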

\begin{proof}
Let~$A$ be a bi-hyperimmune set and let~$B = A$. Since $A$ is co-hyperimmune, by \Cref{prop:cohyp-pg}, $A$ is partition generic. Let~$H$ be an infinite subset of~$A$.
Then, $H$ is an infinite subset of~$B$, so $B$ is not $H$-hyperimmune.
\end{proof}

The remainder of this section is devoted to the design of a notion of genericity which admits a preservation of hyperimmunity basis theorem, implying \Cref{prop:pg-non-basis-preservation-hyp}.

\subsection{Hyperimmunity genericity}

From now on, fix a set~$B \subseteq \omega$. All the following definitions hold for any~$B$, but because of~\Cref{prop:omega-b-prtr-generic}, the only interesting case is when the set~$B$ is hyperimmune.

\begin{definition}
A class~$\L \subseteq 2^\omega \times 2^\omega$ is \emph{$\mathfrak{H}$-regular} if it is non-empty, upward-closed, and for every $(X,Y) \in \L$, every $k$, every $k$-cover $Z_0 \cup \dots \cup Z_{k-1} \supseteq X$ and every $\sigma \in 2^{<\omega}$, there is some $j < k$ such that $(Z_j, Y \setminus \sigma) \in \L$.
\end{definition}

\begin{proposition} \label{prop:prtr-unionlarge}
Suppose $\{\L_i\}_{i \in I}$ is an arbitrary non-empty collection of $\mathfrak{H}$-regular classes. Then $\bigcup_{i \in I} \L_i$ is $\mathfrak{H}$-regular.
\end{proposition}
\begin{proof}
It is clear that $\bigcup_{i \in I} \L_i$ is not empty. Let $(X_0,X_1) \in \bigcup_{i \in I} \L_i$. Let $Y_0 \supseteq X_0$ and $Y_1 \supseteq X_1$. There is some $i \in I$ such that $(X_0,X_1) \in \L_i$. As $\L_i$ is $\mathfrak{H}$-regular, $(Y_0,Y_1) \in \L_i \subseteq \bigcup_{i \in I} \L_i$.

Let $(X_0,X_1) \in \bigcup_{i \in I} \L_i$. Let $Y_0 \cup \dots \cup Y_k \supseteq X$ and $\sigma \in 2^{<\omega}$. There is some $i \in I$ such that $(X_0,X_1) \in \L_i$. As $\L_i$ is $\mathfrak{H}$-regular, $(Y_j,X_1 \setminus \sigma) \in \L_i \subseteq \bigcup_{i \in I} \L_i$ for some $j \leq k$.
\end{proof}

\begin{definition}
Given a class~$\A \subseteq 2^\omega \times 2^\omega$, let $\L_{\mathfrak{H}}(\A)$ denote the largest $\mathfrak{H}$-regular subclass of~$\A$. If $\A$ does not contain a $\mathfrak{H}$-regular class, let $\L_{\mathfrak{H}}(\A)$ be the empty set.
\end{definition}

\begin{proposition}\label{prop:prtr-complarge}
Let $\A \subseteq 2^\omega \times 2^\omega$ be an upward-closed class. Then 
$$\L_{\mathfrak{H}}(\A) = \left\{(X,Y) \in 2^\omega \times 2^\omega \ : \begin{array}{l}
 	\forall k \forall X_0 \cup \dots \cup X_k \supseteq X \\
 	\forall \sigma \in 2^{<\omega}\ \exists i \leq k\ (X_i, Y \setminus \sigma) \in \A
 \end{array}
 \right\}$$
\end{proposition}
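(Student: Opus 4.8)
The plan is to mimic the proof of \Cref{prop:complarge} exactly, replacing "partition regular" by "$\mathfrak{H}$-regular" and keeping track of the extra parameter $\sigma$ throughout. Write $\L'$ for the right-hand side set. The argument splits into two inclusions, plus a verification that $\L'$ is itself $\mathfrak{H}$-regular whenever it is non-empty. The first observation is that $\L' \subseteq \A$: given $(X,Y) \in \L'$, apply the defining condition with $k = 0$, the trivial $1$-cover $X_0 = X$, and $\sigma = \emptyset$, to get $(X, Y) \in \A$ (here we use that $\A$ is upward-closed, so $Y \setminus \emptyset = Y$ causes no issue, though in fact $Y \setminus \emptyset = Y$ literally).

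First I would show that every $\mathfrak{H}$-regular subclass $\L \subseteq \A$ is contained in $\L'$. Fix such an $\L$ and $(X,Y) \in \L$. Given $k$, a $k$-cover $X_0 \cup \dots \cup X_k \supseteq X$ and $\sigma \in 2^{<\omega}$, $\mathfrak{H}$-regularity of $\L$ yields some $i \leq k$ with $(X_i, Y \setminus \sigma) \in \L \subseteq \A$; hence $(X,Y) \in \L'$. This shows $\L \subseteq \L'$, so in particular (assuming $\A$ contains some $\mathfrak{H}$-regular class) $\L_{\mathfrak{H}}(\A) \subseteq \L'$; if $\A$ contains no $\mathfrak{H}$-regular class the statement follows once we show $\L'$ is empty in that case, which will be immediate from the next paragraph.

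Next I would check that, if $\L' \neq \emptyset$, then $\L'$ is $\mathfrak{H}$-regular; combined with $\L' \subseteq \A$ and the maximality of $\L_{\mathfrak{H}}(\A)$ this gives $\L' \subseteq \L_{\mathfrak{H}}(\A)$, completing the equality. Upward closure: if $(X,Y) \in \L'$ and $X' \supseteq X$, $Y' \supseteq Y$, then any $k$-cover of $X'$ is also a $k$-cover of $X$, and for any $\sigma$ the set $Y' \setminus \sigma \supseteq Y \setminus \sigma$, so upward-closure of $\A$ promotes the witness $(X_i, Y \setminus \sigma) \in \A$ to $(X_i, Y' \setminus \sigma) \in \A$; hence $(X',Y') \in \L'$. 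The regularity clause is the analogue of the contradiction argument in \Cref{prop:complarge}: suppose $(X,Y) \in \L'$, $Y_0 \cup \dots \cup Y_k \supseteq X$, and $\tau \in 2^{<\omega}$, but no $(Y_i, Y \setminus \tau)$ lies in $\L'$. For each $i \leq k$ this failure gives $k_i$, a $k_i$-cover $Y^i_0 \cup \dots \cup Y^i_{k_i} \supseteq Y_i$, and a string $\sigma_i$ such that $(Y^i_j, (Y \setminus \tau) \setminus \sigma_i) \notin \A$ for all $j \leq k_i$. Let $\sigma^*$ be (the string coding) the longest of $\tau \cup \sigma_0, \dots, \tau \cup \sigma_k$ — concretely take $\sigma^*$ so that $(Y \setminus \tau)\setminus \sigma_i \supseteq Y \setminus \sigma^*$ for every $i$, e.g.\ $\sigma^* = \tau$ extended to dominate all the $\sigma_i$. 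Then $\{Y^i_j\}_{i \leq k,\, j \leq k_i}$ is a finite cover of $X$, and by upward-closure of $\A$ (downgrading $(Y\setminus\tau)\setminus\sigma_i$ to its subset $Y\setminus\sigma^*$ would go the wrong way — instead observe $(Y^i_j, (Y\setminus\tau)\setminus\sigma_i) \notin \A$ together with upward closure forces $(Y^i_j, Y \setminus \sigma^*) \notin \A$ since $Y\setminus\sigma^* \subseteq (Y\setminus\tau)\setminus\sigma_i$). This contradicts $(X,Y) \in \L'$ applied to this cover and the string $\sigma^*$.

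The only genuinely delicate point — and the one I expect to be the main obstacle — is the bookkeeping of the string parameter in this last step: one must arrange a single $\sigma^*$ that simultaneously absorbs $\tau$ and all the $\sigma_i$, and check that the monotonicity of $Z \mapsto Z \setminus \sigma$ in $\sigma$ (larger $\sigma$ gives smaller $Z \setminus \sigma$) points in the direction compatible with upward-closure of $\A$. Everything else is a routine transcription of \Cref{prop:complarge}.
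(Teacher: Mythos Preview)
Your proof is correct and is exactly the intended argument: the paper states this proposition without proof, treating it as a routine transcription of \Cref{prop:complarge}, which is precisely what you carry out. Your handling of the one new ingredient---absorbing $\tau$ and the finitely many $\sigma_i$ into a single $\sigma^*$ and then using the contrapositive of upward-closure (from $(Y^i_j,(Y\setminus\tau)\setminus\sigma_i)\notin\A$ and $Y\setminus\sigma^*\subseteq (Y\setminus\tau)\setminus\sigma_i$ to $(Y^i_j,Y\setminus\sigma^*)\notin\A$)---is the right direction and closes the argument cleanly.
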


\begin{definition}
A class~$\A \subseteq 2^\omega \times 2^\omega$ is \emph{$\mathfrak{H}$-large} if it is upward-closed and for every co-finite set~$X_1$, $\{ X_0 : (X_0, X_1) \in \A \}$ is partition large.
\end{definition}

\begin{proposition}\label[proposition]{prop:prtr-large-contains-prtr}
A class $\A \subseteq 2^\omega \times 2^\omega$ is $\mathfrak{H}$-large iff it is upward-closed and contains a $\mathfrak{H}$-regular subclass.
\end{proposition}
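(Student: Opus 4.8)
This is the exact analogue of \Cref{prop:large-contains-pr} (which characterizes partition large classes as upward-closed classes containing a partition regular subclass), transposed to the two-variable setting with the $\mathfrak{H}$-variants of the definitions. So I would simply mimic that proof, using \Cref{prop:prtr-complarge} in place of \Cref{prop:complarge} and \Cref{prop:prtr-unionlarge} in place of \Cref{prop:unionlarge}.

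First, the easy direction: suppose $\A$ is upward-closed and contains an $\mathfrak{H}$-regular subclass $\L \subseteq \A$. Upward-closure is given. To check $\mathfrak{H}$-largeness, fix a co-finite set $X_1$; I must show $\{X_0 : (X_0,X_1) \in \A\}$ is partition large, i.e.\ (by \Cref{prop:large-contains-pr}) upward-closed and containing a partition regular subclass. Upward-closure in the $X_0$-coordinate follows from upward-closure of $\A$. Since $\L$ is non-empty, pick $(U,V) \in \L$; since $\L$ is $\mathfrak{H}$-regular and $2^{<\omega}$-closed under shifting the second coordinate by strings, $(U, V \setminus \sigma) \in \L$ for every $\sigma$, and by upward-closure $(U, W) \in \L$ for every infinite $W$; in particular I can arrange $(U, X_1) \in \L$ when $X_1$ is co-finite (take $\sigma$ long enough that $V \setminus \sigma \subseteq X_1$, then go up). Now consider $\L_{X_1} := \{X_0 : (X_0, X_1) \in \L\}$: this is non-empty (contains $U$), upward-closed, and the defining clause of $\mathfrak{H}$-regularity applied with $Y = X_1$ and $\sigma = \emptyset$ (noting $X_1 \setminus \emptyset = X_1$) says precisely that for every $X_0 \in \L_{X_1}$, every $k$ and every $k$-cover of $X_0$, one piece lies in $\L_{X_1}$. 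Hence $\L_{X_1}$ is partition regular and contained in $\{X_0 : (X_0,X_1)\in\A\}$, so the latter is partition large.

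For the converse, suppose $\A$ is $\mathfrak{H}$-large. It is upward-closed by definition. It remains to produce an $\mathfrak{H}$-regular subclass, and the natural candidate is $\L_{\mathfrak{H}}(\A)$, for which I use the explicit description of \Cref{prop:prtr-complarge}. I need $\L_{\mathfrak{H}}(\A) \neq \emptyset$: I claim $(\omega,\omega) \in \L_{\mathfrak{H}}(\A)$. Given $k$, a $k$-cover $X_0 \cup \dots \cup X_k \supseteq \omega$, and $\sigma \in 2^{<\omega}$: the set $X_1' := \omega \setminus \sigma$ is co-finite, so by $\mathfrak{H}$-largeness $\{Z : (Z, X_1') \in \A\}$ is partition large, hence (clause (b) of partition largeness) one of the $X_i$ lies in it, i.e.\ $(X_i, \omega \setminus \sigma) \in \A$; this is exactly the condition in \Cref{prop:prtr-complarge} for $(\omega,\omega)$. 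Thus $\L_{\mathfrak{H}}(\A)$ is non-empty, and by definition it is the largest $\mathfrak{H}$-regular subclass of $\A$ (one should note, as in the analysis preceding \Cref{prop:prtr-complarge}, that $\L_{\mathfrak{H}}(\A)$ is indeed $\mathfrak{H}$-regular whenever non-empty — this is the two-variable counterpart of the corresponding part of \Cref{prop:complarge}). Hence $\A$ contains an $\mathfrak{H}$-regular subclass.

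\textbf{Main obstacle.} The only subtlety I anticipate is bookkeeping with the asymmetric roles of the two coordinates and, in the converse direction, making sure the witness provided by partition largeness of a single slice $\{Z:(Z,\omega\setminus\sigma)\in\A\}$ is correctly the one required by the $\mathfrak{H}$-regularity clause (which quantifies over all $\sigma$ simultaneously) — but since the clause in \Cref{prop:prtr-complarge} already bakes in the $\forall\sigma$, this reduces to a per-$\sigma$ application of clause (b) of partition largeness, so there is no real difficulty. If \Cref{prop:prtr-complarge} has not been proven in the text before this point, I would first insert its (routine) proof, again paralleling \Cref{prop:complarge}.
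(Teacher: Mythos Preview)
Your proposal is correct and follows essentially the same approach as the paper's proof: both directions go through $\L_{\mathfrak{H}}(\A)$ via \Cref{prop:prtr-complarge}, showing $(\omega,\omega)\in\L_{\mathfrak{H}}(\A)$ for the converse, and the forward direction is the straightforward check of $\mathfrak{H}$-largeness from the regularity clause. Your forward direction is somewhat more verbose than the paper's (which just observes $(\omega,\omega)\in\L$ and applies the regularity clause directly, noting that every co-finite set is $\omega\setminus\sigma$ for some~$\sigma$), and the parenthetical ``$(U,W)\in\L$ for every infinite $W$'' is a slip---you only get co-finite $W$, as you correctly use afterwards---but the argument is sound.
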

\begin{proof}
Suppose $\A$ is upward-closed and contains a $\mathfrak{H}$-regular subclass $\L \subseteq \A$.
Upward closure is trivially satisfied by hypothesis. By $\mathfrak{H}$-regularity of $\L$, $(\omega,\omega) \in \L$ and for every $k$-cover $Y_0, \dots, Y_{k-1}$ of $\omega$, and every~$\sigma \in 2^{<\omega}$, there is some $j < k$ such that $(Y_j, \omega \setminus \sigma) \in \L \subseteq \A$. So $\A$ is $\mathfrak{H}$-large.

Suppose now $\A$ is $\mathfrak{H}$-large. It is upward-closed by definition. We claim that $\L_{\mathfrak{H}}(\A)$ is $\mathfrak{H}$-regular. By definition of $\mathfrak{H}$-largeness and \Cref{prop:prtr-complarge}, $(\omega,\omega) \in \L_{\mathfrak{H}}(\A)$, so $\L_{\mathfrak{H}}(\A) \neq \emptyset$. By definition of $\L_{\mathfrak{H}}(\A)$, it is $\mathfrak{H}$-regular, hence $\A$ contains a $\mathfrak{H}$-regular subclass.
\end{proof}

\begin{definition}
Let $\A \subseteq 2^\omega \times 2^\omega$ be a class.
We say that $X$ is \emph{$\mathfrak{H}(B)$-generic} in $\A$ if $(X,\overline{B})$ belongs to every non-trivial $\Pi^0_2$ $\mathfrak{H}$-regular subclass of~$\A$. If $X$ is $\mathfrak{H}(B)$-generic in $2^\omega \times 2^\omega$, we simply say that $X$ is \emph{$\mathfrak{H}(B)$-generic}.
\end{definition}

\begin{proposition}\label[proposition]{prop:prtr-pg-invariant-superset}
If $X$ is $\mathfrak{H}(B)$-generic in $\A$	 and $Y \supseteq X$, then $Y$ is $\mathfrak{H}(B)$-generic in~$\A$.
\end{proposition}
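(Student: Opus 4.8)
The plan is to mimic the proof of \Cref{prop:pg-invariant-finite}, but using the upward-closure built into the definition of $\mathfrak{H}$-regularity rather than closure under finite change. Concretely, I would fix an arbitrary non-trivial $\Pi^0_2$ $\mathfrak{H}$-regular subclass $\L \subseteq \A$ and argue that $(Y,\overline{B}) \in \L$, from which the conclusion that $Y$ is $\mathfrak{H}(B)$-generic in $\A$ is immediate by definition.

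First I would invoke the hypothesis that $X$ is $\mathfrak{H}(B)$-generic in $\A$: since $\L$ is a non-trivial $\Pi^0_2$ $\mathfrak{H}$-regular subclass of $\A$, we get $(X,\overline{B}) \in \L$. Next, recall that by definition every $\mathfrak{H}$-regular class is upward-closed, i.e. if $(X_0,X_1) \in \L$, $Y_0 \supseteq X_0$ and $Y_1 \supseteq X_1$, then $(Y_0,Y_1) \in \L$. Applying this with $X_0 = X$, $Y_0 = Y \supseteq X$, and $X_1 = Y_1 = \overline{B}$ yields $(Y,\overline{B}) \in \L$. Since $\L$ was an arbitrary non-trivial $\Pi^0_2$ $\mathfrak{H}$-regular subclass of $\A$, this shows $(Y,\overline{B})$ belongs to all of them, i.e. $Y$ is $\mathfrak{H}(B)$-generic in $\A$.

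I do not expect any genuine obstacle here; the statement is the two-variable analogue of the upward-closure half of partition genericity, and the only point to be careful about is that upward-closure is indeed part of the definition of $\mathfrak{H}$-regular (it is), so that one does not need to pass through $\L_{\mathfrak{H}}(\A)$ or any largeness argument. The proof is therefore a couple of lines long.
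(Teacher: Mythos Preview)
Your proposal is correct and matches the paper's proof essentially line for line: fix a non-trivial $\Pi^0_2$ $\mathfrak{H}$-regular subclass $\L \subseteq \A$, use $\mathfrak{H}(B)$-genericity of $X$ to get $(X,\overline{B}) \in \L$, and then apply upward-closure of $\L$ to conclude $(Y,\overline{B}) \in \L$.
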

\begin{proof}
Let $\L \subseteq \A$ be any non-trivial $\Pi^0_2$ $\mathfrak{H}$-regular subclass of $\A$.
Since $X$ is $\mathfrak{H}(B)$-generic in $\A$, then $(X,\overline{B}) \in \L$. Since $Y \supseteq X$, then by upward-closure of $\L$, $(Y, \overline{B}) \in \L$.
Therefore $Y$ is $\mathfrak{H}(B)$-generic in~$\A$.
\end{proof}

\begin{proposition}\label[proposition]{prop:prtr-pg-invariant-finite}
If $X$ is $\mathfrak{H}(B)$-generic in $\A$	 and $Y =^{*} X$, then $Y$ is $\mathfrak{H}(B)$-generic in~$\A$.
\end{proposition}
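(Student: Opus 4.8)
The plan is to deduce this finite-change invariance from the already-proved superset invariance (\Cref{prop:prtr-pg-invariant-superset}) together with the defining closure property of $\mathfrak{H}$-regular classes, applied to a cover built from a finite subset and finitely many singletons.

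Since $Y =^{*} X$, the set $X \cap Y$ is a subset of $X$ whose complement inside $X$ is finite, and $Y \supseteq X \cap Y$. By \Cref{prop:prtr-pg-invariant-superset} it is therefore enough to show that $X \cap Y$ is $\mathfrak{H}(B)$-generic in $\A$: once this is done, applying \Cref{prop:prtr-pg-invariant-superset} again to the inclusion $Y \supseteq X \cap Y$ yields the statement for $Y$. Thus I may assume outright that $Y \subseteq X$ with $X \setminus Y = \{z_0, \dots, z_{m-1}\}$ finite, the case $m = 0$ being trivial.

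Now fix an arbitrary non-trivial $\Pi^0_2$ $\mathfrak{H}$-regular subclass $\L \subseteq \A$; I must show $(Y, \overline{B}) \in \L$. By hypothesis $(X, \overline{B}) \in \L$. The tuple $Y, \{z_0\}, \dots, \{z_{m-1}\}$ is an $(m+1)$-cover of $X$, so applying $\mathfrak{H}$-regularity of $\L$ to the pair $(X, \overline{B})$, this cover, and the empty string $\sigma = \emptyset$ (for which $\overline{B} \setminus \sigma = \overline{B}$), one of these $m+1$ sets, call it $Z$, satisfies $(Z, \overline{B}) \in \L$. Because $\L$ is non-trivial, no such $Z$ can be a singleton, so necessarily $Z = Y$, giving $(Y, \overline{B}) \in \L$ as required.

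There is no real obstacle here; the only points to keep in mind are that non-triviality is exactly what kills the singleton parts of the cover, and that choosing $\sigma$ to be the empty string makes the conclusion of $\mathfrak{H}$-regularity land precisely on the pair $(Y, \overline{B})$ we want. If one prefers to avoid the two-step reduction, one can instead cover $X$ directly by $X \cap Y$ and the singletons of $X \setminus Y$, conclude $(X \cap Y, \overline{B}) \in \L$ by the same singleton-elimination, and then invoke upward-closure of $\L$ for the inclusion $Y \supseteq X \cap Y$.
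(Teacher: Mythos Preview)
Your proof is correct and follows essentially the same approach as the paper: cover $X$ by $Y$ together with a finite piece, apply $\mathfrak{H}$-regularity with the empty $\sigma$, and use non-triviality to rule out the finite part. The paper takes the single finite set $F = X \setminus Y$ rather than splitting it into singletons at the outset, but your explicit singleton cover is arguably cleaner, since non-triviality literally excludes first coordinates of size~$<2$; the paper's step ``since $\L$ is non-trivial, $(F,\overline{B}) \notin \L$'' tacitly relies on the same singleton refinement you spell out.
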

\begin{proof}
Let $\L \subseteq \A$ be any non-trivial $\Pi^0_2$ $\mathfrak{H}$-regular subclass of $\A$.
Since $X$ is $\mathfrak{H}(B)$-generic in $\A$, then $(X,\overline{B}) \in \L$. Since $Y =^{*} X$, then there is a finite set $F$ such that $Y \cup F \supseteq X$. By $\mathfrak{H}$-regularity of $\L$, either $(Y,\overline{B})$ or $(F,\overline{B})$ belongs to $\L$. Since $\L$ is non-trivial, $(F,\overline{B}) \notin \L$, so $(Y,\overline{B}) \in \L$.
Therefore $Y$ is $\mathfrak{H}(B)$-generic in~$\A$.
\end{proof}

\begin{lemma} \label{lemma:prtr-abslarge2}
Let $\A \subseteq 2^\omega \times 2^\omega$ be a class and $X$ be $\mathfrak{H}(B)$-generic in $\A$. For every $Y_0 \cup Y_1 \supseteq X$, if $(Y_0,\overline{B}) \notin \L_{\mathfrak{H}}(\A)$, then $Y_1$ is $\mathfrak{H}(B)$-generic in $\A$.
\end{lemma}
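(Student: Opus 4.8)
The plan is to mimic the proof of \Cref{lemma:abslarge2} in the two-coordinate setting, using $\mathfrak{H}$-regularity in place of ordinary partition regularity, and keeping track of the fact that $\mathfrak{H}(B)$-genericity is witnessed at the pair $(X,\overline{B})$. So suppose for contradiction that $Y_1$ is \emph{not} $\mathfrak{H}(B)$-generic in $\A$. By definition, this means there is a non-trivial $\Pi^0_2$ $\mathfrak{H}$-regular subclass $\V \subseteq \A$ such that $(Y_1, \overline{B}) \notin \V$.

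Next I would pass to $\L_{\mathfrak{H}}(\A)$. Since $\V$ is an $\mathfrak{H}$-regular subclass of $\A$, by maximality of $\L_{\mathfrak{H}}(\A)$ we have $\V \subseteq \L_{\mathfrak{H}}(\A)$. Combined with the hypothesis $(Y_0, \overline{B}) \notin \L_{\mathfrak{H}}(\A)$, this yields $(Y_0, \overline{B}) \notin \V$. Now I invoke $\mathfrak{H}$-regularity of $\V$ at the pair $(X, \overline{B})$: since $Y_0 \cup Y_1 \supseteq X$ is a $2$-cover of $X$, and taking $\sigma = \emptyset$ (so $\overline{B} \setminus \sigma = \overline{B}$), $\mathfrak{H}$-regularity forces either $(Y_0, \overline{B}) \in \V$ or $(Y_1, \overline{B}) \in \V$ — but we have just ruled out both. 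Hence $(X, \overline{B}) \notin \V$.

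Finally, $(X, \overline{B}) \notin \V$ directly contradicts the assumption that $X$ is $\mathfrak{H}(B)$-generic in $\A$, since $\V$ is a non-trivial $\Pi^0_2$ $\mathfrak{H}$-regular subclass of $\A$ and $\mathfrak{H}(B)$-genericity of $X$ requires $(X, \overline{B})$ to lie in every such subclass. This contradiction establishes that $Y_1$ is $\mathfrak{H}(B)$-generic in $\A$.

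There is essentially no obstacle here: the argument is a direct transcription of \Cref{lemma:abslarge2}, and the only point requiring a moment's care is that the $\mathfrak{H}$-regularity axiom is stated with a string parameter $\sigma$ acting on the second coordinate, so one must remember to instantiate it with $\sigma = \emptyset$ to recover the plain "one of the two parts of the cover stays in the class" behavior for the first coordinate while leaving $\overline{B}$ untouched. Everything else (upward-closure, non-triviality of $\V$, maximality of $\L_{\mathfrak{H}}(\A)$) is used exactly as in the one-coordinate case.
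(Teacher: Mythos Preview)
Your proposal is correct and follows essentially the same approach as the paper's proof: assume a non-trivial $\Pi^0_2$ $\mathfrak{H}$-regular $\V \subseteq \A$ omits $(Y_1,\overline{B})$, use $\V \subseteq \L_{\mathfrak{H}}(\A)$ to conclude it also omits $(Y_0,\overline{B})$, then apply $\mathfrak{H}$-regularity to contradict $(X,\overline{B}) \in \V$. Your explicit instantiation with $\sigma = \emptyset$ is a helpful clarification that the paper leaves implicit.
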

\begin{proof}
Suppose for contradiction that there is a non-trivial $\Pi^0_2$ $\mathfrak{H}$-regular class $\V \subseteq \A$ such that $(Y_1,\overline{B}) \notin \V$. In particular, $\V \subseteq \L_{\mathfrak{H}}(A)$, so $(Y_0,\overline{B}) \notin \V$. By partition $\mathfrak{H}$-regularity of $\V$, since $Y_0 \cup Y_1 \supseteq X$, then $(X,\overline{B}) \notin \V$, which contradicts $\mathfrak{H}(B)$-genericity of $X$ in $\A$.
\end{proof}

\begin{proposition} \label[proposition]{lemma:prtr-abslarge}
Let $\A \subseteq 2^\omega \times 2^\omega$ be a non-trivial $\mathfrak{H}$-large $\Sigma^0_1$ class. Suppose $X$ is $\mathfrak{H}(B)$-generic in $\A$. Let $Y_0 \cup \dots \cup Y_k \supseteq X$. Then there is a non-trivial $\mathfrak{H}$-large $\Sigma^0_1$ subclass $\U \subseteq \A$ together with some $i \leq k$ such that $Y_i$ is $\mathfrak{H}(B)$-generic in $\U$.
\end{proposition}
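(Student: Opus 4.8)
The plan is to adapt the proof of \Cref{lemma:abslarge} essentially verbatim, replacing the partition-regular machinery by its $\mathfrak{H}$-parametrized analogue and arguing by induction on $k$. For the base case $k = 0$ we have $Y_0 \supseteq X$, so \Cref{prop:prtr-pg-invariant-superset} already gives that $Y_0$ is $\mathfrak{H}(B)$-generic in $\A$, and we may take $\U = \A$ and $i = 0$.

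For the inductive step, assume the statement for every cover indexed by $Y_0, \dots, Y_{k-1}$. If $Y_k$ happens to be $\mathfrak{H}(B)$-generic in $\A$ we are done with $\U = \A$ and $i = k$. Otherwise, fix a non-trivial $\Pi^0_2$ $\mathfrak{H}$-regular subclass $\L \subseteq \A$ with $(Y_k, \overline{B}) \notin \L$, write $\L$ as a countable intersection $\bigcap_e \U_e$ of upward-closed $\Sigma^0_1$ classes, and fix $e$ with $(Y_k, \overline{B}) \notin \U_e$. Then $\A \cap \U_e$ is a $\Sigma^0_1$, upward-closed class, it is non-trivial since $\A \cap \U_e \subseteq \A$, and it contains the $\mathfrak{H}$-regular class $\L$; hence by \Cref{prop:prtr-large-contains-prtr} it is $\mathfrak{H}$-large. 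Moreover $X$ stays $\mathfrak{H}(B)$-generic in $\A \cap \U_e$, because any non-trivial $\Pi^0_2$ $\mathfrak{H}$-regular subclass of $\A \cap \U_e$ is also a subclass of $\A$. By \Cref{prop:prtr-complarge}, $\L_{\mathfrak{H}}(\A \cap \U_e) \subseteq \A \cap \U_e \subseteq \U_e$, so $(Y_k, \overline{B}) \notin \L_{\mathfrak{H}}(\A \cap \U_e)$. Applying \Cref{lemma:prtr-abslarge2} inside the class $\A \cap \U_e$ to the $2$-cover made of $Y_k$ and $Y_0 \cup \dots \cup Y_{k-1}$, with $Y_k$ in the role of the part outside $\L_{\mathfrak{H}}$, we get that $Y_0 \cup \dots \cup Y_{k-1}$ is $\mathfrak{H}(B)$-generic in $\A \cap \U_e$. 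Finally the induction hypothesis, applied to the non-trivial $\mathfrak{H}$-large $\Sigma^0_1$ class $\A \cap \U_e$ and the cover $Y_0, \dots, Y_{k-1}$, produces a non-trivial $\mathfrak{H}$-large $\Sigma^0_1$ subclass $\V \subseteq \A \cap \U_e \subseteq \A$ and an index $i < k$ with $Y_i$ being $\mathfrak{H}(B)$-generic in $\V$; this $\V$ and $i$ witness the claim for $k$.

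The routine but load-bearing part of the argument will be the bookkeeping that keeps each auxiliary class simultaneously non-trivial, upward-closed, $\mathfrak{H}$-large and $\Sigma^0_1$ as the induction descends; in particular, one must decompose the given $\Pi^0_2$ $\mathfrak{H}$-regular class into \emph{upward-closed} $\Sigma^0_1$ pieces so that \Cref{prop:prtr-large-contains-prtr} and \Cref{prop:prtr-complarge} genuinely apply to $\A \cap \U_e$. By contrast, the genuinely $\mathfrak{H}$-specific feature — the finite shifts $Y \setminus \sigma$ built into the definition of $\mathfrak{H}$-regularity — never surfaces explicitly here, since it has already been absorbed into \Cref{lemma:prtr-abslarge2}, \Cref{prop:prtr-large-contains-prtr} and \Cref{prop:prtr-pg-invariant-superset}; so the only real work is checking that the translation of the proof of \Cref{lemma:abslarge} goes through unimpeded.
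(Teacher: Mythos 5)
Your proof is correct and follows essentially the same route as the paper's: induction on $k$, splitting on whether $Y_k$ is $\mathfrak{H}(B)$-generic in $\A$, extracting an upward-closed $\Sigma^0_1$ component $\U_e$ of the witnessing $\Pi^0_2$ $\mathfrak{H}$-regular class with $(Y_k,\overline{B}) \notin \U_e$, and applying \Cref{lemma:prtr-abslarge2} inside $\A \cap \U_e$ before invoking the induction hypothesis. The extra bookkeeping you supply (upward-closedness of the $\Sigma^0_1$ pieces, the verification that $(Y_k,\overline{B}) \notin \L_{\mathfrak{H}}(\A \cap \U_e)$ and that $X$ remains $\mathfrak{H}(B)$-generic there) is left implicit in the paper but is exactly what its argument relies on.
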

\begin{proof}
We proceed by induction on $k$. For $k = 0$, $Y_0 \supseteq X$, so by \Cref{prop:prtr-pg-invariant-superset}, $Y_0$ is $\mathfrak{H}(B)$-generic in $\A$. Take $\U = \A$ and we are done.

Suppose now that the property holds for $k-1$. Suppose $Y_k$ is not $\mathfrak{H}(B)$-generic in $\A$. 
Thus there is a non-trivial $\Pi^0_2$ $\mathfrak{H}$-regular class $\bigcap_{e \in C} \U_e \subseteq \A$ such that $(Y_k,\overline{B}) \not \in \bigcap_{e \in C} \U_e$. In particular, there is some $e \in \C$ such that $(Y_k,\overline{B}) \notin \U_e$. Note that $\bigcap_{e \in C} \U_e \subseteq \A \cap \U_e$, so $\A \cap \U_e$ is $\mathfrak{H}$-large. By \Cref{lemma:prtr-abslarge2}, $Y_0 \cup \dots \cup Y_{k-1}$ is $\mathfrak{H}(B)$-generic in $\U_e \cap \A$. By induction hypothesis on $\U_e \cap \A$ and $Y_0 \cup \dots \cup Y_{k-1}$, there is a non-trivial $\mathfrak{H}$-large $\Sigma^0_1$ class $\V \subseteq \U_e \cap \A$, together with some $i < k$ such that $Y_i$ is $\mathfrak{H}(B)$-generic in $\V$.
\end{proof}

\begin{proposition}\label[proposition]{prop:omega-b-prtr-generic}
Suppose~$B$ is hyperimmune. Then $\omega$ is $\mathfrak{H}(B)$-generic.
\end{proposition}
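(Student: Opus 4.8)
The plan is to show that if $B$ is hyperimmune, then $(\omega, \overline{B})$ belongs to every non-trivial $\Pi^0_2$ $\mathfrak{H}$-regular class $\L$, by deriving a contradiction with the hyperimmunity of $B$ from the assumption that $(\omega,\overline{B}) \notin \L$. The key observation is that a non-trivial $\Pi^0_2$ $\mathfrak{H}$-regular class $\L$ is contained in some $\Sigma^0_1$ $\mathfrak{H}$-large class $\U$ (analogous to how in \Cref{prop:cohyp-pg} a non-trivial $\Pi^0_2$ partition regular class is sandwiched inside a $\Sigma^0_1$ partition large class): writing $\L = \bigcap_n \U_n$ with each $\U_n$ a $\Sigma^0_1$ upward-closed class, and using \Cref{prop:prtr-complarge}, one checks that each stage $\U_n$ (or a suitable finite-level approximation of the $\mathfrak{H}$-large condition) is itself $\mathfrak{H}$-large, so it suffices to fix one such $\Sigma^0_1$ $\mathfrak{H}$-large $\U \supseteq \L$ with $(\omega,\overline{B}) \notin \U$.

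First I would unwind what $\mathfrak{H}$-largeness of the $\Sigma^0_1$ class $\U$ gives us. Since $\U$ is $\mathfrak{H}$-large and upward-closed, for every co-finite set $X_1$ the section $\{X_0 : (X_0,X_1) \in \U\}$ is partition large; applying the $1$-cover $\{0\} \cup \dots \cup \{t\} \cup \{n : n > t\} \supseteq \omega$ and partition largeness, and using that $\U$ is $\Sigma^0_1$ and upward-closed, one extracts, for each threshold $t$ and each finite $\sigma$, a finite string $\rho$ with $\min \rho > t$ such that $(\rho, \overline{B} \setminus \sigma) \in \U$ — and crucially such a $\rho$ can be found $\emptyset$-computably and uniformly in $t$, because $\overline{B} \setminus \sigma$ is co-finite and because membership in the $\Sigma^0_1$ class $\U$ is a $\Sigma^0_1$ event (here one has to take $\sigma$ large enough to swallow the finitely many errors, or equivalently replace $\overline{B} \setminus \sigma$ by a generic co-finite set — only the co-finiteness matters for $\mathfrak{H}$-largeness).

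Next I would exploit that $(\omega, \overline{B}) \notin \U$. Since $\U$ is $\Sigma^0_1$, there is a finite $F \subseteq \omega$ and a finite $\tau \prec \overline{B}$ such that no condition $(\rho, Y)$ with $\rho \subseteq F$ and $Y \supseteq \tau$-compatible portion enters $\U$ at a given stage; more usefully, the fact that $(\omega,\overline{B}) \notin \U$ together with $\mathfrak{H}$-regularity of $\L$ propagates to: for every finite $\rho$ that we find as above with $(\rho, \overline{B}\setminus \sigma) \in \U$, the string $\rho$ must contain an element of $B$ above $t$ (otherwise $\rho \subseteq \overline{B}$, and applying upward-closure $(\overline{B}, \overline{B} \setminus \sigma) \in \U$, then $\mathfrak{H}$-regularity against the $2$-cover $\rho \cup (\omega \setminus \rho) = \omega$ would eventually force $(\omega, \overline{B}\setminus\sigma') \in \L$ for the $\Pi^0_2$ class, contradicting $(\omega,\overline{B}) \notin \L$ via finite-change invariance of the $\overline{B}$-coordinate as in \Cref{prop:prtr-pg-invariant-finite}). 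Thus the $\emptyset$-computable procedure that, on input $t$, outputs such a $\rho$ and then its elements yields an array $F_0, F_1, \dots$ (with $F_t$ the set of elements of $\rho$ found for threshold $t$) that traces $B$: each $F_t$ is finite, uniformly computable, and meets $B$ above $t$. This contradicts the hyperimmunity of $B$.

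The main obstacle is the bookkeeping in the middle paragraph: getting from "$(\omega,\overline{B}) \notin \L$" to "every witnessing string $\rho$ hits $B$" cleanly. The issue is that $\mathfrak{H}$-regularity splits the $X$-coordinate (via $k$-covers) but only allows shrinking the $Y$-coordinate by finite strings $\sigma$; so to run the argument one must carefully track that the $\overline{B}$-coordinate is only ever modified by finite amounts, invoke \Cref{prop:prtr-pg-invariant-finite} (or the non-triviality clause that kills $(F,\overline{B}\setminus\sigma)$ for finite $F$) to discard the finite piece of a $2$-cover, and conclude that the infinite piece forces membership — then iterate. Once this "every witness meets $B$ above the threshold" claim is established, the reduction to hyperimmunity of $B$ is immediate.
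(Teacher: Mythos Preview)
Your overall contrapositive strategy (assume $(\omega,\overline{B})\notin\U$ for some $\Sigma^0_1$ $\mathfrak{H}$-large $\U\supseteq\L$ and build a computable array tracing $B$) is the right shape, but you have the two coordinates switched, and this breaks the argument.

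Recall that in an $\mathfrak{H}$-regular class the \emph{first} coordinate is the one that gets split by $k$-covers, while the \emph{second} coordinate is the one from which finite initial segments are removed; in the definition of $\mathfrak{H}(B)$-genericity the second coordinate is where $\overline{B}$ lives. The failure $(\omega,\overline{B})\notin\U$ therefore constrains the \emph{second} coordinate: writing $\U=\{(X,Y):\exists(\rho_0,\rho_1)\in W,\ \rho_0\subseteq X,\ \rho_1\subseteq Y\}$, it says precisely that every pair $(\rho_0,\rho_1)\in W$ has $\rho_1\cap B\neq\emptyset$. It says nothing about whether a given first-coordinate witness meets $B$, since $\omega$ contains everything. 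Your key step (``if $\rho\subseteq\overline{B}$ then upward-closure gives $(\overline{B},\overline{B}\setminus\sigma)\in\U$ and a $2$-cover forces $(\omega,\overline{B}\setminus\sigma')\in\L$'') does not go through: upward-closure in the first coordinate takes you from $(\rho,Y)$ to $(\overline{B},Y)$, not to anything with $\overline{B}$ in the second slot, and $\mathfrak{H}$-regularity only ever refines the first coordinate downward, so you never reach $(\omega,\cdot)$ again. There is also a computability problem you flag but do not resolve: searching for $(\rho,\overline{B}\setminus\sigma)\in\U$ is not effective.

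The fix is exactly what the paper does: exploit $\mathfrak{H}$-largeness in the \emph{second} coordinate. For each $s$, since $\{s+1,s+2,\dots\}$ is co-finite, $(\omega,\{s+1,s+2,\dots\})\in\U$, so one can computably find $(\rho_0,\rho_1)\in W$ with $\min\rho_1>s$; set $F_s=\rho_1$. If $(\omega,\overline{B})\notin\U$ then every such $\rho_1$ meets $B$, so $(F_s)_s$ is a computable array tracing $B$, contradicting hyperimmunity. Equivalently (and this is how the paper phrases it, without the contrapositive), hyperimmunity of $B$ gives some $s$ with $F_s\subseteq\overline{B}$, whence $(\omega,\overline{B})\in\U$ directly.
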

\begin{proof}
Let~$\L \subseteq 2^\omega \times 2^\omega$ be a non-trivial $\Pi^0_2$ $\mathfrak{H}$-regular class. We can assume $\L = \bigcap_n \U_n$ for some decreasing sequence of $\Sigma^0_1$ $\mathfrak{H}$-large classes. Fix~$n \in \omega$ and let $W \subseteq 2^{<\omega} \times 2^{<\omega}$ be a c.e. set such that $\U_n = \{ (X,Y) : \exists (\rho_0, \rho_1) \in W\ \rho_0 \subseteq X \wedge \rho_1 \subseteq Y \}$. Let~$P$ be a cofinite set.

Fix some~$s \in \omega$.
Since $\U_n$ is $\mathfrak{H}$-large, then $(\omega, \{s+1, s+2, \dots \}) \in \U_n$. Therefore, there is some $(\rho_0, \rho_1) \in W$ such that $\min \rho_1 > s$.
Let~$F_s = \rho_1$. Note that $F_s$ can be found computably uniformly in~$s$.
Since~$B$ is hyperimmune, there is some~$s$ such that $F_s \subseteq \overline{B}$. It follows that $(\omega, \overline{B}) \in \U_n$. Since this holds for every~$n$, then $(\omega, \overline{B}) \in \bigcap_n \U_n = \L$.
\end{proof}

\begin{proposition}\label[proposition]{prop:hyp-and-pg-rel-then-prtr-gen}
Suppose~$B$ is hyperimmune and $A$ is partition generic relative to~$B$.
Then $A$ is $\mathfrak{H}(B)$-generic.
\end{proposition}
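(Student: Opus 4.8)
The plan is to collapse a non-trivial $\Pi^0_2$ $\mathfrak{H}$-regular class on pairs down to a non-trivial $\Pi^0_2(B)$ partition regular class on the first coordinate, and then invoke partition genericity of $A$ relative to $B$. Concretely, fix an arbitrary non-trivial $\Pi^0_2$ $\mathfrak{H}$-regular class $\L \subseteq 2^\omega \times 2^\omega$; by the definition of $\mathfrak{H}(B)$-genericity it suffices to show $(A,\overline{B}) \in \L$. I would introduce the slice
$$\Q = \{\, X \subseteq \omega \;:\; (X,\overline{B}) \in \L \,\},$$
and observe that since $\L$ is $\Pi^0_2$ and $\overline{B} \leq_T B$, the class $\Q$ is $\Pi^0_2(B)$.

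Next I would check that $\Q$ is a non-trivial partition regular subclass of $2^\omega$. Upward closure of $\Q$ is immediate from upward closure of $\L$, which is built into $\mathfrak{H}$-regularity. For the pigeonhole clause, given $X \in \Q$ and a finite cover $Y_0 \cup \dots \cup Y_k \supseteq X$, apply $\mathfrak{H}$-regularity of $\L$ to $(X,\overline{B})$ with this cover and the empty string as the parameter $\sigma$; since the empty string leaves the second coordinate $\overline{B}$ unchanged, this yields some $i \leq k$ with $(Y_i,\overline{B}) \in \L$, i.e.\ $Y_i \in \Q$. Non-triviality of $\Q$ (that $\Q \subseteq \U_2$) follows from non-triviality of $\L$: if $(X,\overline{B}) \in \L$ then $X$ must have at least two elements, since otherwise splitting $X$ into singletons and applying $\mathfrak{H}$-regularity (again with the empty-string parameter) would place a set of size $\leq 1$ in $\L$. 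Finally, $\Q$ is non-empty: by \Cref{prop:omega-b-prtr-generic}, hyperimmunity of $B$ makes $\omega$ $\mathfrak{H}(B)$-generic, so $(\omega,\overline{B}) \in \L$ and hence $\omega \in \Q$. This is the one place where the hypothesis that $B$ is hyperimmune is used.

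Having established that $\Q$ is a non-trivial $\Pi^0_2(B)$ partition regular class, partition genericity of $A$ relative to $B$ gives $A \in \Q$, that is, $(A,\overline{B}) \in \L$; since $\L$ was an arbitrary non-trivial $\Pi^0_2$ $\mathfrak{H}$-regular class, $A$ is $\mathfrak{H}(B)$-generic. I do not expect a real obstacle here: the only substantive ingredient is \Cref{prop:omega-b-prtr-generic}, and the rest is bookkeeping. The single point that needs care is recognizing that verifying the pigeonhole clause for $\Q$ only ever invokes the trivial ($\sigma$ empty) instance of $\mathfrak{H}$-regularity — the prefix-stripping strength encoded in the definition of $\mathfrak{H}$-regularity is what will matter for the preservation-of-hyperimmunity constructions later, not for this propagation lemma.
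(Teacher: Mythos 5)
Your proof is correct and follows essentially the same route as the paper: both pass to the slice $\{X : (X,\overline{B}) \in \L\}$, use \Cref{prop:omega-b-prtr-generic} (hence hyperimmunity of $B$) to see it is non-empty, verify it is a non-trivial $\Pi^0_2(B)$ partition regular class via the $\sigma=\epsilon$ instance of $\mathfrak{H}$-regularity, and conclude by partition genericity of $A$ relative to $B$. If anything, your write-up is slightly more explicit than the paper's (which only records partition largeness and non-triviality in passing), but there is no difference in substance.
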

\begin{proof}
Let~$\L \subseteq 2^\omega \times 2^\omega$ be a non-trivial $\Pi^0_2$ $\mathfrak{H}$-regular class. By \Cref{prop:omega-b-prtr-generic}, $(\omega, \overline{B}) \in \L$.
Since~$\L$ is $\mathfrak{H}$-regular, then for every~$k$, every $Z_0 \cup \dots \cup Z_{k-1} = \omega$, there is some~$j < k$ such that~$(Z_j, \overline{B}) \in L$. Thus the class $\L_0 = \{ X : (X, \overline{B}) \in \L \}$ is partition large. Moreover, $\L_0$ is $\Pi^0_2(B)$, so since~$A$ is partition generic relative to~$B$, $A \in \L_0$. It follows that $(A,\overline{B}) \in \L$.
\end{proof}

\subsection{Applications}

We now turn to the main application of $\mathfrak{H}(B)$-genericity, that is, a partition genericity subset basis theorem for preservation of hyperimmunity. As mentioned earlier, the basic  preservation of hyperimmunity statement fails for partition genericity. However, when one considers relativized partition genericity, then one can prove such as basis theorem.

\begin{theorem}\label[theorem]{thm:pg-tg-hyperimmune}
Suppose~$A$ is $\mathfrak{H}(B)$-generic in a non-trivial $\mathfrak{H}$-large $\Sigma^0_1$ class~$\U \subseteq 2^\omega \times 2^\omega$.
Then there is an infinite subset $H \subseteq A$ such that~$B$ is $H$-hyperimmune.
\end{theorem}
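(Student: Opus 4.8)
The plan is to run the partition generic Mathias forcing (relativized appropriately) adapted to the three-sorted setting of $\mathfrak{H}(B)$-genericity, where conditions are triples $(\sigma, X, \U)$ with $(\sigma,X)$ a Mathias condition and $\U$ a non-trivial $\mathfrak{H}$-large $\Sigma^0_1$ class in which $X$ is $\mathfrak{H}(B)$-generic. Starting from the condition $(\emptyset, A, \U)$, a sufficiently generic filter $\F$ produces an infinite set $G_\F \subseteq A$; infinitude is forced exactly as in \Cref{lem:pg-forcing-infinite} using \Cref{prop:prtr-pg-invariant-finite} in place of \Cref{prop:pg-invariant-finite}. The substantive content is a density lemma: for every condition $(\sigma, X, \U)$ and every Turing functional $\Phi_e$ (thought of as a candidate $G$-computable function tracing $B$), there is an extension forcing $\Phi^G_e$ not to trace $B$, i.e.\ forcing that $\Phi^G_e$ fails to be a total function dominating $p_B$ at some argument, or equivalently forcing infinitely often that $\Phi^G_e(n)$ is below some value so that $B \setminus \{0,\dots,\Phi^G_e(n)\}$ still has an element $\le$ something — the key point being that the extra coordinate $\overline{B}$ in the $\mathfrak{H}$-regular classes is precisely what lets us absorb the "$Y \setminus \sigma$" shifts coming from hyperimmunity of $B$.

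First I would set up the dichotomy in the density lemma along the lines of \Cref{lem:pg-forcing-non-trace} and \Cref{lem:pg-forcing-non-pa}. For each $k$, define a $\Sigma^0_1$ set $S_k$ collecting the data "$\Phi_e^{\sigma\cup\rho}(x)\downarrow = v$ for some $\rho$ contained in some large side of a $k$-cover", and split on whether, for every $k$, this data is rich enough to (after intersecting the relevant large class with the projection to the first coordinate) produce a value $v$ that is "too small" to be a correct trace of $B$ at $x$. In Case 1, richness for all $k$ makes the class of $Y \in \U$ containing a witnessing $\rho$ an $\mathfrak{H}$-large (indeed $\Pi^0_2$, or $\Sigma^0_1$ after the usual reformulation) subclass of $\U$; since $A$ is $\mathfrak{H}(B)$-generic in $\U$, we get $(A,\overline{B})$ in it, extract the witness $\rho \subseteq X$, and extend by $(\sigma\cup\rho, X\setminus\{0,\dots,|\rho|\}, \U)$, invoking \Cref{prop:prtr-pg-invariant-finite} to keep genericity. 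Crucially, the witness must be such that the forced value of $\Phi^G_e(x)$ is wrong for $p_B$ — and here we use hyperimmunity of $B$ exactly as in \Cref{prop:omega-b-prtr-generic}: the $\overline{B}$-coordinate of the $\mathfrak{H}$-regular class guarantees, via the array $F_s$ built from the c.e.\ approximation, that $\overline{B}$ meets the relevant finite sets, so some stage gives a correct wrongness witness.

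In Case 2, there is a fixed $k$ for which the data is not rich: for this $k$ one can, after a compactness/refinement step combining finitely many $k$-covers into a single $k^{N}$-cover of $\omega$ (as in \Cref{lem:pg-forcing-non-trace}), find a part $P$ of the cover such that for every $\rho \subseteq P$ the computation $\Phi_e^{\sigma\cup\rho}(x)$ diverges (or is always "large enough" to be harmless). Then apply \Cref{lemma:prtr-abslarge} to $\U$ and the cover $\langle P_j\rangle$ of $X$: it yields a non-trivial $\mathfrak{H}$-large $\Sigma^0_1$ class $\V \subseteq \U$ and an index $j$ with $Y_j = P_j \cap X$ being $\mathfrak{H}(B)$-generic in $\V$, and in particular $(Y_j,\overline{B}) \in \V$, so $P_j \cap X$ still lies in (the first-coordinate section of) $\U$, making the earlier combinatorial conclusion about $\rho \subseteq P_j$ applicable. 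The extension $(\sigma, P_j \cap X, \V)$ then forces $\Phi^G_e(x)\uparrow$, hence forces $\Phi^G_e$ not total, so not a trace of $B$. Finally, assemble: taking $\F$ sufficiently generic to meet the infinitude-dense set of \Cref{lem:pg-forcing-infinite} and the density set just proved for every $e$, we get an infinite $G_\F \subseteq A$ with $B$ being $G_\F$-hyperimmune, which is the theorem.

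The main obstacle I expect is getting the interface between the three quantifier layers right: the $\mathfrak{H}$-regular classes live in $2^\omega\times 2^\omega$ with the asymmetric condition (arbitrary covers on the first coordinate, arbitrary finite truncations $Y\setminus\sigma$ on the second), so when I form "the class of $Y\in\U$ such that $(Y, \overline{B})$ witnesses something about $\Phi_e$" I must check it is genuinely $\mathfrak{H}$-large and not merely partition large in the first coordinate — the truncation-closure in the second coordinate has to be verified, and it is exactly there that I need $B$'s hyperimmunity to convert "$(\omega, \text{cofinite}) \in \U_n$ for all cofinite sets" into "$(\omega,\overline{B})\in\U_n$", mirroring \Cref{prop:omega-b-prtr-generic}. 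A secondary nuisance is bookkeeping the fact that "$\Phi^G_e$ is not $B$-hyperimmune-defeating" is a $\Sigma^0_2$ requirement, so I should phrase the density lemma as forcing, for each $e$, either non-totality of $\Phi^G_e$ or a specific value of $\Phi^G_e$ at a specific input that is dominated by $p_B$ — and then hyperimmunity of $B$ is what makes the "dominated" alternative achievable inside an $\mathfrak{H}$-regular class.
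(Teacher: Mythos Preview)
Your overall plan matches the paper: same forcing notion, infinitude via \Cref{prop:prtr-pg-invariant-finite}, and a two-case density lemma with \Cref{lemma:prtr-abslarge} handling the divergence side. But Case~1 as you describe it has a genuine gap. The theorem does \emph{not} assume $B$ is hyperimmune, so you cannot invoke hyperimmunity inside the density lemma; that assumption was spent once, upstream in \Cref{prop:omega-b-prtr-generic}, and from then on the argument must run purely on $\mathfrak{H}(B)$-genericity. Your plan to ``use hyperimmunity of $B$ exactly as in \Cref{prop:omega-b-prtr-generic}'' to certify that the forced value is wrong for $p_B$, or to verify truncation-closure in the second coordinate, imports a hypothesis you do not have. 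Relatedly, you describe the Case~1 class as ``the class of $Y\in\U$ containing a witnessing $\rho$'', which lives in $2^\omega$; but $\mathfrak{H}$-largeness and $\mathfrak{H}(B)$-genericity are notions for subclasses of $2^\omega\times 2^\omega$, so as written there is a type mismatch and no way to conclude $(X,\overline{B})$ belongs to it. The $S_k$ template from the PA and trace proofs does not transplant either, because there ``wrongness'' of a value was a $\Sigma^0_1$ property independent of the instance, whereas here ``wrong for $p_B$'' depends on $B$.

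The fix is to use the c.e.-array characterization of hyperimmunity rather than domination of $p_B$, and to build the second coordinate into the class from the start: set
\[
\A = \{(Y,C)\in\V : \exists\rho\subseteq Y\ \exists n\ \Phi_e^{\sigma\cup\rho}(n)\!\downarrow\ \subseteq C\}.
\]
This is $\Sigma^0_1$ and upward-closed in both coordinates (``finite set $\subseteq C$'' is monotone in $C$, whereas ``$v<p_B(x)$'' is not a monotone condition on $\overline{B}$, which is exactly why your formulation stalls). The dichotomy is whether $\A$ is $\mathfrak{H}$-large, a condition about \emph{co-finite} second coordinates, not about $\overline{B}$. If it is, $\L_{\mathfrak{H}}(\A)$ is a non-trivial $\Pi^0_2$ $\mathfrak{H}$-regular subclass of $\V$, so genericity gives $(X,\overline{B})\in\A$ outright: some array entry is contained in $\overline{B}$, hence misses $B$, with no further appeal to~$B$. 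If not, a single co-finite $P$ and a single $k$ witness failure, yielding a nonempty $\Pi^0_1$ class of bad $k$-covers; pick any member (no Liu-style $k^N$ refinement is needed here), apply \Cref{lemma:prtr-abslarge}, and use that $(Z_i\cap X,\overline{B})\in\L_{\mathfrak{H}}(\W)$ permits truncating the second coordinate by the finite set $\overline{P}$ to get $(Z_i,P)\in\V$, which forces the divergence alternative.
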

\begin{proof}
Consider the notion of forcing $(\sigma, X, \V)$ where $(\sigma, X)$ is as Mathias condition, 
and $\V \subseteq \U$ is a $\mathfrak{H}$-large $\Sigma^0_1$ class within which $X$ is $\mathfrak{H}(B)$-generic.
A condition $(\tau, Y, \W)$ \emph{extends} another condition $(\sigma, X, \V)$ if $(\tau, Y)$ Mathias extends $(\sigma, X)$ and $\W \subseteq \V$.

Any filter $\F$ induces a subset $G_\F = \bigcup_{(\sigma, X, \V) \in \F} \sigma$ of~$A$. 
The first property we prove is that $G_\F$ is an infinite set. 

\begin{lemma}\label[lemma]{lem:pg-tg-forcing-infinite}
Let $\F$ be a sufficiently generic filter. Then $G_\F$ is infinite.
\end{lemma}
\begin{proof}
Let us show that for any $n$, the set of conditions $(\tau, Y, \V)$ such that $\tau(x) = 1$ for some $x > n$ is dense.
Let $(\sigma, X, \V)$ be a condition. Since $X$ is $\mathfrak{H}(B)$-generic in $\V$ and $\V$ is non-trivial, then $X \in \L(\V)$, hence $X$ is infinite. Let $x \in X$ be greater than $n$, let $\tau$ be string $\sigma \cup \{x\}$ and let $Y = X \setminus \{0, \dots, |\rho|\}$. By \Cref{prop:prtr-pg-invariant-finite}, $Y$ is $\mathfrak{H}(B)$-generic in $\V$. Therefore, $(\tau, Y, \V)$ is a valid extension of $(\sigma, X, \V)$.
\end{proof}

In the following lemma, we interpret $\Phi_e^G$ as a partial $G$-c.e.\ array $\{ \Phi_e^G(n) : n \in \omega \}$. An array $\{F_n : n \in \omega \}$ \emph{intersects} a set~$C$ if $\forall n\  F_n \cap C \neq \emptyset$.

\begin{lemma}\label[lemma]{lem:pg-tg-forcing-requirement}
For every condition $(\sigma, X, \V)$, there is an extension $(\tau, Y, \W)$
forcing $\Phi_e^G$ not to be a $G$-c.e. array intersecting~$B$.	
\end{lemma}
\begin{proof}
We have two cases.

Case 1: the following class is $\mathfrak{H}$-large:
$$
\A = \{ (Y,C) \in \V : \exists \rho \subseteq Y \exists n\ \Phi_e^{\sigma \cup \rho}(n) \subseteq C \}
$$
Since $\A$ is a non-trivial $\Pi^0_2$ $\mathfrak{H}$-large subclass of~$\U$ and $X$ is $\mathfrak{H}(B)$-generic in~$\U$, then $(X,\overline{B}) \in \A$. Therefore there is some~$\rho \subseteq X$ and some~$n$ such that $\Phi_e^{\sigma \cup \rho}(n) \subseteq \overline{B}$. Let~$Y = X \setminus \{0, \dots, |\rho|\}$. By \Cref{prop:prtr-pg-invariant-finite}, $Y$ is $\mathfrak{H}(B)$-generic in~$\V$. Therefore $(\sigma \cup \rho, Y, \V)$ is an extension of~$(\sigma, X, \V)$ forcing $\Phi_e^G(n) \subseteq \overline{B}$.

Case 2: there is a co-finite set~$P$ and some~$k$ such that the following class is non-empty:
$$
\C = \left\{ Z_0 \oplus \dots \oplus Z_{k-1} : \begin{array}{l} 
 	Z_0 \cup \dots \cup Z_{k-1} = \omega \wedge \\
 	\forall i < k\ (Z_i,P) \not \in \V \vee \forall n \forall \rho \subseteq Z_i \Phi_e^{\sigma \cup \rho}(n)\uparrow \vee \Phi_e^{\sigma \cup \rho}(n) \not \subseteq P
 \end{array} \right\}
$$
Note that $\C$ is a non-empty $\Pi^0_1$ class. Pick any element $Z_0 \oplus \dots \oplus Z_{k-1} \in \C$. By \Cref{lemma:prtr-abslarge}, there is a $\mathfrak{H}$-large $\Sigma^0_1$ subclass~$\W \subseteq \V$ and some $i < k$ such that $Z_i \cap X$ is $\mathfrak{H}(B)$-generic in $\W$. In particular, $(Z_i \cap X, \overline{B}) \in \L_{\mathfrak{H}}(W) \subseteq \W$, so $(Z_i \cap X, \overline{B} \cap P) \in W \subseteq \V$ and by upward-closure of~$V$, $(Z_i, P) \in \V$, so for every~$n$ and every~$\rho \subseteq Z_i$, $\Phi_e^{\sigma \cup \rho}(n)\uparrow \vee \Phi_e^{\sigma \cup \rho}(n) \not \subseteq P$.
The condition $(\sigma, Z_i \cap X, \W)$ is an extension of~$(\sigma, X, \V)$ forcing~$\Phi_e^{G}$ not to be a c.e.\ array.
\end{proof}

We are now ready to prove \Cref{thm:pg-tg-hyperimmune}.
Let~$\F$ be a sufficiently generic filter containing $(\emptyset, A, \U)$. By \Cref{lem:pg-tg-forcing-infinite}, $G_\F$ is infinite. By construction, $G_\F \subseteq A$. Last, by \Cref{lem:pg-tg-forcing-requirement}, $B$ is $G_\F$-hyperimmune.
\end{proof}

\begin{corollary}\label[corollary]{hyp-and-pg-rel-then-preserve-hyp}
Suppose~$B$ is hyperimmune and $A$ is partition generic relative to~$B$.
Then there is an infinite subset $H \subseteq A$ such that~$B$ is $H$-hyperimmune.
\end{corollary}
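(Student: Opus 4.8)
The plan is to obtain this corollary as an immediate packaging of \Cref{thm:pg-tg-hyperimmune} and \Cref{prop:hyp-and-pg-rel-then-prtr-gen}, the only missing ingredient being an explicit non-trivial $\mathfrak{H}$-large $\Sigma^0_1$ class that witnesses the hypothesis of the theorem.

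First I would invoke \Cref{prop:hyp-and-pg-rel-then-prtr-gen}: since $B$ is hyperimmune and $A$ is partition generic relative to $B$, the set $A$ is $\mathfrak{H}(B)$-generic, i.e.\ $(A,\overline{B})$ belongs to every non-trivial $\Pi^0_2$ $\mathfrak{H}$-regular subclass of $2^\omega \times 2^\omega$. To feed this into \Cref{thm:pg-tg-hyperimmune} I need a concrete non-trivial $\mathfrak{H}$-large $\Sigma^0_1$ class $\U$ in which $A$ is $\mathfrak{H}(B)$-generic. The natural candidate is $\U = \{ (X,Y) \in 2^\omega \times 2^\omega : |X| \geq 2 \}$, the product analogue of $\U_2$. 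This class is plainly $\Sigma^0_1$, upward-closed, and non-trivial; and for $\mathfrak{H}$-largeness one checks that for every co-finite set $X_1$ the section $\{ X_0 : (X_0,X_1) \in \U \}$ is exactly $\U_2$, which is partition large, so $\U$ is $\mathfrak{H}$-large.

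Next I would observe that $A$ is in fact $\mathfrak{H}(B)$-generic in $\U$: any non-trivial $\Pi^0_2$ $\mathfrak{H}$-regular subclass of $\U$ is in particular a non-trivial $\Pi^0_2$ $\mathfrak{H}$-regular subclass of $2^\omega \times 2^\omega$, so $(A,\overline{B})$ lies in it by the previous step. Then I would apply \Cref{thm:pg-tg-hyperimmune} with this $\U$, which yields an infinite subset $H \subseteq A$ such that $B$ is $H$-hyperimmune, as desired.

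Every step here is routine; the only point requiring a small verification is the $\mathfrak{H}$-largeness of $\U$, which reduces to the already-established fact that $\U_2$ is partition large. I therefore do not expect any genuine obstacle — the content of the corollary is entirely carried by \Cref{thm:pg-tg-hyperimmune} and \Cref{prop:hyp-and-pg-rel-then-prtr-gen}, and this proof merely stitches them together.
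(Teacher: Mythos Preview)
Your proposal is correct and follows essentially the same route as the paper: invoke \Cref{prop:hyp-and-pg-rel-then-prtr-gen} to get $\mathfrak{H}(B)$-genericity of $A$, then apply \Cref{thm:pg-tg-hyperimmune}. The paper's proof is in fact slightly more terse and does not spell out the choice of a non-trivial $\mathfrak{H}$-large $\Sigma^0_1$ class $\U$; your explicit verification that $\U = \{(X,Y) : |X| \geq 2\}$ works is a harmless (and arguably helpful) addition, since $\mathfrak{H}(B)$-genericity in $2^\omega \times 2^\omega$ immediately implies $\mathfrak{H}(B)$-genericity in any subclass.
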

\begin{proof}
By \Cref{prop:hyp-and-pg-rel-then-prtr-gen}, $A$ is $\mathfrak{H}(B)$-generic. By \Cref{thm:pg-tg-hyperimmune}, there is infinite subset $H \subseteq A$ such that~$B$ is $H$-hyperimmune.
\end{proof}

\begin{corollary}
Suppose~$B$ is hyperimmune and $A$ is Kurtz random relative to~$B$.
Then there is an infinite subset $H \subseteq A$ such that~$B$ is $H$-hyperimmune.
\end{corollary}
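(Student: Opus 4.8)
The plan is to obtain this corollary as an immediate consequence of \Cref{hyp-and-pg-rel-then-preserve-hyp}, once one knows that every set which is Kurtz random relative to~$B$ is partition generic relative to~$B$. So the only real content of the proof is the relativization of \Cref{prop:kurtz-bipg} to the oracle~$B$, and the proof splits into three short steps.

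First I would relativize \Cref{prop:measure:pr}: any non-trivial $B$-measurable partition regular class~$\L$ has measure~$1$. The argument is verbatim that of \Cref{prop:measure:pr}, with ``Martin-L\"of random'' replaced throughout by ``$B$-Martin-L\"of random''. Namely, $\L$ is closed under finite changes, so by Kolmogorov's $0$--$1$ law it has measure~$0$ or~$1$; if it had measure~$0$, fix an oracle~$O \geq_T B$ such that~$\L$ is contained in a $\Pi^0_2(O)$ null set, so that~$\L$ contains no $O$-Martin-L\"of random; then for any $O$-Martin-L\"of random~$Z$, the set~$\overline{Z}$ is also $O$-Martin-L\"of random and $Z \cup \overline{Z} = \omega \in \L$, so partition regularity forces $Z \in \L$ or $\overline{Z} \in \L$, a contradiction.

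Next I would note that a set which is Kurtz random relative to~$B$ lies in every $\Sigma^0_1(B)$ class of measure~$1$, hence in every $\Pi^0_2(B)$ class of measure~$1$, since such a class is a uniform intersection of $\Sigma^0_1(B)$ classes, each of which then has measure~$1$. Every non-trivial $\Pi^0_2(B)$ partition regular subclass of~$2^\omega$ is Borel, hence $B$-measurable, hence of measure~$1$ by the previous step; therefore~$A$ belongs to every such class, i.e.\ $A$ is partition generic relative to~$B$. (In fact $\overline{A}$ is as well, so~$A$ is bi-partition generic relative to~$B$, but only partition genericity is needed here.) Finally, since~$B$ is hyperimmune and~$A$ is partition generic relative to~$B$, \Cref{hyp-and-pg-rel-then-preserve-hyp} directly yields an infinite subset $H \subseteq A$ such that~$B$ is $H$-hyperimmune.

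I expect essentially no obstacle in this argument: the only point requiring a moment's care is that the measure-theoretic argument of \Cref{prop:measure:pr} relativizes cleanly and that the partition regular class in play is genuinely measurable, which is automatic since all the partition regular classes under consideration are Borel.
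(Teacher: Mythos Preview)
Your proposal is correct and follows essentially the same approach as the paper: the paper simply cites the relativized form of \Cref{prop:kurtz-bipg} to conclude that~$A$ is partition generic relative to~$B$, then applies \Cref{hyp-and-pg-rel-then-preserve-hyp}. You have merely unfolded the relativization of \Cref{prop:kurtz-bipg} (and the underlying \Cref{prop:measure:pr}) explicitly, which is exactly what the paper's one-line citation is implicitly invoking.
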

\begin{proof}
By~\Cref{prop:kurtz-bipg}, $A$ is partition generic relative to~$B$.
Apply \Cref{hyp-and-pg-rel-then-preserve-hyp}.
\end{proof}

\begin{corollary}
Let~$B$ be a hyperimmune set. Then for every set~$A \subseteq \omega$, there is an infinite set~$H \subseteq A$ or $H \subseteq \overline{A}$ such that $B$ is $H$-hyperimmune.
\end{corollary}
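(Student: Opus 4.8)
The plan is to replay, in the relativized $\mathfrak{H}$-setting, the argument by which \Cref{cor:partition-generic-regular} was derived from \Cref{lemma:abslarge}, and then to feed the outcome into the basis theorem \Cref{thm:pg-tg-hyperimmune}. Concretely, I would introduce the class
\[
\P_{\mathfrak{H}} = \{\, X : X \text{ is } \mathfrak{H}(B)\text{-generic in some non-trivial } \mathfrak{H}\text{-large } \Sigma^0_1 \text{ class} \,\},
\]
show it is partition regular, observe that it contains $\omega$, deduce that one of $A$, $\overline{A}$ belongs to it, and apply \Cref{thm:pg-tg-hyperimmune} to that side.

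For the key step --- that $\P_{\mathfrak{H}}$ is partition regular --- I would argue exactly as in \Cref{cor:partition-generic-regular}. Since $B$ is hyperimmune, \Cref{prop:omega-b-prtr-generic} gives that $\omega$ is $\mathfrak{H}(B)$-generic, hence in particular $\mathfrak{H}(B)$-generic in the non-trivial $\mathfrak{H}$-large $\Sigma^0_1$ class $\{(X,Y) : |X| \geq 2\}$, so $\omega \in \P_{\mathfrak{H}}$ and $\P_{\mathfrak{H}} \neq \emptyset$. Upward-closure of $\P_{\mathfrak{H}}$ follows from \Cref{prop:prtr-pg-invariant-superset}. For the partition property: given $X \in \P_{\mathfrak{H}}$, fix a non-trivial $\mathfrak{H}$-large $\Sigma^0_1$ class $\U$ in which $X$ is $\mathfrak{H}(B)$-generic, and let $Y_0 \cup \dots \cup Y_k \supseteq X$; by \Cref{lemma:prtr-abslarge} there is a non-trivial $\mathfrak{H}$-large $\Sigma^0_1$ subclass $\V \subseteq \U$ and some $i \leq k$ such that $Y_i$ is $\mathfrak{H}(B)$-generic in $\V$, whence $Y_i \in \P_{\mathfrak{H}}$. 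Now, for an arbitrary $A \subseteq \omega$, since $A \cup \overline{A} = \omega \in \P_{\mathfrak{H}}$ and $\P_{\mathfrak{H}}$ is partition regular, either $A \in \P_{\mathfrak{H}}$ or $\overline{A} \in \P_{\mathfrak{H}}$; say the former (the latter is symmetric). Then $A$ is $\mathfrak{H}(B)$-generic in a non-trivial $\mathfrak{H}$-large $\Sigma^0_1$ class, so \Cref{thm:pg-tg-hyperimmune} produces an infinite $H \subseteq A$ with $B$ being $H$-hyperimmune, as required.

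The routine work is the transcription of the proof of \Cref{cor:partition-generic-regular} and the bookkeeping with ``non-trivial $\mathfrak{H}$-large''. The one genuinely delicate point is conceptual: it is tempting to try to finish via \Cref{hyp-and-pg-rel-then-preserve-hyp}, which would require one of $A$, $\overline{A}$ to be partition generic relative to $B$ in the full space $2^\omega$; but that is a stronger condition than being partition generic relative to $B$ in some non-trivial $\Sigma^0_1(B)$ large class --- partition genericity is only \emph{almost} a partition regular notion --- so the naive relativization of \Cref{cor:partition-generic-regular} does not obviously deliver it. The fix is precisely to keep track of the witnessing $\mathfrak{H}$-large $\Sigma^0_1$ class throughout, which is what \Cref{lemma:prtr-abslarge} is built to allow, and which makes \Cref{thm:pg-tg-hyperimmune} directly applicable.
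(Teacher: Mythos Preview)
Your proposal is correct and follows essentially the same route as the paper. The paper's proof is terser: it directly applies \Cref{prop:omega-b-prtr-generic} and then \Cref{lemma:prtr-abslarge} to the $2$-cover $A \cup \overline{A} = \omega$, and finishes with \Cref{thm:pg-tg-hyperimmune}. Your version repackages this by defining the class $\P_{\mathfrak{H}}$ and proving it partition regular, which is the exact $\mathfrak{H}$-analogue of \Cref{cor:partition-generic-regular}; but the partition step of that argument is nothing other than a single invocation of \Cref{lemma:prtr-abslarge}, so the substance is identical. Your explicit identification of the starting non-trivial $\mathfrak{H}$-large $\Sigma^0_1$ class $\{(X,Y) : |X| \geq 2\}$ and your remark about why one cannot shortcut through \Cref{hyp-and-pg-rel-then-preserve-hyp} are both on point.
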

\begin{proof}
By \Cref{prop:omega-b-prtr-generic}, $\omega$ is $\mathfrak{H}(B)$-generic. By \Cref{lemma:prtr-abslarge}, there is a $\mathfrak{H}$-large $\Sigma^0_1$ class~$\U \subseteq 2^\omega \times 2^\omega$ within which either $A$ or $\overline{A}$ is $\mathfrak{H}$-generic. By \Cref{thm:pg-tg-hyperimmune}, there is an infinite set~$H \subseteq A$ or $H \subseteq \overline{A}$ such that $B$ is $H$-hyperimmune.
\end{proof}

\section{Preservation of non-$\Sigma^0_1$ definitions}\label[section]{sect:pres-sigma}

The purpose of this section is to find the right genericity notion admitting a preservation of non-$\Sigma^0_1$ definitions genericity basis theorem, implying the preservation of non-$\Sigma^0_1$ definitions pigeonhole basis theorem. More precisely, the following theorem was proven by Wang~\cite{Wang2016definability}:

\begin{theorem}[Wang~\cite{Wang2016definability}]\label[theorem]{thm:preservation-nonce-pigeon-basis}
If $B$ is a non-$\Sigma^0_1$ set, then for every set $A \subseteq \omega$, there is an infinite set $H \subseteq A$ or $H \subseteq \overline{A}$ such that $B$ is not $\Sigma^0_1(H)$.
\end{theorem}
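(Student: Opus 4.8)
The plan is to mirror the development of \Cref{sect:pres-hyp}, replacing the requirement ``$\Phi^G_e$ is not a $G$-c.e.\ array intersecting~$B$'' by the requirement ``$W^G_e \neq B$''. Concretely, I would introduce a notion of \emph{$\mathfrak{S}(B)$-genericity}, defined through classes $\L \subseteq 2^\omega \times 2^\omega$ whose second coordinate tracks~$\overline{B}$, by the same template used for $\mathfrak{H}(B)$-genericity: a notion of \emph{$\mathfrak{S}$-regular} class (non-empty, upward-closed, with a pigeonhole property on the first coordinate together with a ``tail'' operation on the second), a derived notion of \emph{$\mathfrak{S}$-large} class, and the operator $\L_{\mathfrak{S}}(\A)$ computing the largest $\mathfrak{S}$-regular subclass. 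Once the axioms are fixed, the structural lemmas of \Cref{sect:pres-hyp} — union-closure, the explicit formula for $\L_{\mathfrak{S}}(\A)$ as in \Cref{prop:prtr-complarge}, the characterization of largeness, invariance of $\mathfrak{S}(B)$-genericity under supersets and finite changes, and above all the absorption lemma \Cref{lemma:prtr-abslarge} — should transfer essentially verbatim.

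Next I would establish the two bridges to~$B$. First, the analogue of \Cref{prop:omega-b-prtr-generic}: \emph{if $B$ is non-$\Sigma^0_1$, then $\omega$ is $\mathfrak{S}(B)$-generic}. Given a non-trivial $\Pi^0_2$ $\mathfrak{S}$-regular class $\bigcap_n \U_n$, fix~$n$; the $\mathfrak{S}$-largeness of the $\Sigma^0_1$ class~$\U_n$ yields a uniformly c.e.\ family of finite ``witnesses'' living in the second coordinate, and one argues that if all of them met~$B$ then~$B$ would be c.e., contradicting the hypothesis; hence some witness is contained in~$\overline{B}$, so $(\omega,\overline{B}) \in \U_n$, and then $(\omega,\overline{B}) \in \bigcap_n \U_n$. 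Second, the analogue of \Cref{prop:hyp-and-pg-rel-then-prtr-gen}: \emph{if $B$ is non-$\Sigma^0_1$ and $A$ is partition generic relative to~$B$, then $A$ is $\mathfrak{S}(B)$-generic}; the proof is identical, since for every non-trivial $\Pi^0_2$ $\mathfrak{S}$-regular $\L$ the class $\{X : (X,\overline{B}) \in \L\}$ is a $\Pi^0_2(B)$ partition large class containing~$\omega$.

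Then I would prove the basis theorem: \emph{if $A$ is $\mathfrak{S}(B)$-generic in a non-trivial $\mathfrak{S}$-large $\Sigma^0_1$ class~$\U$, then $A$ has an infinite subset~$H$ with $B$ not $\Sigma^0_1(H)$}. As in \Cref{thm:pg-tg-hyperimmune}, force with triples $(\sigma,X,\V)$ where $(\sigma,X)$ is a Mathias condition and $\V \subseteq \U$ is a non-trivial $\mathfrak{S}$-large $\Sigma^0_1$ class in which~$X$ is $\mathfrak{S}(B)$-generic; the analogue of \Cref{lem:pg-tg-forcing-infinite} makes $G_\F$ infinite. To meet $W^G_e \neq B$ from $(\sigma,X,\V)$, set $\A = \{(Y,C)\in\V : \exists \rho \subseteq Y\ \exists m\ (m \in W^{\sigma\cup\rho}_e \wedge m \in C)\}$, which is $\Sigma^0_1$ and upward-closed. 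In Case~1, $\A$ is $\mathfrak{S}$-large, hence, being a non-trivial $\Pi^0_2$ $\mathfrak{S}$-large subclass of~$\V$, $(X,\overline{B}) \in \A$ by $\mathfrak{S}(B)$-genericity; this gives $\rho \subseteq X$ and $m \in \overline{B}$ with $m \in W^{\sigma\cup\rho}_e$, and $(\sigma\cup\rho, X\setminus\{0,\dots,|\rho|\}, \V)$ forces $m \in W^G_e$ while $m \notin B$. In Case~2, $\A$ is not $\mathfrak{S}$-large; dually to \Cref{prop:prtr-complarge} this is witnessed by a cofinite set~$P$ and some~$k$, producing a \emph{non-empty} $\Pi^0_1$ class of $k$-covers $Z_0 \cup \dots \cup Z_{k-1} = \omega$ such that for each~$i$, either $(Z_i,P)\notin\V$ or no $\rho\subseteq Z_i$ enumerates anything into~$P$. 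Pick any such cover; by \Cref{lemma:prtr-abslarge} there are a non-trivial $\mathfrak{S}$-large $\Sigma^0_1$ class $\W\subseteq\V$ and some~$i$ with $Z_i\cap X$ $\mathfrak{S}(B)$-generic in~$\W$, and the bookkeeping of \Cref{lem:pg-tg-forcing-requirement} (via the tail operation applied to $\overline{B}$ and~$P$) forces $(Z_i,P)\in\V$. Hence in the reservoir $Z_i\cap X$ no extension enumerates into~$P$, so $W^G_e \subseteq \omega\setminus P$; as~$P$ is cofinite and~$B$ is infinite (being non-$\Sigma^0_1$), $P$ meets~$B$, so $W^G_e \neq B$. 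In contrast to the cone, PA and trace lemmas, Case~2 invokes no basis theorem: only the non-emptiness of a $\Pi^0_1$ class and the absorption lemma.

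Finally, Wang's theorem follows exactly as the pigeonhole corollaries of \Cref{sect:pres-hyp} followed from \Cref{thm:pg-tg-hyperimmune}: $\omega$ is $\mathfrak{S}(B)$-generic since~$B$ is non-$\Sigma^0_1$; applying \Cref{lemma:prtr-abslarge} to the cover $A\cup\overline{A}=\omega$ gives a non-trivial $\mathfrak{S}$-large $\Sigma^0_1$ class in which~$A$ or~$\overline{A}$ is $\mathfrak{S}(B)$-generic; the basis theorem then produces the desired infinite subset of~$A$ or of~$\overline{A}$. (This also re-proves \Cref{thm:pg-basis-theorem}(2) via the second bridge, and, combined with \Cref{cor:partition-generic-regular} relativized to~$B$, the full pigeonhole statement.) I expect the genuine difficulty to be the first step: calibrating the $\mathfrak{S}$-regularity and $\mathfrak{S}$-largeness axioms — in particular deciding which second-coordinate sets the largeness quantifies over — so that the structural lemmas go through unchanged \emph{and} the $\omega$-genericity proposition holds with the hypothesis ``non-$\Sigma^0_1$'' rather than ``hyperimmune''. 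This amounts to isolating the combinatorial shadow of ``$B$ is not c.e.''\ that plays, for preservation of non-$\Sigma^0_1$ definitions, the role hyperimmunity played for preservation of hyperimmunity; everything after that is a faithful transcription of \Cref{sect:pres-hyp}.
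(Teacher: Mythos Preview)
Your high-level strategy matches the paper's exactly: define a $B$-tailored genericity notion, show $\omega$ is generic when $B$ is non-$\Sigma^0_1$, prove an absorption lemma, force with Mathias-plus-large-class conditions, and derive Wang's theorem by splitting $\omega = A \cup \overline{A}$. But the concrete instantiation you sketch --- mirroring $\mathfrak{H}$-regularity with classes in $2^\omega \times 2^\omega$, a tail operation on the second coordinate, and largeness quantifying over cofinite sets --- fails precisely at the step you flag as hard. If $\U$ is a $\Sigma^0_1$ class with $(\omega,P)\in\U$ for every cofinite $P$, there is no reason $(\omega,\overline{B})\in\U$: the witnesses $F_s$ you produce, if each met $B$, would only show $B$ is not hyperimmune, not that $B$ is c.e. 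So your ``$\omega$ is $\mathfrak{S}(B)$-generic'' step does not go through, and without it Case~1 of your requirement lemma is unreachable. You correctly anticipate that the axioms must change, but you do not say how.

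The paper's resolution is structurally different from a mirror of \Cref{sect:pres-hyp}, and the paper says so explicitly: ``Contrary to hyperimmunity genericity, we need to work with largeness rather than regularity''. Concretely: classes live in $2^\omega\times\omega$ (a single integer, not a set), and largeness is $B$-dependent --- $\A$ is $\mathfrak{E}(B)$-large if for every \emph{finite $F\subseteq B$} and every $k$-cover of $\omega$, some part $Y_j$ has $(Y_j,n)\in\A$ for all $n\in F$. The replacement for your $\omega$-genericity argument is an overflow: for each $k$ and each $F=B\uh_s$, the set of $m$ such that ``some part of every $k$-cover hits $(\cdot,m)$ together with all $(\cdot,n)$, $n\in F$'' is a c.e.\ superset of $B$, hence contains some $m\in\overline{B}$; pigeonhole as $s\to\infty$ gives a single part that works for all of $B$ and for some $m\in\overline{B}$. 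Genericity of $X$ then means $X\in\L(\{Z:\forall n\in B\,(Z,n)\in\U\wedge\exists n\in\overline{B}\,(Z,n)\in\U\})$ for every such $\U$. In the requirement lemma the paper splits on whether $\{(Y,n)\in\V:\exists\rho\subseteq Y\ n\in W_e^{\sigma\cup\rho}\}$ is $\mathfrak{E}(B)$-large; Case~2 is witnessed by a finite $F\subseteq B$ (not a cofinite $P$), and the extension forces a specific $n\in F$ out of $W_e^G$ --- not $W_e^G$ finite as in your proposal. So the skeleton you drew is correct, but the joints are placed differently.
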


Partition genericity is not the right notion to reprove this pigeonhole basis theorem. Indeed, the following proposition shows that there is no preservation of non-$\Sigma^0_1$ definitions partition genericity basis theorem.

\begin{proposition}\label[proposition]{prop:pg-non-basis-preservation-ce}
There is a non-$\Sigma^0_1$ set~$B$ and a partition generic set~$A$ such that $B$ is $\Sigma^0_1$ in every infinite subset of~$A$.	
\end{proposition}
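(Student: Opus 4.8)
The plan is to mirror the construction of \Cref{prop:pg-non-basis-preservation-hyp}: take $A$ to be a bi-hyperimmune set and exploit the fact that $A$ is partition generic by \Cref{prop:cohyp-pg}, while choosing $B$ so that it is forced to be $\Sigma^0_1$ relative to \emph{every} infinite subset of $A$. The natural candidate is $B = \overline{A}$ itself — but we must be careful: we need $B$ non-$\Sigma^0_1$, and we need $\overline{A}$ to be uniformly enumerable from any infinite $H \subseteq A$. The second requirement is the crux: if $H \subseteq A$ is infinite, then $H$ is disjoint from $\overline{A} = B$, so knowing an infinite subset of the complement of $B$ does not obviously enumerate $B$. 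So a direct choice of $B = \overline{A}$ does not work, and the argument must instead arrange the witness set $A$ and the target $B$ together by a careful construction, building $A$ co-hyperimmune (so partition generic) while coding $B$ into the \emph{structure} of $A$ so that any infinite subset $H \subseteq A$ recovers $B$.

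Concretely, I would build $A$ and a non-$\Sigma^0_1$ set $B$ simultaneously by a finite-extension (or movable-marker) construction. Partition $\omega$ into consecutive intervals $I_0, I_1, \dots$ with $|I_n|$ growing fast, and inside each interval $I_n$ place exactly one element of $A$, whose position within $I_n$ encodes finitely much information about $B$ (say, a bit or a bounded block of bits of $B$). Since $A$ meets each $I_n$ in exactly one point and the $I_n$ grow, $A$ is co-infinite and, by choosing the enumeration of these choices to diagonalize against all partial computable arrays bounding $p_{\overline{A}}$, we make $\overline{A}$ hyperimmune, i.e. $A$ co-hyperimmune, hence partition generic by \Cref{prop:cohyp-pg}. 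At the same time we diagonalize $B$ against all $\Sigma^0_1$ sets $W_e$ in the usual way (ensure $B \neq W_e$ by reserving a fresh witness), so $B$ is not $\Sigma^0_1$. The point is that these two agendas are compatible: making $\overline{A}$ hyperimmune only requires that the single $A$-element of $I_n$ sits at an \emph{unpredictable} position, and there is enough freedom in a large interval $I_n$ both to encode the relevant bits of $B$ and to defeat the $n$-th prospective trace.

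Finally I would verify the key conclusion: if $H \subseteq A$ is infinite, then from $H$ one can compute $B$ (indeed $\Sigma^0_1(H)$, in fact $H$-computably, which is stronger). Given $H$, one reads off, for each $n$ for which $H$ meets $I_n$, the position of that element within $I_n$, which decodes the associated bits of $B$; since $H$ is infinite it meets infinitely many intervals, and by arranging that interval $I_n$ codes, say, bit $n$ of $B$ (or that the coding is ``spread out'' so that any infinite subset of intervals suffices to recover all of $B$ — e.g. interval $I_n$ codes the value $B(m)$ for $m = $ the position of $n$ in the set $\{n : H \cap I_n \neq \emptyset\}$ would not be uniform; better: code $B\restriction n$ into $I_n$, so that seeing any single sufficiently large interval gives an initial segment, and seeing infinitely many gives all of $B$), one enumerates all of $B$. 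The main obstacle is precisely engineering the coding so it is simultaneously (i) decodable from \emph{any} infinite subset of $A$ and (ii) loose enough to leave room for the hyperimmunity diagonalization in each interval; coding $B\restriction n$ (rather than a single bit) into $I_n$ handles (i), and taking $|I_n|$ to grow fast enough — say $|I_n| \geq 2^{n} \cdot (\text{bound needed for the } n\text{-th trace requirement})$ — handles (ii). Once the coding is fixed, checking non-$\Sigma^0_1$-ness of $B$, co-hyperimmunity of $A$, and the enumeration of $B$ from any infinite $H \subseteq A$ are all routine.
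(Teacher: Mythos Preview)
Your proposal has a genuine gap at its core: the construction you describe cannot make $\overline{A}$ hyperimmune. You partition $\omega$ into computable consecutive intervals $I_0, I_1, \dots$ and place \emph{exactly one} element of $A$ in each $I_n$. Then $\overline{A}$ contains $|I_n|-1 \geq 1$ elements of every interval, so the pairwise disjoint computable array $(I_n)_{n\in\omega}$ already intersects $\overline{A}$ at every index; equivalently, the principal function $p_{\overline{A}}$ is dominated by the computable function $n \mapsto a_{n+1}$ (where $I_n = [a_n, a_{n+1})$). There is simply no freedom to ``diagonalize against all partial computable arrays bounding $p_{\overline{A}}$'': the interval structure itself is such an array. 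Hence $A$ is not co-hyperimmune and your appeal to \Cref{prop:cohyp-pg} fails. In fact the situation is worse: since you need $|I_n|$ to grow at least like $2^n$ to code $B\restriction n$, the set $A$ satisfies $\sum_{x\in A} 1/x < \infty$, so $A$ lies outside the non-trivial $\Pi^0_2$ partition regular class $\{X : \sum_{n\in X} 1/n = \infty\}$ and is therefore not partition generic at all, regardless of how the single element is chosen within each interval.

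The paper avoids this by a completely different device: it takes a computable linear order $(\omega, <_L)$ of order type $\omega + \omega^{*}$ with no infinite computable ascending or descending sequence, lets $A$ be the $\omega$-part, and sets $B = A$. The absence of computable monotone sequences forces $A$ to be bi-hyperimmune (so co-hyperimmune, hence partition generic), while the order structure gives the coding for free: for any infinite $H \subseteq A$ one has $A = \{x : \exists n \in H\ x <_L n\}$, so $B = A$ is $\Sigma^0_1(H)$. The key idea you are missing is that the ``coding'' of $B$ into $A$ should not be positional within a computable block structure (which kills co-hyperimmunity), but should instead come from an auxiliary computable relation on $\omega$ under which $A$ is an initial segment; any single sufficiently large element of $A$ then reveals an initial segment of $B$, and the hyperimmunity of $\overline{A}$ is obtained not by local diagonalization inside intervals but globally from the absence of computable monotone sequences in the order.
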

\begin{proof}
Fix any computable linear order $L = (\omega, <_L)$ of order type~$\omega+\omega^{*}$ with no infinite computable ascending or descending sequence (see Rosenstein~\cite{rosenstein1982linear}). Let~$A$ be the $\omega$ part of this order. Note that $\overline{A}$ is the $\omega^{*}$ part of this order. 
First, note that $A$ is bi-hyperimmune. Indeed, suppose $F_0, F_1, \dots$ is a c.e. array such that for every~$n$, $F_n \cap A \neq \emptyset$. Then the set~$X = \{ \min_{<_L} F_n : n \in \omega \}$ is an infinite c.e. subset of~$A$. By thinning out the set~$X$, one can compute an infinite increasing sequence, contradicting our assumption. Thus $A$ is hyperimmune.
By a symmetric argument, $A$ is co-hyperimmune.

By \Cref{prop:cohyp-pg}, since~$A$ is co-hyperimmune, it is partition generic. Let~$B = A$. Since~$A$ is hyperimmune, then it is not c.e. Let~$H$ be any infinite subset of~$A$. 
Then $B = \{ x \in \omega : \exists n \in H,\ x <_L n \}$. Thus $B$ is c.e. in every infinite subset of~$A$.
\end{proof}

The remainder of this section is devoted to the design of a notion of genericity which admits a preservation of non-c.e. definitions basis theorem, implying \Cref{thm:preservation-nonce-pigeon-basis}.

\subsection{Enumeration genericity}

From now on, fix a set~$B \subseteq \omega$. All the following definitions hold for any~$B$, but because of~\Cref{prop:pren-pg-large-ce}, the only interesting case is when the set~$B$ is non-$\Sigma^0_1$. Contrary to hyperimmunity genericity, we need to work with largeness rather than regularity, for some reasons which will become clear when proving \Cref{prop:pren-pg-large-ce}.

\begin{definition}
A class $\A \subseteq 2^\omega \times \omega$ is \emph{$\mathfrak{E}(B)$-large} if
\begin{enumerate}
\item If $(X,n) \in \A$ and $X \subseteq Y$, then $(Y,n) \in \A$
\item For every finite set~$F \subseteq B$ and every $k$-cover $Y_0, \dots, Y_{k-1}$ of $\omega$, there is some~$j < k$ such that $\forall n \in F\ (Y_j, n) \in \A$. 
\end{enumerate}
\end{definition}

The definition of $\mathfrak{E}(B)$-largeness ensures that the class $\{ X : \forall n \in B\ (X,n) \in \A \}$ is partition large. If~$B$ is not c.e. but $\A$ is $\Sigma^0_1$, then there must be some \qt{overflow}, in the sense that for every~$k$-cover $Y_0, \dots, Y_{k-1}$ of $\omega$, there must be some~$n \in \overline{B}$ and some~$j < k$ such that $(Y_j,n) \in \A$. Such an~$n$ however depends on~$k$. Therefore, there does not exist in general an~$n \in \overline{B}$ such that the class $\{ X : (X,n) \in \A \}$ is partition large, but one still have the following proposition:

\begin{proposition}\label[proposition]{prop:pren-pg-large-ce}
Suppose $B$ is not c.e. 
For every $\mathfrak{E}(B)$-large $\Sigma^0_1$ class~$\U \subseteq 2^\omega \times \omega$, the class $\{ X : \forall n \in B\ (X, n) \in \U \wedge 
\exists n \in \overline{B}\ (X, n) \in \U \}$ is partition large.
\end{proposition}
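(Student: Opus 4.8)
The plan is to verify directly the three requirements of partition largeness for the class
$$\mathcal{P} := \{\, X : \forall n \in B\ (X,n) \in \U \ \wedge\ \exists n \in \overline{B}\ (X,n) \in \U \,\},$$
namely non-emptiness, upward-closure, and the finite cover property. Throughout, for $Y \subseteq \omega$ write $D_Y := \{\, n : (Y,n) \in \U \,\}$; since $\U$ is $\Sigma^0_1$, the map $Y \mapsto D_Y$ is monotone, $D_Y$ is $\Sigma^0_1$ in $Y$, and in particular $D_Y$ is c.e.\ whenever $Y$ is computable (e.g.\ $Y$ finite or $Y=\omega$). Condition (1) in the definition of $\mathfrak{E}(B)$-largeness is exactly monotonicity of $X \mapsto D_X$, so $\mathcal{P}$ is upward-closed.

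For non-emptiness I would show $\omega \in \mathcal{P}$. Applying condition (2) of $\mathfrak{E}(B)$-largeness to the trivial $1$-cover of $\omega$ gives $(\omega,n)\in\U$ for every $n\in B$, i.e.\ $B \subseteq D_\omega$; but $D_\omega$ is c.e.\ while $B$ is not, so the inclusion is strict and some $n \in \overline{B}$ has $(\omega,n)\in\U$. This is the simplest instance of the ``overflow'' mentioned before the statement, and the cover property is a cover-relative version of it.

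For the cover property, fix a finite cover $Y_0 \cup \dots \cup Y_{k-1} \supseteq \omega$. For every finite $F \subseteq B$, condition (2) yields $j<k$ with $F \subseteq D_{Y_j}$, so each $S_F := \{\, j<k : F \subseteq D_{Y_j}\,\}$ is non-empty; the family $\{S_F : F \subseteq B \text{ finite}\}$ is closed under finite intersection (indeed $S_{F_1} \cap S_{F_2} = S_{F_1 \cup F_2}$) and consists of subsets of the finite set $\{0,\dots,k-1\}$, so $A := \bigcap_F S_F = \{\, j<k : B \subseteq D_{Y_j}\,\}$ is non-empty. (This already re-proves, via \Cref{prop:large-contains-pr} and \Cref{prop:complarge}, that $\{X : \forall n\in B\ (X,n)\in\U\}$ is partition large.) If some $j\in A$ also has $D_{Y_j} \cap \overline{B} \neq \emptyset$ then $Y_j \in \mathcal{P}$ and we are done; so assume for contradiction that $D_{Y_j}=B$ for every $j\in A$. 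Note also a preliminary reduction: if $B \subseteq D_{\{i\}}$ for some $i\in\omega$, then $D_{\{i\}}$ is c.e.\ and properly contains $B$, so $\{i\}\in\mathcal{P}$, and since the cover places $i$ in some $Y_j$, upward-closure gives $Y_j\in\mathcal{P}$; hence we may assume no singleton captures $B$.

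It remains to rule out the configuration ``$D_{Y_j}=B$ for all $j\in A$'', and this is the main obstacle. The plan is a refinement/fusion argument: replace each $Y_j$ with $j\in A$ by the two pieces $Y_j \setminus \{0,\dots,m\}$ and $Y_j \cap \{0,\dots,m\}$, keep the other $Y_i$, and re-run the directed-intersection argument on this larger cover. An untouched piece $Y_i$ with $i\notin A$ cannot capture all of $B$; a finite piece $Y_j \cap \{0,\dots,m\}$ has c.e.\ trace $\subseteq D_{Y_j}=B$, so it cannot capture all of $B$ either (that would make $B$ c.e.); hence some infinite piece $Y_j \setminus \{0,\dots,m\}$ captures $B$, forcing $D_{Y_j\setminus\{0,\dots,m\}}=B$ again. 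As $A$ is finite, one fixed $j^\ast$ works for infinitely many $m$, so by monotonicity $D_{Y_{j^\ast}\setminus\{0,\dots,m\}}=B$ for all $m$. Iterating such thinnings — subdividing $Y_{j^\ast}$, and, to stop the directed argument from escaping through the remaining indices of $A$, subdividing those simultaneously — along a tree and invoking a compactness (K\"onig) argument produces a nested family of sets whose $\U$-traces are all $B$; one then extracts from the $\Sigma^0_1$-ness of $\U$ a \emph{computable} enumeration of $B$, contradicting $B \notin \Sigma^0_1$. Making that last extraction genuinely effective — controlling the limit of the thinning and handling the finitely many indices of $A$ at once — is the delicate point, and it is where one combines condition (2) of $\mathfrak{E}(B)$-largeness, the hypothesis that $B$ is not c.e., and the structural results \Cref{prop:complarge} and \Cref{prop:interlarge}.
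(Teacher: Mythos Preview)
Your argument is correct through the point where you establish that the set $A = \{ j < k : B \subseteq D_{Y_j} \}$ is non-empty; this is a clean way to obtain part of the cover property. The gap is in the final step, where you assume $D_{Y_j} = B$ for every $j \in A$ and try to derive a contradiction by a refinement/fusion process. The difficulty is real and your sketch does not overcome it: the pieces $Y_j$ (and all their tails $Y_j \setminus \{0,\dots,m\}$) are \emph{arbitrary} sets, so nothing you extract from them is computable. Your iteration does show that $D_{Y_{j^\ast} \setminus \{0,\dots,m\}} = B$ for all $m$, but passing to a limit along a K\"onig path only gives $D_{\bigcap_m Y_m} \subseteq B$, not equality, since for an open $\U$ membership of $(Y,n)$ is witnessed by a finite $\rho \subseteq Y$ and these witnesses need not stabilize. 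There is no route from ``$D_Z = B$ for many noncomputable $Z$'' to ``$B$ is c.e.'' without some additional uniformity, and none is available once the cover is fixed.

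The paper's proof avoids this by reversing the order of quantifiers: instead of fixing the cover and hunting for an overflow element, it first manufactures the overflow element using \emph{all} $k$-covers at once. For each $s$ it defines
\[
A_{k,s} = \{\, m : \forall\, k\text{-covers } Z_0,\dots,Z_{k-1}\ \exists i<k\ (Z_i,m)\in\U \wedge \forall n \in B\uh_s\ (Z_i,n)\in\U \,\},
\]
which is c.e.\ (the universal quantifier over covers becomes $\Sigma^0_1$ by compactness, and $B\uh_s$ is a finite parameter) and contains $B$ by condition (2) of $\mathfrak{E}(B)$-largeness applied with $F = B\uh_s \cup \{m\}$. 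Since $B$ is not c.e., there is $n_s \in A_{k,s} \setminus B$. Only then does one fix the cover $Y_0,\dots,Y_{k-1}$ and apply pigeonhole over $s$: some fixed $i$ works for infinitely many $s$, so $\forall n \in B\ (Y_i,n)\in\U$ (letting $s\to\infty$) and $(Y_i,n_s)\in\U$ for some $n_s\in\overline{B}$. The effectiveness you were missing comes precisely from quantifying over all covers rather than working inside one.
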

\begin{proof}
Suppose $B$ is non-c.e. Let $\U \subseteq 2^\omega \times \omega$ be any non-trivial $\mathfrak{E}(B)$-large $\Sigma^0_1$ class. For every $k, s$, let $A_{k,s} = \{ m : \forall Z_0 \cup \dots \cup Z_{k-1} = \omega\ \exists i < k\ (Z_i, m) \in \U \wedge \forall n \in B \uh_s\ (Z_i, n) \in \U \}$. Since~$\U$ is $\mathfrak{E}(B)$-large, for every $k, s$, the set $A_{k,s}$ is a c.e. superset of~$B$, so since~$B$ is not c.e., there is some~$n_s \in A_{k,s} \setminus B$. By the infinite pigeonhole principle, for every $Z_0 \cup \dots \cup Z_{k-1} = \omega$, there is some~$i < k$ such that for infinitely many~$s$, $n_s \in Z_i$ and $\forall n \in B \uh_s\ (Z_i, n) \in \U$.
It follows that for every $Z_0 \cup \dots \cup Z_{k-1} = \omega$, there is some~$i < k$
such that $\exists n \in \overline{B}\ (Z_i,n) \in \U$ and $\forall n \in B\ (Z_i,n) \in \U$.
Thus the class~$\{ X : \forall n \in B\ (X, n) \in \U  \wedge \exists n \in \overline{B}\ (X, n) \in \U \}$ is partition large.
\end{proof}


A class~$\A \subseteq 2^\omega \times \omega$ is \emph{non-trivial} if for every~$(X,n) \in \A$, $|X| \geq 2$. 

\begin{definition}
Let $\A \subseteq 2^\omega \times \omega$ be a class.
We say that $X$ is \emph{$\mathfrak{E}(B)$-generic} in $\A$ if for every non-trivial $\mathfrak{E}(B)$-large $\Sigma^0_1$ class~$\U \subseteq \A$, $X \in \L(\{ Z : \forall n \in B\ (Z, n) \in \U \wedge \exists n \in \overline{B}\ (Z, n) \in \U \})$. If $X$ is $\mathfrak{E}(B)$-generic in $2^\omega \times \omega$, we simply say that $X$ is \emph{$\mathfrak{E}(B)$-generic}.
\end{definition}

\begin{proposition}\label[proposition]{prop:ce-and-pg-rel-then-pren-gen}
Suppose~$B$ is not c.e. and $A$ is partition generic relative to~$B$.
Then $A$ is $\mathfrak{E}(B)$-generic.
\end{proposition}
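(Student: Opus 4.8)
The plan is to unwind the definition and reduce everything to the partition genericity of $A$ relative to $B$. By definition, $A$ is $\mathfrak{E}(B)$-generic exactly when, for every non-trivial $\mathfrak{E}(B)$-large $\Sigma^0_1$ class $\U \subseteq 2^\omega \times \omega$, one has $A \in \L(\V_\U)$, where $\V_\U = \{ Z : \forall n \in B\ (Z,n) \in \U \wedge \exists n \in \overline{B}\ (Z,n) \in \U \}$. So I would fix such a $\U$ and aim to show that $\L(\V_\U)$ is a non-trivial $\Pi^0_2(B)$ partition regular subclass of $2^\omega$; then $A \in \L(\V_\U)$ because $A$ is partition generic relative to $B$, and since $\U$ is arbitrary this yields $\mathfrak{E}(B)$-genericity of $A$.

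That $\L(\V_\U)$ is partition regular, and in particular non-empty, follows from \Cref{prop:pren-pg-large-ce}, which is exactly where the hypothesis that $B$ is not c.e.\ enters: it gives that $\V_\U$ is partition large, and then \Cref{prop:large-contains-pr} shows that $\L(\V_\U)$ is the (non-empty) largest partition regular subclass of $\V_\U$. Non-triviality is immediate: since $\U$ is non-trivial, any $Z \in \V_\U$ has $(Z,n) \in \U$ for some $n$, hence $|Z| \geq 2$, so $\V_\U \subseteq \U_2$ and a fortiori $\L(\V_\U) \subseteq \U_2$; apply \Cref{prop:non-trivial-u2}.

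The delicate point is the complexity bound. A priori the formula for $\L(\V_\U)$ from \Cref{prop:complarge} quantifies over all finite covers, which only gives $\Pi^1_1$, and moreover $\V_\U$ itself is merely $\Pi^0_2(B)$, not $\Sigma^0_1(B)$, so the $\Sigma^0_1$-to-$\Pi^0_2$ argument used earlier for $\L$ of an upward-closed $\Sigma^0_1$ class does not apply directly. The idea is to present $\V_\U$ as a decreasing intersection of $\Sigma^0_1(B)$ classes: set $\V_{\U,s} = \{ Z : (\forall n < s)\,(n \in B \rightarrow (Z,n) \in \U) \wedge \exists n \in \overline{B}\ (Z,n) \in \U \}$, so that $\V_\U = \bigcap_s \V_{\U,s}$ with the sequence decreasing. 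Each $\V_{\U,s}$ is upward-closed and $\Sigma^0_1(B)$, and it is partition large, being an upward-closed superset of the partition large class $\V_\U$ (so one does not need to re-run the argument of \Cref{prop:pren-pg-large-ce}). Hence each $\L(\V_{\U,s})$ is partition regular and, by the $B$-relativized form of the fact proved just before \Cref{cor:complarge} that $\L$ of an upward-closed $\Sigma^0_1$ class is $\Pi^0_2$, it is $\Pi^0_2(B)$, uniformly in $s$. The classes $\L(\V_{\U,s})$ decrease with $s$, so by \Cref{prop:interlarge} their intersection is partition regular; and a short maximality argument shows it equals $\L(\V_\U)$: on one hand $\L(\V_\U) \subseteq \V_\U \subseteq \V_{\U,s}$ forces $\L(\V_\U) \subseteq \L(\V_{\U,s})$ for every $s$, and on the other $\bigcap_s \L(\V_{\U,s})$ is a partition regular subclass of $\bigcap_s \V_{\U,s} = \V_\U$, hence contained in $\L(\V_\U)$. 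Therefore $\L(\V_\U) = \bigcap_s \L(\V_{\U,s})$ is $\Pi^0_2(B)$.

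Combining the two preceding paragraphs, $\L(\V_\U)$ is a non-trivial $\Pi^0_2(B)$ partition regular subclass of $2^\omega$, so $A \in \L(\V_\U)$ since $A$ is partition generic relative to $B$, and we conclude as above. I expect the only real obstacle to be the complexity bookkeeping in the third paragraph: keeping the ``$\exists n \in \overline{B}$'' conjunct intact at every finite stage $s$ (so that partition largeness of $\V_{\U,s}$ comes for free from the superset argument), and checking that the $\Pi^0_2(B)$ index of $\L(\V_{\U,s})$ is obtained uniformly in $s$, so that the countable intersection stays $\Pi^0_2(B)$.
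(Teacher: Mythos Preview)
Your proof is correct and follows the same strategy as the paper: invoke \Cref{prop:pren-pg-large-ce} to see that $\V_\U$ is partition large, deduce that $\L(\V_\U)$ is a non-trivial partition regular class, and conclude $A \in \L(\V_\U)$ from partition genericity of $A$ relative to $B$.

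The one place you go beyond the paper is the complexity bound. The paper simply cites \Cref{prop:complarge} and asserts that $\L(\V)$ is $\Pi^0_2(B)$, whereas you rightly observe that the defining formula carries a second-order universal quantifier and that $\V_\U$ is only $\Pi^0_2(B)$ rather than $\Sigma^0_1(B)$, so the earlier compactness argument (showing $\L(\U)$ is $\Pi^0_2$ for upward-closed $\Sigma^0_1$ $\U$) does not apply verbatim. Your decomposition $\V_\U = \bigcap_s \V_{\U,s}$ into a decreasing sequence of upward-closed $\Sigma^0_1(B)$ partition large classes, together with \Cref{prop:interlarge} and the maximality identity $\L(\V_\U) = \bigcap_s \L(\V_{\U,s})$, is the clean way to recover the $\Pi^0_2(B)$ bound and makes explicit a step the paper leaves to the reader.
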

\begin{proof}
Fix a non-trivial $\mathfrak{E}(B)$-large $\Sigma^0_1$ class~$\U \subseteq 2^\omega \times \omega$.
Since~$B$ is not c.e., by \Cref{prop:pren-pg-large-ce}, the class $\V = \{ X : \forall n \in B\ (X, n) \in \U \wedge \exists n \in \overline{B}\ (X, n) \in \U \}$ is partition large. Note that $\V$ is $\Pi^0_2(B)$.
By \Cref{prop:large-contains-pr}, $\V$ contains a partition regular subclass, so $\L(\V)$ is partition regular. By \Cref{prop:complarge}, $\L(\V)$ is $\Pi^0_2(B)$, so since $A$ is partition generic relative to~$B$, $A \in \L(\V)$.
\end{proof}

\begin{proposition}\label[proposition]{prop:pren-pg-omega-ce}
Suppose $B$ is not c.e. Then $\omega$ is $\mathfrak{E}(B)$-generic.	
\end{proposition}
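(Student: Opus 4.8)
The plan is simply to unwind the definition of $\mathfrak{E}(B)$-genericity and feed it into \Cref{prop:pren-pg-large-ce}, much as \Cref{prop:omega-b-prtr-generic} does for hyperimmunity genericity. By definition, to prove that $\omega$ is $\mathfrak{E}(B)$-generic it suffices to fix an arbitrary non-trivial $\mathfrak{E}(B)$-large $\Sigma^0_1$ class $\U \subseteq 2^\omega \times \omega$ and show that $\omega \in \L(\V_\U)$, where $\V_\U = \{ Z : \forall n \in B\ (Z,n) \in \U \ \wedge\ \exists n \in \overline{B}\ (Z,n) \in \U \}$.

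First I would invoke \Cref{prop:pren-pg-large-ce}: since $B$ is not c.e.\ and $\U$ is a non-trivial $\mathfrak{E}(B)$-large $\Sigma^0_1$ class, $\V_\U$ is partition large. Next I would note that $\V_\U$ is upward-closed, which is immediate from clause~(1) of $\mathfrak{E}(B)$-largeness (closure of $\U$ under supersets in the first coordinate). Hence, by \Cref{prop:large-contains-pr}, $\V_\U$ contains a partition regular subclass, so $\L(\V_\U)$ is a nonempty partition regular class. Finally, every partition regular class contains $\omega$ — being nonempty it contains some $X$, and then $\omega$ by upward closure — so $\omega \in \L(\V_\U)$. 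Since $\U$ was an arbitrary non-trivial $\mathfrak{E}(B)$-large $\Sigma^0_1$ class, this shows $\omega$ is $\mathfrak{E}(B)$-generic.

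I expect no real obstacle here: all the substantive content, namely the \qt{overflow} argument forcing some $n \in \overline{B}$ into $\U$ along any finite cover, has already been carried out in \Cref{prop:pren-pg-large-ce}. The only point that requires a little care is bookkeeping with the definitions — checking that the class whose partition largeness \Cref{prop:pren-pg-large-ce} delivers is literally the class $\V_\U$ appearing inside $\L(\cdot)$ in the definition of $\mathfrak{E}(B)$-genericity, and that \qt{partition large} is exactly the hypothesis needed to apply \Cref{prop:large-contains-pr}.
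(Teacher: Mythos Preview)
Your proof is correct. The paper's own proof is a single line: it invokes \Cref{prop:ce-and-pg-rel-then-pren-gen} applied to $A = \omega$, using that $\omega$ is trivially partition generic relative to any set. Your argument is essentially the proof of \Cref{prop:ce-and-pg-rel-then-pren-gen} specialized to $A = \omega$: both routes use \Cref{prop:pren-pg-large-ce} to get that $\V_\U$ is partition large and then \Cref{prop:large-contains-pr} to get that $\L(\V_\U)$ is partition regular. The only (minor) difference is that the general result \Cref{prop:ce-and-pg-rel-then-pren-gen} needs the additional observation that $\L(\V_\U)$ is $\Pi^0_2(B)$ in order to apply $B$-relative partition genericity of $A$, whereas your direct argument for $\omega$ bypasses any complexity analysis since $\omega$ lies in every partition regular class.
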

\begin{proof}
Immediate from \Cref{prop:ce-and-pg-rel-then-pren-gen}, since $\omega$ is partition generic relative to~$B$.
\end{proof}

\begin{proposition}\label[proposition]{prop:pren-pg-infinite}
If $X$ is $\mathfrak{E}(B)$-generic in a non-trivial $\mathfrak{E}(B)$-large $\Sigma^0_1$ class $\A \subseteq 2^\omega \times \omega$, then $X$ is infinite.
\end{proposition}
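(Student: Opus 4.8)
The plan is to apply the definition of $\mathfrak{E}(B)$-genericity to the class $\A$ itself and then exploit that $\A$ is non-trivial, reducing everything to the already-established fact that a non-trivial partition regular class has only infinite members.

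First I would instantiate the definition of ``$X$ is $\mathfrak{E}(B)$-generic in $\A$'' with the choice $\U = \A$. This is legitimate: by hypothesis $\A$ is a non-trivial $\mathfrak{E}(B)$-large $\Sigma^0_1$ class, and trivially $\A \subseteq \A$, so $\A$ is an admissible value for the universally quantified $\U$ in the definition. Instantiating thus yields $X \in \L(\V)$, where
\[
\V = \{\, Z : \forall n \in B\ (Z,n) \in \A \ \wedge\ \exists n \in \overline{B}\ (Z,n) \in \A \,\}.
\]

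Next I would observe that $\V \subseteq \U_2$: if $Z \in \V$, then in particular $(Z,n) \in \A$ for some $n$, and since $\A$ is non-trivial this forces $|Z| \geq 2$. (Incidentally, this remark shows that the hypothesis tacitly entails $\overline{B} \neq \emptyset$, as $X \in \L(\V) \subseteq \V$ requires $\V$ to be non-empty.) Hence $\L(\V) \subseteq \V \subseteq \U_2$. Since $X \in \L(\V)$, the class $\L(\V)$ is non-empty, so by \Cref{prop:complarge} it is partition regular; being a partition regular class included in $\U_2$, it is non-trivial by \Cref{prop:non-trivial-u2}, and therefore contains only infinite sets. In particular $X$ is infinite.

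I do not expect a genuine obstacle here: the statement is a bookkeeping consequence of the definitions, the only substantive input being the characterization of non-trivial partition regular classes from \Cref{prop:non-trivial-principal} and \Cref{prop:non-trivial-u2}. The single point requiring care is to feed $\A$ itself (and hence the projection class $\V$ attached to $\A$) into the definition of genericity, so that the universal quantifier over $\U$ is actually witnessed, rather than trying to work with a proper subclass.
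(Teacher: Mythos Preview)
Your proposal is correct and follows essentially the same route as the paper: instantiate the definition of $\mathfrak{E}(B)$-genericity with $\U = \A$, observe that non-triviality of $\A$ forces the associated class $\V$ (and hence $\L(\V)$) to sit inside $\U_2$, and conclude via \Cref{prop:non-trivial-u2} that $\L(\V)$ is a non-trivial partition regular class, so $X$ is infinite. The paper's argument is slightly terser but identical in substance.
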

\begin{proof}
Let~$\V = \{ Z : \forall n \in B\ (Z, n) \in \U \wedge \exists n \in \overline{B}\ (Z, n) \in \A \}$. Since $X$ is $\mathfrak{E}(B)$-generic in~$\A$, $X \in \L(\V)$. Since $\A$ is non-trivial, for every~$X \in \V$, $|X| \geq 2$. Therefore, by \Cref{prop:non-trivial-u2}, $\L(\V)$ is non-trivial, so since~$X \in \L(\V)$, $X$ is infinite.
\end{proof}

\begin{proposition}\label[proposition]{prop:pren-pg-invariant-superset}
If $X$ is $\mathfrak{E}(B)$-generic in $\A$	 and $Y \supseteq X$, then $Y$ is $\mathfrak{E}(B)$-generic in~$\A$.
\end{proposition}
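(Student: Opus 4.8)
The statement to prove is Proposition~\ref{prop:pren-pg-invariant-superset}: if $X$ is $\mathfrak{E}(B)$-generic in $\A$ and $Y \supseteq X$, then $Y$ is $\mathfrak{E}(B)$-generic in $\A$. The plan is to unfold the definition of $\mathfrak{E}(B)$-genericity and reduce everything to the known upward-closure properties of $\L(\cdot)$ and of largeness. So I would fix an arbitrary non-trivial $\mathfrak{E}(B)$-large $\Sigma^0_1$ class $\U \subseteq \A$, and set $\V_\U = \{ Z : \forall n \in B\ (Z, n) \in \U \wedge \exists n \in \overline{B}\ (Z, n) \in \U \}$. By hypothesis $X \in \L(\V_\U)$, and I need $Y \in \L(\V_\U)$.

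The key observation is that $\L(\V_\U)$ is a partition regular class, hence upward-closed, so $Y \supseteq X \in \L(\V_\U)$ immediately gives $Y \in \L(\V_\U)$. The only point requiring care is that $\L(\V_\U)$ really is partition regular (and not empty). This follows the same way as in the proof of \Cref{prop:ce-and-pg-rel-then-pren-gen}: the class $\V_\U$ is upward-closed in its first coordinate (because $\U$ is, by clause (1) of $\mathfrak{E}(B)$-largeness, so both the $\forall n \in B$ and the $\exists n \in \overline{B}$ conditions are preserved under taking supersets of $Z$), and it is non-empty since $X \in \L(\V_\U) \subseteq \V_\U$. Then, since $X$ is $\mathfrak{E}(B)$-generic in $\A$ — and because I may actually invoke \Cref{prop:pren-pg-large-ce} when $B$ is not c.e., or simply note that $\L(\V_\U)$ is nonempty directly from $X \in \L(\V_\U)$ — the class $\L(\V_\U)$ is the largest partition regular subclass of $\V_\U$ and is nonempty, hence partition regular by \Cref{prop:complarge}. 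Upward closure of a partition regular class (clause (2) of the definition of partition regular) then finishes the job.

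Since $\U$ was an arbitrary non-trivial $\mathfrak{E}(B)$-large $\Sigma^0_1$ subclass of $\A$, we conclude that $Y \in \L(\V_\U)$ for all such $\U$, which is exactly the assertion that $Y$ is $\mathfrak{E}(B)$-generic in $\A$.

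I do not expect any genuine obstacle here: this is a routine upward-closure argument, entirely parallel to \Cref{prop:prtr-pg-invariant-superset} in the hyperimmunity-genericity setting. The only place one could slip is forgetting to check that $\V_\U$ is upward-closed in the first coordinate — but this is immediate from clause (1) of the definition of $\mathfrak{E}(B)$-largeness applied to $\U$ — and that $\L(\V_\U)$ is nonempty, which is given for free by $X \in \L(\V_\U)$.
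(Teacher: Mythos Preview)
Your proposal is correct and follows essentially the same approach as the paper: fix an arbitrary non-trivial $\mathfrak{E}(B)$-large $\Sigma^0_1$ subclass $\U \subseteq \A$, set $\V = \{ Z : \forall n \in B\ (Z, n) \in \U \wedge \exists n \in \overline{B}\ (Z, n) \in \U \}$, use the hypothesis to get $X \in \L(\V)$, and conclude $Y \in \L(\V)$ by upward closure of $\L(\V)$. Your additional care in justifying that $\L(\V)$ is nonempty (hence partition regular, hence upward-closed) is more explicit than the paper, which simply asserts that $\L(\V)$ is closed upward; but note that by the explicit formula in \Cref{prop:complarge}, $\L(\V)$ is upward-closed unconditionally (any cover of a superset is a cover), so you do not even need nonemptiness or upward closure of $\V$ for that step.
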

\begin{proof}
Let $\U$ be a non-trivial $\mathfrak{E}(B)$-large $\Sigma^0_1$ subclass of~$\A$. Since $X$ is $\mathfrak{E}(B)$-generic in $\A$, then, letting~$\V = \{ Z : \forall n \in B\ (Z, n) \in \U \wedge \exists n \in \overline{B}\ (Z, n) \in \U \}$, $X \in \L(\V)$. Since $\L(\V)$ is closed upward, $Y \in \L(\V)$.
Therefore $Y$ is $\mathfrak{E}(B)$-generic in~$\A$.
\end{proof}

\begin{proposition}\label[proposition]{prop:pren-pg-invariant-finite}
If $X$ is $\mathfrak{E}(B)$-generic in $\A$	and $Y =^{*} X$, then $Y$ is $\mathfrak{E}(B)$-generic in~$\A$.
\end{proposition}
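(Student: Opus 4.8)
The plan is to follow the pattern of \Cref{prop:pren-pg-invariant-superset}, reducing the finite-change invariance to invariance under removal of a single element (plus the already-established superset invariance). First I would fix a non-trivial $\mathfrak{E}(B)$-large $\Sigma^0_1$ subclass $\U \subseteq \A$, and set $\V = \{ Z : \forall n \in B\ (Z, n) \in \U \wedge \exists n \in \overline{B}\ (Z, n) \in \U \}$. Since $X$ is $\mathfrak{E}(B)$-generic in $\A$, we have $X \in \L(\V)$, and by \Cref{prop:large-contains-pr} (or directly from \Cref{prop:complarge}), $\L(\V)$ is a partition regular class. We want $Y \in \L(\V)$ as well.

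Since $Y =^{*} X$, write $X \subseteq Y \cup F$ for some finite set $F$; equivalently $Y \cup F \supseteq X$. Because $\L(\V)$ is partition regular and $X \in \L(\V)$, applying the partition property to the $2$-cover $(Y, F)$ of $X$ yields that either $Y \in \L(\V)$ or $F \in \L(\V)$. But $\L(\V)$ is non-trivial: since $\A$ is non-trivial, for every $(Z,n) \in \A$ we have $|Z| \geq 2$, hence $\V \subseteq \U_2$, hence $\L(\V) \subseteq \U_2$, and by \Cref{prop:non-trivial-u2} every member of $\L(\V)$ is infinite. The finite set $F$ is therefore not in $\L(\V)$, so we conclude $Y \in \L(\V)$. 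As this holds for every non-trivial $\mathfrak{E}(B)$-large $\Sigma^0_1$ subclass $\U \subseteq \A$, $Y$ is $\mathfrak{E}(B)$-generic in $\A$.

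There is no real obstacle here: the only point requiring a moment's care is the non-triviality of $\L(\V)$, which is exactly what licenses discarding the finite part $F$ of the cover, and which follows from the standing non-triviality hypothesis on $\A$ via \Cref{prop:non-trivial-u2}, exactly as in \Cref{prop:pren-pg-infinite}. Strictly speaking one should also confirm that $\V$ need not be non-trivial by itself but that $\L(\V)$ nonetheless lands inside $\U_2$; this is immediate since $\L(\V) \subseteq \V \subseteq \U_2$. $\qed$
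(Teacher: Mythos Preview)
Your argument is essentially identical to the paper's: fix a non-trivial $\mathfrak{E}(B)$-large $\Sigma^0_1$ class $\U\subseteq\A$, form $\V$, use $X\in\L(\V)$, write $Y\cup F\supseteq X$ with $F$ finite, and eliminate $F$ by non-triviality of $\L(\V)$.

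One small correction: you repeatedly invoke ``the standing non-triviality hypothesis on $\A$'', but the statement places no such hypothesis on $\A$. The non-triviality you need is that of $\U$, which is built into the definition of $\mathfrak{E}(B)$-genericity (one only tests against \emph{non-trivial} $\mathfrak{E}(B)$-large $\Sigma^0_1$ subclasses $\U\subseteq\A$). Your chain $\L(\V)\subseteq\V\subseteq\U_2$ is then justified because $Z\in\V$ gives $(Z,n)\in\U$ for some $n$, and $\U$ (not $\A$) is non-trivial. With that adjustment, the proof matches the paper's exactly.
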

\begin{proof}
Let $\U \subseteq \A$ be any non-trivial $\mathfrak{E}(B)$-large  $\Sigma^0_1$ subclass of $\A$.
Let~$\V = \{ Z : Z : \forall n \in B\ (Z, n) \in \U \wedge \exists n \in \overline{B}\ (Z, n) \in \U \}$. Since $X$ is $\mathfrak{E}(B)$-generic in $\A$, then $X \in \L(\V)$. 
Since $Y =^{*} X$, then there is a finite set $F$ such that $Y \cup F \supseteq X$.
By partition regularity of~$\L(\V)$, either $F \in \L(\V)$, or $Y \in \L(\V)$. 
Since~$\U$ is non-trivial, then $\L(\V)$ is non-trivial, so $F \not \in \L(\V)$. It follows that $Y \in \L(\V)$.
Therefore $Y$ is $\mathfrak{E}(B)$-generic in~$\A$.
\end{proof}

\begin{lemma} \label{lemma:pren-abslarge2}
Suppose $X$ is $\mathfrak{E}(B)$-generic in a class~$\A \subseteq 2^\omega \times \omega$. For every $Y_0 \cup Y_1 \supseteq X$, if $Y_0 \not \in \L(\{ Z : \forall n \in B\ (Z, n) \in \U \wedge \exists n \in \overline{B}\ (Z, n) \in \A \})$, then $Y_1$ is $\mathfrak{E}(B)$-generic in~$\A$.
\end{lemma}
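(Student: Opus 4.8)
The plan is to argue by contradiction, mirroring the proof of \Cref{lemma:abslarge2} (and of its two-variable analogue \Cref{lemma:prtr-abslarge2}). The only new wrinkle is that $\mathfrak{E}(B)$-genericity is phrased not directly in terms of an $\mathfrak{E}(B)$-large $\Sigma^0_1$ class $\U$, but in terms of the partition large ``shadow'' $\{ Z : \forall n \in B\ (Z,n) \in \U \wedge \exists n \in \overline{B}\ (Z,n) \in \U \}$ of such a class; so the witnessing object has to be pushed through this operation.

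Concretely, I would suppose for contradiction that $Y_1$ is not $\mathfrak{E}(B)$-generic in $\A$. Unfolding the definition, there is a non-trivial $\mathfrak{E}(B)$-large $\Sigma^0_1$ class $\U \subseteq \A$ such that, setting $\V = \{ Z : \forall n \in B\ (Z,n) \in \U \wedge \exists n \in \overline{B}\ (Z,n) \in \U \}$, we have $Y_1 \notin \L(\V)$. Since $\U \subseteq \A$ we have $\V \subseteq \{ Z : \forall n \in B\ (Z,n) \in \A \wedge \exists n \in \overline{B}\ (Z,n) \in \A \}$, and $\L$ is monotone under inclusion (the largest partition regular subclass of the smaller class is in particular a partition regular subclass of the larger one), so $\L(\V)$ is contained in $\L(\{ Z : \forall n \in B\ (Z,n) \in \A \wedge \exists n \in \overline{B}\ (Z,n) \in \A \})$. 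Combined with the hypothesis that $Y_0$ avoids the latter class, this yields $Y_0 \notin \L(\V)$ as well.

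To finish, I would invoke $\mathfrak{E}(B)$-genericity of $X$ in $\A$: since $\U$ is one of the classes quantified over in that definition, $X \in \L(\V)$, so in particular $\L(\V) \neq \emptyset$ and hence $\L(\V)$ is partition regular by \Cref{prop:complarge}. But $Y_0 \cup Y_1 \supseteq X$ with $X \in \L(\V)$, so partition regularity of $\L(\V)$ forces $Y_0 \in \L(\V)$ or $Y_1 \in \L(\V)$, contradicting the two facts established above; hence $Y_1$ is $\mathfrak{E}(B)$-generic in $\A$ after all. There is no serious obstacle here; the only point to keep track of is that $\L(\V)$ could a priori be empty, which is exactly why one first extracts nonemptiness of $\L(\V)$ from the genericity of $X$ before applying the partition regularity splitting, together with the routine bookkeeping around the shadow operation.
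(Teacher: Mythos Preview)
Your proof is correct and follows essentially the same route as the paper's. The paper argues directly rather than by contradiction: it fixes an arbitrary non-trivial $\mathfrak{E}(B)$-large $\Sigma^0_1$ class $\U \subseteq \A$, observes $X \in \L(\V)$ by genericity, uses partition regularity of $\L(\V)$ on the cover $Y_0 \cup Y_1 \supseteq X$, and rules out $Y_0 \in \L(\V)$ via the same monotonicity-of-$\L$ inclusion you used; but this is only a cosmetic reorganization of your argument (and you are a bit more careful in noting that $\L(\V)$ is nonempty before invoking partition regularity).
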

\begin{proof}
Fix any non-trivial $\mathfrak{E}(B)$-large $\Sigma^0_1$ class $\U \subseteq \A$ and let~$\V = \{ Z : \forall n \in B\ (Z, n) \in \U \wedge \exists n \in \overline{B}\ (Z, n) \in \U \}$. 
Since $X$ is $\mathfrak{E}(B)$-generic in~$\A$, then $X \in \L(\V)$. Since $Y_0 \cup Y_1 \supseteq X$, then by partition regularity of $\L(\V)$, either~$Y_0 \in \L(\V)$, or $Y_1 \in \L(\V)$. Since~$\U \subseteq \A$, then $\V \subseteq \{ Z : \forall n \in B\ (Z, n) \in \U \wedge \exists n \in \overline{B}\ (Z, n) \in \A \}$, so $\L(\V) \subseteq \L(\{ Z : \forall n \in B\ (Z, n) \in \U \wedge \exists n \in \overline{B}\ (Z, n) \in \A \})$. If follows from hypothesis that~$Y_0 \not \in \L(\V)$, so $Y_1 \in \L(\V)$.
Therefore, $Y_1$ is $\mathfrak{E}(B)$-generic in~$\A$.
\end{proof}

\begin{proposition} \label[proposition]{lemma:pren-abslarge}
Let $\A \subseteq 2^\omega \times \omega$ be a non-trivial $\mathfrak{E}(B)$-large $\Sigma^0_1$ class. Suppose $X$ is $\mathfrak{E}(B)$-generic in $\A$. Let $Y_0 \cup \dots \cup Y_k \supseteq X$. Then there is a non-trivial $\mathfrak{E}(B)$-large $\Sigma^0_1$ subclass $\U \subseteq \A$ together with some $i \leq k$ such that $Y_i$ is $\mathfrak{E}(B)$-generic in $\U$.
\end{proposition}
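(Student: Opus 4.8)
The plan is to argue by induction on $k$, in close parallel with the proofs of \Cref{lemma:abslarge} and \Cref{lemma:prtr-abslarge}, using \Cref{prop:pren-pg-invariant-superset} and \Cref{lemma:pren-abslarge2} as the two main tools. For the base case $k=0$ one has $Y_0 \supseteq X$, hence $Y_0$ is $\mathfrak{E}(B)$-generic in $\A$ by \Cref{prop:pren-pg-invariant-superset}, and since $\A$ is itself a non-trivial $\mathfrak{E}(B)$-large $\Sigma^0_1$ class we may take $\U = \A$ and $i = 0$.

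For the inductive step, assume the statement for $k-1$. If $Y_k$ is already $\mathfrak{E}(B)$-generic in $\A$ we are done with $\U = \A$ and $i = k$, so suppose it is not. Unfolding the definition of $\mathfrak{E}(B)$-genericity, there is a non-trivial $\mathfrak{E}(B)$-large $\Sigma^0_1$ class $\U' \subseteq \A$ witnessing this, i.e., with $Y_k \notin \L(\V')$ where $\V' = \{ Z : \forall n \in B\ (Z,n) \in \U' \wedge \exists n \in \overline{B}\ (Z,n) \in \U' \}$. Since every non-trivial $\mathfrak{E}(B)$-large $\Sigma^0_1$ subclass of $\U'$ is also one of $\A$, the set $X$ is $\mathfrak{E}(B)$-generic in $\U'$ as well. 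I would then invoke \Cref{lemma:pren-abslarge2} for the class $\U'$ and the cover $Y_k \cup (Y_0 \cup \dots \cup Y_{k-1}) \supseteq X$, with $Y_k$ in the role of the failing side. The hypothesis required there is that $Y_k \notin \L(\{ Z : \forall n \in B\ (Z,n) \in \U \wedge \exists n \in \overline{B}\ (Z,n) \in \U' \})$ for every non-trivial $\mathfrak{E}(B)$-large $\Sigma^0_1$ class $\U \subseteq \U'$; this holds because for such $\U$ one has $\{ Z : \forall n \in B\ (Z,n) \in \U \wedge \exists n \in \overline{B}\ (Z,n) \in \U' \} \subseteq \V'$ --- shrinking $\U'$ to $\U$ only tightens the universal clause, while the existential clause still refers to $\U'$ --- so by monotonicity of $\L(\cdot)$ the former class has $\L$ contained in $\L(\V')$, which does not contain $Y_k$. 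Thus $Y_0 \cup \dots \cup Y_{k-1}$ is $\mathfrak{E}(B)$-generic in $\U'$.

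Finally, I would apply the induction hypothesis to the non-trivial $\mathfrak{E}(B)$-large $\Sigma^0_1$ class $\U'$, to the $\mathfrak{E}(B)$-generic set $Y_0 \cup \dots \cup Y_{k-1}$, and to the cover of that set by $Y_0, \dots, Y_{k-1}$; this yields a non-trivial $\mathfrak{E}(B)$-large $\Sigma^0_1$ subclass $\U'' \subseteq \U'$ and some $i \leq k-1$ with $Y_i$ being $\mathfrak{E}(B)$-generic in $\U''$. Since $\U'' \subseteq \U' \subseteq \A$ and $i \leq k$, taking $\U = \U''$ completes the induction. I do not anticipate a genuine obstacle; the only point needing care is the set-containment in the middle paragraph, which is exactly where the asymmetric $\forall/\exists$ shape of the classes $\{ Z : \forall n \in B\ (Z,n) \in \U \wedge \exists n \in \overline{B}\ (Z,n) \in \U' \}$ must be handled correctly --- keeping the universal clause on the smaller class and the existential clause on the larger one, just as in the proof of \Cref{lemma:pren-abslarge2} itself.
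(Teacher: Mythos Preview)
Your proposal is correct and follows essentially the same route as the paper: induction on~$k$, using \Cref{prop:pren-pg-invariant-superset} for the base case and, in the inductive step, passing to a witnessing $\Sigma^0_1$ subclass $\U'\subseteq\A$, observing that $X$ remains $\mathfrak{E}(B)$-generic there, applying \Cref{lemma:pren-abslarge2} to get $\mathfrak{E}(B)$-genericity of $Y_0\cup\dots\cup Y_{k-1}$ in~$\U'$, and then invoking the induction hypothesis. Your extra paragraph verifying the set containment needed for \Cref{lemma:pren-abslarge2} is more explicit than the paper (which simply cites the lemma), but the argument is the same.
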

\begin{proof}
We proceed by induction on $k$. For $k = 0$, $Y_0 \supseteq X$, so by \Cref{prop:pren-pg-invariant-superset}, $Y$ is $\mathfrak{E}(B)$-generic in $\A$. Take $\U = \A$ and we are done.
Suppose now that the property holds for $k-1$. Suppose $Y_k$ is not $\mathfrak{E}(B)$-generic in $\A$. 
Thus there is a non-trivial $\mathfrak{E}(B)$-large $\Sigma^0_1$ class $\U \subseteq \A$ such that $Y_k \not\in \L(\{ Z : \forall n \in B\ (Z, n) \in \U \wedge \exists n \in \overline{B}\ (Z, n) \in \U \})$.
By \Cref{lemma:pren-abslarge2}, $Y_0 \cup \dots \cup Y_{k-1}$ is $\mathfrak{E}(B)$-generic in $\U$. By induction hypothesis on $\U$ and $Y_0 \cup \dots \cup Y_{k-1}$, there is a non-trivial $\mathfrak{E}(B)$-large $\Sigma^0_1$ subclass $\V \subseteq \U$ together with some $i < k$ such that $Y_i$ is $\mathfrak{E}(B)$-generic in $\V$.
\end{proof}

\subsection{Applications}

We now turn to the main application of enumeration genericity, which is a partition genericity subset basis theorem for preservation of non-$\Sigma^0_1$ definitions.

\begin{theorem}\label[theorem]{thm:pg-eg-ce}
Suppose~$A$ is $\mathfrak{E}(B)$-generic in a non-trivial $\mathfrak{E}(B)$-large $\Sigma^0_1$ class~$\U \subseteq 2^\omega \times \omega$.
Then there is an infinite subset $H \subseteq A$ such that~$B$ is not $\Sigma^0_1(H)$.
\end{theorem}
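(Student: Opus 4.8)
The plan is to run \emph{enumeration generic Mathias forcing}: a condition is a triple $(\sigma, X, \V)$, where $(\sigma, X)$ is a Mathias condition and $\V \subseteq \U$ is a non-trivial $\mathfrak{E}(B)$-large $\Sigma^0_1$ class within which $X$ is $\mathfrak{E}(B)$-generic, ordered by letting $(\tau, Y, \W)$ extend $(\sigma, X, \V)$ when $(\tau, Y)$ Mathias extends $(\sigma, X)$ and $\W \subseteq \V$. Given the hypothesis, $(\emptyset, A, \U)$ is a condition ($A$ is infinite by \Cref{prop:pren-pg-infinite}), and for a sufficiently generic filter $\F$ containing it one forms $G_\F = \bigcup_{(\sigma, X, \V) \in \F} \sigma \subseteq A$. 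Reading $\Phi_e^G$ as the $e$-th $G$-c.e.\ set $W_e^G$, it suffices to prove two density lemmas: that $G_\F$ is infinite, and that for each $e$ the conditions forcing $W_e^G \neq B$ are dense.

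The first lemma is routine. Given $(\sigma, X, \V)$, the reservoir $X$ is infinite by \Cref{prop:pren-pg-infinite} since it is $\mathfrak{E}(B)$-generic in the non-trivial class $\V$; choosing $x \in X$ above a prescribed bound, the triple $(\sigma \cup \{x\},\, X \setminus \{0,\dots,x\},\, \V)$ is an extension, still valid by \Cref{prop:pren-pg-invariant-finite}.

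The core is the requirement lemma: for every $(\sigma, X, \V)$ and $e$ there is an extension forcing $W_e^G \neq B$. Set $\A = \{ (Y, n) \in \V : n \in W_e^{\sigma \cup \rho} \text{ for some finite } \rho \subseteq Y \}$, a $\Sigma^0_1$ class, upward-closed in its first coordinate, and non-trivial since $\A \subseteq \V$. \emph{Case 1: $\A$ is $\mathfrak{E}(B)$-large.} Since $X$ is $\mathfrak{E}(B)$-generic in $\V$ and $\A$ is a non-trivial $\mathfrak{E}(B)$-large $\Sigma^0_1$ subclass of $\V$, the definition of $\mathfrak{E}(B)$-genericity puts $X$ in $\L(\{ Z : \forall n \in B\ (Z,n) \in \A \wedge \exists n \in \overline{B}\ (Z,n) \in \A\})$, hence in that class itself, so there is $n \in \overline{B}$ and finite $\rho \subseteq X$ with $n \in W_e^{\sigma \cup \rho}$; then $(\sigma \cup \rho,\, X \setminus \{0,\dots,\max\rho\},\, \V)$ is an extension (genericity preserved by \Cref{prop:pren-pg-invariant-finite}) forcing $n \in W_e^G$ while $n \notin B$. \emph{Case 2: $\A$ is not $\mathfrak{E}(B)$-large.} As $\A$ is upward-closed, clause~(2) of $\mathfrak{E}(B)$-largeness must fail, giving a finite $F \subseteq B$ and a $k$-cover $Y_0, \dots, Y_{k-1}$ of $\omega$ with, for each $j < k$, some $n_j \in F$ such that $(Y_j, n_j) \notin \A$. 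Apply \Cref{lemma:pren-abslarge} to the $k$-cover $Y_0 \cap X, \dots, Y_{k-1} \cap X$ of $X$ to obtain a non-trivial $\mathfrak{E}(B)$-large $\Sigma^0_1$ class $\W \subseteq \V$ and an index $j < k$ with $Y_j \cap X$ being $\mathfrak{E}(B)$-generic in $\W$; then $(Y_j \cap X, n) \in \W \subseteq \V$ for all $n \in B$, so $(Y_j, n_j) \in \V$ by upward closure, and as $(Y_j, n_j) \notin \A$ no finite $\rho \subseteq Y_j$ enumerates $n_j$ into $W_e$ over $\sigma$. Consequently $(\sigma,\, Y_j \cap X,\, \W)$ is an extension (with $Y_j \cap X$ infinite by \Cref{prop:pren-pg-infinite}) forcing $n_j \notin W_e^G$ while $n_j \in B$, so again $W_e^G \neq B$.

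The delicate point is Case~2: the mere failure of $\mathfrak{E}(B)$-largeness of $\A$ leaves open the useless possibility $(Y_j, n_j) \notin \V$, and this is exactly what the overflow phenomenon underlying $\mathfrak{E}(B)$-genericity repairs — by selecting the piece $Y_j \cap X$ generic in a refined class $\W$ via \Cref{lemma:pren-abslarge}, that piece is forced to enumerate \emph{all} of $B$ correctly, so the obstruction must reside in the $W_e$-enumeration clause. Assembling the two lemmas in the usual way yields $G_\F \subseteq A$ infinite with $W_e^{G_\F} \neq B$ for every $e$, i.e.\ $B$ is not $\Sigma^0_1(G_\F)$.
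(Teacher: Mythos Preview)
Your proof is correct and follows essentially the same approach as the paper: the same Mathias-style forcing with an $\mathfrak{E}(B)$-large $\Sigma^0_1$ class as third coordinate, the same auxiliary class $\A$, and the same two-case analysis, including the key step in Case~2 of invoking \Cref{lemma:pren-abslarge} to select a piece $Y_j\cap X$ generic in a refined $\W$ so that $(Y_j,n_j)\in\V$ is forced and the disjunct $(Y_j,n_j)\notin\V$ is ruled out. The only cosmetic difference is that the paper packages the witnessing cover in Case~2 as a non-empty $\Pi^0_1$ class before picking an element, whereas you take the witnessing cover directly; neither uses a basis theorem here, so this is immaterial.
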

\begin{proof}
Consider the notion of forcing $(\sigma, X, \V)$ where $(\sigma, X)$ is as Mathias condition, 
and $\V \subseteq \U$ is a $\mathfrak{E}(B)$-large $\Sigma^0_1$ class within which $X$ is $\mathfrak{E}(B)$-generic.
A condition $(\tau, Y, \W)$ \emph{extends} another condition $(\sigma, X, \V)$ if $(\tau, Y)$ Mathias extends $(\sigma, X)$ and $\W \subseteq \V$.

Any filter $\F$ induces a subset $G_\F = \bigcup_{(\sigma, X, \V) \in \F} \sigma$ of~$A$. 
The first property we prove is that $G_\F$ is an infinite set. 

\begin{lemma}\label[lemma]{lem:pg-eg-forcing-infinite}
Let $\F$ be a sufficiently generic filter. Then $G_\F$ is infinite.
\end{lemma}
\begin{proof}
Let us show that for any $n$, the set of conditions $(\tau, Y, \V)$ such that $\tau(x) = 1$ for some $x > n$ is dense.
Let $(\sigma, X, \V)$ be a condition. Since $X$ is $\mathfrak{E}(B)$-generic in $\V$, then by \Cref{prop:pren-pg-infinite}, $X$ is infinite. Let $x \in X$ be greater than $n$, let $\tau$ be string $\sigma \cup \{x\}$ and let $Y = X \setminus \{0, \dots, |\rho|\}$. By \Cref{prop:pren-pg-invariant-finite}, $Y$ is $\mathfrak{E}(B)$-generic in $\V$. Therefore, $(\tau, Y, \V)$ is a valid extension of $(\sigma, X, \V)$.
\end{proof}

\begin{lemma}\label[lemma]{lem:pg-eg-forcing-requirement}
For every condition $(\sigma, X, \V)$, there is an extension $(\tau, Y, \W)$ forcing $W_e^G \neq B$.	
\end{lemma}
\begin{proof}
We have two cases.

Case 1: the following class is $\mathfrak{E}(B)$-large:
$$
\A = \{ (Y,n) \in \V : \exists \rho \subseteq Y\ n \in W_e^{\sigma \cup \rho} \}
$$
Since $\A$ is a non-trivial $\mathfrak{E}(B)$-large $\Sigma^0_1$ subclass of~$\U$ and $X$ is $\mathfrak{E}(B)$-generic in~$\U$, then $X \in \L(\{ Z : \forall n \in B\ (Z, n) \in \U \wedge \exists n \in \overline{B}\ (Z,n) \in \A \})$. Therefore there is some~$\rho \subseteq X$ and some~$n \in \overline{B}$ such that $n \in W_e^{\sigma \cup \rho}$. Let~$Y = X \setminus \{0, \dots, |\rho|\}$. By \Cref{prop:pren-pg-invariant-finite}, $Y$ is $\mathfrak{E}(B)$-generic in~$\V$. Therefore $(\sigma \cup \rho, Y, \V)$ is an extension of~$(\sigma, X, \V)$ forcing $W_e^G \neq B$.

Case 2: there is some finite set~$F \subseteq B$ and some~$k \in \omega$ such that the following class is non-empty:
$$
\C = \left\{ Z_0 \oplus \dots \oplus Z_{k-1} : \begin{array}{l}
 		Z_0 \cup \dots \cup Z_{k-1} = \omega \wedge\\
 		\forall i < k\ \exists n \in F\ (Z_i,n) \not \in \V \vee \forall \rho \subseteq Z_i\ n \not \in W_e^{\sigma \cup \rho}
 \end{array}\right\}
$$
Note that $\C$ is a non-empty $\Pi^0_1$ class. Pick any element $Z_0 \oplus \dots \oplus Z_{k-1} \in \C$. By \Cref{lemma:pren-abslarge}, there is a non-trivial $\mathfrak{E}(B)$-large $\Sigma^0_1$ subclass $\W$ of~$\U$ and some $i < k$ such that $Z_i \cap X$ is $\mathfrak{E}(B)$-generic in $\W$. In particular, $\forall n \in B\ (Z_i \cap X, n) \in \W \subseteq \V$, and by upward-closure of $\V$, $\forall n \in B\ (Z_i, n) \in \V$. By choice of~$Z_i$, there is some~$n \in F$ such that $(Z_i,n) \not \in \V \vee \forall \rho \subseteq Z_i\ n \not \in W_e^{\sigma \cup \rho}$. Since $F \subseteq B$, $(Z_i, n) \in \V$, so $\forall \rho \subseteq Z_i\ n \not \in W_e^{\sigma \cup \rho}$. The condition $(\sigma, Z_i \cap X, \W)$ is an extension of~$(\sigma, X, \V)$ forcing~$n \not \in W_e^G$, with~$n \in B$.
\end{proof}

We are now ready to prove \Cref{thm:pg-eg-ce}.
Let~$\F$ be a sufficiently generic filter containing $(\emptyset, A, \U)$. By \Cref{lem:pg-eg-forcing-infinite}, $G_\F$ is infinite. By construction, $G_\F \subseteq A$. Last, by \Cref{lem:pg-eg-forcing-requirement}, $B$ is not $\Sigma^0_1(G_\F)$.
\end{proof}

\begin{corollary}\label[corollary]{cor:non-ce-pg-preservation-nonce}
Let~$B$ be a non-$\Sigma^0_1$ set. Let~$A$ be partition generic relative to~$B$. 
Then there is an infinite subset~$H \subseteq A$ such that~$B$ is not $\Sigma^0_1(H)$.
\end{corollary}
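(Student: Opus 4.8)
The plan is to chain together the two main results of this section, exactly as was done for the earlier corollaries such as \Cref{hyp-and-pg-rel-then-preserve-hyp}. Since ``$B$ is non-$\Sigma^0_1$'' means the same as ``$B$ is not c.e.'', \Cref{prop:ce-and-pg-rel-then-pren-gen} applies directly: as $A$ is partition generic relative to $B$, the set $A$ is $\mathfrak{E}(B)$-generic, that is, $\mathfrak{E}(B)$-generic in $2^\omega \times \omega$.

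The only gap between this and the hypothesis of \Cref{thm:pg-eg-ce} is that the latter requires $A$ to be $\mathfrak{E}(B)$-generic inside a \emph{non-trivial} $\mathfrak{E}(B)$-large $\Sigma^0_1$ class, whereas $2^\omega \times \omega$ is itself trivial. I would bridge it by exhibiting the canonical such class, $\U = \{ (X,n) \in 2^\omega \times \omega : |X| \geq 2 \}$. It is plainly $\Sigma^0_1$ and non-trivial, and it is upward-closed in the first coordinate; condition~(2) of $\mathfrak{E}(B)$-largeness holds because any $k$-cover $Y_0, \dots, Y_{k-1}$ of $\omega$ must contain an infinite part $Y_j$, which then has $|Y_j| \geq 2$ and so satisfies $(Y_j, n) \in \U$ for every $n$ in a given finite $F \subseteq B$. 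Hence $\U$ is a non-trivial $\mathfrak{E}(B)$-large $\Sigma^0_1$ class.

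Finally, being $\mathfrak{E}(B)$-generic in $2^\omega \times \omega$ amounts to quantifying over all non-trivial $\mathfrak{E}(B)$-large $\Sigma^0_1$ subclasses of $2^\omega \times \omega$; since every such subclass of $\U$ is in particular one of $2^\omega \times \omega$, it follows at once that $A$ is $\mathfrak{E}(B)$-generic in $\U$. Applying \Cref{thm:pg-eg-ce} to $A$ and $\U$ then produces an infinite set $H \subseteq A$ with $B$ not $\Sigma^0_1(H)$, which is the desired conclusion. I do not anticipate a genuine obstacle: the substance is entirely in \Cref{prop:ce-and-pg-rel-then-pren-gen} and \Cref{thm:pg-eg-ce}, and the only point requiring care is the routine bookkeeping around triviality of the ambient class and the fact that genericity in $2^\omega \times \omega$ transfers down to any subclass.
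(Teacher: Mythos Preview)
Your proof is correct and follows the same two-step route as the paper's proof (invoke \Cref{prop:ce-and-pg-rel-then-pren-gen}, then \Cref{thm:pg-eg-ce}); the paper's version is terser and leaves implicit the choice of the ambient non-trivial class, which you correctly supply as $\U = \{(X,n) : |X| \geq 2\}$.
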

\begin{proof}
By~\Cref{prop:ce-and-pg-rel-then-pren-gen}, $A$ is $\mathfrak{E}(B)$-generic.
By \Cref{thm:pg-eg-ce}, there is an infinite subset~$H \subseteq A$ such that~$B$ is not $\Sigma^0_1(H)$.
\end{proof}

\begin{corollary}
Let~$B$ be a non-$\Sigma^0_1$ set. Let~$A$ be Kurtz random relative to~$B$.
Then there is an infinite subset~$H \subseteq A$ such that~$B$ is not $\Sigma^0_1(H)$.
\end{corollary}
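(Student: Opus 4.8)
The plan is to obtain this corollary as an immediate consequence of the results already established in this section, in exact parallel to the Kurtz-random corollaries proved earlier for cone avoidance, PA avoidance, constant-bound trace avoidance, and preservation of hyperimmunity. The key observation is that Kurtz randomness relative to $B$ implies partition genericity relative to $B$.

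First I would relativize \Cref{prop:kurtz-bipg} to the oracle $B$. Its proof rests on \Cref{prop:measure:pr}, which shows that every non-trivial measurable partition regular class has measure $1$; that argument uses only Kolmogorov's $0$-$1$ law, the fact that a null $\Pi^0_2$ class can be covered by an effectively null $\Pi^0_2(O)$ class for a suitable oracle $O$, and the closure of Martin-L\"of randomness under complementation — all of which relativize verbatim. Consequently any set that is Kurtz random relative to $B$ lies in every non-trivial $\Pi^0_2(B)$ partition regular class, i.e.\ is (bi-)partition generic relative to $B$. In particular $A$ is partition generic relative to $B$.

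Then I would simply apply \Cref{cor:non-ce-pg-preservation-nonce} to $A$ and $B$: since $B$ is non-$\Sigma^0_1$ and $A$ is partition generic relative to $B$, that corollary yields an infinite subset $H \subseteq A$ with $B$ not $\Sigma^0_1(H)$, which is precisely the assertion. (If one wanted to unwind this, one could instead route through \Cref{prop:ce-and-pg-rel-then-pren-gen} to get $\mathfrak{E}(B)$-genericity and then apply \Cref{thm:pg-eg-ce}, but invoking \Cref{cor:non-ce-pg-preservation-nonce} directly is cleaner.)

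There is essentially no obstacle here: the only content is the routine relativization of \Cref{prop:kurtz-bipg}, and the deduction is a two-step chain \Cref{prop:kurtz-bipg} $\Rightarrow$ (partition genericity relative to $B$) $\Rightarrow$ \Cref{cor:non-ce-pg-preservation-nonce}.
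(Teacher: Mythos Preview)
Your proposal is correct and matches the paper's proof essentially verbatim: the paper also invokes (the relativization of) \Cref{prop:kurtz-bipg} to conclude that $A$ is partition generic relative to~$B$, and then applies \Cref{cor:non-ce-pg-preservation-nonce} directly.
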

\begin{proof}
By \Cref{prop:kurtz-bipg}, $A$ is partition generic relative to~$B$, so by \Cref{cor:non-ce-pg-preservation-nonce}, there is an infinite subset~$H \subseteq A$ such that~$B$ is not $\Sigma^0_1(H)$.
\end{proof}

\begin{proof}[Proof of~\Cref{thm:preservation-nonce-pigeon-basis}]
Let~$B$ be a non-$\Sigma^0_1$ set and $A$ be a set.
By \Cref{prop:pren-pg-omega-ce}, $\omega$ is $\mathfrak{E}(B)$-generic.
By \Cref{lemma:pren-abslarge}, there is a non-trivial $\mathfrak{E}(B)$-large $\Sigma^0_1$ class~$\A \subseteq 2^\omega \times \omega$ within which either~$A$ or $\overline{A}$ is $\mathfrak{E}(B)$-generic. By \Cref{thm:pg-eg-ce}, there is an infinite subset~$H \subseteq A$ such that~$B$ is not $\Sigma^0_1(H)$.
\end{proof}


\section{Partition genericity and computability}\label[section]{sect:questions}

We conclude this study of partition regularity and partition genericity by considering the corresponding notions of lowness, and constructing a partition generic set which is both computably dominated and of non-DNC degree.

\begin{definition}\ 
\begin{enumerate}
	\item A set~$X$ is \emph{low for partition genericity} if every partition generic set is partition $X$-generic.
	\item A set $X$ is \emph{low for partition regularity} if for every $\Pi^0_2(X)$ partition regular class $\L$, there is a $\Pi^0_2$ partition regular subclass $\M \subseteq \L$
	\item A set~$X$ is \emph{low for partition largeness} if for every $\Sigma^0_1(X)$ partition large class~$\U$, there is a $\Sigma^0_1$ large subclass~$\V \subseteq \U$.
\end{enumerate}
\end{definition}

It is clear that if~$X$ is low for partition regularity or low for partition largeness, then it is low for partition genericity. Actually, all three notions are trivial, in the sense that the only degree which is low for partition genericity is the computable one.

\begin{proposition}\label[proposition]{prop:low-pg-computable}
If $X$ is low for partition genericity, then $X$ is computable.
\end{proposition}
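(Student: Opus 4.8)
The plan is to prove the contrapositive: if $X$ is non-computable, then $X$ is not low for partition genericity, i.e.\ there is a partition generic set $A$ which is not partition $X$-generic. The mechanism producing such an $A$ is \Cref{lem:subset-complement-not-pg}: if we can find an infinite set $H \leq_T X$ and a partition generic set $A$ with $H \subseteq \overline{A}$, then \Cref{lem:subset-complement-not-pg} says that $A$ is not $H$-partition generic, and since $H \leq_T X$ every $\Pi^0_2(H)$ partition regular class is $\Pi^0_2(X)$, so $A$ is not partition $X$-generic; as $A$ is partition generic, this contradicts lowness. Thus the whole statement reduces to the claim that \emph{every non-computable $X$ computes an infinite set $H$ whose complement is partition generic}, after which one simply takes $A = \overline{H}$ (so that $H = \overline{A}$).

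I would first dispatch the case where $X$ is of hyperimmune degree. Then $X$ computes a hyperimmune set $H$; such an $H$ is infinite and co-infinite, and $\overline{H}$ is co-hyperimmune, hence partition generic by \Cref{prop:cohyp-pg}. This already settles the claim, and hence the proposition, for every non-computable $\Delta^0_2$ set, since such sets have hyperimmune degree.

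The remaining case, $X$ of hyperimmune-free degree but non-computable, is the main obstacle. If $X$ or $\overline{X}$ happens to be partition generic we are done immediately: take $A$ to be whichever one is, so that $\overline{A}$ is itself an infinite set computable from $X$ and contained in $\overline{A}$, and apply \Cref{lem:subset-complement-not-pg}. So one may assume that neither $X$ nor $\overline{X}$ is partition generic; by the argument of \Cref{prop:cohyp-pg} this forces both $X$ and $\overline{X}$ to be traced by computable arrays. Even with this combinatorial control, producing an infinite $H \leq_T X$ whose complement is partition generic seems to require a genuine construction, and this is the step I expect to be hardest: every set below $X$ is computably dominated, so the natural $X$-definable notions of largeness (such as $\L_X$, weighted-divergence classes, or the square-density class of \Cref{prop:co-immune-not-pg}) each contain a $\Pi^0_2$ partition regular subclass and hence separate no partition generic set. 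The attack I would pursue is a direct construction of $H$ carried out relative to $X$, exploiting that $X$ is non-computable to keep the complement of $H$ generic against every non-trivial $\Pi^0_2$ partition regular class (each of which is comeager, so that being partition generic is merely a matter of meeting countably many dense requirements); the delicate point is precisely that $X$ may be low, so these requirements cannot simply be met $X'$-computably, and one must arrange that a suitable $H$ is fixed from $X$ alone. Once the reduced claim is in place, \Cref{lem:subset-complement-not-pg} finishes the argument uniformly in all cases.
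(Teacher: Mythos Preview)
Your reduction is exactly the one the paper uses: find an infinite $H \leq_T X$ with $\overline{H}$ partition generic, and then invoke $\L_H$ (equivalently \Cref{lem:subset-complement-not-pg}) to witness that $\overline{H}$ is not $X$-partition generic. The hyperimmune case you sketch is fine. The genuine gap is the hyperimmune-free case, which you yourself flag as unfinished; as written this is not a proof but a plan, and the ``direct construction of $H$ relative to $X$'' you describe is neither specified nor obviously possible.

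The paper closes this gap with a single uniform choice of $H$ that works for every non-computable $X$, with no case split on hyperimmunity. Take $H = \{\sigma \in 2^{<\omega} : \sigma \prec X\}$, the set of (codes of) initial segments of $X$. Then $H \equiv_T X$ and, crucially, \emph{every} infinite subset of $H$ computes $X$. Now apply the cone avoidance partition genericity subset basis theorem (\Cref{thm:pg-basis-cone-avoidance}): if $H$ were partition generic in some non-trivial $\Sigma^0_1$ large class, it would have an infinite subset not computing $X$, a contradiction. Hence $H$ is not partition generic in any such class, and since $\omega$ is partition generic and $H \cup \overline{H} = \omega$, the dichotomy of \Cref{lemma:abslarge} (equivalently, the argument behind \Cref{cor:partition-generic-regular}) forces $\overline{H}$ to be partition generic in $2^\omega$. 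This is exactly the missing idea: rather than building $H$ to make $\overline{H}$ generic, choose $H$ so rigid that cone avoidance \emph{forces} the genericity onto the other side.
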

\begin{proof}
Let~$X$ be a non-computable set. Let~$A = \{ \sigma \in 2^{<\omega} : \sigma \prec X \}$. Then $A \equiv_T X$ and every infinite subset of~$A$ computes~$A$.
Either $A$ is partition generic in a non-trivial partition large $\Sigma^0_1$ class, or ~$\overline{A}$ is partition generic in $2^\omega$. In the first case, by \Cref{thm:pg-basis-cone-avoidance}, there is an infinite subset of~$A$ which does not compute~$A$, contradiction. Therefore~$\overline{A}$ is partition generic.

Let us show that~$\overline{A}$ is not partition generic relative to~$X$.
Since~$A$ is infinite, $\L_A$ is a non-trivial $\Pi^0_2(X)$ partition regular class.
However, $\overline{A} \not \in \L_A$. Therefore, $X$ is not low for partition genericity.
\end{proof}

The degrees of partition generic sets are not fully understood. In \Cref{subsect:pg}, we proved that every co-hyperimmune set and every Kurtz random is partition generic. By the computably dominated basis theorem, there are Kurtz randoms of computably dominated degree. Since every weakly 1-generic is Kurtz random, there are Kurtz randoms of non-DNC degree. On the other hand, Stephan and Yu~\cite{stephan2006lowness} proved that the degrees which are low for Kurtz-randomness are precisely the computably dominated non-DNC degrees, which implies that no Kurtz random is of both degrees simultaneously. We now prove the existence of a partition generic set which is low for Kurtz-randomness. This is done using a perfect tree forcing starting from a suitable tree.

\begin{definition}[Terwijn and Zambella~\cite{terwijn1997algorithmic}]
Fix a canonical coding of all finite sets $D_0, D_1, \dots$.
A set~$X$ is \emph{computably traceable} if there is a computable function $p$ such that, for each function $f \leq_T A$, there is a computable function $h$ satisfying, for all $n$,
$|D_{h(n)}| \leq p(n)$ and $f(n) \in D_{h(n)}$.
\end{definition}

Terwijn and Zambella~\cite{terwijn1997algorithmic} proved that the computably traceable degrees are precisely those which are low for Schnorr randomness. It is clear that every computably traceable set is computably dominated. Moreover, by Kjos-Hanssen, Merkle, and Stephan~\cite{kjoshanssen2011kolmogorov}, every computably traceable set is of non-DNC degree.

\begin{proposition}\label[proposition]{prop:pg-ct}
There is a partition generic set which is computably traceable and of minimal degree.
\end{proposition}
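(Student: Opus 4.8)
The strategy is to run a perfect‑tree forcing of Spector type, which will give minimality, while threading through it the partition‑genericity apparatus of \Cref{sect:pr}, keeping every tree that appears computable and thin enough to make the generic computably traceable. Two preliminary remarks organize the requirements. First, $\L(\U)$ is a non‑trivial $\Pi^0_2$ partition regular class for every non‑trivial upward‑closed $\Sigma^0_1$ class $\U$ (\Cref{prop:large-contains-pr}, together with the explicit description in \Cref{prop:complarge}), and every non‑trivial $\Pi^0_2$ partition regular class can be written $\bigcap_m\U_m$ with each $\U_m$ a non‑trivial upward‑closed $\Sigma^0_1$ class containing it (replace the $\Sigma^0_1$ approximants by their upward closures intersected with $\U_2$, and use that a finite intersection of $\Sigma^0_1$ classes is $\Sigma^0_1$); hence a set is partition generic if and only if it lies in every non‑trivial $\Sigma^0_1$ partition large class. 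Second, by \Cref{lem:ua2-large} and \Cref{lem:ua2-pg}, for every computable coinfinite set $H$ the class $\U_{H,2}$ is a non‑trivial $\Sigma^0_1$ partition large class and every superset of $H$ is partition generic in $\U_{H,2}$; so the cone above $H$ is a computable perfect tree all of whose paths are partition generic in $\U_{H,2}$, a legitimate starting point. Fix enumerations $\V_0,\V_1,\dots$ of the non‑trivial $\Sigma^0_1$ partition large classes and $\Phi_0,\Phi_1,\dots$ of the Turing functionals.

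A condition is a hybrid of a perfect‑tree condition and the partition‑generic Mathias conditions of \Cref{sect:applications}: a finite stem, a computable perfect tree $T$ governing the yet‑undecided part of $G$, and a non‑trivial $\Sigma^0_1$ partition large class $\U$ with $[T]$ (read as a family of subsets of $\omega$) consisting of sets partition generic in $\U$; extension extends the stem, passes to a computable perfect subtree, and shrinks the class. Three families of dense sets must be met. For the partition genericity requirement ``$G\in\V_e$'': if $\U\cap\V_e$ is partition large, then $\L(\U\cap\V_e)$ is a non‑trivial $\Pi^0_2$ partition regular subclass of $\U$, so every path of $T$ already lies in it and hence in $\V_e$; in general one first uses openness and partition largeness of $\V_e$ to drive the committed part of the stem into $\V_e$ — this is where a further $\Sigma^0_1$ refinement of $\U$ and a pass to a subtree are made, using the pigeonhole \Cref{lemma:abslarge} for partition genericity and invariance under finite changes \Cref{prop:pg-invariant-finite}. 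For the minimality requirement, we use the $e$‑splitting dichotomy: either $T$ has a computable perfect subtree whose pairs of paths all $e$‑split, so that $\Phi_e^G$, if total, computes $G$, or $T$ has a computable perfect subtree with no $e$‑splitting pair, so that $\Phi_e^G$, if total, is computable; in both cases the subtree is chosen so that the selected branching ``stays large'', via \Cref{lemma:abslarge} and the union/intersection closure of partition largeness (\Cref{prop:unionlarge}, \Cref{prop:interlarge}). For computable traceability, we keep all trees uniformly computable and, on the no‑$e$‑splitting alternative, shrink so that $\Phi_e^X(n)$ takes a single value across all paths $X$; this value is then a computable function of $n$, and on the splitting alternative one passes to a subtree carrying a computable bound on the $e$‑splitting values, exactly as in the classical construction of a computably traceable — hence computably dominated, and by \cite{kjoshanssen2011kolmogorov} non‑DNC — minimal degree. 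We also branch both ways infinitely often so that $G$ is bi‑infinite, in particular non‑computable.

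The main obstacle is the compatibility of these three tasks: every perfect‑tree refinement (for minimality) and every thinning (for traceability) must preserve the invariant that all paths of the current tree remain partition generic in the current class, and all refinements must be performed with computable perfect trees. Establishing this is the technical heart of the argument, and amounts to a tree‑level strengthening of \Cref{lemma:abslarge}: when the paths of a tree compatible with $\U$ are separated by finitely many effectively open conditions, the branching that realizes the ``large'' part can be isolated as a computable perfect subtree compatible with a suitable non‑trivial $\Sigma^0_1$ refinement of $\U$; the explicit form of $\L(\cdot)$ (\Cref{prop:complarge}) and the fact (\Cref{lem:ua2-pg}) that a set lies in every large subclass of $\U_{X,2}$ are what make this extraction uniform and effective. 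Granting it, the usual fusion argument along a generic filter meeting all three families yields $G$: infinite and partition generic by the first family and the preliminary remark, of minimal degree by the second, and computably traceable by the third.
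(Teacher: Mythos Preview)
Your proposal is a strategy outline, not a proof: you explicitly identify the ``tree-level strengthening of \Cref{lemma:abslarge}'' as ``the technical heart of the argument'' and then write ``Granting it, the usual fusion argument \dots\ yields $G$.'' That lemma is never stated precisely, let alone proven, and there is a concrete obstruction to the plan as written. Your invariant---that every path of the current tree $T$ is partition generic in the current class $\U$---is indeed preserved when you pass to subtrees (so the Spector and traceability steps cause no trouble), but it does \emph{not} guarantee that some path of $T$ lies in a given non-trivial $\Sigma^0_1$ partition large class $\V_e$, which is exactly what you need in order to ``drive the stem into $\V_e$.'' For instance, if $H,E$ are computable, infinite, disjoint, and $\overline{H}\setminus E$ is infinite, then the computable perfect tree $T=\{H\cup Y:Y\subseteq\overline{H}\setminus E\}$ has all paths $\supseteq H$, hence all partition generic in $\U_{H,2}$, yet no path meets $E$ twice, so no path lies in $\U_{E,2}$. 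Nothing in your invariant rules out reaching such a $T$ after some Spector refinements, and \Cref{lemma:abslarge} (which concerns a single set and a cover of it) gives no leverage here.

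The paper sidesteps this difficulty entirely with a single combinatorial idea you are missing: build once and for all a computable function tree $T_0$ with the property that any two distinct paths $X,Y\in[T_0]$ satisfy $X\cup Y=^{*}\omega$, and then run ordinary computable Sacks forcing below $T_0$. The covering property is inherited by every subtree, so for any condition $T\le T_0$ and any non-trivial partition large $\Sigma^0_1$ class $\V$, pick two distinct paths $X,Y\in[T]$; since $X\cup Y=^{*}\omega$ and $\V$ is non-trivial and partition large, one of them lies in $\V$, and openness lets you pass to a subtree forcing $G\in\V$. Minimality and computable traceability then come for free from the standard facts about computable perfect-tree forcing (Terwijn--Zambella). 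There is no need to carry a class $\U$ in the conditions, no hybrid Mathias component, and no tree-level analogue of \Cref{lemma:abslarge}.
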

\begin{proof}
A \emph{function tree} is a function $T : 2^{<\omega} \to 2^{<\omega}$ such that for every $\sigma \in 2^{<\omega}$, $T(\sigma 0)$ and $T(\sigma 1)$ are incompatible extensions of~$T(\sigma)$. 
Any such function $T : 2^{<\omega} \to 2^{<\omega}$ induces a function $T : 2^\omega \to 2^\omega$ by defining $T(X) = \bigcup_{\sigma \prec X} T(\sigma)$. We then write $[T] = \{ f(X) : X \in 2^\omega \}$. A function tree~$S$ \emph{extends} a function tree~$T$ (written $S \leq T$) if $[S] \subseteq [T]$.

\begin{lemma}
There is a computable function tree~$T_0$ such that for every $X,Y \in 2^\omega$ with $X \neq Y$,
then $T_0(X) \cup T_0(Y) =^{*} \omega$.
\end{lemma}
\begin{proof}
Let~$T_0(\epsilon) = \epsilon$. Suppose $T_0$ is defined on $2^{\leq n}$ for some~$n$.
Let~$\sigma_0, \sigma_1, \dots, \sigma_{2^{n+1}-1}$ be the list of all strings of length~$n+1$.
For every~$i < 2^{n+1}$, let $\tau_i$ be the string of length~$2^{n+1}$ which has a 0 at position $i$, and 1 everywhere else. Let~$T_0(\sigma_i) = T_0(\sigma_i \uh_n)^\frown \tau_i$.
For instance, $T_0(0) = 01$, $T_0(1) = 10$, $T(00) = 010111$, $T(01) = 011011$, $T(10) = 101101$, $T(11) = 101110$. Note that every two strings of same length is sent to strings of same length.

Let~$X, Y \in \omega$ be such that $X \neq Y$, and let $\sigma$ be the longest common substring.
We claim that for every~$n > |T_0(\sigma)$, then either~$n \in T_0(X)$, or $n \in T_1(Y)$.
Indeed, let $t$ be the smallest length such that $n < |T_0(X \uh_t)|$, or equivalently such that $n < |T_0(Y \uh_t)|$. Let~$i$ and $j < 2^t$ be such that~$X\uh_t$ and $Y\uh_t$ are respectively the $i$th and the $j$th string of length~$t$. Note that since $n > |T_0(\sigma)|$, then $X\uh_t$ and $Y\uh_t$  are incomparable, hence~$i \neq j$.
By definition, $T_0(X \uh_t) = T_0(X \uh_{t-1})^\frown \tau_i$ and $T_0(Y \uh_t) = T_0(Y \uh_{t-1})^\frown \tau_j$, where $\tau_i$ and $\tau_j$ do not have a 0 at the same position. 
By choice of~$t$, $n \geq |T_0(X \uh_{t-1})| = |T_0(Y \uh_{t-1})|$, so either $n \in T_0(X \uh_t)$, or $n \in T_0(Y \uh_t)$.
\end{proof}

Consider the notion of forcing $\mathbb{P}$ whose conditions are computable function trees extending $T_0$. Any sufficiently generic filter~$\F$ induces a set~$G_\F$ which is the unique member of $\bigcap_{T \in \F} [T]$.
A condition $T$ \emph{forces} a formula $\varphi(G)$ if the formula hods for every~$G \in [T]$.

\begin{lemma}\label[lemma]{lem:pg-ct-progress}
For every non-trivial partition large $\Sigma^0_1$ class~$\U \subseteq 2^\omega$ and every condition~$T$ there is an extension~$S \leq T$ forcing~$G \in \U$.
\end{lemma}
\begin{proof}
Pick two $X,Y \in 2^\omega$ with $X \neq Y$.
Since~$T \leq T_0$, then $T(X) \cup T(Y) =^{*} \omega$. Since~$\U$ is a non-trivial partition large class, either $T(X) \in \U$, or $T(Y) \in \U$. Assume the first case holds, by symmetry. Since $\U$ is $\Sigma^0_1$, there there is a finite string $\sigma \prec X$ such that $[T(\sigma)] \subseteq \U$. Let~$S$ be the extension of~$T$ defined by $S(\tau) = T(\sigma\tau)$. Then for every~$G \in [S]$, $G \in \U$.
\end{proof}

Let~$\F$ be a sufficiently generic filter. By \Cref{lem:pg-ct-progress}, $G_\F$ is partition generic. It is well known that every sufficiently generic filter for computable Sacks forcing produces sets of minimal degree. Terwijn and Zambella~\cite{terwijn1997algorithmic} proved that these sets are also computably traceable.
\end{proof}

\vspace{0.5cm}

\bibliographystyle{plain}
\bibliography{bibliography}

\end{document}